\newtheorem{theorem}{Theorem}[section]
\newtheorem{lemma}[theorem]{Lemma}
\newtheorem{proposition}[theorem]{Proposition}
\theoremstyle{remark}
\newtheorem{remark}[theorem]{\it \bf{Remark}\/}
\numberwithin{equation}{section}
\def\section{\@startsection{section}{1}%
  \z@{1.5\linespacing\@plus\linespacing}{.5\linespacing}%
  {\normalfont\bfseries\large\centering}}
\newcommand{\be}{\begin{equation}}
\newcommand{\ee}{\end{equation}}
\newcommand{\bea}{\begin{eqnarray}}
\newcommand{\eea}{\end{eqnarray}}
\newcommand{\bee}{\begin{eqnarray*}}
\newcommand{\eee}{\end{eqnarray*}}
\def\pa{\partial}
\def\pr{\partial}
\def\RR{\mathbb{R}}
\def\sigmat{\tilde{\sigma}}
\def\de{\delta}
\def\ep{\varepsilon}
\def\supess{\mathop{\operator@font Sup\,ess}}
\def\RR{\mathbb{R}}
\def\a{\alpha}
\def\R2+{\RR ^2_+}
\def\pa{\partial}
\def\lim{\mathop{\rm lim}}
\def\sup{\mathop{\rm sup}}
\def\l{\lambda}
\def\log{{\rm log}}
\def\rhoh{\hat{\rho}}
\def\T{\Theta}
\def\pa{\partial}
\def\rhot{\tilde{\rho}}
\def\Psit{\Psi}
\def\Lamdba{\Lambda}
\def\pa{\partial}
\def\la{\langle}
\def\matchal{\mathcal}
\def\ra{\rangle}
\def\NL{\textrm{NL}}
\def\Psipx{\Psi_{P}}
\def\rhopx{\rho_{P}}
\def\rhox{\rho}
\def\Psix{\Psi}
\def\qx{Q}
\def\rhox{\rho}
\def\Psix{\Psi}
\def\Phix{\Phi}
\def\rhoh{\hat{\rho}}
\def\Et{\tilde{\mathcal E}}
\def\nut{\tilde{\nu}}
\newcommand\blue{{\color{\blue}}}
\begin{document}

\title[]{On blow up for the energy super critical defocusing {nonlinear Schr\"odinger equations} }
{\author[F. Merle]{Frank Merle}
\address{AGM, Universit\'e de Cergy Pontoise and IHES, France}
\email{merle@math.u-cergy.fr}
\author[P. Rapha\"el]{Pierre Rapha\"el}
\address{Department of Pure Mathematics and Mathematical Statistics, Cambridge, UK}
\email{praphael@maths.cam.ac.uk}
\author[I. Rodnianski]{Igor Rodnianski}
\address{Department of Mathematics, Princeton University, Princeton, NJ, USA}
\email{irod@math.princeton.edu}
\author[J. Szeftel]{Jeremie Szeftel}
\address{CNRS $\&$ Laboratoire Jacques Louis Lions, Sorbonne Universit\'e, Paris, France}
\email{jeremie.szeftel@upmc.fr}}

\begin{abstract} 
We consider the energy supercritical {\em defocusing} nonlinear Schr\"odinger equation $$i\pa_tu+\Delta u-u|u|^{p-1}=0$$ in dimension $d\ge 5$. In a suitable range of energy supercritical parameters $(d,p)$, we prove the existence of $\mathcal C^\infty$ well localized spherically symmetric initial data such that the corresponding unique strong solution blows up in finite time. Unlike other known blow up mechanisms, the singularity formation does not occur by concentration of a soliton or through a self similar solution, which are unknown in the defocusing case, but via a {\em front mechanism}. Blow up is achieved by {\em compression} in the associated  hydrodynamical flow which in turn produces a highly oscillatory singularity. The front blow up profile is chosen among the countable family of {\em $\mathcal C^\infty$} spherically symmetric self similar solutions to the compressible Euler equation whose existence and properties in a suitable range of parameters are established 
in the companion paper \cite{MRRSprofile}. 
\end{abstract}

\maketitle


\section{Introduction}


We consider the defocusing nonlinear Schr\"odinger equation
\be
\label{nls}
(\text{NLS}) \ \ \left|\begin{array}{ll}i\pa_tu+\Delta u-u|u|^{p-1}=0\\ u_{|t=0}=u_0
\end{array}\right., \ \ (t,x)\in [0,T)\times \Bbb R^d, \ \ u(t,x)\in \Bbb C.
\ee
in dimension $d\ge 3$ for an integer nonlinearity $p\in 2\Bbb N^*+1$ and address the problem of its global dynamics.
We begin by giving a quick introduction to the problem and its development.

\subsection{Cauchy theory and scaling} 
It is a very classical statement that smooth well localized initial data $u_0$ yield {\it local in time}, unique, smooth, strong solutions. For the global dynamics,
 two quantities conserved along the flow \eqref{nls} are of the utmost importance:
$${\textit {mass:}}\qquad  M(u)=\int_{\Bbb R^d}|u(t,x)|^2=\int_{\Bbb R^d} |u_0(x)|^2 $$ 
\be\label{energy}{\textit {energy:}} \qquad E(u)=\frac12\int_{\Bbb R^d}|\nabla u(t,x)|^{2}+\frac1{p+1}\int_{\Bbb R^d}|u(t,x)|^{p+1}dx=E(u_0).\ee The scaling symmetry group $$u_\l(t,x)=\l^{\frac 2{p-1}}u(\l^2t,\l x), \ \ \l>0$$ acts on the space of solutions by leaving the critical norm invariant $$\int_{\Bbb R^d}|\nabla^{s_c}u_\l(t,x)|^2=\int_{\Bbb R^d}|\nabla^{s_c}u(t,x)|^2\ \ \mbox{for}\ \ s_c=\frac d2-\frac{2}{p-1}.$$
Accordingly, the problem \eqref{nls} can be classified as energy subcritical, critical or supercritical depending on whether
the critical Sobolev exponent $s_c$ lies below, equal or above the energy exponent $s=1$. This classification also reflects the 
(in)/ability for the kinetic term in \eqref{energy} to control the potential one via the Sobolev embedding  $H^1\hookrightarrow L^q$.

{\subsection{Classification of the dynamics} We review the main known dynamical results which rely on the scaling classification.}\\

\noindent{\bf Energy subcritical case}. In the energy subcritical case $s_c<1$, the pioneering work of Ginibre-Velo \cite{GV} showed that for all $u_0\in H^1$, there exists a unique strong solution $u\in \mathcal C^0([0,T),H^1)$ to \eqref{nls} and identified the blow up criterion 
\be
\label{blowupcrtierion}
T<+\infty \Longrightarrow \lim_{t\uparrow  T}\|u(t)\|_{H^1}=+\infty.
\ee 
Conservation of energy, which is positive definite and thus controls the energy norm $H^1$, then immediately implies 
that the solution is global, $T=+\infty$. In fact, it can be shown in addition that these solutions scatter  as $t\to \pm\infty$, \cite{GV1}.\\

\noindent{\bf Energy critical problem}. In the energy critical case $s_c=1$, the criterion \eqref{blowupcrtierion} fails and the energy density could {\em concentrate}. For the data with a small critical norm, Strichartz estimates allow one to rule out such a scenario, \cite{CaW}. The large data critical problem has been an arena of an intensive and remarkable work in the 
last 20 years.

For large spherically symmetric data in dimensions $d=3,4$, the energy concentration mechanism was ruled out by Bourgain \cite{Bo} 
and Grillakis \cite{gril} via a localized Morawetz estimate. In Bourgain's work, a new {\it induction on energy} argument led to the statements of both the global existence and scattering. These results were extended to higher dimensions by Tao, \cite{Tao1}. 

The {\it interaction Morawetz estimate},
introduced in \cite{ckstt}, led to a breakthrough on the global existence and scattering for general solutions without symmetry, first in $d=3$, \cite{ckstt}, then in $d=4$, \cite{rvis}, 
and $d\ge 5$, \cite{vis}. 

{A new approach was introduced in Kenig-Merle  \cite{KM} in which, if there exists one global non-scattering solution, then using the {\it concentration compactness} profile decomposition \cite{BaGe,MeVe}, one extracts a minimal blow up solution and proves that up to renormalization, such a minimal element must behave like a soliton. The existence of such objects is ruled out using the defocusing nature of the nonlinearity, which is directly related to the non existence of solitons for defocusing models.}

In all of these large data arguments, the a priori bound on the critical norm provided by the conservation of energy played a fundamental role. Let us note that in the energy critical {\it focusing} setting, the concentration of the critical norm is known to be possible via type II (non self similar) blow up with soliton profile, see e.g \cite{KST,MRRnlsmap,RSch, RodSter,venoiveien,Perlem}.\\

\noindent{\bf Energy supercritical problem}. In the energy supercritical range $s_c>1$, {\it local in time} unique strong solutions can be constructed in the critical Sobolev space $H^{s_c}$, \cite{CaW,kvis}. Kenig-Merle's  approach, \cite{KMsuper,kvis}, gives a blow up criterion $$T<+\infty\Longrightarrow \limsup_{t\uparrow T}\|u(t,\cdot)\|_{H^{s_c}}=+\infty,$$  but the question of whether this actually happens for {\it any} solution remained completely open. On the other hand, the main difficulty in proving that $T=\infty$ for {\it all} solutions is that there are  {\em no a priori} bounds at the scaling level of regularity $H^{s_c}$.

 \subsection{Qualitative behavior for supercritical models} The question of global existence or blow up for energy supercritical models is a fundamental open problem in many nonlinear settings, both focusing and defocusing. For focusing problems, the existence of finite energy type I (self similar) blow up solutions is known in various instances, see e.g \cite{DS, MaMecpam, lepin, CRStypeI}, and solitons have been proved to be admissible blow up profiles in certain type II (non self-similar) blow up regimes in all three settings of heat, wave and Schr\"odinger equations, see e.g. \cite{HV, MRRnls,Charles,Mizo2,MatanoMe}. There are also several examples of supercritical problems with positive definite energy (wave maps, Yang-Mills) which admit smooth self-similar profiles and thus provide explicit blow up solutions, \cite{shatah,bizon,don}. \\
 
 On the other hand, for defocusing problems, soliton-like solutions are known not to exist and  admissible self similar solutions are expected not to exist.  For a simple defocusing model like the scalar nonlinear defocusing heat equation, a direct application of the maximum principle ensures that bounded data yield uniformly bounded solutions which are global in time and in fact dissipate. {We recall again that for the {\it energy critical} problems, blow up occurs in the focusing case, where solitons exist, and it does not in the defocusing case where solitons are known not to exist.}\\
 
{This collection of facts led to the belief, as explicitly conjectured by Bourgain in  \cite{bo}, 
 that global existence and scattering should hold for the energy supercritical defocusing Schr\"odinger and wave equations.} Indications of 
 various qualitative behaviors supporting different conclusions have been 
 provided (we give a highly incomplete list) in numerical
 simulations e.g. \cite{css,mz}, in model  problems showing blow up e.g. \cite{tao2,tao}, in examples of global solutions e.g.  \cite{KSClahg,bs}, in logarithmically supercritical problems e.g. \cite{tao3,struwe,colombo}, and in ill-posedness and norm inflation type results e.g. \cite{grenier,lebau,alazard,thoman}. 
 
 The behavior of solutions in other supercritical models such as the ones arising in fluid and gas dynamics is extremely 
 interesting and not yet well understood. We will not discuss it here.


\subsection{Statement of the result}


We assert that in dimensions $5\le d\le 9$ the defocusing (NLS) model \eqref{nls} admits finite time type II (non self similar) blow up solutions arising from $\mathcal C^\infty$ well localized initial data. {The singularity formation is based neither on soliton concentration nor self similar profiles, but on a new {\em front scenario} producing a {\em highly oscillatory} blow up profile.}

\begin{theorem}[Existence of energy supercritical type II defocusing blow up]
\label{thmmain}
Let 
\be
\label{assumptiondimension}
(d,p)\in\{(5,9),(6,5),(8,3),(9,3)\},
\ee
and let the critical blow up speed be 
\be
\label{vneoneenneo}
r^*(d,\ell)=\frac{\ell+d}{\ell+\sqrt{d}}, \ \ \ell=\frac{4}{p-1}.
\ee Then there exists a discrete sequence of blow up speeds $(r_k)_{k\ge 1}$ with $$2<r_k< r^*(d,\ell), \ \ \lim_{k\to+\infty} r_k=r^*(d,\ell)$$ such that any all $k\ge 1$,  there exists a finite co-dimensional manifold of smooth initial data $u_0\in \cap_{m\ge 0} H^m(\Bbb R^d,\Bbb C)$ with spherical symmetry such that the corresponding solution to \eqref{nls} blows up in finite time $0<T<+\infty$ at the center of symmetry with
\be
\label{vmempemempevp}\|u(t,\cdot)\|_{L^\infty}=\frac{c_{p,r,d}(1+o_{t\to T}(1))}{(T-t)^{\frac 1{p-1}\left(1+\frac{r_k-2}{r_k}\right)}}, \ \ c_{p,r,d}>0.
\ee
\end{theorem}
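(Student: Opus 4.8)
The plan is to produce the singularity inside the hydrodynamical (Madelung) formulation of \eqref{nls}. On the region where $\rho=|u|^2>0$, writing $u=\sqrt{\rho}\,e^{i\Phi}$ turns \eqref{nls} into the quantum compressible Euler system
\begin{equation}
\label{hydrosystem}
\left|\begin{array}{ll}\pa_t\rho+2\div(\rho\na\Phi)=0\\ \pa_t\Phi+|\na\Phi|^2+\rho^{\frac{p-1}2}=\frac{\Delta\sqrt{\rho}}{\sqrt{\rho}}\end{array}\right.
\end{equation}
whose right hand side is the \emph{quantum pressure}. In the regime we aim to build, $\rho$ becomes very large near the origin and, in a semiclassical counting, the quantum pressure is a lower order perturbation of the compressible Euler flow obtained by dropping the right hand side of \eqref{hydrosystem}. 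The leading order blow up profile is then $u_P=\sqrt{\rho_P}\,e^{i\Phi_P}$, built from one of the $\mathcal C^\infty$ spherically symmetric \emph{self similar front solutions} $(\rho_P,\Phi_P)$ of the compressible Euler equation constructed in \cite{MRRSprofile}, in the range of admissible parameters producing the discrete family of speeds $(r_k)$ with $2<r_k<r^*(d,\ell)$ and $r_k\to r^*(d,\ell)$. \emph{Compression} along the Euler flow forces $\rho$ to concentrate at the origin; since $|u|=\sqrt{\rho}$ this yields the amplitude law \eqref{vmempemempevp}, while the renormalized phase $\Phi_P$ oscillates at frequency tending to $+\infty$ as $t\uparrow T$, which is the mechanism producing the \emph{highly oscillatory} singularity.

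\textbf{Renormalization and decomposition.} First I would pass to self similar variables adapted to the front: a rescaled time $s(t)\to+\infty$, a scaling parameter $\l(t)\to 0$, a renormalized amplitude and a modulated global phase, so that the renormalized unknown equals $u_P$ plus a perturbation $\et$. A finite set of modulation parameters, conjugate to the scaling and phase symmetries and to the finitely many zero/unstable modes of the profile, is then fixed by imposing orthogonality conditions on $\et$; this produces a closed system of modulation ODEs whose leading dynamics reproduces the self similar scaling law and hence, after unwinding, the blow up rate in \eqref{vmempemempevp}. The perturbation $\et$ then solves a forced equation whose linear part is the linearization $\L_P$ of the renormalized Euler flow about $(\rho_P,\Phi_P)$, plus the quantum pressure contribution, plus the forcing generated by $u_P$ failing to be an exact solution of \eqref{nls}. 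Spherical symmetry reduces the whole analysis to a one dimensional radial problem, which is essential for the bookkeeping.

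\textbf{Energy estimates and closing the bootstrap.} Next I would run a bootstrap controlling $\et$ in a scale of \emph{high order weighted Sobolev norms} measured with respect to the semiclassical small parameter attached to the front (essentially the inverse oscillation frequency). Two ingredients must be combined: a repulsivity/monotonicity of $\L_P$ on the exterior region, coming from the decay and monotonicity properties of the self similar Euler profile established in \cite{MRRSprofile}, which furnishes a Lyapunov functional controlling $\et$ up to the top order; and a separate treatment of the quantum pressure term $\Delta\sqrt{\rho}/\sqrt{\rho}$, which loses two derivatives but is quantitatively small because $\rho$ is large in the semiclassical counting. The finitely many genuinely unstable directions of $\L_P$ are handled by a Lyapunov--Schmidt/Brouwer argument: one shows that for initial data ranging over a finite co-dimensional manifold the associated unstable components can be tuned so as to remain in the bootstrap regime for all $s$, which forces $0<T<+\infty$ together with the asymptotics \eqref{vmempemempevp}; continuity in the data then produces the manifold. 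Translating the control of $\et$ back to the physical variable yields the smoothness $u_0\in\cap_{m\ge0}H^m(\RR^d,\CC)$ and finally \eqref{vmempemempevp}.

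\textbf{Main obstacle.} The heart of the difficulty --- and the reason the supercritical defocusing case is so delicate --- is that, unlike the subcritical and critical theories, there is \emph{no a priori bound at the scaling regularity} $H^{s_c}$, so the entire argument must be a self contained bootstrap; within it the perturbation equation loses derivatives through the quantum pressure while the background front is only marginally stable and feeds back a nontrivial forcing. Closing the estimates therefore hinges on choosing the weighted semiclassical norms so that the derivative loss from the quantum term, the growth coming from the unstable modes of $\L_P$, and the size of the forcing all balance; this tuning, together with the extraction of the exact admissible blow up speeds $r_k$ from the modulation system, is the crux of the construction.
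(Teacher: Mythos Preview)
Your broad architecture --- hydrodynamical variables, Euler self-similar profile from \cite{MRRSprofile}, bootstrap on weighted high Sobolev norms, Brouwer for the unstable directions --- matches the paper. But two structural points are genuinely off, and the first would send you down a wrong path.

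\textbf{No modulation.} You propose fixing a finite set of modulation parameters by orthogonality conditions and deriving modulation ODEs whose leading dynamics yields the rate. The paper does \emph{not} do this: the scaling $\lambda$, the front parameter $b$, and the phase $\gamma$ are all \emph{frozen} at their exact self-similar laws $-\lambda_\tau/\lambda=\tfrac12$, $b_\tau/b=-\mathcal e$, $\gamma_\tau=-1/b$ from the outset (Lemma \ref{newequationlemma}). There are no orthogonality conditions and no modulation ODEs; the rate \eqref{vmempemempevp} is read directly from the renormalization. The finitely many unstable directions are handled purely by the Brouwer argument on the linear semigroup (Lemma \ref{browerset}), not by modulation.

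\textbf{The acoustic light cone is the heart of the linear theory, and you do not mention it.} The linearized flow about the profile is, after eliminating $\rho$, a wave equation whose principal symbol degenerates on the hypersurface $Z=Z_2$ where the profile crosses the sonic line $\Delta=(w-1)^2-\sigma^2=0$. The paper's linear control is \emph{not} ``repulsivity on the exterior'': it is maximal accretivity of the commuted operator $\Delta^{k_0}\mathcal M$ in a Hilbert space built with a measure $g$ that \emph{vanishes} on (a slightly shifted) light cone (Lemma \ref{lemmaderivative}, Proposition \ref{propaccretif}). This only works because the profile is $\mathcal C^\infty$ across $P_2$ --- for generic non-smooth profiles the number of derivatives required for accretivity exceeds the profile's regularity, and the scheme collapses. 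The repulsivity conditions \eqref{coercivityquadrcouplinginside}, \eqref{P} are exactly what make the relevant quadratic forms coercive inside and outside the cone respectively. The semigroup then gives exponential decay \emph{only} on $Z\le Z_a$; propagation to compact sets uses finite speed of propagation (Lemma \ref{lemmalightcone}), and global control requires a separate unweighted $H^{k_m}$ energy identity with $k_m\gg k_0$ in which the quantum pressure loses \emph{one} derivative (not two), compensated by the $b^2$ smallness (Proposition \ref{proproopr}). This two-tier structure --- spectral theory at $k_0$ derivatives inside the cone, quasilinear Schr\"odinger energy at $k_m$ derivatives globally --- is what actually closes the bootstrap and is absent from your outline.
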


\noindent{\bf Comments on the result.}\\

\noindent{\em 1. Hydrodynamical formulation}. The heart of the proof of Theorem \ref{thmmain} is a study of \eqref{nls} 
in its hydrodynamical formulation, i.e. with respect to its phase and modulus variables. The key to our analysis is the identification 
of an underlying {\it compressible Euler dynamics}. The latter arises as a leading order approximation of a "front" like renormalization of the original equation. In this process, the Laplace term applied to the modulus\footnote{But not to the phase!} of the solution is treated {\em perturbatively} in the blow up regime. This is one of the key insights of the paper. The approximate 
Euler dynamics furnishes us with a self-similar solution, which requires very special properties and is constructed in the companion
paper \cite{MRRSprofile} and  which, in turn, acts as a blow up profile for the original equation. The existence of these blow up profiles is directly related to the restriction on the parameters \eqref{assumptiondimension} which we discuss in comment 3 below. Let us recall that there is a long history of trying to use the hydrodynamical variables in (NLS) problems and exploit a connection with fluid mechanics, going back to Madelung's original formulation of quantum mechanics in hydrodynamical  variables, \cite{ma}. Geometric optics and the hydrodynamical formulation were used to address  ill-posedness and norm inflation in the defocussing Schr\"odinger equations, \cite{grenier,alazard}. There is also a recent study of vortex filaments in \cite{BaVe} and its dynamical use of the Hasimoto transform. The scheme of proof of Theorem \ref{thmmain} will directly apply to produce the first complete description of singularity formation for the three dimensional compressible Navier-Stokes equation in the companion paper \cite{MRRSfluid}.\\

\noindent{\em 2. Blow up profile}. The blow up profile of Theorem \ref{thmmain} is more easily described in terms of the hydrodynamical variables: 
\be
\label{vneoenvneovnie}
u(t,x)=\rho_T(t,x)e^{i\phi(t,x)}.
\ee 
More precisely, we establish the decomposition 
\be
\label{nvivnineoneinov}
\left|\begin{array}{ll}\rho_T(t,x)=\frac{1}{(T-t)^{\frac{1}{p-1}\left(1+\frac{r-2}{r}\right)}}(\rho_P+\rho)(Z)\\\phi(t,x)=\frac{1}{(T-t)^{\frac{r-2}{r}}}(\Psi_P+\Psi)(Z)\end{array}\right., \ \ Z=\frac{x}{(T-t)^{\frac{1}{r}}}
\ee and prove the local asymptotic stability $$\lim_{t\to T}\|\Psi\|_{L^\infty(Z\le 1)}+\|\rho\|_{L^\infty(Z\le 1)}=0.$$ Here, the blow up profile $(\rho_P,\Psi_P)$ is, after a suitable transformation, picked among the family of spherically symmetric, smooth and decaying as $Z\to +\infty$ self-similar solutions to the compressible Euler equations. The interest in self-similar solutions for the equations of gas dynamics goes back to the pioneering works of Guderley \cite{guderley} and Sedov \cite{sedov} (and references therein,) who in particular considered converging motion of a compressible gas towards the center
of symmetry.  However, the rich amount of literature produced since then  is concerned with {\it non-smooth} self-similar 
solutions. This is partly due to the physical motivations, e.g. interests in solutions modeling implosion or detonation waves, where 
self-similar rarefaction or compression is followed by a shock wave (these are self-similar solutions which contain shock discontinuities 
already present in the data), and, partly due to the fact that, as it turns out, global solutions with the desired behavior at infinity and at the
center of symmetry are {\it generically} not $\mathcal C^\infty$. This appears to be a fundamental feature of the self-similar Euler dynamics
and, in the language of underlying acoustic geometry, means that {\it generically} such solutions are not smooth across the 
backward light (acoustic) cone with the vertex at the singularity. The key of our analysis is to find those non-generic $\matchal C^\infty$ solutions and to discover that this regularity is {\em an essential} element in controlling suitable {\it repulsivity} properties of the associated linearized operator. 
This is at the heart of the control of the full blow up. A novel contribution of the companion paper \cite{MRRSprofile} is the
 construction of $\mathcal C^\infty$ spherically symmetric self-similar solutions to the compressible Euler equations with suitable behavior at infinity and at the center of symmetry for {\em discrete values of the blow up speed parameter $r$  in the vicinity of the limiting blow up speed $r^*(d,\ell)$ given by \eqref{vneoneenneo}}.\\

\noindent{\em 3. Restriction on the parameters}. There is nothing specific with the choice of parameters \eqref{assumptiondimension}, and clearly the proof provides a full range of parameters. Two main constraints govern these restrictions. First of all, a {\em fundamental} restriction in order to make the Eulerian regime dominant is the constraint
\be
\label{neineigieieogig}
r^*(d,\ell)>2\Leftrightarrow \ell<\ell_2(d)=d-2\sqrt{d}
\ee which provides a non empty set of nonlinearities iff $$\ell_2(d)>0\Leftrightarrow d\ge 5.$$ As a result, the case of dimensions $d=3,4$ is not amenable to our analysis at this point, and the existence of blow up solutions for $d=3,4$ remains open. The second restriction concerns the existence of $\mathcal C^\infty$ smooth blow up profiles with suitable repulsivity properties of the associated linearized operator, as addressed in \cite{MRRSprofile}, see section \ref{bloeuoje} and remark \ref{renamrkaofjeof} for detailed statements. In particular, a non degeneracy condition $S_\infty(d,\ell)\neq 0$ for 
an explicit convergent series required. An elementary numerical computation is performed in \cite{MRRSprofile} to check the condition in the range \eqref{assumptiondimension}.\\

\noindent{\em 4. Behavior of Sobolev norms}. The conservation of mass and  energy imply a uniform $H^1$ bound on the solution. This can also be checked directly on the leading order representation formulas \eqref{vneoenvneovnie}, \eqref{nvivnineoneinov}. For higher Sobolev norms, a computation, see Appendix \ref{formal}, shows that the blow up solutions of Theorem \ref{thmmain} break scaling, i.e., we can find 
$$1<\sigma<s_c=\frac d2-\frac{2}{p-1}$$ such that 
$$\lim_{t\to T}\|u(t)\|_{H^{\sigma}}=+\infty,$$ and the critical Sobolev norm $\|u(t,\cdot)\|_{H^{s_c}}$ blows up polynomially.\\

\noindent{\em 5. Stability of blow up}. The blow up profiles of Theorem \ref{thmmain} have a finite number of instability directions, possibly none. Local asymptotic stability in the {\it interior of the backward light cone} (of the acoustical metric 
associated to the Euler profile) from the singularity relies on an abstract spectral argument for compact perturbations of maximal accretive operators. Related arguments have been used in the literature for the study of self-similar solutions both in focusing and defocusing regimes, for example \cite{BK, GW, MZduke, MRSheat, CMRcylindrique} for parabolic  and \cite{DS} for hyperbolic problems. The key to the control of the nonlinear flow 
in the {\it exterior} of the light cone is the propagation of  certain {\em weighted scale invariant} norms. This generalizes a Lyapunov functional based approach developed in \cite{MRRnls}. Counting the precise number of instability directions  is an independent problem, disconnected to the nonlinear analysis of the blow up, and remains to be addressed.\\

\noindent{\em 6. {Oscillatory behavior}}. The constructed solutions are smooth at the blow up time 
away from $x=0$:
\be
\label{neneioneonoennev}
\forall R>0, \ \ \lim_{t\to T}u(t,x)= u^*(x) \ \ \mbox{in}\ \ H^k(|x|>R), \ \ k\in \Bbb N.
\ee 
As in the cases for blow up problems in the focusing setting, see e.g. \cite{MRCMP}, the profile outside the blow up point has a {\em universal} behavior when approaching the singularity
\be
\label{cneineneonoev}
u^*(x)=c_P(1+o_{|x|\to 0}(1))\frac{e^{i\frac{c_\Psi}{|x|^{r-2}}}}{|x|^{\frac{2(r-1)}{p-1}}}, \ \ c_P\ne 0.
\ee 
What is unusual, and together with potential non-genericity perhaps responsible for difficulties in numerical detection of the blow up phenomena, is the {\em highly oscillatory} behavior.  This appears to be a deep consequence of the structure of the self-similar solution to the compressible Euler equation and the coupling of phase and modulus variable in the blow up regime, 
generating an {\em anomalous Euler scaling}. {The heart of our analysis is to show that after passing to the suitable renormalized variables provided by the front, the highly oscillatory behavior \eqref{cneineneonoev} becomes regular near the singularity and can be controlled with the {\em monotonicity} estimates of energy type, without appealing to Fourier analysis.}\\

\noindent{\em 7. Type I blow up}. The existence of self similar solutions to the defocusing energy supercritical (NLS) decaying at infinity is an open problem. Such solutions are easily ruled out for the heat equation using the maximum principle, and we refer to \cite{KSClahg} for further discussion in the case of the wave equation.\\

The paper is organized as follows. In section \ref{sectiontwo}, we present the ``front'' renormalization of the flow which makes the Euler dynamics dominant, and recall all necessary facts about the corresponding self similar profile built in \cite{MRRSprofile}. Theorem \ref{thmmain} reduces to building a global in time non vanishing solution to the renormalized flow \eqref{exactliearizedflow} written in hydrodynamical variables. In section \ref{stragegyoftheproof} we detail the strategy of the proof. In section \ref{sectionlinear}, we introduce the functional setting related to maximal accretivity (modulo a compact perturbation) of the corresponding linear operator which leads to a statement of exponential decay in a neighborhood of the light cone  for the space of solutions (modulo an a priori control of a finite dimensional manifold corresponding to the unstable directions.) In section \ref{sectionbootstrap}, we describe our set of initial data and the set of bootstrap assumptions 
which govern the analysis. In sections \ref{sectionsobolev}, \ref{sec:point}, \ref{sec:high}, we close the control of weighted Sobolev norms and the associated pointwise bounds. In section \ref{low}, we close the exponential decay of low Sobolev norms by relying on spectral estimates and finite speed of propagation arguments. 

\subsection*{Acknowledgements}  The authors would like  to thank N. Burq (Orsay), V. Georgescu (Cergy-Pontoise) and 
L. Vega (Bilbao) for stimulating discussions at the early stages of this work.\\

P.R. is supported by the ERC-2014-CoG 646650 SingWave. P.R would like to thank the Universit\'e de la C\^ote d'Azur where part of this work was done for its kind hospitality. I.R. is partially supported by the NSF 
grant DMS \#1709270 and a Simons Investigator Award. J.S  is supported by the ERC grant  ERC-2016 
CoG 725589 EPGR. 


\subsection*{Notations} 


The bracket $$\la r\ra=\sqrt{1+r^2}.$$ The weighted scalar product for a given measure $g$:
\be
\label{scalarltwo}
(u,v)_g=\int_{\Bbb R^d} u\overline{v}g dx.
\ee
The integer part of $x\in \Bbb R$
$$x\leq [x]<x+1, \ \ [x]\in \Bbb Z.$$
The infinitesimal generator of dilations $$\Lambda =y\cdot\nabla.$$


\section{Front renormalization, blow up profile and strategy of the proof}
\label{sectiontwo}


In this section we introduce the hydrodynamical variables to study \eqref{nls} and the associated renormalization procedure which makes the compressible Euler structure dominant. We collect from \cite{MRRSprofile} the main facts about the existence of smooth spherically symmetric self-similar solutions to the compressible Euler equations which will serve as blow up profiles.


\subsection{Hydrodynamical formulation and front renormalization}


We renormalize the flow and, for {\em non vanishing solutions}, write  the equivalent hydrodynamical formulation in phase and modulus variables.\\
We begin with the standard {\em self-similar} renormalization  $$u(t,x)=\frac{1}{\l(t)^{\frac 2{p-1}}}v(s,y)e^{i\gamma}, \ \ y=\frac{x}{\l}$$ where we freeze the scaling parameter at the self-similar scale
$$\frac{d\tau}{dt}=\frac1{\l^2}, \ \ y=\frac x{\l(t)}, \ \ -\frac{\l_\tau}{\l}=\frac 12,$$ then \eqref{nls} becomes 
\be
\label{nbeivbeibei}
 i\pa_\tau v+\Delta v-\gamma_\tau v-i\frac{\l_\tau}{\l}\left(\frac 2{p-1}v+ \Lambda v\right)-v|v|^{p-1}=0.
 \ee 
In the defocusing case, \eqref{nbeivbeibei}  has no obvious type I self similar stationary solution, or type II soliton like solutions, \cite{MRRnls}, but, it turns out, that it admits approximate {\em front like} solutions. Their existence relies on a 
specific phase and modulus coupling and {\it anomalous} scaling. We introduce the parameters
\be
\label{vneionveoneonv}
\left|\begin{array}{l}
r=\frac{2}{1-\mathcal e}, \ \ 0<\mathcal e <1\\
\mu=\frac{1}{r}=\frac{1-\mathcal e}2\\
\ell=\frac{4}{p-1}
\end{array}\right.
\ee and claim:

\begin{lemma}[Front renormalization of the self similar flow]
\label{newequationlemma}
Define geometric parameters
\be
\label{defparameters}
-\frac{\l_\tau}{\l}=\frac 12, \ \ \frac{b_\tau}{b}=-\mathcal e, \ \ \gamma_\tau=-\frac 1b, \ \ \frac{d\tau}{dt}=\frac{1}{\l^2}
\ee
and introduce the renormalization
$$u(t,x)=\frac{1}{\l(t)^{\frac 2{p-1}}}v(s,y)e^{i\gamma}, \ \ y=\frac{x}{\l}$$ with the phase and modulus 
$$
\left|\begin{array}{l}
v=we^{i\phi}\\
 w(\tau,y)=\frac{1}{(\sqrt{b})^{\frac 2{p-1}}}\rho_T(\tau,Z)\in \Bbb R^*_+\\
 \phi(\tau,y)=\frac{1}{b}\Psi_T(\tau,Z)\\
 Z=y\sqrt{b}
 \end{array}\right.
 $$
In these variables \eqref{nls} becomes, on $[\tau_0,+\infty)$:
\be
\label{newequation}
\left|\begin{array}{ll}\pa_\tau \rho_T=-\rho_T\Delta \Psi_T-\frac{\mu\ell(r-1)}{2}\rho_T-\left(2\pa_Z\Psi_T+\mu Z\right)\pa_Z\rho_T\\
\rho_T\pa_\tau \Psi_T=b^2\Delta \rho_T-\left[|\nabla \Psi_T|^2+\mu(r-2)\Psi_T-1+\mu\Lambda \Psi_T+\rho_T^{p-1}\right]\rho_T.
\end{array}\right.
\ee
\end{lemma}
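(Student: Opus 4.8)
The plan is to substitute the prescribed change of variables directly into the self-similar equation \eqref{nbeivbeibei} and separate real and imaginary parts, tracking carefully how the various scaling factors interact. First I would record the chain-rule identities governing the substitution. Since $y = x/\lambda$ and $Z = y\sqrt{b}$, with $\lambda,b$ functions of $\tau$ (or equivalently of $t$), the spatial derivatives transform as $\nabla_y = \sqrt{b}\,\nabla_Z$ and $\Delta_y = b\,\Delta_Z$, while the time derivative picks up transport terms: for a function $F(\tau,Z)$ with $Z=y\sqrt b$ one has $\partial_\tau\big|_y F = \partial_\tau F + \frac{b_\tau}{2b}\,\Lambda_Z F$, where $\Lambda_Z = Z\cdot\nabla_Z$. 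Using $\frac{b_\tau}{b} = -\mathcal e$ from \eqref{defparameters}, this becomes $\partial_\tau\big|_y F = \partial_\tau F - \frac{\mathcal e}{2}\Lambda_Z F$. I would also need $\mu = 1/r = (1-\mathcal e)/2$, so $\mathcal e/2 = \mu(r-2)/(2)\cdot$ — more precisely $\mathcal e = 1-2\mu$, hence $\mathcal e/2 = (1-2\mu)/2$; these algebraic identities relating $\mathcal e,\mu,r$ are what convert the transport and lower-order coefficients into the stated form.

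Next I would plug $v = we^{i\phi}$ with $w = b^{-1/(p-1)}\rho_T(\tau,Z)$ and $\phi = b^{-1}\Psi_T(\tau,Z)$ into \eqref{nbeivbeibei}. Writing the Schrödinger operator in Madelung form, $i\partial_\tau v + \Delta v = e^{i\phi}\big[\, i\partial_\tau w - w\partial_\tau\phi + \Delta w - w|\nabla\phi|^2 + i(2\nabla\phi\cdot\nabla w + w\Delta\phi)\,\big]$, and noting that $-i\frac{\lambda_\tau}{\lambda}(\frac{2}{p-1}v + \Lambda v) = \frac{i}{2}(\frac{2}{p-1}v + \Lambda_y v)$ by \eqref{defparameters}, I split into the imaginary part (the continuity/transport equation for $w$, hence $\rho_T$) and the real part (the Bernoulli/Hamilton–Jacobi equation for $\phi$, hence $\Psi_T$). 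The imaginary part gives $\partial_\tau w = -\frac12(\frac{2}{p-1}w + \Lambda_y w) - 2\nabla\phi\cdot\nabla w - w\Delta\phi$; substituting the profile ansatz, the factor $b^{-1/(p-1)}$ cancels throughout, the time derivative produces the $-\frac{\mathcal e}{2}\Lambda_Z$ correction plus the $-\frac{1}{p-1}\frac{b_\tau}{b} = \frac{\mathcal e}{p-1}$ term from differentiating $b^{-1/(p-1)}$, the Laplacian $w\Delta\phi$ becomes $b\cdot b^{-1/(p-1)}b^{-1}\rho_T\Delta_Z\Psi_T = b^{-1/(p-1)}\rho_T\Delta_Z\Psi_T$ (the powers of $b$ cancel — this is the point of the anomalous coupling), and $\nabla\phi\cdot\nabla w$ scales similarly. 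Collecting the $\rho_T$ coefficients: the dilation terms combine into $-\left(2\partial_Z\Psi_T + \mu Z\right)\partial_Z\rho_T$ and the zeroth-order terms into $-\frac{\mu\ell(r-1)}{2}\rho_T$, using $\ell = 4/(p-1)$, $\mu = (1-\mathcal e)/2$ and the identity $\ell(r-1)/2 \cdot \mu = \frac{2}{p-1} + \frac{\mathcal e}{p-1} - \frac{\mathcal e}{2}$ (which should check out after expanding — this is the main bookkeeping check).

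For the real part, the Bernoulli equation reads $w\partial_\tau\phi = \Delta w - w|\nabla\phi|^2 + \frac{w}{p-1}\cdot(\text{from }|v|^{p-1})$ — wait, I must be careful: the nonlinearity $-v|v|^{p-1}$ contributes $-w^{p}e^{i\phi}$, purely real, giving $-w|v|^{p-1} = -w\cdot w^{p-1}$; after the ansatz $w^{p-1} = b^{-(p-1)/(p-1)}\rho_T^{p-1} = b^{-1}\rho_T^{p-1}$, and multiplying the whole real equation through by $b$ (which is natural since $w\partial_\tau\phi$ carries a $b^{-1}$ from $\phi = b^{-1}\Psi_T$ and the extra $-\frac{\mathcal e}{b}$ from $\partial_\tau(b^{-1})$, plus the $\gamma_\tau = -1/b$ term), one gets exactly the $b^2\Delta_Z\rho_T$ on the right (from $\Delta w = b\cdot b^{-1/(p-1)}\Delta_Z\rho_T$, multiplied by $b$ and divided by the $b^{-1/(p-1)}$ normalization of $\rho_T$ on the left, leaving $b^2$) and the bracket $\big[\,|\nabla\Psi_T|^2 + \mu(r-2)\Psi_T - 1 + \mu\Lambda\Psi_T + \rho_T^{p-1}\,\big]\rho_T$. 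Here the $-1$ comes from $\gamma_\tau = -1/b$ paired with $\phi$-normalization, the $\mu(r-2)\Psi_T$ from the $\partial_\tau(b^{-1}) = \mathcal e b^{-1}$ contribution using $\mathcal e = 1 - 2\mu = \mu(r-2)$ (since $r = 1/\mu$ gives $\mu(r-2) = 1 - 2\mu = \mathcal e$ — good), the $\mu\Lambda\Psi_T$ from the dilation generator with coefficient $\mathcal e/2$... actually the dilation term should produce $\frac{\mathcal e}{2}\Lambda_Z\Psi_T$ from the time-derivative correction, but the displayed equation has $\mu\Lambda\Psi_T$; I need $\mathcal e/2$ versus $\mu$ — these are equal only if $\mathcal e = 2\mu$, i.e. $1-\mathcal e = \mathcal e$... that is not generally true, so the $\mu\Lambda\Psi_T$ must instead be assembled from combining the $\frac{\mathcal e}{2}\Lambda_Z$ from $\partial_\tau|_y$ with the $\frac12\Lambda_y\phi = \frac{\mu}{?}$ piece of the self-similar dilation term $-i\frac{\lambda_\tau}{\lambda}\Lambda v$; tracking both and using $\mathcal e = 1-2\mu$ will be needed to land on the clean coefficient $\mu$. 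The main obstacle, and the only real content of the proof, is precisely this coefficient bookkeeping: ensuring every power of $b$ cancels in the intended way (so that only the single $b^2\Delta\rho_T$ term retains $b$-dependence — the manifestation of the Laplacian-on-modulus being perturbative) and that the constants $\mathcal e,\mu,\ell,r$ recombine into the stated lower-order coefficients $\frac{\mu\ell(r-1)}{2}$, $\mu(r-2)$, $\mu$, and the constant $1$. I would organize this as: (i) state the chain rule lemma; (ii) compute imaginary part, verify; (iii) compute real part, verify; (iv) a short table of the identities $\mathcal e = 1-2\mu = \mu(r-2)$, $\mu r = 1$, $\ell = 4/(p-1)$ used to simplify. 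No genuine analytic difficulty arises — it is an exact algebraic identity between PDEs — but the computation is delicate and easy to get wrong by a sign or a factor of $2$.
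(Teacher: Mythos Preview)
Your approach is essentially identical to the paper's: start from the self-similar equation \eqref{nbeivbeibei}, take the polar (Madelung) decomposition $v=we^{i\phi}$, separate real and imaginary parts to obtain coupled equations for $(w,\phi)$, then substitute the front ansatz $w=b^{-1/(p-1)}\rho_T$, $\phi=b^{-1}\Psi_T$, $Z=y\sqrt b$ and track powers of $b$. The paper organizes this in exactly the order you propose (chain rule, imaginary part, real part, coefficient identities), and the key structural observation---that all $b$-powers cancel except for the single $b^2\Delta\rho_T$ term---is the same.

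One bookkeeping slip to flag: your proposed identity $\tfrac{\mu\ell(r-1)}{2}=\tfrac{2}{p-1}+\tfrac{\mathcal e}{p-1}-\tfrac{\mathcal e}{2}$ is not correct. The zeroth-order coefficient in the $\rho_T$ equation assembles from just two pieces: $-\tfrac{1}{p-1}$ from the self-similar term $-\tfrac12\cdot\tfrac{2}{p-1}w$ in the imaginary part, and $-\tfrac{\mathcal e}{p-1}$ from differentiating $b^{-1/(p-1)}$ in $\partial_\tau w$. The $\tfrac{\mathcal e}{2}\Lambda_Z$ correction from $\partial_\tau|_y$ feeds entirely into the transport coefficient (combining with $\tfrac12\Lambda_y$ to give $\tfrac{1-\mathcal e}{2}=\mu$), not into the zeroth-order term. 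So the correct identity is simply $\tfrac{\mu\ell(r-1)}{2}=\tfrac{1+\mathcal e}{p-1}$, which you can verify directly from $\mu=\tfrac{1-\mathcal e}{2}$, $\ell=\tfrac{4}{p-1}$, $r-1=\tfrac{1+\mathcal e}{1-\mathcal e}$. Your resolution of the $\Lambda\Psi_T$ coefficient in the real part (combining $-\tfrac12\Lambda_y\phi$ with the $+\tfrac{\mathcal e}{2}\Lambda_Z$ time-shift to get $\tfrac{1-\mathcal e}{2}=\mu$) is correct.
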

 
\begin{proof} Starting from \eqref{nbeivbeibei}, we define a polar decomposition 
$$v=we^{i\phi}$$ so that
$$v'=(w'+i\phi'w)e^{i\phi}, \ \ v''=w''-|\phi'|^2w+2i\phi'w'+i\phi'' w$$
and 
\bea
\label{equationw}
 0&=&i\pa_\tau w+\Delta w+\left(-\pa_\tau\phi-|\nabla\phi|^2-\gamma_\tau+\frac{\lambda_\tau}{\lambda}y\cdot\nabla \phi \right)w\\
\nonumber & +& i\left(\Delta\phi-\frac{2}{p-1}\frac{\l_\tau}{\l}\right)w+i\left(2\nabla \phi-\frac{\l_\tau}{\l} y\right)\cdot\nabla w-w|w|^{p-1}.
\eea
Separating the real and imaginary parts yields the self-similar equations \eqref{nbeivbeibei}:
\be
\label{selfsimphasemodulus}
\left|\begin{array}{ll}
\pa_\tau w=-\left(\Delta\phi+\frac{1}{p-1}\right)w-\left(2\frac{\pa_y\phi}{y}+\frac 12\right)\Lambda w\\
w\pa_\tau \phi=\Delta w+\left(-|\nabla\phi|^2-\gamma_\tau-\frac 12\Lambda \phi\right)w -w|w|^{p-1}
\end{array}\right.
\ee
We now renormalize according to
$$w(\tau,y)=\frac{1}{(\sqrt{b})^{\frac 2{p-1}}}\rho_T(\tau,Z)\in \Bbb R^*_+, \ \ \phi(\tau,y)=\frac{1}{b}\Psi_T(\tau,Z) \ \ Z=y\sqrt{b}$$
with a {\it fixed} choice of parameters in the modulation equations
$$
\frac{b_\tau}{b}=-{\mathcal e}, \ \ \gamma_\tau=-\frac{1}{b}, \ \ 0<{\mathcal e}<1
$$
which transforms \eqref{selfsimphasemodulus} into 
$$
\left|\begin{array}{ll}\pa_\tau \rho_T=-\rho_T\Delta \Psi_T-\frac{{\mathcal e}+1}{p-1}\rho_T-\left(2\pa_Z\Psi_T+\frac{1-{\mathcal e}}{2}Z\right)\pa_Z\rho_T\\
\rho_T\pa_\tau \Psi_T=b^2\Delta \rho_T-\left[|\nabla \Psi_T|^2+{\mathcal e}\Psi_T-1+\frac 12(1-{\mathcal e})\Lambda \Psi_T+\rho_T^{p-1}\right]\rho_T
\end{array}\right.
$$
We now compute from \eqref{vneionveoneonv}:
$$\left|\begin{array}{l}
\frac{\mu\ell(r-1)}{2}=\frac{2}{p-1}(1-\mu)=\frac{1+{\mathcal e}}{p-1}\\
\mu(r-2)=1-(1-{\mathcal e})={\mathcal e}
\end{array}\right.
$$ and \eqref{newequation} is proved.
\end{proof}


\subsection{Blow up profile and Emden transform}
\label{bloeuoje}


A stationary solution $(\rho_P,\Psi_P)$ to \eqref{newequation} in the limiting Eulerian regime $b=0$ satisfies the profile equation
\be
\label{profileequation}
\left|\begin{array}{ll}
|\nabla \Psi_P|^2+\rho_P^{p-1}+\mu(r-2)\Psi_P+\mu\Lambda \Psi_P=1\\
\Delta \Psi_P+\frac{\mu\ell(r-1)}{2}+\left(2\pa_Z\Psi_P+\mu Z\right)\frac{\pa_Z\rho_P}{\rho_P}=0
\end{array}\right.
\ee
{ We supplement it with the boundary conditions:}
\be
\left|\begin{array}{ll}
\label{boundarydata}
\rho_P(0)=1, \ \ \Psi_P(0)=0,\\ \rho_P(x)\to 0,\ \Psi_P(x)\to \frac 1{\mathcal e} \ \ {\text as}\ \  x\to \infty
\end{array}\right.
\ee
We now show that the system \eqref{profileequation}, \eqref{boundarydata} is equivalent to the corresponding system of equations
describing self-similar solutions of the Euler equations. We define the Emden variables:
 \be
 \label{relationsprofileemden}
\left|\begin{array}{lll}
 \phi=\frac{\mu}{2}\sqrt{\ell}, \ \ p-1=\frac{4}{\ell}\\
 Q=\rho_P^{p-1}=\frac{1}{M^2}, \ \ \frac{1}{M}=\phi Z \sigma\\
 \frac{\Psi'_P}{Z}=-\frac{\mu}{2}w
 \end{array}\right., \ \ x=\log Z,
\ee
then \eqref{profileequation} is mapped onto 
\be
\label{systemedefoc}
\left|\begin{array}{ll}
(w-1)w'+\ell \sigma\sigma'+(w^2-rw+\ell \sigma^2)=0\\
\frac{\sigma}{\ell}w'+(w-1)\sigma'+\sigma\left[w\left(\frac{d}{\ell}+1\right)-r\right]=0
\end{array}
\right.
\ee
or equivalently $$\left|\begin{array}{ll}a_1w' +b_1\sigma'+d_1=0\\
a_2 w'+b_2\sigma'+d_2=0
\end{array}\right.$$
with 
\be
\label{defvaluesboinedone}
\left|\begin{array}{ll}
a_1=w-1, \ \ b_1=\ell\sigma, \ \ d_1=w^2-rw+\ell\sigma^2\\
a_2=\frac{\sigma}{\ell}, \ \ b_2=w-1, \ \ d_2=\sigma\left[\left(1+\frac{d}{\ell}\right)w-r\right].
\end{array}\right.
\ee
The system \eqref{systemedefoc} is exactly the one describing spherically symmetric self-similar solutions to the compressible Euler equation, \cite{sedov} (and the references therein). For an explicit derivation see Appendix A. It is analyzed in \cite{MRRSprofile}, following pioneering work of Guderley, Sedov and others.

Let 
\be
\label{definitionwe}
w_e=\frac{\ell(r-1)}{d}
\ee
and the determinants
\be
\label{calculdeterm}
\left|\begin{array}{lll}
\Delta=a_1b_2-b_1a_2=(w-1)^2-\sigma^2\\
\Delta_1=-b_1d_2+b_2d_1=w(w-1)(w-r)-d(w-w_e)\sigma^2\\
\Delta_2=d_2a_1-d_1a_2=\frac{\sigma}{\ell}\left[(\ell+d-1)w^2-w(\ell+d+\ell r-r)+\ell r-\ell \sigma^2\right]
\end{array}\right.
\ee
then
\be\label{ws}
w'=-\frac{\Delta_1}{\Delta}, \ \ \sigma'=-\frac{\Delta_2}{\Delta}, \ \ 
\frac{dw}{d\sigma}=\frac{\Delta_1}{\Delta_2}.
\ee
Solution curves $w=w(\sigma)$  of the above system can be examined through its {\it phase portrait} in the $(\sigma,w)$ plane. The shape of the phase portrait depends {\em crucially} on the polynomials $\Delta$,  $\Delta_{1}$, $\Delta_2$ and the parameters $(r,d,\ell)$. It is not hard to see that there is a unique solution with the normalization 
\be
\label{normlazationprofile}
\rho_P(0)=1, \ \ \Psi_P(0)=0,
\ee
at $x=0$, which is also ${\mathcal C}^\infty$ in the vicinity of $x=0$,
but the heart of the matter is the global behavior of this unique solution. 

In particular, any such solution with the required asymptotics as $x\to+\infty$ needs to pass through the point $P_2$ which 
lies on the so called {\it sonic line\footnote{Any point $Z_0$ on the sonic line corresponds to the acoustic cone described the 
equation $(\tau, Z=Z_0)$ for the acoustic metric defined by the profile passing through $Z_0$.}}: $\Delta=0$ but where also
\be
\label{neovneneoneo}
\Delta_1(P_2)=\Delta_2(P_2)
\ee 
(there are potentially two such points).  
It turns out that at $P_2$ the solution experiences an unavoidable discontinuity of high derivatives, {\em except for discrete values of the speed $r$}. The following structural proposition on the blow up profile is proved in the companion paper \cite{MRRSprofile}.

\begin{theorem}[Existence and asymptotics of a $\matchal C^\infty$ profile, \cite{MRRSprofile}]
\label{propexistenceprofile}
Let $$(d,p)\in\{(5,9),(6,5),(8,3),(9,3)\}
$$
and recall \eqref{vneoneenneo}.
Then there exists a sequence $(r_k)_{k\ge 1}$ with 
\be
\label{limimtirmt}
\lim_{k\to \infty} r_k=r^*(d,\ell), \ \ r_k<r^*(d,\ell)
\ee 
such that for all $k\ge 1$, the following holds:\\
\noindent{\em 1. Existence of a smooth profile at the origin}: the unique radially symmetric solution to \eqref{profileequation} with Cauchy data at the origin \eqref{boundarydata} reaches in finite time $Z_2>0$ the point $P_2$.\\
\noindent{\em 2. Passing through $P_2$}: the solution passes through $P_2$ with $\mathcal C^\infty$ regularity.\\
\noindent{\em 3. Large $Z$ asymptotic}: the solution admits the asymptotics as $Z\to +\infty$:
\be
\label{limitprofilesbsi}
\left|\begin{array}{l}
w(Z)=\frac{c_w}{Z^r}\left(1+O\left(\frac{1}{Z^{ r}}\right)\right)\\
\sigma(Z)=\frac{c_\sigma}{Z^r}\left(1+O\left(\frac{1}{Z^{r}}\right)\right)\\
\end{array}\right.
\ee
or equivalently
\be
\label{decayprofile}
\left|\begin{array}{ll}
Q(Z)=\rho_P^{p-1}(Z)=\frac{c_P^{p-1}}{Z^{2(r-1)}}\left(1+O\left(\frac{1}{Z^r}\right)\right), \\ \Psi_P(Z)=\frac 1{\mathcal e}+\frac{c_\Psi}{Z^{r-2}}\left(1+O\left(\frac{1}{Z^r}\right)\right)\end{array}\right.
\ee 
with non zero constants $c_\sigma,c_P$. Similar asymptotics hold for all higher order derivatives.\\
\noindent{\em 4. Non vanishing}:  there holds $$\forall Z\ge 0, \ \ \rho_P>0.$$
\noindent{\em 5. Repulsivity inside the light cone}: let 
\be
\label{definitionF}
F=\sigma_P+\Lambda\sigma_P,
\ee 
then there exists $c=c(d,\ell,r)>0$ such that
\be
\label{coercivityquadrcouplinginside}
\forall 0\le Z\le Z_2, \ \ \left|\begin{array}{l}
(1-w-\Lambda w)^2-F^2>c\\
1-w- \Lambda  w-\frac{(1-w)F}{\sigma}> c.
\end{array}\right.
\ee
\noindent{\em 6. Repulsivity outside the light cone}: \be
\label{P}
\exists c=c_{d,\ell,r}>0, \ \ \forall Z\ge Z_2, \ \ \left|\begin{array}{l} (1-w-\Lambda w)^2-F^2>c\\ 1-w-\Lambda w>c
\end{array}\right.
\ee
\end{theorem}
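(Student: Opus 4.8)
I would run the whole argument in the Emden variables \eqref{relationsprofileemden}, in which \eqref{profileequation} becomes the autonomous planar system \eqref{systemedefoc}, i.e. $w'=-\Delta_1/\Delta$, $\sigma'=-\Delta_2/\Delta$ with $'=d/dx$, $x=\log Z$, the $\Delta$'s being the explicit polynomials \eqref{calculdeterm}. Since $\Delta=(w-1)^2-\sigma^2$ vanishes on the sonic lines $w=1\pm\sigma$, I first desingularize by a new independent variable $\xi$ with $dx=\Delta\,d\xi$, so that $\dot w=-\Delta_1$, $\dot\sigma=-\Delta_2$ is a smooth polynomial flow. Its relevant equilibria are: the point $\sigma=+\infty$, corresponding to the center $Z=0$, out of which the local Cauchy analysis (expansion of a radial $\mathcal C^\infty$ solution normalized by $\rho_P(0)=1$, $\Psi_P(0)=0$) selects, for each admissible $(r,d,\ell)$, a unique trajectory; the two points $P_1,P_2$ on $\{\Delta=0\}$ singled out by \eqref{neovneneoneo}; and the equilibrium $(\sigma,w)=(0,0)$, which corresponds to $Z=+\infty$ and whose linearization is $-r\,\mathrm{Id}$, so that every trajectory entering its basin converges at exactly the rate $Z^{-r}$ required by \eqref{limitprofilesbsi}. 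Throughout $(d,\ell)$ is fixed by \eqref{assumptiondimension} and the speed $r$ is the shooting parameter, running over an interval below $r^*(d,\ell)$. A barrier analysis --- building regions of the $(\sigma,w)$ half-plane bounded by arcs of the nullclines $\{\Delta_1=0\}$, $\{\Delta_2=0\}$, of $\{\Delta=0\}$, and of the lines $\{w=r\}$, $\{w=w_e\}$ with $w_e$ as in \eqref{definitionwe}, and checking that $(\dot w,\dot\sigma)$ points inward --- traps the origin-trajectory and, with a uniform lower bound on $|(\dot w,\dot\sigma)|$ transferred back to the $Z$ variable, forces it to reach $P_2$ (rather than $P_1$ or a non-equilibrium point of the sonic line) at a finite $Z_2>0$: this is item 1.

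\textbf{The crux: quantized smooth passage through $P_2$ (item 2).} At $P_2$ the desingularized flow has an equilibrium, and I would compute its linearization, extracting eigenvalues $\lambda_1,\lambda_2$; for $r$ in a left neighborhood of $r^*(d,\ell)$ one gets a node with ratio $N(r)=|\lambda_2/\lambda_1|$ that diverges as $r\uparrow r^*$, the value $r^*$ being precisely where the linearization at $P_2$ acquires a zero eigenvalue (a local saddle--node/transcritical degeneration on the sonic line). A trajectory reaching $P_2$ then leaves it along a one-parameter family of arcs, all tangent to the slow eigendirection except a single arc carried by the fast direction; the generic arc is only $\mathcal C^{\lfloor N(r)\rfloor}$ in $Z$, the obstruction being the coefficient of a resonant monomial $(x-x_2)^{\lfloor N(r)\rfloor+1}$ (possibly with a logarithm), while the distinguished fast arc, lying on the analytic invariant manifold through $P_2$, continues with full $\mathcal C^\infty$ --- in fact analytic --- regularity. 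Thus the profile passes smoothly through $P_2$ iff the origin-trajectory of the first step connects to this analytic manifold, a codimension-one condition in $r$. I would show the associated ``matching defect'' $\delta(r)$ --- the signed transversal separation at $x_2$ between the origin-trajectory and the analytic manifold --- is continuous in $r$ and, by a winding/oscillation analysis of the origin-trajectory as $r\uparrow r^*$ forced by the degeneration of $P_2$, changes sign infinitely often there; each sign change produces a speed $r_k<r^*$, and $r_k\to r^*$. For $r=r_k$ the profile therefore reaches $P_2$ at finite $Z_2$ and continues past it analytically, which is item 2, the higher-derivative asymptotics near $P_2$ coming from the invariant-manifold expansion.

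\textbf{Continuation to infinity, non-vanishing, repulsivity (items 3--6).} Past $P_2$, for $r=r_k$, a second barrier confines the trajectory to the basin of the node $(\sigma,w)=(0,0)$; monotonicity then gives $w,\sigma\sim c\,Z^{-r}$, hence, undoing \eqref{relationsprofileemden}, the asymptotics \eqref{limitprofilesbsi}--\eqref{decayprofile} with $c_\sigma,c_P\ne 0$ (item 3; higher derivatives by differentiating \eqref{systemedefoc}). Non-vanishing of $\rho_P$ (item 4) is read off $Q=\rho_P^{p-1}=1/M^2$, $1/M=\phi Z\sigma$: the barrier regions keep $\sigma>0$ finite on $(0,\infty)$, while $\rho_P(0)=1$ and smoothness near $P_2$ settle the endpoints. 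Finally \eqref{coercivityquadrcouplinginside} and \eqref{P} are pointwise lower bounds for explicit quadratic forms in $w,\Lambda w,\sigma_P$ and $F=\sigma_P+\Lambda\sigma_P$. Outside the cone ($Z\ge Z_2$) I would insert the decay \eqref{decayprofile}: both left-hand sides then tend to $1>0$, so the bounds hold for large $Z$, and a continuity argument on the compact intermediate range closes them using the monotonicity of $w$ and the smallness of $r^*-r_k$. Inside the cone ($0\le Z\le Z_2$) one compares $w,\Lambda w$ with their $r\to r^*$ limits, for which the strict inequalities reduce to the non-vanishing of an explicit convergent series $S_\infty(d,\ell)\ne 0$, checked by the numerical computation in the range \eqref{assumptiondimension} recorded after \eqref{neineigieieogig}.

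\textbf{Main obstacle.} The whole weight lies in the second step: proving that smooth passage through the sonic point occurs at all requires a normal-form/invariant-manifold analysis at $P_2$, whose eigenstructure and attached analytic manifold are only semi-explicit, while proving that the quantized set accumulates exactly at $r^*$ requires controlling the global $r$-dependence of the origin-trajectory precisely enough to count the sign changes of $\delta(r)$, in concert with the degeneration at $r^*$. Marrying this local spectral picture to the global barrier estimates --- in particular ruling out that the origin-trajectory exits the trapping region through an unintended face for $r$ near $r^*$ --- is the delicate part.
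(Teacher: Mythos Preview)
The paper does not prove this theorem: it is imported wholesale from the companion paper \cite{MRRSprofile}, and the only information the present text supplies is the statement itself, the Emden-transform setup \eqref{relationsprofileemden}--\eqref{calculdeterm}, the eigenvalue/slope data at $P_2$ quoted in step~1 of the proof of Lemma~\ref{shiftoight}, and Remark~\ref{renamrkaofjeof}. So there is no ``paper's own proof'' to compare against line by line. Judged against those hints, your overall architecture --- desingularize by $dx=\Delta\,d\xi$, trap the origin-trajectory by barriers into $P_2$, identify $\mathcal C^\infty$ passage through $P_2$ with landing on a distinguished analytic invariant curve, shoot in $r$ to obtain a discrete set accumulating at $r^*$, then use the node at $(\sigma,w)=(0,0)$ for the large-$Z$ asymptotics --- is the right picture and is consistent with what the paper does say (e.g.\ the remark in \S\ref{stragegyoftheproof} that generic profiles at $P_2$ have regularity strictly below the number of derivatives needed for accretivity, and that the coercivity constant in \eqref{coercivityquadrcouplinginside} degenerates as $r\to r^*$).

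There is one concrete misattribution. You place the numerical condition $S_\infty(d,\ell)\ne 0$ inside the verification of the repulsivity bound \eqref{coercivityquadrcouplinginside} (item~5). Remark~\ref{renamrkaofjeof} says the opposite: the positivity properties \eqref{coercivityquadrcouplinginside}, \eqref{P} are checked \emph{analytically} in \cite{MRRSprofile}, while $S_\infty(d,\ell)\ne 0$ is the \emph{numerical} input driving the existence/quantization part of the theorem --- it is the non-degeneracy ensuring that your matching defect $\delta(r)$ actually has the required sign changes near $r^*$, not a proxy for the pointwise inequalities in items~5--6. So in your step~2 you should expect the series to appear as the leading coefficient governing $\delta(r)$ (or the Taylor obstruction at $P_2$), and in your step on items~5--6 you should aim for direct analytic bounds on $1-w-\Lambda w$ and $F$ along the constructed trajectory. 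Beyond this swap, your identification of the ``main obstacle'' is accurate: turning the heuristic ``$\delta(r)$ oscillates infinitely often as $r\uparrow r^*$'' into a proof is exactly where the hard work in \cite{MRRSprofile} lies, and nothing in the present paper gives a shortcut around it.
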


\begin{remark}[Restriction on the parameters]
\label{renamrkaofjeof}
The proof of Theorem \ref{propexistenceprofile} requires the non degeneracy of an explicit series $S_\infty(d,\ell)\neq 0$ which is {\em numerically} checked in \cite{MRRSprofile} in the range \eqref{assumptiondimension}. The positivity properties \eqref{coercivityquadrcouplinginside}, \eqref{P} are checked analytically in \cite{MRRSprofile} and will be fundamental for the well-posedness of the linearized flow {\em inside the light cone}, and the control of global Sobolev norms outside the light cone. Let us insist that the restriction on parameters relies on the intersection of the conditions \eqref{neineigieieogig}, $S_\infty(d,\ell)\ne 0$ and \eqref{coercivityquadrcouplinginside}, \eqref{P} hold. The range \eqref{assumptiondimension} is just an example where this holds, but a larger range of parameters can be directly extracted from \cite{MRRSprofile}, and the conclusion of Theorem \ref{thmmain} would follow. In particular and since we are working with non vanishing solutions, the fact that the non linearity is an odd integer can be relaxed as in \cite{MRRSfluid}.
\end{remark}

From now on and for the rest of this paper, we assume \eqref{assumptiondimension}. We observe from direct check that there holds: 
$$r^*(\ell)=\frac{d+\ell}{\ell+\sqrt{d}}>2\Leftrightarrow \ell<d-2\sqrt{d}=\ell_2(d).$$ Recalling \eqref{vneionveoneonv}, we may therefore assume from \eqref{limimtirmt} that the blow speed $r=r_k$ satisfies  $$r>2\Leftrightarrow {\mathcal e}=\frac{r-2}{r}>0.$$


\subsection{Linearization of the renormalized flow}


We look for $u$ solution to \eqref{nls} and proceed to the decomposition of Lemma \ref{newequationlemma}. We are left with finding a global, in self similar time $\tau\in[\tau_0,+\infty)$, solution to \eqref{newequation}:
\be
\label{fullflowrenormalized}
\left|\begin{array}{ll}\pa_\tau \rho_T=-\rho_T\Delta \Psi_T-\frac{\mu\ell(r-1)}{2}\rho_T-\left(2\pa_Z\Psi_T+\mu Z\right)\pa_Z\rho_T\\
\rho_T\pa_\tau \Psi_T=b^2\Delta \rho_T-\left[|\nabla \Psi_T|^2+\mu(r-2)\Psi_T-1+\mu\Lambda \Psi_T+\rho_T^{p-1}\right]\rho_T
\end{array}\right.
\ee
with non vanishing density $\rho_T>0$. We define 
\be
\label{defhtwohunbis}
\left|\begin{array}{ll}
H_2=\mu+2\frac{\Psipx'}{Z}=\mu(1-w)\\
H_1=-\left(\Delta \Psipx+\frac{\mu\ell(r-1)}2\right)=H_2\frac{\Lambda \rhopx}{\rhopx}=\frac{\mu\ell}{2}(1-w)\left[1+\frac{\Lambda \sigma}{\sigma}\right]
\end{array}\right.
\ee

We linearize $$\rho_T=\rho_P+\rho, \ \ \Psi_T=\Psi_P+\Psi$$ and compute, using the profile equation \eqref{profileequation}, for the first equation:
\bee
\pa_\tau \rho&=&-(\rho_P+\rho)\Delta(\Psi_P+\Psi)-\frac{\mu\ell(r-1)}{2}(\rho_P+\rho)-(2\pa_Z\Psi_P+\mu Z+2\pa_Z\Psi)(\pa_Z\rho_P+\pa_Z\rho)\\
& = & -\rho_T\Delta \Psi-2\nabla\rho_T\cdot\nabla \Psi+H_1\rho-H_2\Lambda \rho
\eee
and for the second one:
\bee
&&\rho_T\pa_\tau \Psi=b^2\Delta \rho_T-\rho_T\Big\{|\nabla \Psi_P|^2+2\nabla \Psi_P\cdot\nabla \Psi+|\nabla \Psi|^2\\
&-& 1+\mu(r-2)\Psi_P+\mu(r-2)\Psi+\mu(\Lambda\Psi_P+\Lambda\Psi)+(\rho_P+\rho)^{p-1}\Big\}\\
& = & b^2\Delta \rho_T-\rho_T\left\{2\nabla \Psi_P\cdot\nabla \Psi+\mu\Lambda \Psi+\mu(r-2)\Psi+|\nabla\Psi|^2+(\rho_P+\rho)^{p-1}-\rho_P^{p-1}\right\}\\
& = & b^2\Delta \rho_T-\rho_T\left\{H_2\Lambda \Psi+\mu(r-2)\Psi+|\nabla \Psi|^2+(p-1)\rho_P^{p-2}\rho +\NL(\rho)\right\}
\eee
with $$\NL(\rho)=(\rho_P+\rho)^{p-1}-\rho_P^{p-1}-(p-1)\rho_P^{p-2}\rho.$$
We arrive at the exact (nonlinear) linearized flow
\be
\label{exactliearizedflow}
\left|\begin{array}{ll} \pa_\tau \rho=H_1\rho-H_2\Lambda \rho-\rho_T\Delta \Psi-2\nabla\rho_T\cdot\nabla \Psi\\
\pa_\tau \Psi=b^2\frac{\Delta \rho_T}{\rho_T}-\left\{H_2\Lambda \Psi+\mu(r-2)\Psi+|\nabla \Psi|^2+(p-1)\rho_P^{p-2}\rho +\NL(\rho)\right\}.
\end{array}\right.
\ee
Theorem \ref{thmmain} is therefore equivalent to exhibiting a finite co-dimensional manifold of smooth well localized initial data leading to global, in renormalized $\tau$-time, solutions to \eqref{exactliearizedflow}.


\subsection{Strategy of the proof}
\label{stragegyoftheproof}


We now explain the strategy of the proof of Theorem \ref{thmmain}.\\

\noindent{\bf step 1} Wave equation and propagator estimate. After the change of variables $\Phi=\rho_P\Psi$, we may schematically rewrite the linearized flow \eqref{exactliearizedflow} in the form 
\be
\label{nkenvenvnnvoenn}
\pa_\tau X=\mathcal M X+\NL(X)-b^2\left|\begin{array}{ll}0\\\Delta (\rho_P+\rho)\end{array}\right.
\ee 
with 
\be
\label{neonvenoeno}
X= \left|\begin{array}{ll}\rho\\ \Phi\end{array}\right., \ \ \mathcal M=\left(\begin{array}{ll}H_1-H_2\Lambda\ & -\Delta +H_3\\-(p-1)Q-H_2\Lambda&H_1-\mu(r-2)\end{array}\right)
\ee where $Q,H_1,H_2,H_3$ are explicit potentials generated by the profile $\rho_P,\Psi_p$. During the first step the 
$b^2\Delta$ term is treated perturbatively. We commute the equation with the powers of the laplacian $\Delta^k$ and obtain for $X_k=\Delta^kX$  
\be
\label{nevneenonone}
\pa_\tau X_k=\mathcal M_k X+\NL_k(X).
\ee We then show that, provided $k$ is large enough, $\mathcal M_k$ is a {\rm finite rank} perturbation of a {\it maximally dissipative} operator with a spectral gap $\delta>0$. The topology in which maximal accretivity is established depends  
on the properties of the wave equation\footnote{Reminiscent of the wave equation arising in a linearization of the compressible Euler equations.} encoded in \eqref{nevneenonone} and is based on weighted Sobolev norms with weights vanishing on 
the light cone corresponding to the point $P_2$ of the profile. Indeed, the principal part of the wave equation is roughly of the form $$\pa_\tau^2\rho-D(Z)\pa_Z^2\rho$$ where the weight $D(Z)$ vanishes on the light cone $Z=Z_2$ corresponding to the ${P_2}$ point. The corresponding propagation estimates for the wave equation produce  an priori control of the solution in the 
interior of the light cone $Z<Z_2$, modulo an a priori control of a finite number of directions corresponding to non positive eigenvalues of $\mathcal M_k$. An essential structural fact of this step is the $\mathcal C^\infty$ regularity of the profile. Indeed, we claim that for a generic non $\matchal C^\infty$ solution at $P_2$, the number of derivatives required to show accretivity of the linearized operator is {\em always} strictly greater than the regularity of the profile at ${P_2}$. As a result such profiles may be completely unstable and are not amenable to our analysis. The $\mathcal C^\infty$ regularity obtained in \cite{MRRSprofile} is therefore absolutely fundamental. The analytic properties leading to the  maximality of the linearized operator will be consequences of \eqref{coercivityquadrcouplinginside}, \eqref{P}. We note that the coercivity constant in \eqref{coercivityquadrcouplinginside} {\em degenerates} as $r\to r^*$, and the number of derivatives needed for accretivity is inversely proportional to this constant. This is a manifestation of a completely new nonlinear effect: the problem sees a scaling which depends on the chosen self similar profile.\\

\noindent{\bf step 2} Extension slightly beyond the light cone. Exponential decay estimates provided in the first step yield control in the interior of the light cone $Z< Z_2$ only. It turns out that the analysis of the first step can be made more robust and 
extended\footnote{Reminiscent of a non-characteristic energy estimate.}  
slightly beyond the light cone, all the way to a spacelike hypersurface $Z=Z_2+a$, $0<a\ll 1$, even though it is complicated by
the dependence of the underlying wave equation on {\it variable} coefficients or, equivalently, on non constancy of the $Q(Z)$ term in \eqref{neonvenoeno}. We can revisit the first step by producing a new maximal accretivity structure for a norm which does not generate in the zone $Z<Z_2+a$, $0<a\ll 1$. The argument relies on a new generalized monotonicity formula. The corresponding propagation estimates recovers exponential decay in the extended zone $Z<Z_2+a$. Once decay has been obtained {\em strictly beyond} the light cone, a simple finite speed of propagation argument allows us to propagate decay to any compact set $Z<Z_0$, $Z_0\gg1$.\\

\noindent{\bf step 3} Loss of derivatives. The decay obtained in step 2 relies on energy estimates compatible with the wave 
propagation and the Eulerian structure of approximation.  The full evolution however is that of the Schr\"odinger equation 
and contains the $b^2\Delta$ term on the right hand side of \eqref{nkenvenvnnvoenn}. Such a term leads to an unavoidable loss of one derivative. However, this loss comes with a $b^2$ smallness in front. We then argue as follows. We  
pick a large enough regularity level $k_m=k_m(r,d)\gg 2k_0$, where $k_0$ is the power of the laplacian used for commutation 
in step 2,
 and derive a {\it global} Schr\"odinger like energy identity on the {\em full} flow \eqref{exactliearizedflow}. 
 The choice of phase and modulus as basic variables turns the equation quasilinear and 
 makes this identity rather complicated and unfamiliar.
 An essential difficulty, which is deeply related to step 2, is that at the highest level of derivatives, the non trivial space dependence of the profile measured by $Q(Z)=\rho_P^{p-1}(Z)$ in \eqref{neonvenoeno} produces a coupling term and a non trivial quadratic form. The condition \eqref{P} implies that the corresponding quadratic form is definite positive for $k_m$ large enough.\\
 
 \noindent{\bf step 4} Closing estimates. As explained above, we work with a linearized nonlinear equation, i.e., obtained after
 subtracting off the profile, written in terms of the phase and modulus unknowns $(\Psi, \rho)$, in renormalized self-similar 
 variables $(\tau, Z)$, where the singularity corresponds to $(\tau=\infty, Z=0)$, a special light cone is $(\tau, Z=Z_2)$ and
 where in the original variables $(t,r)$ the region $r\ge 1$ corresponds to $Z\ge e^{\mu\tau}$.     
 
 First, outside the singularity $r\ge 1$, we modify the profile by strengthening its decay to make it rapidly decaying and of  finite energy. 
 Relative to the self-similar variables this modification happens at $Z\sim e^{\mu\tau}$, far from the singularity, and as a result is harmless. Then, we run two sets of estimates. First, we employ
 wave propagation like estimates which go initially just slightly beyond the special light cone and then extend 
 to any compact set
 in $Z$. These estimates are carried out at a sufficiently high level of regularity with $\sim 2k_0$ derivatives. The number 
 $k_0$ emerges from the linear theory and is determined by the (conditional) positivity of a certain quadratic form responsible 
 for maximal accretivity. 
 
 Then, we couple these estimates to global Schr\"odinger like estimates which take into account previously ignored $b^2\Delta$ and take care of global control. These estimates are carried out at all levels of regularity up to $k_m$ derivatives with $k_m\gg k_0$. They are carefully designed weighted $L^2$ type estimates. The weights depend on the number
 of derivatives $k$: at first, their strength grows with $k$ but by the time we reach the highest level of regularity $k_m$ the 
 weight function is identically $=1$. The latter has to do with a well-known fact that even for a linear Schr\"odinger equation
 estimates, use of weights leads to a derivative loss  
 ($\Delta$ is not self-adjoint on a weighted $L^2$ space.) Therefore, our
 highest derivative norm should correspond to an {\it unweighted} $L^2$ estimate. Of course, this last estimate also sees a 
 positivity condition \eqref{P} responsible for the coercivity of an appearing quadratic form.
 
 These global weighted $L^2$ bounds then allow us to prove pointwise bounds for the solution and its derivatives which,
 in turn, allow us to control nonlinear terms.  The obtained sets of weighted $L^\infty$ bounds on derivatives recover in particular the {\it non vanishing} assumption required of the solution.
 We should note that while all the local (in $Z$) norms decay exponentially in 
 $\tau$, the global norms are merely bounded. In the original $(t,r)$ variables this means that the perturbation decays inside
 and slightly beyond the backward light cone from the singular point but does not decay away from the singularity. This is, 
 of course, entirely consistent with the global conservation of energy for NLS. 
\\
 
 The whole proof proceeds  via a bootstrap argument which also involves a Brouwer type argument to deal with unstable modes, if any, arising in linear theory of step 1. This is what produces a finite co-dimension manifold of admissible data.


\section{Linear theory slightly beyond the light cone}
\label{sectionlinear}

Our aim in this section is to study the linearized problem \eqref{exactliearizedflow} for the exact Euler problem $b=0$. We in particular aim at setting up the suitable functional framework in order to apply classical propagator estimates which will yield exponential decay on compact sets $Z\lesssim 1$ modulo the control of a finite number of unstable directions.


\subsection{Linearized equations}


Recall the exact linearized flow \eqref{exactliearizedflow} which we rewrite:
$$
\left|\begin{array}{ll} \pa_\tau \rho=H_1\rho-H_2\Lambda \rho-\rho_P\Delta \Psi-2\nabla\rho_P\cdot\nabla \Psi-\rho\Delta \Psi-2\nabla \rho\cdot\nabla \Psi\\
\pa_\tau \Psi=b^2\frac{\Delta \rho_T}{\rho_T}-\left\{H_2\Lambda \Psi+\mu(r-2)\Psi+(p-1)\rho_P^{p-2}\rho +|\nabla \Psi|^2+\NL(\rho)\right\}\end{array}\right.
$$
We introduce the new unknown 
\be
\label{defnewvariablephi}
\Phix=\rhopx \Psi
\ee and obtain equivalently using \eqref{defhtwohunbis}:
\be
\label{nekoneneon}
\left|\begin{array}{ll} \pa_\tau \rhox=H_1\rhox-H_2\Lambda \rhox-\Delta \Phix+H_3\Phix+G_\rho\\
\pa_\tau \Phix=-(p-1)\qx\rhox-H_2\Lambda \Phix+(H_1-\mu(r-2))\Phix+G_\Phi
 \end{array}\right.
\ee
with 
\bea
\qx=\rhopx^{p-1}, \ \ H_3=\frac{\Delta\rhopx}{\rhopx}
\eea 
and the nonlinear terms:
\be
\label{defgrho}
\left|\begin{array}{ll}G_\rho=-\rho\Delta \Psi-2\nabla\rho\cdot\nabla \Psix\\
G_\Phi=-\rho_P(|\nabla \Psi|^2+\NL(\rho))+\frac{b^2\rho_P}{\rho_T}\Delta \rho_T.
\end{array}\right.
\ee
 We transform \eqref{nekoneneon} into a wave equation for $\Phi$ and compute:
  \bee
&&\pa_\tau^2\Phix=-(p-1)\qx(H_1\rhox-H_2\Lambda \rhox-\Delta \Phix+H_3\Phix+G_\rho)+\pa_\tau G_\Phi\\
& & -H_2\Lambda\pa_\tau\Phix+  (H_1-\mu(r-2))\pa_\tau\Phix\\
& = & -(p-1)\qx(H_1\rhox-\Delta \Phix+H_3\Phix) -H_2\Lambda\pa_\tau\Phix+  (H_1-\mu(r-2))\pa_\tau\Phix\\
& + & (p-1)\qx H_2\Lambda\left\{\frac{1}{(p-1)\qx}\left[-\pa_\tau \Phi-H_2\Lambda \Phix+(H_1-\mu(r-2))\Phix+G_\Phi\right]\right\}\\
& + & \pa_\tau G_\Phi-(p-1)QG_\rho\\
& = & (p-1)Q\Delta \Phi-H_2^2\Lambda^2\Phi-2H_2\Lambda\pa_\tau \Phix+  A_1\Lambda \Phi+A_2\pa_\tau\Phix+A_3 \Phix\\
& + & \pa_\tau G_\Phi-\left(H_1+H_2\frac{\Lambda Q}{Q}\right)G_\Phi+H_2\Lambda G_\Phi-(p-1)QG_\rho
\eee
with
$$\left|\begin{array}{llll}
A_1=H_2H_1-H_2\Lambda H_2+H_2(H_1-\mu(r-2))+H_2^2\frac{\Lambda Q}{Q}\\
A_2=2H_1-\mu(r-2)+H_2\frac{\Lambda Q}{Q}\\
A_3=-(H_1-\mathcal e)H_1+H_2\Lambda H_1-H_2(H_1-\mu(r-2))\frac{\Lambda Q}{Q} -  (p-1)QH_3
\end{array}\right.
$$
In this section we focus  on deriving decay estimates for \eqref{nekoneneon}.

\begin{remark}[Null coordinates and red shift]
 We note that the principal symbol of the above wave equation is given by the second order operator
$$
\Box_Q:=\pa_\tau^2 - ((p-1)Q-H_2^2 Z^2)\pa_Z^2+2H_2Z \pa_Z\pa_\tau
$$
In the variables of Emden transform this can be written equivalently as 
$$
\Box_Q=\pa_\tau^2 - \mu^2\left[\sigma^2-(1-w)^2\right]\pa_x^2+2\mu(1-w)\pa_x\pa_\tau
$$
The two principal null direction associated with the above equation are 
$$
L=\pa_\tau+\mu \left[(1-w)-\sigma\right]\pa_x,\qquad \underline{L}=\pa_\tau+\mu \left[(1-w)+\sigma\right]\pa_x,
$$
so that 
$$
\Box_Q=L\underline{L}
$$
We observe that at $P_2$, we have $L=\pa_\tau$ and the surface $Z=Z_2$ is a null cone. Moreover, the associated acoustical metric is
$$
g_Q=\mu^2\Delta d\tau^2 -2\mu(1-w) d\tau dx +dx^2,\qquad \Delta=(1-w)^2-\sigma^2
$$
for which $\pa_\tau$ is a Killing field (generator of translation symmetry). Therefore, $Z=Z_2$ is a {\it Killing horizon} (generated by a null Killing field.) We can make it even more precise by transforming the metric $g_Q$ into a slightly different form by defining the coordinate $s$:
$$
s=\mu\tau-f(x),\qquad f'=\frac {1-w}{\Delta},
$$
so that 
$$
g_Q=\Delta (ds)^2-\frac {\sigma^2}{\Delta} dx^2
$$
and then the coordinate $x^*$:
$$
x^*=\int \frac{\sigma}{\Delta} dx, 
$$
so that
$$
g_Q=\Delta\, d(s+x^*)\, d(s-x^*)
$$
and $s+x^*$ and $s-x^*$ are the null coordinates of $g_Q$.
The Killing horizon $Z=Z_2$ corresponds to $x^*=-\infty$ and $\Delta\sim e^{Cx^*}$ for some positive constant $C$.
In this form, near $Z_2$ the metric $g_Q$ resembles the $1+1$-quotient Schwarzschild metric near the black hole horizon.

The associated {\it surface gravity} $\kappa$ which can be computed according to
\begin{align*}
\kappa&=\frac {\pa_{x^*} \Delta}{2\Delta}|_{P_2} =\frac {\pa_{x} \Delta}{2\sigma}|_{P_2}=
\frac {-w'(1-w)-\sigma'\sigma}{\sigma}|_{P_2}\\ &=(-w'-\sigma')|_{P_2}=1-w-\Lambda w -\frac{(1-w)F}{\sigma}|_{P_2}>0
\end{align*}
This is precisely the repulsive condition \eqref{coercivityquadrcouplinginside}  (at $P_2$).
The positivity of surface gravity implies the presence of the {\it red shift} effect along $Z=Z_2$ both as an optical phenomenon for the acoustical metric
$g_Q$ and also as an indicator of local monotonicity estimates for solutions of the wave equation 
$\Box_Q \varphi=0$, \cite{dr}. The complication in the analysis below is the presence of lower order terms in the wave equation as well as the need for global in space estimates.
\end{remark}


\subsection{The linearized operator}


Pick a small enough parameter $$0<a\ll 1$$ and consider the new variable
\be
\label{defintionT}
T=\pa_\tau\Phi+aH_2\Lambda \Phi,
\ee then 
\bee
\pa_\tau T&=&\pa^2_\tau\Phi+aH_2\Lambda\pa_\tau \Phi= \pa^2_\tau\Phi+aH_2\Lambda(T-aH_2\Lambda \Phi)\\
&=& \pa^2_\tau\Phi+aH_2\Lambda T-a^2H_2\Lambda H_2\Lambda \Phi-a^2H_2^2\Lambda^2\Phi
\eee
which yields the $(T,\Phi)$ equation $$\pa_\tau \Phi=T-aH_2\Lambda \Phi$$ and 
\bee
\pa_\tau T& = & (p-1)Q\Delta \Phi-H_2^2\Lambda^2\Phi-2H_2\Lambda(T-aH_2\Lambda \Phi)+  A_1\Lambda \Phi+A_2(T-aH_2\Lambda \Phi)+A_3 \Phix\\
& + & aH_2\Lambda T-a^2H_2\Lambda H_2\Lambda \Phi-a^2H_2^2\Lambda^2\Phi+  G_T\\
&= & (p-1)Q\Delta \Phi-(1-a)^2H_2^2\Lambda^2\Phi+\tilde{A_2}\Lambda \Phi+A_3\Phi-(2- a)H_2\Lambda T +A_2T\\
&+&G_T
\eee
with 
\be
\label{defgt}
G_T=\pa_\tau G_\Phi-\left(H_1+H_2\frac{\Lambda Q}{Q}\right)G_\Phi+H_2\Lambda G_\Phi-(p-1)QG_\rho
\ee 
and $$\tilde{A}_2=A_1+(2a-a^2)H_2\Lambda H_2-a A_2H_2.$$
We rewrite these equations in vectorial form
\be
\label{newlinearflow}
\pa_\tau X=\mathcal M X +G, \ \ X=\left|\begin{array}{ll}\Phi\\ T\end{array}\right., \ \ G=\left|\begin{array}{ll}0\\ G_T\end{array}\right.
\ee
with 
\be
\label{defiiotinm}
\mathcal M=\left(\begin{array}{ll} -aH_2\Lambda & 1\\ (p-1)Q\Delta -(1-a)^2H_2^2\Lambda^2+\tilde{A_2}\Lambda +A_3& -(2-a)H_2\Lambda +A_2\end{array}\right).
\ee


\subsection{Shifted measure}


The fine structure of the operator \eqref{defiiotinm} involves the understanding of the associated light cone.

\begin{lemma}[Shifted measure]
\label{shiftoight}
Let 
\be
\label{defdacnoneo}
D_a=(1-a)^2(w-1)^2-\sigma^2
\ee
then for $0<a<a^*$ small enough, there exists a $\mathcal C^1$ map $a\mapsto Z_a$ with $$Z_{a=0}=Z_2, \ \ \frac{\pa Z_a}{\pa a}>0$$ such that 
\be
\label{eineneoneonoe}
\left|\begin{array}{l}
D_a(Z_a)=0\\
-D_a(Z)>0\ \ \mbox{on}\ \ 0\le Z<Z_a\\
\lim_{Z\to 0}Z^2(-D_a)>0.
\end{array}\right.
\ee
\end{lemma}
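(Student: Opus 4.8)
The statement is a perturbation-and-continuity argument built on the properties of the profile at $P_2$ recorded in Theorem \ref{propexistenceprofile}. First I would observe that $D_0 = (w-1)^2 - \sigma^2 = \Delta$, so by definition of $P_2$ we have $D_0(Z_2) = 0$, and from \eqref{calculdeterm} together with the repulsivity bound \eqref{coercivityquadrcouplinginside} at $P_2$ (whose surface gravity computation in the preceding Remark shows $-w' - \sigma' = 1 - w - \Lambda w - \tfrac{(1-w)F}{\sigma} > 0$ at $P_2$), one gets that $Z \mapsto D_0(Z)$ crosses zero transversally at $Z_2$: indeed $\tfrac{d}{dZ} D_0 = 2(w-1)w' - 2\sigma\sigma'$, and using $w' = -\Delta_1/\Delta$, $\sigma' = -\Delta_2/\Delta$ together with $\Delta_1(P_2) = \Delta_2(P_2)$ and $\Delta(P_2) = 0$ requires a short l'Hôpital-type expansion near $P_2$ (the ratio $\Delta_1/\Delta$ has a finite limit because the profile is $\mathcal C^\infty$ through $P_2$) showing $\tfrac{d}{dx}D_0\big|_{P_2} = 2\sigma\cdot\kappa > 0$ with $\kappa$ the surface gravity, which is positive by \eqref{coercivityquadrcouplinginside}. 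Hence $D_0 < 0$ on $[0,Z_2)$ — here I would invoke that the profile solution is smooth and nonvanishing on $[0,Z_2]$ with $\rho_P(0)=1$, $\Psi_P(0)=0$, and that near $Z=0$ one has $w \to 0$, $\sigma \sim \tfrac{1}{\phi Z}$ from \eqref{relationsprofileemden} so $D_0 \sim -\sigma^2 \sim -\tfrac{1}{\phi^2 Z^2}$, giving both $-D_0 > 0$ on $[0,Z_2)$ and $\lim_{Z\to 0} Z^2(-D_0) = 1/\phi^2 > 0$.

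Next I would set up the perturbation in $a$. Write $D_a(Z) = D_0(Z) + \big((1-a)^2 - 1\big)(w-1)^2 = D_0(Z) - (2a - a^2)(w-1)^2$. Since $(w-1)^2 > 0$ on a neighborhood of $Z_2$ (as $w(Z_2) \neq 1$; this follows because $\Delta(P_2) = 0$ with $\sigma(P_2)\neq 0$ forces $(1-w)^2 = \sigma^2 > 0$ there), the implicit function theorem applied to the $\mathcal C^1$ function $(a,Z) \mapsto D_a(Z)$ at $(0, Z_2)$ — where $\partial_Z D_0(Z_2) \neq 0$ by the transversality just established — produces the $\mathcal C^1$ branch $a \mapsto Z_a$ with $Z_0 = Z_2$ and $D_a(Z_a) = 0$. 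Differentiating the identity $D_a(Z_a) = 0$ gives $\partial_a Z_a = -\tfrac{\partial_a D_a(Z_a)}{\partial_Z D_a(Z_a)} = \tfrac{(2 - 2a)(w-1)^2}{\partial_Z D_a(Z_a)}$, and since the numerator is positive and $\partial_Z D_a(Z_a) > 0$ for $a$ small (by continuity from $\partial_Z D_0(Z_2) > 0$), we conclude $\partial_a Z_a > 0$.

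For the remaining two properties of \eqref{eineneoneonoe}: the claim $-D_a(Z) > 0$ on $[0, Z_a)$ follows because $-D_a = -D_0 + (2a-a^2)(w-1)^2 \geq -D_0 > 0$ on the compact $[0, Z_2 - \delta]$ uniformly, while on the small window $[Z_2 - \delta, Z_a)$ one uses that $D_a < 0$ there by the transversal-crossing structure inherited under small perturbation (the function $D_a$ is negative immediately to the left of its unique nearby zero $Z_a$ since $\partial_Z D_a(Z_a) > 0$); I should check there is no other zero of $D_a$ in $(0, Z_a)$, which holds because $-D_0$ is bounded below by a positive constant on any $[0, Z_2 - \delta]$ and the $O(a)$ correction cannot cancel it for $a$ small, combined with continuity of the branch. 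Finally $\lim_{Z\to 0} Z^2(-D_a) = \lim_{Z\to 0} Z^2(-D_0) + (2a-a^2)\lim_{Z\to 0} Z^2(w-1)^2 = 1/\phi^2 + (2a-a^2)\cdot 0 > 0$ since $w \to 0$ and $Z^2 \to 0$ as $Z \to 0$.

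\textbf{Main obstacle.} The genuinely delicate point is the \emph{transversality} $\partial_Z D_0(Z_2) \neq 0$, equivalently the positivity of the surface gravity $\kappa$ at $P_2$. This is where the $\mathcal C^\infty$ regularity of the profile through $P_2$ (part 2 of Theorem \ref{propexistenceprofile}) is essential: it guarantees $w', \sigma'$ extend continuously across $P_2$ despite $\Delta$ vanishing there, so that the l'Hôpital computation $\partial_x D_0|_{P_2} = 2\sigma(-w'-\sigma')|_{P_2}$ makes sense, and then \eqref{coercivityquadrcouplinginside} delivers $-w'-\sigma' = 1 - w - \Lambda w - (1-w)F/\sigma > c > 0$. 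Everything else is a routine implicit-function-theorem and continuity argument, but this single transversality input — tying together the smoothness of the profile and the repulsivity inequality — is the structural heart of the lemma.
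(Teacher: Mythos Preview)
Your proof is correct and follows the same overall strategy as the paper: implicit function theorem anchored at $(a,Z)=(0,Z_2)$, with the key input being the transversality $\partial_Z D_0(Z_2)>0$. The difference lies in how this transversality is obtained. You compute $\partial_x D_0\big|_{P_2} = 2\sigma_2(-w'-\sigma')\big|_{P_2} = 2\sigma_2\kappa$ and invoke the repulsivity bound \eqref{coercivityquadrcouplinginside} directly via the surface gravity identity from the preceding Remark. The paper instead expands $w', \sigma'$ at $P_2$ using the slope $c_-$ and eigenvalue $\lambda_+$ imported from the companion paper \cite{MRRSprofile}, obtaining the explicit value $\partial_x D_0\big|_{P_2} = |\lambda_+|$. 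The two are consistent (indeed $2\sigma_2\kappa = |\lambda_+|$), but the paper's explicit constant is reused in the next lemma, where the exponent $c_g$ in the measure asymptotics \eqref{asymptoticsg} is computed in terms of $|\lambda_+|$; your route is more self-contained but does not hand you this constant. For the negativity of $D_a$ on $(0,Z_a)$ you use the clean comparison $-D_a = -D_0 + (2a-a^2)(w-1)^2 \ge -D_0 > 0$ on $[0,Z_2]$ followed by transversality near $Z_a$; the paper instead factors $D_a = \big((1-a)(1-w)+\sigma\big)\big((1-a)(1-w)-\sigma\big)$ and tracks the sign of the second factor.

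One minor inaccuracy: $w$ does not tend to $0$ at the origin. Evaluating the second profile equation \eqref{profileequation} at $Z=0$ gives $\Delta\Psi_P(0) = -\tfrac{\mu\ell(r-1)}{2}$, hence $w(0) = w_e = \tfrac{\ell(r-1)}{d}$ (cf.\ \eqref{definitionwe}). This is harmless for your argument since all you need is that $w$ is bounded near the origin, so $Z^2(w-1)^2 \to 0$ and $\lim_{Z\to 0} Z^2(-D_a) = \lim_{Z\to 0} Z^2\sigma^2 > 0$ still holds.
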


\begin{proof}[Proof of Lemma \ref{shiftoight}]

We recall the notations of the Emden transform: 
\be
\label{enoenoenvonev}
\left|\begin{array}{l}
x=\log Z\\
\mu=\frac{1-{\mathcal e}}{2}\\
F=\sigma+\Lambda \sigma\\
(p-1)Q=\mu^2Z^2\sigma^2, \ \ \frac{\Lambda Q}{Q}=2+2\frac{\Lambda \sigma}{\sigma}=\frac{2F}{\sigma}\\
(p-1)\pa_ZQ=(p-1)\frac{\Lambda Q}{Q}\frac{Q}{Z}=2\mu^2Z\sigma^2\left(1+\frac{\Lambda\sigma}{\sigma}\right)=2\mu^2Z\sigma F\\
H_2=\frac{1-{\mathcal e}}{2}+2\frac{\pa_Z\Psi_P}{Z}=\mu(1-w)\\
H_1=H_2\frac{\Lambda \rho_P}{\rho_P}=\frac{H_2}{p-1}\frac{\Lambda Q}{Q}=\frac{2\mu F(1-w)}{(p-1)\sigma}\\
D=(w-1)^2-\sigma^2.
\end{array}\right.
\ee

\noindent{\bf step 1} Values of derivatives at $P_2$. Let $$\Delta=(w-1)^2-\sigma^2.$$ Let the variables $$w=w_2+W, \ \ \sigma=\sigma_2+\Sigma,$$ then near $P_2$: $$W=c_-\Sigma+O(\Sigma^2).$$ 
Let
\be
\label{defvalueci}
\left|\begin{array}{llll}
c_1=\pa_W\Delta_1(P_2)\\
c_2=\pa_W\Delta_2(P_2)\\
c_3=\pa_\Sigma \Delta_1(P_2)\\
c_4=\pa_\Sigma\Delta_2(P_2)=-2\sigma_2^2
\end{array}\right.
\ee
Then, in our range of parameters, 
\bea
c_1<0, \ \ c_2<0, \ \  c_3<0,\ \  c_4<0, 
\eea
and we have
\be
\label{realtionslopeweignefuncitons}
\left|\begin{array}{ll}
c_2c_-+c_4=\l_+\\
c_2c_++c_4=\l_-\\
c_{\pm}=\frac{c_1c_\pm+c_3}{c_2c_\pm+c_4}
\end{array}\right.
\ee
which imply 
$$c_1c_-+c_3=c_-(c_2c_-+c_4)=c_-\l_+$$
as well as
\bea
-1<c_-<0<c_+, \ \ \l_-<\l_+<0,
\eea
see Lemma 2.8 and Lemma 2.9 in \cite{MRRSprofile}. 

We compute
$$
\left|\begin{array}{l}
\Delta_1=c_1W+c_3\Sigma+O(W^2+\Sigma^2)=(c_1c_-+c_3)\Sigma+O(\Sigma^2)\\
\Delta_2=c_2W+c_4\Sigma+O(W^2+\Sigma^2)=(c_2c_-+c_4)\Sigma+O(\Sigma^2)\\
\Delta=(1-w_2-W)^2-(\sigma_2+\Sigma)^2=(\sigma_2-W)^2-(\sigma_2+\Sigma)^2=-2\sigma_2(c_-+1)\Sigma+O(\Sigma^2)
\end{array}\right.
$$
This yields 
\be
\label{venvneneoevnen}
\left|\begin{array}{l}
\frac{dw}{dx}=-\frac{\Delta_1}{\Delta}=-\frac{c_1c_-+c_3+O(\Sigma)}{-2\sigma_2(1+c_-)+O(\Sigma)}=\frac{|c_-||\l_+|}{2\sigma_2(1+c_-)}+O(\Sigma)\\
\frac{d\sigma}{dx}=-\frac{\Delta_2}{\Delta}=-\frac{c_2c_-+c_4+O(\Sigma)}{-2\sigma_2(1+c_-)+O(\Sigma)}=-\frac{|\l_+|}{2\sigma_2(1+c_-)}+O(\Sigma)
\end{array}\right.
\ee
and hence
\bea
\label{siggaddoadne}
\nonumber &&Z_2\frac{d\Delta}{dZ}(Z_2)=  \frac{d\Delta}{dx}(P_2)=-2(1-w_2)\frac{dw}{dx}(P_2)-2\sigma_2\frac{d\sigma}{dx}(P_2)\\
\nonumber & = & -2\sigma_2\frac{|c_-||\l_+|}{2\sigma_2(1+c_-)}-2\sigma_2\left[-\frac{|\l_+|}{2\sigma_2(1+c_-)}\right]=  \frac{|\l_+|}{1+c_-}(1-|c_-|)\\
&=& |\l_+|>0
\eea
\noindent{\bf step 2} Computation of $Z_a$. Let $D_0(Z)=\Delta(Z)$, we have $D_0'(Z_2)>0$ from \eqref{siggaddoadne} and hence by the implicit function theorem applied to the function $F(a, Z)=D_a(Z)$ at $(a, Z)=(0, Z_2)$ where $D_0(Z_2)=0$, we infer for all $a$ small enough the existence of a locally unique solution $Z_a$ to 
\be
\label{defitionza}
D_a(Z_a)=0
\ee
Furthermore, $Z_a$ is $\mathcal{C}^1$ in a neighborhood of $a=0$ and its derivative is given by
$$
 \frac{\pa Z_a}{\pa a}_{|a=0} = -\left(\frac{\frac{\pr D_a(Z)}{\pr a}}{\frac{\pr D_a(Z)}{\pr Z}}\right)_{a=0, \, Z=Z_2}= \frac{2\sigma_2^2}{D_0'(Z_2)}>0$$ Thus
\be
\label{vneiovnneo}
 \frac{\pa Z_a}{\pa a}_{|a=0}>0,\ \ Z_a>Z_2\ \ \mbox{for}\ \ 0<a\ll1, \ \ D_a'(Z_a)>0.
 \ee
We now observe
$$D_a(Z)= ((1-a)(1-w)+\sigma)((1-a)(1-w)-\sigma)$$
so that $D_a(Z)$ is of the sign of $(1-a)(1-w)-\sigma$ since $w<1$ and $\sigma>0$. Now from \eqref{venvneneoevnen}:
\bee
\frac{d}{dx}\Big((1-a)(1-w)-\sigma\Big) &=&-(1-a)\frac{|c_-||\l_+|}{2\sigma_2(1+c_-)}+\frac{|\l_+|}{2\sigma_2(1+c_-)}\\
&=&\frac{|\l_+|}{2\sigma_2(1+c_-)}[1-(1-a)|c_-|]>0.
\eee
Thus, $(1-a)(1-w)-\sigma$ is increasing on $(0,Z_a]$ and vanishes at $Z=Z_a$ so that 
\bee
D_a(Z)<0\textrm{ on }(0, Z_a).
\eee
Moreover, we have in view of the behavior of $\sigma$ and $w$ as $Z\to 0_+$, see Lemma 3.1 in  \cite{MRRSprofile}, 
\bee
\lim_{Z\to 0_+}Z^2(-D_a(Z)) &=& \lim_{Z\to 0_+}Z^2\sigma^2=1>0.
\eee
This concludes the proof of \eqref{eineneoneonoe}.
\end{proof}


\subsection{Commuting with derivatives}


We define $$T_k=\Delta^kT, \ \ \Phi_k=\Delta^k\Phi.$$ 

\begin{lemma}[Commuting with derivatives]
\label{lemmaderivative}
Let $k\in \Bbb N$. There exists a smooth measure $g$ defined for $Z\in [0,Z_a]$ such that the following holds. Let the elliptic operator
$$\mathcal L_g\Phi_k=\frac{\mu^2}{gZ^{d-1}}\pa_Z\left(Z^{d-1}Z^2g(-D_a) \pa_Z\Phi_k\right),$$ 
then there holds 
\be
\label{commutationderivative}
\Delta^k(\mathcal MX)=\mathcal M_k\left|\begin{array}{ll}\Phi_k\\T_k\end{array}\right.+\widetilde{\mathcal M}_k X
\ee
with
$$
\mathcal M_k\left|\begin{array}{ll}\Phi_k\\T_k\end{array}\right.=\left|\begin{array}{ll}-aH_2\Lambda \Phi_k-2ak(H_2+\Lambda H_2)\Phi_k+ T_k\\ \matchal L_g \Phi_k-(2-a)H_2\Lambda T_k-2k(2-a)(H_2+\Lambda H_2) T_k +A_2T_k\end{array}\right.
$$
where $\widetilde{\mathcal M}_k$ satisfies the following pointwise bound
\be
\label{contlmak}
|\widetilde{\mathcal M}_kX|\lesssim_k \left|\begin{array}{ll} \displaystyle\sum_{j=0}^{2k-1}|\pa^{j}_Z\Phi|,\\
 \displaystyle\sum_{j=0}^{2k}|\pa^{j}_Z\Phi|+\sum_{j=0}^{2k-1}|\pa_Z^jT|.
\end{array}\right.
\ee
Moreover, $g>0$ in $[0,Z_a)$ and admits the asymptotics:
\be
\label{asymptoticsg}
\left|\begin{array}{ll}g(Z)=1+O(Z^2)\ \ \mbox{as}\ \ Z\to 0\\
g(Z)=c_{a,d,r,\ell}(Z_a-Z)^{c_g}\left[1+O(Z-Z_a)\right]\ \mbox{as}\ \ Z\uparrow Z_a, 
\end{array}\right.
\ee
with 
\be
\label{estcg}
c_g>0
\ee for all $k\ge k_1$ large enough and $0<a<a^*$ small enough.
\end{lemma}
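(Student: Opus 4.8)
Throughout I work in the radial variables, where $\Delta=\pa_Z^2+\frac{d-1}{Z}\pa_Z$ and $\Lambda=Z\pa_Z$, and I record two elementary commutator facts: $[\Delta,\Lambda]=2\Delta$, hence $[\Delta^k,\Lambda]=2k\Delta^k$; and for smooth $f$, $\Delta^k(f\,\cdot)=f\Delta^k+[\Delta^k,f]$ with $[\Delta^k,f]$ of order $2k-1$, principal part $2kf'\pa_Z^{2k-1}$, and coefficients built from the derivatives of $f$ up to order $2k$; one also uses $\Delta^k=\pa_Z^{2k}+\frac{k(d-1)}{Z}\pa_Z^{2k-1}+(\text{order}\le 2k-2)$. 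Applying $\Delta^k$ to the first row of $\mathcal M$ (see \eqref{defiiotinm}) and to the $T$--part of the second row, where only the transport operators $H_2\Lambda$ and multiplication by $A_2$ enter, yields $\Delta^k(H_2\Lambda\Phi)=H_2\Lambda\Phi_k+2k(H_2+\Lambda H_2)\Phi_k+R$ with $R$ of order $\le 2k-1$ in $\Phi$ — the principal part $2kZH_2'\pa_Z^{2k}\Phi=2k(\Lambda H_2)\pa_Z^{2k}\Phi$ of $[\Delta^k,H_2]\Lambda\Phi$ combines with the $[\Delta^k,\Lambda]$--term — and likewise with $T$ in place of $\Phi$. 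This produces the transport part of $\mathcal M_k$ together with its $k$--dependent drift corrections $\propto(H_2+\Lambda H_2)$, all remainders being absorbed into $\widetilde{\mathcal M}_k$, consistently with \eqref{contlmak}.

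The substance is the second--order contribution $\Delta^k(\mathcal A\Phi)$, where $\mathcal A:=(p-1)Q\Delta-(1-a)^2H_2^2\Lambda^2+\tilde A_2\Lambda+A_3$ is the scalar operator in the second row of $\mathcal M$. By the Emden identities \eqref{enoenoenvonev} its principal coefficient is $\alpha:=(p-1)Q-(1-a)^2H_2^2Z^2=\mu^2Z^2\big(\sigma^2-(1-a)^2(1-w)^2\big)=\mu^2Z^2(-D_a)$, so $\mathcal A=\alpha\pa_Z^2+\gamma\pa_Z+A_3$ with $\gamma=(p-1)Q\frac{d-1}{Z}-(1-a)^2H_2^2Z+\tilde A_2Z$. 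I would compute the coefficients of $\pa_Z^{2k+2}\Phi$ and $\pa_Z^{2k+1}\Phi$ in $\Delta^k(\mathcal A\Phi)$ and match them against those of $\mathcal L_g\Phi_k=\alpha\pa_Z^2\Phi_k+\big(\alpha'+\tfrac{(d-1)\alpha}{Z}+\alpha\tfrac{g'}{g}\big)\pa_Z\Phi_k$, after substituting $\Phi_k=\pa_Z^{2k}\Phi+\frac{k(d-1)}{Z}\pa_Z^{2k-1}\Phi+(\text{lower})$. The $\pa_Z^{2k+2}\Phi$ coefficients agree automatically ($=\alpha$); equating the $\pa_Z^{2k+1}\Phi$ ones gives the first--order linear ODE $\frac{g'}{g}=(2k-1)\frac{\alpha'}{\alpha}-\frac{d-1}{Z}+\frac{\gamma}{\alpha}$, which, using $\sigma^2-(1-a)^2(1-w)^2=-D_a$, reads
\[
\frac{g'}{g}=V_k:=\frac{4k-d}{Z}+(2k-1)\frac{(-D_a)'}{-D_a}+\frac{(d-2)\sigma^2+\mu^{-2}\tilde A_2}{Z(-D_a)}.
\]
I then set $g:=\exp\!\int_0^ZV_k$, the solution normalized by $g(0)=1$; by construction $\mathcal L_g\Phi_k-\Delta^k(\mathcal A\Phi)$ is a differential operator of order $\le 2k$ in $\Phi$, which together with the remainders above gives $\widetilde{\mathcal M}_k$ and the bound \eqref{contlmak} (implied constants depending on $k$ through the coefficients of $\Delta^k$ and of the commutators).

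It remains to establish \eqref{asymptoticsg}--\eqref{estcg} by analyzing $V_k$ at the two endpoints. Near $Z=0$: since $\rho_P(0)=1$, the functions $w$, $Q=\rho_P^{p-1}$, $\tilde A_2$ extend smoothly (and evenly) and $Z^2(-D_a)\to c_0>0$ by Lemma \ref{shiftoight}; each of the three terms of $V_k$ is then odd with at worst a simple pole at $0$ of residues $4k-d$, $-2(2k-1)$ and $d-2$ (the $\tilde A_2$ piece being $O(Z)$), and since $(4k-d)-2(2k-1)+(d-2)=0$ these poles cancel, so $V_k$ is smooth and odd near $0$, whence $\log g=\int_0^ZV_k=O(Z^2)$ and $g=1+O(Z^2)$, in particular $g>0$ near $0$. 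Near $Z=Z_a$: by Lemma \ref{shiftoight}, $D_a(Z_a)=0$, $D_a'(Z_a)>0$ and $-D_a>0$ on $[0,Z_a)$, so $-D_a=D_a'(Z_a)(Z_a-Z)\big(1+O(Z_a-Z)\big)$ and
\[
V_k=\frac{-(2k-1)+c_*(a)}{Z_a-Z}+O(1),\qquad c_*(a)=\frac{(d-2)\sigma(Z_a)^2+\mu^{-2}\tilde A_2(Z_a)}{Z_aD_a'(Z_a)},
\]
with $c_*(a)$ independent of $k$; integrating, $g(Z)=c_{a,d,r,\ell}(Z_a-Z)^{c_g}\big(1+O(Z-Z_a)\big)$ with $c_g=2k-1-c_*(a)$, and $g=\exp\int_0^ZV_k>0$ on $[0,Z_a)$ by local integrability of $V_k$ there. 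Finally $c_*(0)$ is finite (by \eqref{siggaddoadne}, $Z_2D_0'(Z_2)=|\l_+|\ne 0$) and $c_*(a)$ is continuous, so after possibly shrinking $a^*$ one has $\sup_{0<a<a^*}c_*(a)<\infty$, and hence $c_g>0$ for all $k\ge k_1=k_1(d,r,\ell)$.

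The main obstacle is the identification of $V_k$ together with this endpoint analysis: one must carry through the matching both the subleading term $\frac{k(d-1)}{Z}\pa_Z^{2k-1}\Phi$ of $\Delta^k\Phi$ and the first--order part of $\mathcal A$, and then recognize the exact cancellation $(4k-d)-2(2k-1)+(d-2)=0$ of the $1/Z$ singularity — a genuine arithmetic coincidence between the commutator structure, the degeneracy order of $-D_a$ and $d$ — which is what makes $g$ regular at the origin so that the normalization $g(0)=1$ is meaningful, as well as the positivity $c_g>0$ at $Z_a$, which is precisely what forces the largeness $k\ge k_1$. The transport--term bookkeeping is routine once the two commutator identities are in hand.
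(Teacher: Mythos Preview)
Your argument is correct and follows essentially the same direct-computation route as the paper: commute $\Delta^k$ through $\mathcal M$ via $[\Delta^k,\Lambda]=2k\Delta^k$ and the Leibniz expansion of $[\Delta^k,f]$, match the two top-order coefficients to obtain a first-order ODE for $g$, and analyze the endpoints. The only organizational differences are that the paper tracks the commuted operator as acting on $\Phi_k$ (coefficients of $\Lambda^2\Phi_k$, $\Lambda\Phi_k$) rather than expanding in $\pa_Z^j\Phi$, and at $Z_a$ the paper computes $\mathcal G(Z_2)\sim 2k|\lambda_+|$ using the eigenvalue structure of the profile at $P_2$, whereas your observation that $c_*(a)$ is $k$-independent gives the same conclusion $c_g\sim 2k$ more directly.
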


\begin{proof} This  is a direct computation.\\

\noindent{\bf step 1} Proof of \eqref{commutationderivative}, \eqref{contlmak}. We recall \eqref{estimatecommutatorvlkeveln}:
$$[\Delta^k,V]\Phi-2k\nabla V\cdot\nabla \Delta^{k-1}\Phi=\sum_{|\alpha|+|\beta|=2k,|\beta|\le 2k-2}c_{k,\alpha,\beta}\pa^\alpha V\pa^\beta\Phi$$ which together with the commutator formulas 
\be
\label{cbjeibvejbegi}
\left|\begin{array}{l}
[\Delta^k,\Lambda]=2k\Delta^k,\ \ [\pa_Z,\Lambda]=\pa_Z\\
\Lambda^2=Z^2\Delta-(d-2)\Lambda\\
\pa_Z\Lambda=\frac{\Lambda^2}{Z}=Z\Delta-(d-2)\pa_Z
\end{array}\right.
\ee 
yields
\bee
\Delta^k(V\Lambda \Phi)&=&V\Delta^k(\Lambda \Phi)+2k\nabla V\cdot\nabla \Delta^{k-1}\Lambda\Phi+\sum_{|\alpha|+|\beta|=2k,|\beta|\le 2k-2}c_{k,\alpha,\beta}\pa^\alpha V\pa^\beta\Lambda \Phi.
\eee
and
\bee
&&2k\nabla V\cdot\nabla \Delta^{k-1}\Lambda\Phi=  2k\pa_ZV\pa_Z\left[\Lambda \Delta^{k-1}\Phi+2(k-1)\Phi_{k-1}\right]\\
& = & 2k\pa_ZV\left[(Z\Delta-(d-2)\pa_Z)\Phi_{k-1}+2(k-1)\pa_Z\Phi_{k-1}\right]\\
& = & 2k\Lambda V\Phi_k+2k(2k-2-d+2)\pa_ZV\pa_Z\Phi_{k-1}
\eee
from which for $0\le Z\le Z_a$:
$$\Delta^k(V\Lambda \Phi)=V(2k+\Lambda) \Phi_k+2k\Lambda V\Phi_k+O_{k,a}\left(\sum_{j=0}^{2k-1}|\pa^{j}_Z\Phi|\right).$$ We then use
\bee
[\Delta^k,\Lambda^2]&=&\Delta^k\Lambda^2-\Lambda^2\Delta^k=[\Delta^k,\Lambda]\Lambda+\Lambda\Delta^k\Lambda-\Lambda(-[\Delta^k,\Lambda]+\Delta^k\Lambda)\\
& = & 2k\Delta^k\Lambda+2k\Lambda\Delta^k=4k^2\Delta^k+4k\Lambda \Delta^k
\eee
to compute similarly:
\bee
&&\Delta^k(V\Lambda^2 \Phi)=V\Delta^k(\Lambda^2 \Phi)+2k\nabla V\cdot\nabla \Delta^{k-1}\Lambda^2\Phi+\sum_{|\alpha|+|\beta|=2k,|\beta|\le 2k-2}c_{k,\alpha,\beta}\pa^\alpha V\pa^\beta\Lambda \Phi\\
& = & V\left[\Lambda^2\Phi_k+4k^2\Phi_k+4k\Lambda \Phi_k\right]+2k\pa_ZV\pa_Z\Delta^{k-1}\Lambda^2\Phi+O\left(\sum_{j=0}^{2k}|\pa^{j}_Z\Phi|\right)
\eee
and 
\bee
&&\pa_Z\Delta^{k-1}\Lambda^2\Phi=\pa_Z\left[\Lambda^2\Phi_{k-1}+4(k-1)^2\Phi_{k-1}+4(k-1)\Lambda \Phi_{k-1}\right]\\
& = & \pa_Z(Z^2\Phi_k-(d-2)\Lambda \Phi_{k-1})+O\left(\sum_{j=0}^{2k}|\pa^{j}_Z\Phi|\right)=Z\Lambda \Phi_k+O\left(\sum_{j=0}^{2k}|\pa^{j}_Z\Phi|\right)
\eee
and hence
\bee
&&\Delta^k(V\Lambda^2 \Phi)=V\left[\Lambda^2\Phi_k+4k^2\Phi_k+4k\Lambda \Phi_k\right]+2k\Lambda V\Lambda \Phi_k+O\left(\sum_{j=0}^{2k}|\pa^{j}_Z\Phi|\right).
\eee
Recalling the definition of the operator \eqref{defiiotinm}, we obtain \eqref{commutationderivative}, \eqref{contlmak} with
$$\mathcal M_k\left|\begin{array}{ll}\Phi_k\\T_k\end{array}\right.=\left|\begin{array}{ll}-aH_2\Lambda \Phi_k -2ak(H_2+\Lambda H_2)\Phi_k+ T_k\\ (p-1)Q\Delta \Phi_k-(1-a)^2H_2^2\Lambda^2\Phi_k+A_k\Lambda \Phi_k\\
-(2-a)H_2\Lambda T_k-2k(2-a)(H_2+\Lambda H_2) T_k +A_2T_k
\end{array}\right.$$
and
$$A_k=2k(p-1)\frac{\pa_ZQ}{Z}-(1-a)^24kH_2\left[H_2+\Lambda H_2\right]+\tilde{A}_2.
$$

\noindent{\bf step 2} Equation for the measure. We compute using \eqref{enoenoenvonev}, \eqref{defdacnoneo}:
\bee
&&(p-1)Q\Delta \Phi_k-(1-a)^2H_2^2\Lambda^2\Phi_k\\
&=&\mu^2Z^2\sigma^2\left(\pa_Z^2\Phi_k+\frac{d-1}{Z}\pa_Z\Phi_k\right)-\mu^2(1-w)^2(1-a)^2\left(Z^2\pa_Z^2\Phi_k+\Lambda \Phi_k\right)\\
& =  & \mu^2\left\{-Z^2D_a\pa_Z^2\Phi_k+Z\pa_Z\Phi_k\left[(d-1)\sigma^2-(1-a)^2(1-w)^2\right]\right\}
\eee
and 
\bee
A_k& = & 2k(p-1)\frac{\pa_ZQ}{Z}-(1-a)^24kH_2\left[H_2+\Lambda H_2\right]+\tilde{A}_2\\
&= & 4k\mu^2\left[\sigma F-(1-a)^2(1-w)+(1-a)^2(1-w)(w+\Lambda w)\right]+\tilde{A}_2
\eee
and hence:
\bee
&&(p-1)Q\Delta \Phi_k-(1-a)^2H_2^2\Lambda^2\Phi_k+A_k\Lambda \Phi_k=  -\mu^2Z^2D_a\pa_Z^2\Phi_k\\
&+& \Lambda \Phi_k\Bigg[\mu^2\left((d-1)\sigma^2-(1-a)^2(1-w)^2\right)\\
&&+4k\mu^2\left[\sigma F-(1-a)^2(1-w)+(1-a)^2(1-w)(w+\Lambda w)\right]+\tilde{A}_2\Bigg].
\eee 
We compute the measure
 $$\frac{\mu^2}{gZ^{d-1}}\pa_Z\left(Z^{d-1}Z^2g(-D_a) \Phi_k'\right)=\mu^2Z^2(-D_a)\pa_Z^2\Phi_k-\mu^2\Lambda \Phi_k\left((d+1)D_a+\frac{g'}{g}ZD_a+ZD_a'\right)$$ and hence the relation:
\bee
&&-\mu^2\left((d+1)D_a+\frac{g'}{g}ZD_a+ZD_a'\right)\\
&=& \mu^2\left((d-1)\sigma^2-(1-w)^2\right)+4k\mu^2\left[\sigma F-(1-a)^2(1-w)+(1-a)^2(1-w)(w+\Lambda w)\right]+\tilde{A}_2.
\eee
Equivalently:
\be
\label{equationforthemeasure}
(-D_a)\frac{\Lambda g}{g}=- \mathcal G
\ee
with
\bea
\label{formulag}
\matchal G&= &-(d-1)\sigma^2+(1-w)^2-(d+1)D_a-\Lambda D_a\\
\nonumber && +4k\left[(1-a)^2(1-w) -\sigma F-(1-a)^2(1-w)(w+\Lambda w)\right]-\frac{\tilde{A}_2}{\mu^2}.
\eea

\noindent{\bf step 3} Asymptotics of the measure. We now solve \eqref{equationforthemeasure}. 
 Near the origin, the normalization \eqref{normlazationprofile} and \eqref{enoenoenvonev} yield $$\sigma=\frac{\sqrt{p-1}}{\mu Z}\left[1+O(Z^2)\right], \ \ F=\sigma+\Lambda \sigma=O(Z), \ \ -D_a=\sigma^2+O(1).$$

We compute 
$$\frac{\tilde{A_2}}{\mu^2}=O\left(\frac{|F|}{\sigma}+|\Lambda w|+|a|\right)=O(1)$$ and hence
\bee
\matchal G &=& -(d-1)\sigma^2 -(d+1)(-\sigma^2)-\Lambda(-\sigma^2)+O\left(1+|a|+\sigma|F|+|w|+|\Lambda w|\right)\\
&=& 2\sigma F+O(1) = O(1)
\eee
which, recalling \eqref{eineneoneonoe}, yields:
$$-\frac{\mathcal G}{(-D_a)}=\frac{O(1)}{\sigma^2+O(1)}=O(Z^2)$$
and we may therefore choose explicitly:
$$g=e^{\int_0^Z \left[-\frac{\mathcal G}{(-D_a)}\right]\frac{d\tau}{\tau}}.$$ 
To compute the behavior near $Z_a$, recall from \eqref{defitionza} \eqref{vneiovnneo} that we have 
\bee
D_a(Z_a)=0, \ \ D_a'(Z_a)>0.
\eee
We infer in the neighborhood of $Z=Z_a$
\bea
\label{vneioenevvnn}
\nonumber\frac{\pa_Zg}{g}& =& \frac{\matchal G}{ZD_a}= \frac{\matchal G(Z_a)}{\Lambda D_a(Z_a)}\frac{1+O(Z-Z_a)}{Z-Z_a}\\
&=& \left(\frac{\matchal G(Z_2)}{\Lambda D_0(Z_2)}+O(|a|)\right)\frac{1+O(Z-Z_a)}{Z-Z_a}.
\eea
The fundamental computation is then at $P_2$ using \eqref{venvneneoevnen}:
\bee
&&\left[(1-w) -\sigma F-(1-w)(w+\Lambda w)\right]=(1-w_2)(1-w_2-\Lambda w)-\sigma_2(\sigma_2+\Lambda \sigma)\\
& = & \sigma_2(\sigma_2-\Lambda w)-\sigma_2(\sigma_2+\Lambda \sigma)=\sigma_2(-\Lambda w-\Lambda \sigma)\\
& = & \sigma_2\left[-\frac{|c_-||\l_+|}{2\sigma_2(1+c_-)}+\frac{|\l_+|}{2\sigma_2(1+c_-)}\right]=\frac{|\l_+|}{2}>0
\eee
Hence from \eqref{formulag} $$\mathcal G(Z_2)=2k(|\l_+|+O(a))+O(1).$$
and from \eqref{siggaddoadne}
$$\frac{\matchal G(Z_2)}{\Lambda D_0(Z_2)}=\frac{2k\left(|\l_+|+O(a)\right)+O(1)}{|\l_+|}>k$$ for $0<a<a^*$ small enough and $k\ge k_1$ large enough\footnote{In particular, we need $k_1>|\lambda_+|^{-1}$.}. Inserting this into \eqref{vneioenevvnn} yields
 \eqref{asymptoticsg}.
\end{proof}


\subsection{Hardy inequality and compactness}


We let $k\ge k_1$ large enough so that \eqref{estcg} holds and extend the measure $g$ by zero for $Z\ge Z_a$. We let $\chi$ be a smooth cut off function  supported strictly inside the light cone $|Z|<Z_2$ with $$g\geq \frac 12\ \ \mbox{on}\ \ {\rm Supp}\chi.$$ Let 
$$\mathcal D_\Phi=\left\{\Phi\in \matchal C^{\infty}([0,Z_a],\Bbb C)\ \ \mbox{with spherical symmetry}\right\},$$ 
be the space of test functions and 
\bea
\label{defscalarproductbis}
\la\la \Phi,\tilde{\Phi}\ra\ra&=&-(\mathcal L_g\Phi_k,\tilde{\Phi}_{k})_g+\int \chi \Phi\overline{\tilde{\Phi}}gZ^{d-1}dZ
\eea
be a Hermitian scalar product,
where we recall the notation \eqref{scalarltwo}. We let $\Bbb H_{\Phi}$ be the completion of $\mathcal{D}_\Phi$ for the norm associated to \eqref{defscalarproductbis}. 
We claim the following compactness subcoercivity estimate:

\begin{lemma}[Subcoercivity estimate] 
For $0<\nu\ll1$:
\be
\label{lowerboundhardy}
 \la\la \Phi,\Phi\ra\ra\gtrsim \int \frac{|\Phi_k|^2}{Z_a-Z}gZ^{d-1}dZ+\sum_{m=0}^{2k} \int  |\pa_Z^{m}\Phi(Z)|^2\frac{g}{(Z_a-Z)^{1-\nu}}Z^{d-1}dZ.
\ee
Furthermore, there exists $c>0$ and a sequence $\mu_n>0$ with $\lim_{n\to +\infty}\mu_n=+\infty$ and $\Pi_n\in \Bbb H_{\Phi}$, $c_n>0$ such that $\forall n\geq 0$, $\forall \Phi\in \Bbb H_{\Phi}$,
\bea
\label{coercivityformula}
\nonumber \la\la \Phi,\Phi\ra\ra &\geq & c\int \frac{|\Phi_k|^2}{Z_a-Z}gZ^{d-1}dZ+\mu_n \sum_{m=0}^{2k} \int  |\pa_Z^{m}\Phi(Z)|^2\frac{g}{(Z_a-Z)^{1-\nu}}Z^{d-1}dZ\\
&- & c_n\sum_{i=1}^n(\Phi,\Pi_i)_g^2.
\eea
\end{lemma}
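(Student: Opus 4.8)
The plan is to establish \eqref{lowerboundhardy} first and then deduce \eqref{coercivityformula} by a standard compactness/spectral argument. For \eqref{lowerboundhardy}, the starting point is the definition \eqref{defscalarproductbis}: an integration by parts gives
$$
-(\mathcal L_g\Phi_k,\Phi_k)_g=\mu^2\int (-D_a)\,|\pa_Z\Phi_k|^2\,Z^{d+1}g\,dZ,
$$
which is manifestly nonnegative since $-D_a>0$ on $[0,Z_a)$ by Lemma \ref{shiftoight}. The key is to convert this weighted gradient bound on $\Phi_k$ into the claimed weighted $L^2$ control of $\Phi_k$ itself and of all lower derivatives $\pa_Z^m\Phi$, $0\le m\le 2k$. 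Near $Z_a$, the asymptotics \eqref{asymptoticsg} give $g\sim (Z_a-Z)^{c_g}$ with $c_g>0$, and a direct computation using \eqref{defdacnoneo}, \eqref{vneiovnneo} shows $-D_a\sim D_a'(Z_a)(Z-Z_a)$, i.e. $-D_a$ vanishes simply at $Z_a$; hence the weight $(-D_a)g$ behaves like $(Z_a-Z)^{c_g+1}$ near the endpoint. A one-dimensional Hardy inequality on $(Z_a-\delta,Z_a)$ — of the form $\int (Z_a-Z)^{c_g+1}|f'|^2\gtrsim \int (Z_a-Z)^{c_g-1}|f|^2$, valid precisely because of the sign $c_g+1>0$ and the fact that the exponent on the right is the natural Hardy shift — upgrades the gradient bound to $\int \frac{|\Phi_k|^2}{Z_a-Z}gZ^{d-1}\,dZ\lesssim \la\la\Phi,\Phi\ra\ra$ (the cutoff term handles the region away from $Z_a$, where $g$ is bounded below and elliptic regularity is classical). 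Iterating Hardy downward in the number of derivatives then produces the full sum $\sum_{m=0}^{2k}$ with the slightly weaker weight $(Z_a-Z)^{-(1-\nu)}$; the loss of $\nu$ is the usual price for integrating Hardy from a controlled $L^2$ bound back to lower-order derivatives, and near the origin the mild singularity of $\sigma$ and the asymptotics $g=1+O(Z^2)$ cause no trouble.

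For \eqref{coercivityformula}, the mechanism is the classical one: \eqref{lowerboundhardy} shows that the identity map from $\Bbb H_\Phi$ (with norm $\la\la\cdot,\cdot\ra\ra$) into the weighted space $\mathcal K$ with norm $\big(\sum_{m=0}^{2k}\int |\pa_Z^m\Phi|^2 \frac{g}{(Z_a-Z)^{1-\nu}}Z^{d-1}\,dZ\big)^{1/2}$ is bounded, and in fact — because \eqref{lowerboundhardy} controls one more derivative's worth of regularity and a strictly stronger weight $(Z_a-Z)^{-1}$ on the top derivative $\Phi_k$ than appears in $\mathcal K$ — this embedding is \emph{compact} by Rellich–Kondrachov on each interior piece together with the extra vanishing at $Z_a$ supplied by the gap between the weights $(Z_a-Z)^{-1}$ and $(Z_a-Z)^{-(1-\nu)}$. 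Spectral theory for the compact self-adjoint operator associated to the quadratic form $\mathcal K(\Phi,\Phi)$ relative to $\la\la\cdot,\cdot\ra\ra$ then yields an orthonormal (in $\la\la\cdot,\cdot\ra\ra$) eigenbasis with eigenvalues $1/\mu_n\downarrow 0$; writing $\Phi=\sum c_i\Pi_i$ and splitting the sum at index $n$ gives
$$
\la\la\Phi,\Phi\ra\ra\ge \mu_n\Big(\mathcal K(\Phi,\Phi)-\sum_{i=1}^n c_i^2\Big)+\sum_{i=1}^n c_i^2,
$$
and absorbing the finitely many low modes $c_i=(\Phi,\Pi_i)_g\cdot(\text{const})$ into the last term of \eqref{coercivityformula} (using \eqref{lowerboundhardy} once more to retain the $c\int \frac{|\Phi_k|^2}{Z_a-Z}gZ^{d-1}$ piece) gives the stated inequality. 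The functions $\Pi_i$ are the eigenfunctions, pushed through the duality $(\cdot,\Pi_i)_g$.

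**Main obstacle.** The delicate point is verifying that $c_g$ is such that the endpoint Hardy inequality closes at \emph{every} derivative level $m=0,\dots,2k$ with a uniform constant and only an $O(\nu)$ loss — i.e. that the weight exponent $c_g+1$ coming from \eqref{asymptoticsg}, \eqref{estcg} stays strictly positive and in the admissible Hardy range after each downward integration, and that the lower-order commutator terms generated when one relates $\pa_Z^m\Phi$ to $\Phi_k=\Delta^k\Phi$ (which are the $\widetilde{\mathcal M}_k$-type terms, controlled by \eqref{contlmak}) can all be absorbed. This is exactly where the largeness of $k$ enters: by Lemma \ref{lemmaderivative}, $\mathcal G(Z_2)/\Lambda D_0(Z_2)>k$, so $c_g$ grows with $k$, giving plenty of room for the iterated Hardy estimates; making this bookkeeping precise — tracking how much weight is spent at each of the $2k$ steps versus how much $c_g$ provides — is the technical heart of the argument. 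The compactness step, by contrast, is routine once the embedding inequality \eqref{lowerboundhardy} with its strict weight gain is in hand.
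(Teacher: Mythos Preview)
Your overall plan matches the paper's: sharp Hardy at $Z_a$ for $\Phi_k$, interior elliptic regularity supplied by the $\chi$ term, then a compact embedding plus spectral decomposition for the subcoercivity. Two points differ from the paper, and one of them exposes a misconception.

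For the lower derivatives in \eqref{lowerboundhardy}, the paper does \emph{not} iterate Hardy $2k$ times. It fixes an interior point $Z_0<Z_a$, writes a single Taylor expansion of $\pa_Z^m\Phi(Z)$ at $Z_0$ to order $2k+1$, bounds the polynomial part by interior Sobolev, splits the remainder $\int_{Z_0}^Z|\pa_Z^{2k+1}\Phi|$ via Cauchy--Schwarz with weights $(Z_a-\tau)^{\pm(1-\nu)}$, and closes by Fubini with a self-absorbing $O(\delta)$ lower-order term. This argument needs only $c_g>0$ (condition \eqref{estcg}), not $c_g$ proportional to $k$. So your stated ``main obstacle''---tracking weight expenditure across $2k$ Hardy steps against the growth of $c_g$---does not arise; the largeness of $k$ plays no role in this lemma and enters only later in the accretivity estimate of Proposition~\ref{propaccretif}.

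For \eqref{coercivityformula}, the paper diagonalizes relative to $L^2_g$, not to your space $\mathcal K$. It shows $\Bbb H_\Phi\hookrightarrow L^2_g$ is compact, defines the Riesz operator $L$ on $L^2_g$ by $\la\la LT,h\ra\ra=(T,h)_g$, diagonalizes this compact positive self-adjoint operator with eigenvalues $\lambda_n\downarrow0$ and $L^2_g$-orthonormal eigenfunctions $\Pi_n$, and then combines the min--max bound $\la\la\Phi,\Phi\ra\ra\ge\lambda_{n+1}^{-1}\|\Phi\|_{L^2_g}^2$ on the orthogonal complement with a separate interpolation $\mathcal K(\Phi,\Phi)\le \varepsilon\la\la\Phi,\Phi\ra\ra+C_\varepsilon\|\Phi\|_{L^2_g}^2$. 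With this route the subtracted terms $(\Phi,\Pi_i)_g^2$ emerge exactly as stated. Your diagonalization on $\mathcal K$ produces instead coefficients $c_i=\la\la\Phi,\Pi_i\ra\ra$, and your claim that these equal $(\Phi,\Pi_i)_g$ up to a constant is unjustified: the $\la\la\cdot,\cdot\ra\ra$ form involves derivatives of $\Phi$ and is not continuous in $L^2_g$, so rewriting $\la\la\Phi,\Pi_i\ra\ra$ as an $L^2_g$ pairing would require additional regularity of $\Pi_i$ at $Z_a$ that you have not established.
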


\begin{proof} This is a classical Hardy and Sobolev based argument.\\

\noindent{\bf step 1} Interior estimate. Let $Z_0<Z_a$ which will be chosen close enough to $Z_a$ in step 2. Then, we have 
\bee
&&\int_0^{Z_0} \frac{|\Phi_k|^2}{Z_a-Z}gZ^{d-1}dZ+\sum_{m=0}^{2k} \int_0^{Z_0}  |\pa_Z^{m}\Phi(Z)|^2\frac{g}{(Z_a-Z)^{1-\nu}}Z^{d-1}dZ\\
&\leq& C_{Z_0}\|\Phi\|_{H^{2k}(0, Z_0)}^2\leq C_{Z_0}\left[ \int_0^{Z_0} |\pa_Z\Phi_k|^2Z^{d-1}dZ+ \int_0^{Z_0} \chi|\Phi(Z)|^2Z^{d-1}dZ\right].
\eee
Since $-Z^2D_a$ and $g$ are smooth and satisfy $-Z^2D_a>0$ and $g>0$ on $[0,Z_0]$, we infer
\bee
\la\la \Phi,\Phi\ra\ra\geq c_{Z_0}\left[ \int_0^{Z_0} \frac{|\Phi_k|^2}{Z_a-Z}gZ^{d-1}dZ+\sum_{m=0}^{2k} \int_0^{Z_0}  |\pa_Z^{m}\Phi(Z)|^2\frac{g}{(Z_a-Z)^{1-\nu}}Z^{d-1}dZ\right]
\eee
for some $c_{Z_0}>0$. Thus, to prove \eqref{lowerboundhardy}, it remains to consider the region $(Z_0,Z_a)$. This will be done in step 2 and step 3.\\

\noindent{\bf step 2} Hardy inequality with loss. Let $0<\nu \ll1$, we claim the lossy Hardy bound for all $\Phi\in \matchal D_\Phi$:
\be
\label{kenkenonevneovn}
\sum_{m=0}^{2k} \int_{Z_0}^{Z_a} |\pa_Z^{m}\Phi(Z)|^2\frac{g}{(Z_a-Z)^{1-\nu}}Z^{d-1}dZ\leq c_{\nu} \la\la \Phi,\Phi\ra\ra.
\ee
Indeed, let $Z_0=Z_a-\de$ with $\de>0$ small enough, we estimate by Taylor expansion for $Z_0\leq Z< Z_a$ for $0\leq m\leq 2k$:
\bee
&&|\pa_Z^{m}\Phi(Z)|^2\leq C\left[  \sum_{j=m}^{2k}\left|\pa_Z^{j}\Phi\left(Z_0\right)\right|^2+\left(\int_{Z_0}^Z\left|\pa_Z^{2k+1}\Phi(\tau)\right|d\tau\right)^2\right]
\eee
From Sobolev, 
\bee
\sum_{j=m}^{2k}\left|\pa_Z^{j}\Phi\left(Z_0\right)\right|^2\leq C_{Z_0}\|\Phi\|_{H^{2k+1}(0, Z_0)}^2\leq  C_{Z_0}\la\la \Phi,\Phi\ra\ra
\eee
and hence
\bee
&&|\pa_Z^{m}\Phi(Z)|^2 \leq C_{Z_0} \la\la \Phi,\Phi\ra\ra+C\left(\int_{Z_0}^Z\left|\pa_Z\Delta^k\Phi(\tau)\right|d\tau\right)^2+C\left(\int\sum_{j=1}^{2k}|\pa_Z^j \Phi|d\tau\right)^2\\
& \leq & C_{Z_0}\la\la \Phi,\Phi\ra\ra+C\left(\int_{Z_0}^Z|\pa_Z\Phi_k|^2(Z_a-Z)^{1-\nu}Z^{d-1}dZ\right)\left(\int_{Z_0}^Z\frac{d\tau}{(Z_a-\tau)^{1-\nu}}\right)\\
& + &C\sum_{j=1}^{2k}\left(\int_{Z_0}^{Z}\frac{|\pa_Z^j\Phi|^2}{(Z_a-Z)^{1-\nu}}dZ\right)\left(\int_{Z_0}^Z(Z_a-\tau)^{1-\nu}d\tau\right)\\
& \leq & C_{Z_0}\la\la \Phi,\Phi\ra\ra+C\de^\nu\int_{Z_0}^Z|\pa_Z\Phi_k|^2(Z_a-\tau)^{1-\nu}d\tau+C\de\sum_{j=m}^{2k}\left(\int_{Z_0}^Z\frac{|\pa_Z^j\Phi|^2}{(Z_a-\tau)^{1-\nu}}d\tau\right)
\eee
where we used the fact that $0<\nu<1$ and $Z_a-Z_0=\de$. Using again $0<\nu<1$ and $Z_a-Z_0=\de$, as well as Fubini and  the fact that $\pa_Zg<0$ on $(Z_0, Z_a)$ for $Z_0$ close enough to $Z_a$ so that $g$ is decreasing on $(Z_0, Z_a)$, we infer
\bee
&&\sum_{m=0}^{2k} \int_{Z_0}^{Z_a} |\pa_Z^{m}\Phi(Z)|^2\frac{g}{(Z_a-Z)^{1-\nu}}Z^{d-1}dZ\\
&\leq& C_{Z_0}\la\la \Phi,\Phi\ra\ra+C\de^\nu\int_{Z_0}^{Z_a}\frac{g}{(Z_a-Z)^{1-\nu}}\left(\int_{Z_0}^Z|\pa_Z\Phi_k|^2(Z_a-\tau)^{1-\nu}d\tau\right)Z^{d-1}dZ\\
&&+C\de\sum_{j=0}^{2k}\int_{Z_0}^{Z_a}\frac{g}{(Z_a-Z)^{1-\nu}}\left(\int_{Z_0}^Z\frac{|\pa_Z^j\Phi|^2}{(Z_a-\tau)^{1-\nu}}d\tau\right)Z^{d-1}dZ\\
&\leq&C_{Z_0}\la\la \Phi,\Phi\ra\ra+C\de^\nu\int_{Z_0}^{Z_a}|\pa_Z\Phi_k|^2g(\tau)(Z_a-\tau)^{1-\nu}\left(\int_\tau^{Z_a}\frac{dZ}{(Z_a-Z)^{1-\nu}}\right)\tau^{d-1}d\tau\\
&&+C\de\sum_{j=0}^{2k}\int_{Z_0}^{Z_a}\frac{|\pa_Z^j\Phi|^2g(\tau)}{(Z_a-\tau)^{1-\nu}}\left(\int_\tau^{Z_a}\frac{dZ}{(Z_a-Z)^{1-\nu}}\right)\tau^{d-1}d\tau\\
&\leq& C_{Z_0}\la\la \Phi,\Phi\ra\ra+C\de^\nu\int_{Z_0}^{Z_a}|\pa_Z\Phi_k|^2g(\tau)(Z_a-\tau)\tau^{d-1}d\tau+C\de\sum_{j=0}^{2k}\int_{Z_0}^{Z_a}\frac{|\pa_Z^j\Phi|^2g(\tau)}{(Z_a-\tau)^{1-\nu}}\tau^{d-1}d\tau.
\eee
Letting $\delta=\delta(\nu)$ small enough and estimating from \eqref{vneiovnneo}
\be
\label{lwoneenovnne}
-(D_a)(Z)\ge c(Z_a-Z)
\ee
 yields \eqref{kenkenonevneovn}.\\

\noindent{\bf step 3} Sharp Hardy. We now claim the sharp Hardy inequality for $f\in \mathcal D_\Phi$:
\be
\label{cneneonveoneo}
\la\la \Phi,\Phi\ra\ra \gtrsim \int_{Z_0}^{Z_a} \frac{|\Phi_k|^2}{(Z_a-Z)}gZ^{d-1}dZ.
\ee
Indeed, recall \eqref{asymptoticsg}, \eqref{estcg} near $Z_a$:
$$g(Z)=c(Z_a-Z)^{c_g}\left[1+O(Z-Z_a)\right],$$
then integrating by parts:
\bee
&& \int_{Z_0}^{Z_a} \frac{|\Phi_k|^2}{(Z_a-Z)}gZ^{d-1}dZ\lesssim \int_{Z_0}^{Z_a} |\Phi_k|^2(Z_a-Z)^{c_g-1}dZ\\
&=&-\frac{1}{c_g}[|\Phi_k|^2(Z_a-Z)^{c_g}]_{Z_0}^{Z_a}+\frac{1}{c_g}\int_{Z_0}^{Z_a}2\Phi_k\pa_Z\Phi_k(Z_a-Z)^{c_g}dZ\\
&\lesssim & |\Phi_k|^2(Z_0)+\left(\int |\Phi_k|^2(Z_a-Z)^{c_g-1}Z^{d-1}dZ\right)^{\frac 12}\left(\int |\pa_Z\Phi_k|^2(Z_a-Z)^{c_g+1}Z^{d-1}dZ\right)^{\frac 12}\\
&\lesssim &\la\la \Phi,\Phi\ra\ra+\left(\int_{Z_0}^{Z_a} \frac{|\Phi_k|^2}{(Z_a-Z)}gZ^{d-1}dZ\right)^{\frac 12}\left(\int |\pa_Z\Phi_k|^2g(-D_a)Z^{d-1}dZ\right)^{\frac 12}
\eee
where we used \eqref{lwoneenovnne}. The bound \eqref{cneneonveoneo} now follows using H\"older. Together with step 1 and step 2, this concludes the proof of  \eqref{lowerboundhardy}.\\

\noindent{\bf step 4} Compactness. We now turn to the proof of \eqref{coercivityformula} which follows from a standard compactness argument. Let us consider $T\in L^2_g$. Then from \eqref{lowerboundhardy}, the antilinear form $$h \mapsto ( T,h)_g,$$  is continuous on $\Bbb H_\Phi$, and hence by Riesz, there exists a unique $L(T)\in \Bbb H_\Phi$ such that 
\be
\label{vneioghenvenoene}
\forall h\in \Bbb H_\Phi, \ \ \la\la L(T), h\ra\ra =(T,h)_g,
\ee and the linear map $L$ is bounded from $L^2_g$ to $\Bbb H_\Phi$. For any $0<\de<Z_a$, we have in view of \eqref{lowerboundhardy}
$$
\|h\|_{L^2_g} \leq \de^{\frac{1-\nu}{2}}\left\|\frac{h}{(Z_a-Z)^{\frac{1-\nu}{2}}}\right\|_{L^2_g}+\|h\|_{L^2_g(Z\leq Z_a-\de)}\lesssim \de^{\frac{1-\nu}{2}}\left\|h\right\|_{\Bbb H_\Phi}+\|h\|_{L^2_g(Z\leq Z_a-\de)}.
$$
Relying on the smallness of $\de^{\frac{1-\nu}{2}}$ for the first term, and Rellich for the second one, we easily infer that 
\bea\label{eq:compactembeddingforHPsiinL2g}
\Bbb H_\Phi\textrm{ embeds compactly in }L^2_g.
\eea
Since $L$ is bounded from $L^2_g$ to $\Bbb H_\Phi$, we infer that the map $$L: L^2_g\mapsto L^2_g$$ 
is compact. Moreover, if $\Phi_1=L(T_1),$ $\Phi_2=L(T_2)$:
$$(L(T_1),T_2)_g=(\Phi_1, T_2)_g=\overline{(T_2,\Phi_1)_g}=\overline{\la\la LT_2,\Phi_1\ra\ra }=\la\la\Phi_1,\Phi_2\ra\ra $$ and hence interchanging the roles of $T_1,T_2$:
$$ (T_1,L(T_2))_g=\overline{(L(T_2),T_1)_g}=\overline{\la\la\Phi_2,\Phi_1\ra\ra }=\la\la\Phi_1,\Phi_2\ra\ra =(L(T_1),T_2)_g$$ 
and $L$ is selfadjoint on $L^2_g$. Since $L>0$ from \eqref{vneioghenvenoene}, we conclude that 
$L$ is a diagonalizable with a non increasing sequences of eigenvalues $\l_n>0$,
 $\lim_{n\to +_\infty}\l_n=0$, and let $(\Pi_{n,i})_{1\leq i\leq I(n)}$ be an $L^2_g$ orthonormal basis for the eigenvalue $\l_n$. The eigenvalue equation implies $\Pi_{n,i}\in \Bbb H_\Phi$.\\
 Let then $$\mathcal{A}_n=\left\{\Phi\in \Bbb H_\Phi, \ \ \int |\Phi|^2gZ^{d-1}dZ=1, \ \ (\Phi,\Pi_{j,i})_{g}=0, \ \ 1\leq i\leq I(j),  \ \ 1\leq j\leq n\right\}$$ and the minimization problem 
 $$I_n=\inf_{\Phi\in \mathcal{A}_n} \la\la \Phi,\Phi\ra\ra ,$$ 
 then the infimum is attained in view of  \eqref{eq:compactembeddingforHPsiinL2g} at $\Phi\in \mathcal A_n$ and, by a standard Lagrange multiplier argument:
$$\forall h\in \Bbb H_\Phi, \ \  \la\la \Phi,h\ra\ra =\sum_{j=1}^n\sum_{i=1}^{I(j)}\alpha_{i,j}(\Pi_{j,i},h)_g+\alpha (\Phi, h)_g.$$ Letting $h=\Pi_{i,j}$ implies $\alpha_{i,j}=0$ and hence from \eqref{vneioghenvenoene}: $$L(\Phi)=\frac{1}{\alpha} \Phi$$ which together with our orthogonality conditions implies $$\frac{1}{\alpha}\leq \l_{n+1}$$ and hence 
\be
\label{neiovneonevneo}
I_n=\la\la \Phi,\Phi\ra\ra =\alpha \la\la L(\Phi),\Phi\ra\ra =\alpha (\Phi,\Phi)_g=\alpha\geq \frac{1}{\l_{n+1}}.
\ee 
Also, for $Z_0=Z_a-\de$ with $\de>0$ small enough, we estimate from \eqref{kenkenonevneovn}
$$
\sum_{m=0}^{2k} \int_{Z_0}^{Z_a} |\pa_Z^{m}\Phi(Z)|^2\frac{g}{(Z_a-Z)^{1-\nu}}Z^{d-1}dZ \leq c_\nu\de^{\frac{\nu}{2}}\la\la \Phi,\Phi\ra\ra.
$$
On the other hand, from Rellich and an elementary compactness argument, for all $Z_a>0$, $\delta>0$, $\epsilon>0$, $k\geq 1$, there exists $c_{Z_a,\delta,\epsilon,k}>0$ such that 
$$\sum_{m=0}^{2k}\int_{Z\leq Z_a-\de}|\pa_Z^m\Phi|^2Z^{d-1}dZ\leq \epsilon \int_{Z\leq Z_a-\de}|\pa_Z\Delta^k\Phi|^2Z^{d-1}dZ+c_{Z_a,\delta,\epsilon,k} \int_{Z\leq Z_a-\de}|\Phi|^2Z^{d-1}dZ.$$
Summing the two inequalities yields for all $\de>0$ small and $\epsilon$ smaller still:
$$
\sum_{m=0}^{2k} \int_0^{Z_a} |\pa_Z^{m}\Phi(Z)|^2\frac{g}{(Z_a-Z)^{1-\nu}}Z^{d-1}dZ \le c_{\nu}\de^{\frac{\nu}{2}}\la\la \Phi,\Phi\ra\ra+\tilde c_{Z_a,\delta,k} \int_0^{Z_a}|\Phi|^2gZ^{d-1}dZ.
$$
Together with \eqref{neiovneonevneo}, this implies for any $\Phi$ satisfying the orthogonality conditions $(\Phi,\Pi_{j,i})_{g}=0, \ \ 1\leq i\leq I(j),  \ \ 1\leq j\leq n$, and for any $\de>0$
$$
\sum_{m=0}^{2k} \int_0^{Z_a} |\pa_Z^{m}\Phi(Z)|^2\frac{g}{(Z_a-Z)^{1-\nu}}Z^{d-1}dZ \leq \Big(c_\nu\de^{\frac{\nu}{2}}+\tilde c_{\delta,Z_a,k}\l_{n+1}\Big)\la\la\Phi, \Phi\ra\ra
$$
which yields \eqref{coercivityformula}.
\end{proof}


\subsection{Accretivity}


We now turn to the proof of the accretivity of the operator $\mathcal M$.\\

\noindent\underline{Hilbert space}. Recall \eqref{defscalarproductbis}. We define the space of test functions
$$\mathcal D_0=\mathcal D_\Phi\times C_{\rm radial}^\infty([0,Z_a],\Bbb C),$$
and let ${\Bbb H}_{2k}$
 be the completion of $\mathcal D_0$ for the scalar product:
\be
\label{defscalarproduct}
 \la X,\tilde{X}\ra = \la\la\Phi, \Phi\ra\ra+(T_k, \tilde{T}_k)_g+\int \chi T\overline{\tilde{T}}Z^{d-1}dZ
\ee
which is a coercive Hermitian form from \eqref{lowerboundhardy}.\\

\noindent\underline{Unbounded operator}. Following \eqref{defiiotinm}, we define the operator  $$\ \ \mathcal M=\left(\begin{array}{ll} -aH_2\Lambda & 1\\ (p-1)Q\Delta -(1-a)^2H_2^2\Lambda^2+\tilde{A_2}\Lambda +A_3& -(2-a)H_2\Lambda +A_2\end{array}\right)$$
  with domain 
\be
\label{defintiondoamin}
D(\mathcal M)=\{X\in \Bbb H_{2k}, \ \ \mathcal M X\in \Bbb H_{2k}\}
\ee 
equipped with the domain norm. We then pick suitable directions $(X_i)_{1\leq i\leq N}\in \Bbb H_{2k}$ and consider the finite rank projection operator
$$\mathcal A=\sum_{i=1}^N\la \cdot,X_i\ra X_i.$$
The aim of this section is to prove the following accretivity property:

\begin{proposition}[Maximal accretivity/dissipativity]
\label{propaccretif}
Let $$\mu, r>0.$$
There exist $k^*\gg 1$ and $0<c^*,a^*\ll 1$ such that for all $k\ge k^*$,  $\forall 0<a<a^*$ small enough, there exist $N=N(k,a)$ directions $(X_i)_{1\leq i\leq N}\in \Bbb H_{2k}$ such that the modified unbounded operator 
$$ \ \ \tilde{\mathcal M}=\mathcal M - \mathcal A$$
is dissipative\footnote{Equivalently, $-\tilde{\mathcal M}$ is accretive.}:
\be
\label{accretivityalmost}
\forall X\in \matchal D(\matchal M), \ \ \Re\la -\tilde{\matchal M} X,X\ra \geq c^*ak \la X,X\ra
\ee
and maximal:
\be
\label{estmaximal}
\forall R>0, \ \ \forall F\in \Bbb H_{2k}, \ \ \exists X\in \matchal D(\matchal M)\ \ \mbox{such that} \ \ (-\tilde{\matchal M}+R)X=F.
\ee
\end{proposition}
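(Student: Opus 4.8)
\emph{Overview and dissipativity.} The plan is to prove \eqref{accretivityalmost} by a weighted energy computation carried out after commuting with $\Delta^k$ as in Lemma~\ref{lemmaderivative}, and to deduce \eqref{estmaximal} from \eqref{accretivityalmost} via the Lumer--Phillips theorem. For dissipativity, write $X=(\Phi,T)$ and use Lemma~\ref{lemmaderivative} to replace $\Delta^k(\mathcal M X)$ by $\mathcal M_k(\Phi_k,T_k)+\widetilde{\mathcal M}_kX$, the error $\widetilde{\mathcal M}_kX$ being lower order in the sense of \eqref{contlmak}. Expanding $\Re\langle\mathcal M X,X\rangle$ for the scalar product \eqref{defscalarproduct}, the first structural point is that the two ``wave'' couplings --- $-(\mathcal L_gT_k,\Phi_k)_g$ coming from the $T$-entry of the $\Phi$-equation and $(\mathcal L_g\Phi_k,T_k)_g$ coming from the $\Delta^k$-commuted $\Phi$-entry of the $T$-equation --- cancel in the real part because $\mathcal L_g$ is self-adjoint on $L^2_g$. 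One then integrates by parts every transport term $H_2\Lambda$ (and the leftover $\mathcal L_g$ terms paired against the commutator potentials): since $g(Z)\sim(Z_a-Z)^{c_g}$ with $c_g>0$ for $k\ge k_1$ by \eqref{estcg}, and $-D_a$ vanishes only at $Z_a$ and linearly there, all boundary contributions at $Z=Z_a$ vanish, and those at $Z=0$ vanish by smoothness and $Z^{d-1}\to0$. The leading positive contributions to $\Re\langle-\mathcal M X,X\rangle$ are then supplied by the commutator coefficients of $\mathcal M_k$: the term $-2k(2-a)(H_2+\Lambda H_2)T_k$ produces $2k(2-a)\int(H_2+\Lambda H_2)|T_k|^2gZ^{d-1}dZ$, and $-2ak(H_2+\Lambda H_2)\Phi_k$ produces, after one further integration by parts, $2ak\mu^2\int Z^{d+1}g(-D_a)(H_2+\Lambda H_2)|\pa_Z\Phi_k|^2dZ$ up to lower order. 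Here the decisive input is the repulsivity \eqref{coercivityquadrcouplinginside}, \eqref{P}, which yields $H_2+\Lambda H_2=\mu(1-w-\Lambda w)\ge c\mu>0$ on $[0,Z_a]$ (the bound extending from $[0,Z_2]$ by continuity for $a$ small); together with the sharp Hardy inequality \eqref{cneneonveoneo}, these terms control $c\mu\,k\|T_k\|^2_{L^2_g}$ and $c\mu\,ak\langle\langle\Phi,\Phi\rangle\rangle$.

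\emph{Absorbing the errors.} All remaining contributions --- the $O(a)$ part of the $-aH_2\Lambda\Phi$ transport, the $O(1)$ parts from the $H_2\Lambda T$, $A_2$, $A_3$ terms, the cut-off terms built from $\chi$, and the full $\widetilde{\mathcal M}_kX$ --- are bounded, via the subcoercivity and compactness estimates \eqref{lowerboundhardy}, \eqref{coercivityformula} and \eqref{cneneonveoneo}, by $C\big(\mu_n^{-1}\langle X,X\rangle+\sum_{i=1}^n(\Phi,\Pi_i)_g^2\big)$ for every $n$. One then fixes, in order: $k\ge k^*$ so large that $kc\mu$ beats all the $O(1)$ constants appearing above and $c_g>0$ holds; $0<a<a^*$ small; $n$ so large that $Cka/\mu_n\le\tfrac12 c\mu ka$; and finally the $N=n$ directions $X_i:=\sqrt{\Lambda_i}\,(L\Pi_i,0)\in\Bbb H_{2k}$, where $L$ is the operator of \eqref{vneioghenvenoene} so that $\langle X,X_i\rangle=\sqrt{\Lambda_i}\,(\Phi,\Pi_i)_g$, with the weights $\Lambda_i$ chosen large enough that $\Re\langle\mathcal A X,X\rangle=\sum_i\Lambda_i|(\Phi,\Pi_i)_g|^2$ absorbs the last sum. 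Since, up to this same finite-rank correction, $\langle X,X\rangle$ is controlled by $\langle\langle\Phi,\Phi\rangle\rangle+\|T_k\|^2_{L^2_g}$ by the coercivity built into \eqref{defscalarproduct}, this gives \eqref{accretivityalmost} with $c^*ak$ on the right.

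\emph{Maximality.} As $\tilde{\mathcal M}=\mathcal M-\mathcal A$ is dissipative by \eqref{accretivityalmost} and densely defined, the Lumer--Phillips theorem reduces \eqref{estmaximal} to showing $\mathrm{Range}(R_0-\tilde{\mathcal M})=\Bbb H_{2k}$ for a single $R_0>0$, after which it holds for all $R>0$ with $\|(R-\tilde{\mathcal M})^{-1}\|\le R^{-1}$. Since $\mathcal A$ is bounded and $\Re\langle-\mathcal M X,X\rangle\ge-C(k,a)\langle X,X\rangle$, it is enough to invert $R_0-\mathcal M$ for one $R_0>C(k,a)$ and to absorb the $\mathcal A$-correction by a Neumann series. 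I would do this by regularizing the degeneracy at $Z_a$ --- replacing $-D_a$ by $-D_a+\varepsilon$ in $\mathcal L_g$ and $g$ by its nondegenerate analogue --- so that the perturbed operator $\mathcal M^\varepsilon$ has nondegenerate principal part on $[0,Z_a]$; the equation $(R_0-\mathcal M^\varepsilon)X^\varepsilon=F$ is then solved by a Galerkin scheme in $\Bbb H_{2k}$, the finite-dimensional problems being solvable because the $\varepsilon$-analogue of the energy identity above (which loses only $O(\varepsilon)$ and $O(1)$, absorbed into $R_0$) is coercive; the $\varepsilon$-uniform bound $\|X^\varepsilon\|_{\Bbb H_{2k}}\lesssim\|F\|_{\Bbb H_{2k}}$ lets one pass to a weak limit $X^\varepsilon\rightharpoonup X$ as $\varepsilon\to0$, and the degenerate weighted norms survive in the limit, so that $X\in D(\mathcal M)$ and $(R_0-\mathcal M)X=F$.

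\emph{Main obstacle.} The delicate part is the error bookkeeping in the dissipativity computation near the degeneracy $Z=Z_a$: controlling the many commutator and cut-off terms there requires the joint use of the quantitative gain $ak$, the Hardy inequalities \eqref{cneneonveoneo}, \eqref{kenkenonevneovn}, and the compactness estimate \eqref{coercivityformula} with its finite-rank correction, organized so that the orders in $k$ and the smallness in $a$ and $\mu_n^{-1}$ make the estimate close. One must also keep track of the fact that the coercivity constant $c$ in \eqref{coercivityquadrcouplinginside} degenerates as $r\to r^*$, so that $k^*$ --- hence $c_g$ and the whole construction --- depends inversely on $c=c(d,\ell,r)$; this is the anomalous scaling of the problem. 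A secondary point is that the regularization used for maximality must preserve the coercivity of the energy identity uniformly in $\varepsilon$.
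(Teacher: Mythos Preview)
Your dissipativity sketch has the right architecture but misidentifies which terms are leading. You claim that integrating by parts the transport terms $-(2-a)H_2\Lambda T_k$ and $-aH_2\Lambda\Phi_k$ produces only ``$O(1)$'' and ``$O(a)$'' contributions respectively. This is wrong: the IBP against the measure $g$ produces factors $\tfrac{\Lambda g}{g}$, and since $g$ was constructed in Lemma~\ref{lemmaderivative} via the equation $(-D_a)\tfrac{\Lambda g}{g}=-\mathcal G$ with $\mathcal G$ containing an explicit $4k$ term (see \eqref{formulag}), one has $\tfrac{\Lambda g}{g}=O(k)$ generically on $(0,Z_a)$ and of order $k/(Z_a-Z)$ near $Z_a$. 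Hence the transport IBP contributes at the \emph{same} order $k$ (resp.\ $ak$) as your claimed leading terms, and you cannot treat it as an error. The paper does not separate these: it combines both $O(k)$ contributions into the single coefficients $A_5,A_6$ and proves $A_5,A_6\ge c^*k/(Z_a-Z)$. The key algebra (see \eqref{lowerbounda5asix} and the lines following) collapses the sum to $\tfrac{2k\sigma^2}{(-D_a)(1-w)}\bigl[1-w-\Lambda w-\tfrac{(1-w)F}{\sigma}\bigr]$, whose positivity requires the \emph{second} repulsivity inequality in \eqref{coercivityquadrcouplinginside}, not merely $1-w-\Lambda w>c$ as you invoke. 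With only the first inequality you cannot conclude.

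Your maximality argument also misses the main obstruction. You propose to regularize the degeneracy at $Z_a$ and run a Galerkin scheme. But the paper shows (step~4) that after eliminating $T$ via $T=(aH_2\Lambda+R)\Phi-F_\Phi$, the resulting scalar ODE for $\Phi$ has principal part $(p-1)Q\Delta-H_2^2\Lambda^2$, whose coefficient vanishes at $Z=Z_2$, \emph{not} $Z_a$ (the $a$-dependence cancels: $(1-a)^2+a(2-a)=1$). This interior regular-singular point is the heart of the matter: generic solutions behave like $(Z_2-Z)^{-1/c_R}$ with $c_R\sim 1/R$, which is not in $\Bbb H_{2k}$. The paper constructs explicit bases $\Psi_1^\pm,\Psi_2^\pm$ on either side of $Z_2$ via the change of variables \eqref{cneineneonmmoruirt}, matches their asymptotic expansions to order $\sqrt R$ using the $\mathcal C^\infty$ regularity of the profile at $P_2$, and thereby singles out the unique $C^{\sqrt R}$ solution. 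Your $\varepsilon$-regularization of $-D_a$ does not touch this singularity, and a Galerkin scheme on $[0,Z_a]$ would additionally require boundary conditions at $Z_a$ that the degenerate setup does not provide. Even if a soft approach could be made to work, the mechanism selecting the regular branch at $Z_2$ must be made explicit.
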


\begin{remark} We recall that maximal dissipative operators are closed.
\end{remark}

\begin{proof}[Proof of Proposition \ref{propaccretif}] given $R>R^*(k)$ large enough, we define the space of test functions 
\bea
\label{def:moreregulardomainDARdenseinDM}
\nonumber\mathcal{D}_{R} &:=& \Big\{X=(\Phi, T), \ \ X\in C^{\frac{\sqrt{R}}{2}}([0,Z_a])\times C^{\frac{\sqrt{R}}{2}}([0,Z_a])\Big\}\\
&&\cap\Big\{X \ \ / \ \ (-\mathcal{M}+R)X\in C^\infty([0,Z_a])\times C^\infty([0,Z_a])\Big\}.
\eea
In steps 1 to 3 below, we prove \eqref{accretivityalmost} for $X\in \mathcal D_R$  so that all integrations by parts in steps 1 to 3 are justified, and all boundary terms at $Z=Z_a$ vanish due to the vanishing of $g$ at $Z=Z_a$. In steps 4 and 5, for any smooth $F$ on $[0,Z_a]$, we show existence and uniqueness of a solution $X\in\mathbb{H}_{2k}$ to $(-\mathcal{M}+R)X=F$ for $R>R^*(k)$ large enough. In step 6, we prove that $\mathcal{D}_R$ is dense in $D(\mathcal M)$. In step 7, we conclude the proof of \eqref{accretivityalmost} and \eqref{estmaximal}.

\vspace{0.3cm}

\noindent{\bf step 1} Main integration by parts. Let $X\in \mathcal D_R$ for $R>R^*(k)$ large enough. We aim at proving \eqref{accretivityalmost} and split the computation in two:
$$
\left|\begin{array}{lll}
\displaystyle\la X,\tilde{X}\ra_1=-(\mathcal L_g\Phi_k,\Phi_{k})_g+(T_k,\tilde{T}_k)_g,\\[2mm]
\displaystyle\la X,\tilde{X}\ra_3=\int \chi \Phi\overline{ \tilde{\Phi}} +\int \chi T\overline{\tilde{T}}.
\end{array}\right.
$$
In step 1, we consider the principal part. We compute from \eqref{commutationderivative}:
\bee
&&-\Re\la \mathcal MX,X\ra_1=\Re( \mathcal L_g\Delta ^k(\mathcal MX)_\Phi,\Phi_k)_g-\Re(\Delta ^k(\mathcal MX)_T,T_k)_g\\
&= & -\Re\left\{\int \nabla\left[-aH_2\Lambda \Phi_k-2ak(H_2+\Lambda H_2)\Phi_k+ T_k+(\widetilde{\matchal M_k}X)_\Phi\right]\cdot \overline{\nabla \Phi_k} Z^2(-D_a)Z^{d-1}\mu^2gdZ \right\}\\
& -& \Re\left\{\int \left[\matchal L_g \Phi_k-(2-a)H_2\Lambda T_k-2k(2-a)(H_2+\Lambda H_2) T_k +A_2T_k +(\widetilde{\matchal M_k}X)_T\right]\overline{T_k}gdZ\right\}\\
& = & -\Re\left\{\int \nabla\left[-aH_2\Lambda \Phi_k-2ak(H_2+\Lambda H_2)\Phi_k+(\widetilde{\matchal M_k}X)_\Phi\right]\cdot \overline{\nabla \Phi_k} Z^2(-D_a)Z^{d-1}g\mu^2dZ \right\}\\
& -& \Re\left\{\int \left[-(2-a)H_2\Lambda T_k-2k(2-a)(H_2+\Lambda H_2) T_k +A_2T_k +(\widetilde{\matchal M_k}X)_T\right]\overline{T_k}gdZ\right\}.
\eee
\noindent\underline{\em $T_k$ terms}. We use 
$$\Re\left(\int fh \overline{\Lambda h}\right)=-\frac 12\int |h|^2 f\left(d+\frac{\Lambda f}f\right)$$ to compute
\bee
-\Re\left\{\int \left[-(2-a)H_2\Lambda T_k\right]\overline{T_k}gdZ\right\}=-\frac{2-a}{2}\int |T_k|^2gH_2\left(d+\frac{\Lambda g}{g}+\frac{\Lambda H_2}{H_2}\right)
\eee
and hence
\bee
- \Re\left\{\int \left[-(2-a)H_2\Lambda T_k-2k(2-a)(H_2+\Lambda H_2) T_k +A_2T_k\right]\overline{T_k}gdZ\right\}=(2-a)\int A_5 H_2|T_k|^2gdZ
\eee
with
\be
\label{formulaa5}
A_5:=  -\frac{1}{2}\left[d+\frac{\Lambda g}{g}+\frac{\Lambda H_2}{H_2}\right]+2k\left(1+\frac{\Lambda H_2}{H_2}\right) -\frac{A_2}{(2-a)H_2}.
\ee

\noindent\underline{\em $\Phi_k$ terms}. We first compute:
\bee
&&-\Re\left\{\int \nabla\left[-2ak(H_2+\Lambda H_2)\Phi_k\right]\cdot \overline{\nabla \Phi_k} Z^2(-D_a)Z^{d-1}gdZ \right\}\\
& =&  2ak\int (H_2+\Lamdba H_2)|\nabla \Phi_k|^2Z^2(-D_a)Z^{d-1}gdZ\\
&+& 2ak\Re\left\{\int \Phi_k\nabla(H_2+\Lambda H_2)\cdot \overline{\nabla \Phi_k} Z^2(-D_a)Z^{d-1}gdZ \right\}\\
& = & 2ak\int (H_2+\Lamdba H_2)|\nabla \Phi_k|^2Z^2(-D_a)Z^{d-1}gdZ\\
& - & ak\int |\Phi_k|^2\nabla \cdot \left( Z^2(-D_a)\nabla(H_2+\Lambda H_2) g\right)Z^{d-1}dZ.
\eee

For the second term:
\bee
 &&-\Re\left\{\int \nabla\left[-aH_2\Lambda \Phi_k\right]\cdot \overline{\nabla \Phi_k} Z^2(-D_a)Z^{d-1}gdZ\right\}= -a\Re\left\{\int \pa_Z(H_2\Lambda \Phi_k)H_2\overline{\Lambda \Phi_k} \frac{D_aZ^{d}}{H_2} g dZ\right\}\\
 & = & \frac a2\int |H_2\Lambda \Phi_k|^2\frac{D_aZ^{d}g}{H_2}\left(\frac{\pa_ZD_a}{D_a}+\frac{d}{Z}-\frac{\pa_ZH_2}{H_2}+\frac{\pa_Z g}{g}\right)dZ\\
 & = & -\frac a2\int |\pa_Z\Phi_k|^2H_2\left(\frac{\Lambda D_a}{D_a}+d-\frac{\Lambda H_2}{H_2}+\frac{\Lambda g}{g}\right)(-D_a) gZ^2Z^{d-1}dZ.
 \eee
  We have therefore obtained the formula: 
 \bea
 \label{cnevneoenoevonoe}
\nonumber&& -\Re\la \mathcal MX,X\ra_1=   (2-a)\int A_5H_2|T_k|^2 g+\mu^2a\int |\nabla \Phi_k|^2A_6Z^2(-D_a)Z^{d-1}gdZ\\
& - &  \mu^2ak\int |\Phi_k|^2\nabla \cdot \left( Z^2(-D_a)\nabla(H_2+\Lambda H_2) g \right)Z^{d-1}dZ\\
\nonumber&- &  \mu^2\Re\left\{\int \nabla(\widetilde{\matchal M_k}X)_\Phi\cdot \overline{\nabla \Phi_k} Z^2(-D_a)Z^{d-1}gdZ\right\}-  \Re\left\{\int (\widetilde{\matchal M_k}X)_T\overline{T_k}gdZ\right\}
 \eea
 where we have defined
 \bee
 A_6 = 2k(H_2+\Lambda H_2)-\frac{H_2}{2}\left(\frac{\Lambda D_a}{D_a}+d-\frac{\Lambda H_2}{H_2}+\frac{\Lambda g}{g}\right).
 \eee
 We now claim the following lower bounds on $A_5,A_6$: there exist universal constants $k^*\gg 1$, $0<c^*,a^*\ll 1$ such that for all $k\ge k^*$ and $0<a<a^*$, 
\be
\label{lowerbounda5asix}
\forall 0\le Z\le Z_1, \ \ \left|\begin{array}{l}
A_5 \ge\frac{c^*k}{Z_a-Z}\\
A_6 \ge \frac{c^*k}{Z_a-Z} 
\end{array}\right.
\ee
\noindent{\em Proof of \eqref{lowerbounda5asix}}. Recall \eqref{equationforthemeasure}, \eqref{formulag}:
\bee
&&-\frac{\Lambda g}{g}=\frac{1}{(-D_a)}\Big\{-(d-1)\sigma^2+(1-w)^2-(d+1)D_a-\Lambda D_a\\
\nonumber && +4k\left[(1-a)^2(1-w) -\sigma F-(1-a)^2(1-w)(w+\Lambda w)\right]-\frac{\tilde{A}_2}{\mu^2}\Big\}\\
& = & \frac{4k}{(-D_a)}\left[(1-w)-\sigma F-(1-w)(w+\Lambda w)+O\left(a+\frac{1}{k}\right)\right]
\eee

and hence from \eqref{formulaa5}:
\bee
A_5&=&  -\frac{1}{2}\left[d+\frac{\Lambda g}{g}+\frac{\Lambda H_2}{H_2}\right]+2k\left(1+\frac{\Lambda H_2}{H_2}\right) -\frac{A_2}{(2-a)H_2}\\
& = & \frac{2k}{(-D_a)}\left[(1-w)-\sigma F-(1-w)(w+\Lambda w)+O\left(a+\frac{1}{k}\right)\right]+2k\left(1-\frac{\Lamdba w}{1-w}+O\left(\frac{1}{k}\right)\right)\\
& = & \frac{2k}{(-D_a)}\left[(1-w)-\sigma F-(1-w)(w+\Lambda w)+(-D_a)\left(1-\frac{\Lamdba w}{1-w}\right)+O\left(a+\frac{1}{k}\right)\right]\\
& = & \frac{2k}{(-D_a)}\left[(1-w)-\sigma F-(1-w)(w+\Lambda w)+(-\Delta)\left(1-\frac{\Lamdba w}{1-w}\right)+O\left(a+\frac{1}{k}\right)\right].
\eee
We now compute for $Z\le Z_2$
\bee
&&(1-w)-\sigma F-(1-w)(w+\Lambda w)+(-\Delta)\left(1-\frac{\Lamdba w}{1-w}\right)\\
& = & (1-w)(1-w-\Lambda w)-\sigma F+(\sigma^2-(1-w)^2)\frac{1-w-\Lambda w}{1-w}\\
& = & \frac{\sigma^2(1-w-\Lambda w)}{1-w}-\sigma F=  \frac{\sigma^2}{1-w}\left[1-w-\Lambda w-\frac{1-w}{\sigma}F\right]\\
& \ge & c\sigma^2
\eee
from the fundamental coercivity bound \eqref{coercivityquadrcouplinginside}, and hence for $Z\le Z_a$ and $a<a^*$ small enough:
$$A_5\ge \frac{kc\sigma^2}{(-D_a)}\ge\frac{kc^*}{Z_a-Z}$$
for some $c^*$ independent of $k,a$. Similarly:
\bee
A_6& =& \frac{2kH_2}{-D_a}\left\{\left[1+\frac{\Lambda H_2}{H_2}+O\left(\frac 1k\right)\right](-D_a)+(1-w)-\sigma F-(1-w)(w+\Lambda w)+O\left(a+\frac{1}{k}\right)\right\}\\
& = & \frac{2k\mu(1-w)}{(-D_a)}\left[(1-w)-\sigma F-(1-w)(w+\Lambda w)+(-\Delta)\left(1-\frac{\Lamdba w}{1-w}\right)+O\left(a+\frac{1}{k}\right)\right]\\
& \ge & \frac{kc^*}{Z_a-Z}
\eee
arguing as above. This concludes the proof of \eqref{lowerbounda5asix}.\\
 
\noindent{\bf step 2} No derivatives term. We compute
\bea
\label{poeutinieog}
\nonumber &&-\Re\la \mathcal MX,X\ra_3=-\Re\left\{\int\chi(\mathcal MX)_\Phi\overline{\Phi}Z^{d-1}dZ+\int \chi (\mathcal MX)_T\overline{T}Z^{d-1}dZ\right\}\\
\nonumber& = & -\Re\left\{\int \chi\left[-aH_2\Lambda \Phi+T\right]\overline{\Phi}Z^{d-1}dZ\right\}\\
\nonumber&-&\Re\left\{\int\chi\left[(p-1)Q\Delta \Phi-(1-a)^2H_2^2\Lambda^2\Phi+\tilde{A_2}\Lambda \Phi +A_3\Phi -(2-a)H_2\Lambda T +A_2T\right]\overline{T}\right\}\\
& =& O\left(\int (\chi+|\Lambda \chi|)\Big(|\Phi|^2+|\pr_Z\Phi|^2+|\pr_Z^2\Phi|^2+|T|^2\Big)\right).
\eea

\noindent{\bf step 3} Accretivity in $\mathcal D_0$. We compute from \eqref{poeutinieog}, \eqref{cnevneoenoevonoe}:
\bee
&&-\Re\la \matchal MX,X\ra = -\Re\la \matchal MX,X\ra_1 -\Re\la \matchal MX,X\ra_3\\
& = & (2-a)\int A_5H_2|T_k|^2 g+\mu^2a\int |\nabla \Phi_k|^2A_6Z^2(-D_a)Z^{d-1}gdZ\\
& - &  \mu^2ak\int |\Phi_k|^2\nabla \cdot \left( Z^2(-D_a)\nabla(H_2+\Lambda H_2) g \right)Z^{d-1}dZ\\
\nonumber&- &  \mu^2\Re\left\{\int \nabla(\widetilde{\matchal M_k}X)_\Phi\cdot \overline{\nabla \Phi_k} Z^2(-D_a)Z^{d-1}gdZ\right\}-  \Re\left\{\int (\widetilde{\matchal M_k}X)_T\overline{T_k}gdZ\right\}\\
& + & O\left(\int (\chi+|\Lambda \chi|)\Big(|\Phi|^2+|\pr_Z\Phi|^2+|\pr_Z^2\Phi|^2+|T|^2\Big)\right).
\eee
We lower bound from \eqref{lowerbounda5asix}:
\bee
&&(2-a)\int A_5H_2|T_k|^2 g+\mu^2a\int |\nabla \Phi_k|^2A_6Z^2(-D_a)Z^{d-1}gdZ\\
&\ge & c^*ak\left[\int \frac{|T_k|^2}{Z_a-Z}+|\nabla \Phi_k|^2\frac{Z^2(-D_a)}{Z_a-Z}gZ^{d-1}dZ\right]
\eee
The smoothness and boundedness of the profile together with \eqref{equationforthemeasure}, \eqref{formulag} ensure
that
$$
\left|\nabla \cdot \left[ Z^2(-D_a)\nabla(H_2+\Lambda H_2)g \right]\right| \le C_k\frac{Z^2(-D_a)g}{Z_a-Z}\le C_k g$$
and in view of  \eqref{contlmak},
\bee
\nonumber&&\Bigg| -  \Re\left\{\int \nabla(\widetilde{\matchal M_k}X)_\Phi\cdot \overline{\nabla \Phi_k} Z^2(-D_a)Z^{d-1}gdZ\right\}-  \Re\left\{\int (\widetilde{\matchal M_k}X)_T\overline{T_k}gdZ\right\}
\\
&\leq& C_k\left(\int |\nabla \Phi_k|^2Z^2(-D_a)gZ^{d-1}dZ\right)^\frac{1}{2}\left(\sum_{j=0}^{2k}\int |\pa_Z^j\Phi|^2gZ^{d-1}dZ\right)^{\frac{1}{2}}\\
&+&C_k\left(\int |T_k|^2gZ^{d-1}dZ\right)^\frac{1}{2}\left[\left(\sum_{j=0}^{2k-1}\int |\pa^j_ZT|^2gZ^{d-1}dZ\right)^{\frac{1}{2}}+\left(\sum_{j=0}^{2k}\int |\pa_Z^j\Phi|^2gZ^{d-1}dZ\right)^{\frac{1}{2}}\right]
\eee
The collection of above bounds yields:
\bee
&&-\Re\la \matchal MX,X\ra\ge   c^*ak\left[\int \frac{|T_k|^2}{Z_a-Z}gZ^{d-1}dZ+\int |\nabla \Phi_k|^2\frac{Z^2(-D_a)}{Z_a-Z}gZ^{d-1}dZ\right]\\
& - & C_k\left[\sum_{j=0}^{2k}\int |\pa_Z^j\Phi|^2gZ^{d-1}dZ+\sum_{j=0}^{2k-1}\int |\pa^j_ZT|^2gZ^{d-1}dZ\right].
\eee
We conclude using \eqref{coercivityformula} with $N=N(a,k)$ large enough and its analogue for $T$:
$$
-\Re\la \mathcal MX,X\ra\geq c^*ak\la X,X\ra-C_{a,k}\sum_{i=1}^N\Big((\Phi,\Pi_i)^2_g+(T,\mathcal T_i)_g^2\Big).$$ 
Therefore,
$$-\Re\la ({\mathcal{M}}-\mathcal A)X,X\ra\geq c^*ak\la X,X\ra+ \sum_{i=1}^{N}\Big(\la X,X_{i,1}\ra^2+\la X,X_{i,2}\ra^2-C_{a,k}\left[(\Phi,\Pi_i)^2_g+(T,\mathcal T_i)_g^2\right]\Big).
 $$
 The linear from $$X=(\Phi,T)\mapsto \sqrt{C_{a,k}}(\Phi,\Pi_i)_g$$ from $(\Bbb H_{2k},\la \cdot\ra)$ into $\Bbb C$ is continuous from Cauchy-Schwarz and \eqref{lowerboundhardy}, and hence by Riesz theorem, there exists $X_i\in \Bbb H_{2k}$ such that $$\forall X\in \Bbb H_{2k}, \ \ \la X,X_{i}\ra=(\Phi,\Pi_i)_g,$$ and similarly for $\mathcal T_i$, and \eqref{accretivityalmost} follows for $X\in \mathcal D_R$.\\
 
\noindent{\bf step 4} ODE formulation of maximality. Our goal, in steps 4 to step 6, is to prove that forall $R>0$ large enough,
\be\label{nnneone:0}
\forall F\in C^{\infty}([0,Z_a]), \ \ \exists!\, X\in\mathbb{H}_{2k}\ \ \mbox{such that} \ \ (-\matchal M+ R)X=F.
\ee
\eqref{nnneone:0} corresponds to solving 
$$
\left|\begin{array}{ll}
-\left[-aH_2\Lambda\right]\Phi-T+R\Phi=F_\Phi,\\ [2mm]
-\left\{\left[(p-1)Q\Delta -(1-a)^2H_2^2\Lambda^2+\tilde{A_2}\Lambda +A_3\right]\Phi-(2-a)H_2\Lambda T +A_2T\right\}+RT=F_T.
\end{array}\right.
$$
Solving for $T$: 
\bea\label{vnenovenneo}
T &=& (aH_2\Lamdba +R)\Phi-F_\Phi,
\eea
we look for $\Phi$ -- solution to  the second order elliptic equation:
\bee
&&\left[(p-1)Q\Delta -(1-a)^2H_2^2\Lambda^2+\tilde{A_2}\Lambda +A_3\right]\Phi +\Big[-(2-a)H_2\Lambda +A_2\Big]\left[aH_2\Lambda\Phi+R\Phi-F_\Phi\right]\\
&=& -F_T+R\left(aH_2\Lambda\Phi+R\Phi-F_\Phi\right)
\eee
i.e.
\bee
&&(p-1)Q\Delta \Phi -H_2^2\Lambda^2\Phi+\Lambda \Phi\left[\tilde{A}_2 +aH_2A_2-2RH_2-a(2-a)H_2\Lambda H_2\right]+(A_3 +RA_2 -R^2)\Phi\\
& = & -F_T-RF_\Phi +\Big[-(2-a)H_2\Lambda +A_2\Big]F_\Phi.
\eee
Now, we have
\bee
(p-1)Q\Delta \Phi -H_2^2\Lambda^2\Phi &=& \Big((p-1)Q -H_2^2Z^2\Big)\pr_Z^2\Phi + \left(\frac{(d-1)(p-1)Q}{Z} -H_2^2Z\right)\pr_Z\Phi
\eee
and hence
\bee
&& \Big((p-1)Q -H_2^2Z^2\Big)\pr_Z^2\Phi+\Bigg\{\frac{(d-1)(p-1)Q}{Z} -H_2^2Z\\
&&+Z\left[\tilde{A}_2+aH_2A_2- 2RH_2-a(2-a)H_2\Lambda H_2\right]\Bigg\}\pr_Z\Phi+(A_3 +RA_2 -R^2)\Phi\\
& = & -F_T-RF_\Phi +\Big[-(2-a)H_2\Lambda +A_2\Big]F_\Phi.
\eee
Since $(p-1)Q=\mu^2Z^2\sigma^2$, we have
\bee
&& \Big((p-1)Q -H_2^2Z^2\Big)\pr_Z^2\Phi+\Bigg\{\frac{(d-1)(p-1)Q}{Z} -H_2^2Z\\
&&+Z\left[\tilde{A}_2+aH_2A_2- 2RH_2-a(2-a)H_2\Lambda H_2\right]\Bigg\}\pr_Z\Phi\\
&=& \Big(\mu^2\sigma^2 -H_2^2\Big)Z^2\pr_Z^2\Phi+\Bigg\{(d-1)\mu^2Z\sigma^2 -H_2^2Z\\
&&+Z\left[\tilde{A}_2+aH_2A_2- 2RH_2-a(2-a)H_2\Lambda H_2\right]\Bigg\}\pr_Z\Phi\\
&=& \frac{1}{Z^{d-1}\varpi}\pr_Z\left(Z^{d-1}\varpi \Big(\mu^2\sigma^2 -H_2^2\Big)Z^2\pr_Z\Phi\right)
\eee
with
\bee
&&\left(\frac{\pr_Z\varpi}{\varpi}+\frac{d-1}{Z}\right)\Big(\mu^2\sigma^2 -H_2^2\Big)Z^2+2Z\Big(\mu^2\sigma^2 -H_2^2\Big)+\Big(2\mu^2\sigma\pr_Z\sigma -2H_2\pr_ZH_2\Big)Z^2\\
&=& (d-1)\mu^2Z\sigma^2 -H_2^2Z+Z\left[\tilde{A}_2+aH_2A_2- 2RH_2-a(2-a)H_2\Lambda H_2\right],
\eee
i.e.
\bee
\frac{\pr_Z\varpi}{\varpi} &=& -\frac{2}{Z}-\frac{2\mu^2\sigma\pr_Z\sigma -2H_2\pr_ZH_2}{\mu^2\sigma^2 -H_2^2}\\
&-& \frac{2RH_2-(d-2)H_2^2 -\tilde{A}_2-aH_2A_2+a(2-a)H_2\Lambda H_2}{\Big(\mu^2\sigma^2 -H_2^2\Big)Z}.
\eee
Recalling $H_2=\mu(1-w)$ yields
\bee
\frac{\pr_Z\varpi}{\varpi} &=& -\frac{2}{Z}-\frac{\pr_Z[\sigma^2 -(1-w)^2]}{\sigma^2 -(1-w)^2}\\
&-& \frac{\frac{2(1-w)}{\mu}R -(d-2)(1-w)^2 -\frac{\tilde{A}_2}{\mu^2}- a(1-w)\frac{A_2}{\mu} -a(2-a)(1-w)\Lambda w}{Z\Big(\sigma^2 -(1-w)^2\Big)}.
\eee
We therefore define
\be
\label{deftho}
\varpi(Z)=\left|\begin{array}{ll}\displaystyle \frac{e^{-F_-(Z)}}{Z^2(\sigma^2 -(1-w)^2)}, \ \ \ \ \mbox{for}\  \ 0\leq Z\leq Z_2,\\[4mm]
\displaystyle\frac{e^{-F_+(Z)}}{Z^2(\sigma^2 -(1-w)^2)}, \ \ \ \ \mbox{for}\  \ Z>Z_2.
\end{array}\right.
\ee
where\footnote{The choice of the lower limits $\frac{Z_2}{2}$ and $2Z_2$ in the definition of $F_\pm$ is arbitrary but
dictate the choice of the constants $C_\pm$ in such a way as to ensure that
$\lim_{Z\uparrow Z_2}F_-(Z)-\lim_{Z\downarrow Z_2} F_+(Z)=0$. The additional degree of freedom in the choice of $C_\pm$ is used to fix an overall normalization  of $\varpi$.}
\bee
F_-(Z) &=& \int_{\frac{Z_2}{2}}^Z\frac{\frac{2(1-w)}{\mu}R -(d-2)(1-w)^2 -\frac{\tilde{A}_2}{\mu^2}- a(1-w)\frac{A_2}{\mu} -a(2-a)(1-w)\Lambda w}{Z'\Big(\sigma^2 -(1-w)^2\Big)}dZ' + C_-,\\
F_+(Z) &=& \int_{2Z_2}^Z\frac{\frac{2(1-w)}{\mu}R -(d-2)(1-w)^2 -\frac{\tilde{A}_2}{\mu^2}- a(1-w)\frac{A_2}{\mu} -a(2-a)(1-w)\Lambda w}{Z'\Big(\sigma^2 -(1-w)^2\Big)}dZ'+C_+.
\eee
In view of the above, we have obtained the elliptic equation:
\be
\label{ellipticproblem}
\left|\begin{array}{ll}
-\frac{1}{Z^{d-1}\varpi}\pr_Z\left(Z^{d-1}\varpi \Big(\sigma^2 -(1-w)^2\Big)Z^2\pr_Z\Phi\right)+\frac{1}{\mu^2}(R^2 -A_2R -A_3)\Phi=H,\\
H= \frac{1}{\mu^2}\left\{F_T+RF_\Phi +\Big[(2-a)H_2\Lambda -A_2\Big]F_\Phi\right\},
\end{array}\right.
\ee
with $T$ recovered by \eqref{vnenovenneo}. As $Z\to Z_2$, we have from \eqref{siggaddoadne}: $$\Delta(Z)=\frac{|\l_+|}{Z_2}(Z-Z_2)+O((Z-Z_2)^2)$$ and hence $$Z(\sigma^2-(1-w)^2)=|\l_+|(Z_2-Z)\left[1+O(Z-Z_2)\right]$$ and hence
\bee
&&\frac{\frac{2(1-w)}{\mu}R -(d-2)(1-w)^2 -\frac{\tilde{A}_2}{\mu^2}- a(1-w)\frac{A_2}{\mu} -a(2-a)(1-w)\Lambda w}{Z\Big(\sigma^2 -(1-w)^2\Big)}\\
&=&\frac{\frac{2\sigma_2}{{\mu}|\l_+|}R\left[1+O\left(\frac{1}{R}\right)\right]}{(Z_2-Z)\left[1+O(Z-Z_2)\right]}
\eee
Since the profile passes through $P_2$ in a $\mathcal C^\infty$ way,
we obtain the development of the measure at $P_2$: for any $M\ge 1$,
\be
\label{DLrhorogin}
\varpi(Z)=|Z_2-Z|^{c_{\varpi}}\left[1+\sum_{m=0}^{M}d_{\sigma,m,R}(Z_2-Z)^{m}+O_M\Big(|Z_2-Z|^{M+1})\right],
\ee 
where 
\be
\label{estiamteerrurr}
c_{\varpi}=\frac{2\sigma_2}{{\mu}|\l_+|}R\left[1+O\left(\frac 1R\right)\right]\ge c^*R>0
\ee
for $R>R^*$ large enough.  Note that the above choice of $C_\pm$ is made to fix the normalization constant 
in front of $|Z_2-Z|^{c_{\varpi}}$ to be equal to $1$.\\

\noindent{\bf step 5} Solving \eqref{ellipticproblem}. We analyze the singularity of \eqref{ellipticproblem} at $P_2$ using a change of variables.\\
\noindent\underline{$0\le Z<Z_2$}. We let $$\Phi(Z)=\Psi(Y), \ \ Y=h(Z), \ \  h(Z)=\int_{\frac{Z_2}{2}}^Z\frac{dZ'}{{Z'}^{d-1}\varpi {Z'}^2(\sigma^2-(1-w)^2)}$$ which maps \eqref{ellipticproblem} onto:
\be
\label{mainproblem}
\left|\begin{array}{ll}-\pa_Y^2\Psi +\frac{1}{\mu^2}(R^2 -A_2R -A_3)Z^{2d}\varpi^2(\sigma^2-(1-w)^2)\Psi=\tilde{H},\\
\tilde{H}=Z^{2d}\varpi^2(\sigma^2-(1-w)^2)H.
\end{array}\right.
\ee
From \eqref{DLrhorogin}, 
\bea
\label{cneineneonmmoruirt}
\nonumber &&Y=h(Z)=\int_{\frac{Z_2}{2}}^Z\frac{dz}{{z}^{d-1}\varpi {z}^2(\sigma^2-(1-w)^2)}\\
\nonumber & = & \int_{\frac{Z_2}{2}}^Z\frac{dz}{{z}^{d-1}{z}|\l_+|(Z_2-z)(Z_2-z)^{c_{\varpi}}\left[1+\sum_{m=0}^{M}d_{\sigma,m,R}(Z_2-z)^{m}+O_M\Big(|Z_2-z|^{M+1})\right]}
\\
&  =&\frac{C}{R\varpi}[1+\Gamma(Z)]
\eea
where from \eqref{estiamteerrurr}  constant $C>0$ is independent of $R$ and, choosing $M=\sqrt R$,
\be
\label{gegenric}
\Gamma(Z)=\sum_{m=1}^{\sqrt{R}}\tilde d_{\sigma,m,R}(Z_2-Z)^{m}+O\Big((Z_2-Z)^{\sqrt{R}+1}\Big)
\ee 
with similar estimates for derivatives. Hence the potential term in \eqref{mainproblem} can be expanded in $Y$ and estimated as $Y\to +\infty$ for $R$ large enough:
\be\label{eq:potexp}
\frac{1}{\mu^2}(R^2 -A_2R -A_3)Z^{2d}\varpi^2(\sigma^2-(1-w)^2)=\frac{C_R}{Y^{2+c_R}}\left[1+\sum_{j=1}^{\sqrt{R}}\frac{\tilde{\tilde {d_{j}}}}{Y^{jc_R}}+O\left(\frac{1}{Y^{c_R(\sqrt{R}+1)}}\right)\right]
\ee
for some universal constants $\tilde{\tilde {d_{j}}}$,
$$C_R=C+O\left(\frac{1}{R}\right), \ \  0<c_R=\frac{1}{c_\varpi}\lesssim \frac 1R$$ 
where $C>0$ is independent of R. 
Therefore, by an elementary fixed point argument, \eqref{mainproblem} with $\tilde{H}=0$ admits a basis of solutions $\Psi^-_1$ and $\Psi^-_2$ with the following behavior as $Y\to +\infty$
\be
\label{veninevoneoneovn}
\left|\begin{array}{l}
\Psi^-_1=1+\sum_{j=1}^{\sqrt{R}}\frac{c_{j,1}}{Y^{jc_R}}+O\left(\frac{1}{Y^{(\sqrt{R}+1)c_R}}\right)\\
\Psi^-_2=Y\left[1+\sum_{j=1}^{\sqrt{R}}\frac{c_{j,2}}{Y^{jc_R}}+O\left(\frac{1}{Y^{(\sqrt{R}+1)c_R}}\right)\right]
\end{array}\right.
\ee
with similar estimates for derivatives. The sequences $(c_{j,1})_{j=1,2}$ are uniquely determined inductively 
from \eqref{mainproblem} with $\tilde{H}=0$ using the expansion of the potential \eqref{eq:potexp}.\\
\noindent\underline{$Z_2<Z\le Z_a$}. To the right of $P_2$, we let$$\Phi(Z)=\Psi(Y), \ \ Y=h(Z), \ \  h(Z)=\int_{2Z_2}^{Z}\frac{dz}{{z}^{d-1}\varpi{z}^2(\sigma^2-(1-w)^2)}+\tilde C_+,$$
which sends\footnote{We add constant $\tilde C_+$ to match the asymptotic expansion of $Y$ in terms of $(Z_2-Z)$. In principle, it is unnecessary as it influences the terms of order $R$ and higher while we only need the universality of the expansion up to the order $\sqrt R$.} $Y\to +\infty$ as $Z\downarrow Z_2$. We construct a similar basis of homogenous solutions $\Psi^+_1$ and $\Psi^+_2$ as $Y\to +\infty$ with asymptotics given by:
$$\Psi^+_1=1+\sum_{j=1}^{\sqrt{R}}\frac{c_{j,1}}{Y^{jc_R}}+O\left(\frac{1}{Y^{(1+\sqrt{R})c_R}}\right), \ \ \Psi^+_2=Y\left[1+\sum_{j=1}^{\sqrt{R}}\frac{c_{j,2}}{Y^{jc_R}}+O\left(\frac{1}{Y^{(1+\sqrt{R})c_R}}\right)\right]$$
with the sequences $c_{j,1}$, $c_{j,2}$ the same as in \eqref{veninevoneoneovn}.

\noindent\underline{Basis of fundamental solutions}. The function $\Phi_1(Z)=\Psi^-_1(Y)$ for $Z<Z_2$ and 
$\Phi_1(Z)=\Psi^+_1(Y)$ for $Z>Z_2$,obtained by gluing $\Psi^\pm_1(Y)$ belongs to $\mathcal C^{\sqrt{R}}([0,Z_a])$ and is a solution to the homogeneous equation \eqref{mainproblem}.  Let now $\Phi_{\rm rad}(Z)$ be the radial solution to the homogeneous problem associated to \eqref{ellipticproblem} with $\Phi_{\rm rad}(0)=1$. Then the wronskian is given by 
$$W=\pa_Z\Phi_1\Phi_{\rm rad}-\pa_Z\Phi_{\rm rad}\Phi_1=\frac{W_0}{Z^{d-1}\varpi Z^2(\sigma^2-(1-w)^2)}$$
where $W_0$ is a constant. We claim $W_0\neq 0$. Indeed, otherwise $\Phi_{\rm rad}$ is proportionate to $\Phi_1$ and hence is $C^{\sqrt{R}}$ on $[0,Z_a]$. In particular, if $T_{\rm rad}$ is given by  \eqref{vnenovenneo} with $F_\Phi=0$, then $X_{\rm rad}=(\Phi_{\rm rad}, T_{\rm rad})$ satisfies $$(-\matchal M+ R)X_{\rm rad}=0\textrm{ on }(0,Z_a).$$
Since $X_{rad}$ is $C^{\sqrt{R}-1}[[0,Z_2])$, we may apply the analysis in steps 1 to 4 for $R>R^*(k)$ large enough and \eqref{accretivityalmost} holds for $X_{\rm rad}$, i.e.
\bee
0 &=& \Re\la (-\mathcal M+ R )X_{rad},X_{rad}\ra\\
&=&\Re\la (-\mathcal{M} +\mathcal{A})X_{rad},X_{rad}\ra -\Re\la \mathcal A X_{rad},X_{rad}\ra +R\|X_{rad}\|_{\mathbb{H}_{2k}}^2\\
&\geq& R\|X_{rad}\|_{\mathbb{H}_{2k}}^2 -\la \mathcal A X_{rad},X_{rad}\ra
\eee
so that for $R>R^*(k)$ sufficiently large 
\bee
\frac R2\|X_{rad}\|_{\mathbb{H}_{2k}}^2\leq 0
\eee
and hence $X_{rad}=0$ a contradiction. This concludes the proof of $W_0\neq 0$.\\
\noindent\underline{Inner solution of  the inhomogeneous problem}.  $(\Phi_{\rm rad}, \Phi_1)$ is then a basis for the homogeneous problem corresponding to \eqref{ellipticproblem}. As a consequence, the only solution to \eqref{ellipticproblem} which is $o((Z_2-Z)^{-\frac{1}{c_R}})$ at $Z=Z_2$ is given by\footnote{Note that $\Phi_{rad}(Z)\sim (Z_2-Z)^{-\frac{1}{c_R}}$ as $Z\to Z_2$ in view of the behavior of $\Psi_2$ as $Y\to +\infty$.}
\bee
\Phi(Z)=-\Phi_1(Z)\int_{0}^{Z}\frac{H(\tau)\Phi_{\rm rad}(\tau)}{W(\tau)}d\tau-\Phi_{\rm rad}(Z)\int_{Z}^{Z_2}\frac{H(\tau)\Phi_1(\tau)}{W(\tau)}d\tau.
\eee
For a smooth $H$, $\Phi$ is smooth on $[0,Z_2)$ and we study its regularity at $Z_2$. In $Y$ variables we obtain for some $Y_0$ large enough:
\be
\label{cenonneo}
\Psi(Y)=c_{Y_0,H}\Psi^-_1(Y)-\Psi^-_1(Y)\int_{Y_0}^{Y}\tilde{H}(\tau)\Psi^-_2(\tau)d\tau-\Psi^-_2(Y)\int_{Y}^{+\infty}\tilde{H}(\tau)\Psi^-_1(\tau)d\tau.
\ee 
We have from \eqref{DLrhorogin}, \eqref{cneineneonmmoruirt}:
$$
(RY)^{c_R}= \frac {1}{Z_2-Z} \left(\sum_{m=0}^{\sqrt{R}} \beta_m (Z_2-Z)^m + O(|Z_2-Z|^{\sqrt{R}+1})\right),
$$
and hence
$$
Z_2-Z = \sum_{m=1}^{\sqrt{R}} \frac {y_m} {Y^{mc_R}}+O\left(\frac{1}{Y^{(\sqrt{R}+1)c_R}}\right)
$$
with similar estimates for derivatives. In particular, a smooth function $H(Z)$ yields expansion for $\tilde H(Z)$:
\bee
\tilde H &=& (Z_2-Z)^{1+2c^{-1}_R}\left(\sum_{m=0}^{\sqrt{R}} h_m (Z_2-Z)^m+O\Big((Z_2-Z)^{\sqrt{R}+1}\Big)\right)\\
&=&\sum_{m=1}^{\sqrt{R}} \frac {q_m}{Y^{2+mc_R}}+O\left(\frac{1}{Y^{2+(\sqrt{R}+1)}c_R}\right).
\eee
Conversely, an expansion of the form 
$$
G=\sum_{m=0}^{\sqrt{R}-1} \frac {b_m}{Y^{mc_R}}+O\left(\frac{1}{Y^{\sqrt{R}c_R}}\right)
$$
defines a $C^{\sqrt{R}}$ function $G(Z)$ at $Z=Z_2$. Plugging in the asymptotic expansion for $\Psi^-_1, \Psi^-_2$ and $\tilde H$ in \eqref{cenonneo} yields
\bee
\Psi(Y)&=& c_{Y_0,H}\left(\sum_{m=0}^{\sqrt{R}}\frac {c_{m,1}}{Y^{mc_R}}+O\left(\frac{1}{Y^{(\sqrt{R}+1)c_R}}\right)\right)- \left(\sum_{m=0}^{\sqrt{R}}\frac {c_{m,1}} {Y^{mc_R}}+O\left(\frac{1}{Y^{(\sqrt{R}+1)c_R}}\right)\right)\\
&&\times\int_{Y_0}^Y 
\left(\sum_{m=0}^{\sqrt{R}} \frac {c_{m,2}} {\tau^{mc_R}}+O\left(\frac{1}{Y^{(\sqrt{R}+1)c_R}}\right)\right)\left(\sum_{j=1}^{\sqrt{R}} \frac {q_j}{\tau^{1+jc_R}}+O\left(\frac{1}{\tau^{1+(\sqrt{R}+1)c_R}}\right)\right)d\tau\\ 
&-& \left(\sum_{m=0}^{\sqrt{R}}\frac{c_{m,2}} {Y^{mc_R}} Y+O\left(\frac{1}{Y^{(\sqrt{R}+1)c_R}}\right)\right)\int_Y^\infty 
\left(\sum_{m=0}^{\sqrt{R}}\frac {c_{m,1}} {\tau^{mc_R}}+O\left(\frac{\log(\tau)}{\tau^{(\sqrt{R}+1)c_R}}\right)\right)\\
&&\times\left(\sum_{j=1}^{\sqrt{R}} \frac {q_j}{\tau^{2+jc_R}}+O\left(\frac{1}{\tau^{2+(\sqrt{R}+1) c_R}}\right)\right)d\tau=\sum_{m=0}^{\sqrt{R}-1} \frac {b_m}{Y^{mc_R}}+O\left(\frac{1}{Y^{\sqrt{R}c_R}}\right).
\eee
We therefore have proved that for $H\in \mathcal C^\infty([0,Z_2])$, there exists a unique solution $\Phi$ to \eqref{ellipticproblem} on $[0,Z_2]$ which is  $o((Z_2-Z)^{-\frac{1}{c_R}})$ at $Z=Z_2$. Furthermore, this solution is smooth on $[0,Z_2)$, and is $C^{\sqrt{R}}$ at $Z=Z_2$ where it admits an asymptotic expansion
\bea\label{eq:asymptoticexapnsionofsolutionMplusRXequalFinnersolutionZ2}
\Phi(Z) &=& \sum_{j=0}^{\sqrt{R} -1}c_{j,\Phi}(Z_2-Z)^j+O\Big((Z_2-Z)^{\sqrt{R}}\Big).
\eea
\noindent\underline{Outer solution of the inhomogeneous problem}. We argue similarly, considering  the basis 
$\Phi_1(Z)$ and $\Phi_{rad}^+(Z)$ with $\Phi_{rad}^+(Z_a)=1$, for $Z_2<Z\le Z_a$ and construct $\Phi$ solution to \eqref{ellipticproblem} on $[Z_2,Z_a]$ which is smooth on $(Z_2, Z_a]$, $o((Z_2-Z)^{-\frac{1}{c_R}})$ at $Z=Z_2$ and $C^{\sqrt{R}}$ at $Z=Z_2$. Furthermore, $\Phi$ admits at $Z=Z_2$ the following asymptotic expansion analogous to \eqref{eq:asymptoticexapnsionofsolutionMplusRXequalFinnersolutionZ2}
$$ \Phi(Z)=\sum_{j=0}^{\sqrt{R} -1}\tilde{c}_{j,\Phi}(Z_2-Z)^j+O\Big((Z_2-Z)^{\sqrt R}\Big).$$ The asymptotic expansion is uniquely determined from the equation \eqref{ellipticproblem} and the first coefficient
$\tilde{c}_{0,\Phi}$. We now recall that the function $\Phi_1$ belongs to $\mathcal C^{\sqrt R}[0,Z_a]$ and 
$\Phi_1(Z_2)=1$. By adding $\Phi_1$ to the above expansion, we obtain another solution in which we can force the condition
$$\tilde{c}_{0,\Phi}=c_{0,\Phi}$$ with $c_{0,\Phi}$ appearing in \eqref{eq:asymptoticexapnsionofsolutionMplusRXequalFinnersolutionZ2}. As a result, the asymptotic expansions of the inner and outer solutions are matched to order $\sqrt R$, so that the constructed solution is $\matchal C^{\sqrt{R}}$ at $Z_2$. Finally, we have shown that given any smooth function $H$ on $[0,Z_a]$, there exists a unique solution $\Phi$ to \eqref{ellipticproblem} on $[0,Z_a]$ which is $o((Z_2-Z)^{-\frac{1}{c_R}})$ at $Z=Z_2$. Furthermore, this solution is smooth for $Z\neq Z_2$ and $C^{\sqrt{R}}$ at $Z=Z_2$. In particular, with $T$ recovered by \eqref{vnenovenneo} and smooth for $Z\neq Z_2$ and $C^{\sqrt{R} -1}$ at $Z=Z_2$, we have that $(\Phi, T)\in \Bbb H_{2k}$ for $R>R(k)$ large enough. Also, since $(\Phi, T)$ with $\Phi\sim (Z_2-Z)^{-\frac{1}{c_R}}$ near $Z=Z_2$ does not belong to $\Bbb H_{2k}$\footnote{Recall that $(c_R)^{-1}\gtrsim R\gg1$}, we have now proved that, in fact, there exists  a unique solution $X=(\Phi, T)$ to $(-\mathcal{M}+R)X=F$ on $[0,Z_a]$ in $\Bbb H_{2k}$, which concludes the proof of \eqref{nnneone:0}.\\

\noindent{\bf step 6} Density of $\mathcal{D}_{R}$. We now prove that $\mathcal{D}_{R}$ given by \eqref{def:moreregulardomainDARdenseinDM} is dense in $D(\mathcal M)$. Indeed,  
if $X\in D(\mathcal{M})$, then $X\in\mathbb{H}_{2k}$ and $\mathcal{M}X\in\mathbb{H}_{2k}$ so that there exists a sequence  $(Y_n)_{n\in\mathbb{N}}\in C^\infty([0,Z_a],\Bbb{C}^2)$ with
$$\lim_{n\to +\infty}Y_n\to (-\mathcal{M}+R)X\textrm{ in }\mathbb{H}_{2k}.$$ 
From step 5, for each integer $n$, there exist a unique $Z_n\in\mathcal{D}_{R}$ solution to
\bee
(-\mathcal{M}+R)Z_n=Y_n, \ \ Z_n\in \Bbb H_{2k},
\eee 
and hence 
$$(-\mathcal{M}+R)Z_n\to (-\mathcal{M}+R)X\textrm{ in }\mathbb{H}_{2k}.$$
Thus, to conclude, it remains to check that $Z_n$ converges to $X$ in $\mathbb{H}_{2k}$. To this end, since $Z_n\in\mathcal{D}_{R}$,    \eqref{accretivityalmost} holds for $Z_n-Z_q$ and thus:
\bee
\Re\la Y_n-Y_q, Z_n-Z_q\ra &=& \Re\la (-\mathcal M+ R )(Z_n-Z_q), Z_n-Z_q\ra\\
&=& \Re\la (-\mathcal{M} +\mathcal{A})(Z_n-Z_q),Z_n-Z_q\ra -\Re\la \mathcal A(Z_n-Z_q),Z_n-Z_q\ra \\
&&+R\|Z_n-Z_q\|_{\mathbb{H}_{2k}}^2\\
&\geq& R\|Z_n-Z_q\|_{\mathbb{H}_{2k}}^2 -\Re\la \mathcal A(Z_n-Z_q),Z_n-Z_q\ra
\eee
so that, since $\mathcal A$ is a bounded operator, we infer for $R$ sufficiently large 
\bee
\frac R2\|Z_n-Z_q\|_{\mathbb{H}_{2k}} &\leq& \|Y_n-Y_q\|_{\mathbb{H}_{2k}}.
\eee
In view of the convergence of $(Y_n)$ in $\mathbb{H}_{2k}$, we deduce that $Z_n$ is a Cauchy sequence in $\mathbb{H}_{2k}$ and hence converges, i.e. 
\bee
\lim_{n\to +\infty}Z_n\to Z\textrm{ in }\mathbb{H}_{2k}, \ \ \ \ Z\in \mathbb{H}_{2k}.
\eee
Since $(-\mathcal{M}+R)Z_n$ converges to $(-\mathcal{M}+R)X$ in $\mathbb{H}_{2k}$, we infer
\bee
(-\mathcal{M}+R)(Z-X)=0\textrm{ in }\mathcal{D}'(0,Z_a), \ \ Z-X\in\mathbb{H}_{2k}.
\eee
The uniqueness statement in \eqref{nnneone:0} applied for $F=0$ yields
$Z=X$.
Thus $Z_n\to X$ and $(-\mathcal{M}+R)Z_n\to (-\mathcal{M}+R)X$ in $\mathbb{H}_{2k}$. Finally, we have obtained a sequence $Z_n\in \mathcal{D}_{R}$ such that $Z_n\to X$ in $D(\mathcal{M})$, and hence $\mathcal{D}_{R}$ is dense in $D(\mathcal{M})$ as claimed.\\

\noindent{\bf step 7} Maximal accretivity. We have proved in steps 1 to 3 that \eqref{accretivityalmost} holds for $X\in \mathcal{D}_{R}$, i.e.
\bee
\forall X\in\mathcal{D}_{R}, \ \ \Re\la (-\matchal M+\matchal A) X,X\ra \geq c^*ak \la X,X\ra.
\eee
Since $\mathcal{D}_{R}$ is dense in $D(\mathcal{M})$, in view of step 6, we have
\bee
\forall X\in \matchal D(\matchal M), \ \ \Re\la (-\matchal M+\matchal A) X,X\ra \geq c^*ak\la X,X\ra
\eee
which concludes the proof of the accretivity property \eqref{accretivityalmost}.\\
We now claim: 
\be
\label{nnneone}\forall F\in \Bbb H_{2k}, \ \ \exists X\in D(\matchal M)\ \ \mbox{such that} \ \ (-\matchal M+ R)X=F.
\ee
Indeed, since $F\in \Bbb H_{2k}$, by density, there exists  
\bee
\lim_{n\to +\infty}F_n\to F\textrm{ in }\mathbb{H}_{2k}, \ \ \ \ F_n\in C^\infty([0,Z_a]).
\eee
Since $F_n\in C^\infty([0,Z_a])$, by \eqref{nnneone:0}, there exists $X_n\in \mathbb{H}_{2k}$ -- solution to 
\bee
(-\matchal M+ R)X_n=F_n.
\eee
Using \eqref{accretivityalmost} and arguing as in step 6, we have for $R$ sufficiently large 
\bee
\frac R2\|X_n-X_q\|_{\mathbb{H}_{2k}} &\leq& \|F_n-F_q\|_{\mathbb{H}_{2k}}.
\eee
In view of the convergence of $(F_n)$ in $\mathbb{H}_{2k}$, we deduce that $X_n$ is a Cauchy sequence in $\mathbb{H}_{2k}$ and hence converges, i.e. 
\bee
\lim_{n\to +\infty}X_n\to X\textrm{ in }\mathbb{H}_{2k}, \ \ \ \ X\in \mathbb{H}_{2k}.
\eee
On the other hand, since $(-\mathcal{M}+R)X_n=F_n$ convergence to $F$ in $\mathbb{H}_{2k}$, we infer
\bee
(-\matchal M+ R)X=F, \ \ X\in D(\matchal M)
\eee
which concludes the proof of \eqref{nnneone}.\\
Finally, \eqref{accretivityalmost} and a classical and elementary induction argument
ensures that the maximality property \eqref{estmaximal} is implied by: 
$$\exists R>0, \ \ \forall F\in \Bbb H_{2k}, \ \ \exists X\in \matchal D(\matchal M)\ \ \mbox{such that} \ \ (-\tilde{\matchal M}+R)X=F.$$
Indeed, let  $R>0$ large enough and $F\in \Bbb H_{2k}$. Since $\mathcal A$ is a bounded operator, for $R$ large enough, from \eqref{nnneone} and  \eqref{accretivityalmost},
$$\Re\la F,X\ra=\Re\la (-\mathcal M+ R )X,X\ra=\Re\la (-\widetilde{\mathcal M} - \mathcal A+R)X,X\ra\geq \frac R2\|X\|_{\Bbb H_{2k}}^2.$$ 
Therefore, for any $F\in \Bbb H_{2k}$, solution $X$ to \eqref{nnneone} is unique.  Therefore, $(-\mathcal M+ R)^{-1}$ is well defined on $\Bbb H_{2k}$ with the bound 
$$\|(-\mathcal M+R)^{-1}\|_{\matchal L(\Bbb H_{2k},\Bbb H_{2k})}\lesssim \frac 1{R}.$$ 
Hence 
$$-\widetilde{\matchal M}+R=-\matchal M + \matchal A+ R=(-\mathcal M+ R)\left[{\rm Id} + (-\mathcal M+ R)^{-1}\matchal A\right]$$ 
is invertible on $\Bbb H_{2k}$ for $R$ large enough, which yields \eqref{estmaximal}. This concludes the proof of Proposition \ref{propaccretif}.
\end{proof}


\subsection{Growth bounds for dissipative operators} 


We conclude this section by recalling classical facts about unbounded operators and their semigroups. Let $(H,\la\cdot,\cdot\ra)$  be a hermitian Hilbert space and $A$ be a closed operator with a dense domain $D(A)$. We recall the definition of the adjoint operator $A^*$: let
$$D(A^*)=\{X\in H, \ \ \tilde{X}\in D(A)\mapsto \la X, A\tilde{X} \ra\ \ \mbox{extends as a bounded functional on}\ \  H\},$$ then $A^*X$ is given by the Riesz theorem as the unique element of $H$ such that 
\be
\label{defnfeoeone}
\forall \tilde{X}\in D(A), \ \ \la A^*X,\tilde{X}\ra=\la X,A\tilde{X}\ra.
\ee

We recall the following classical lemma.

\begin{lemma}[Properties of maximal dissipative operators, \cite{bookinternet} p.49]
\label{lemamcnneoe}
\label{lemmdnone} Let $A$ be a maximal dissipative operator on a Hilbert space $H$ with domain $D(A)$, then:\\
(i) A is closed;\\
(ii) $A^*$ is maximal dissipative;\\
(iii) $\sigma(A)\subset\{\l\in \Bbb C, \ \ \Re(\lambda)\le 0\}$;\\
(iv) $\|(A-\l)^{-1}\|\leq |\Re(\l)|^{-1}$ for $\Re(\l)>0$.
\end{lemma}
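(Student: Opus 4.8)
The plan is to run the classical Lumer--Phillips argument, drawing on exactly the same ingredients that produced Proposition \ref{propaccretif}: a dissipativity inequality together with the surjectivity built into maximality. First I would record the elementary a priori bound. For $X\in D(A)$ and $\lambda$ with $\Re(\lambda)>0$, taking real parts in $\langle(\lambda-A)X,X\rangle=\lambda\|X\|^2-\langle AX,X\rangle$ and using $\Re\langle AX,X\rangle\le 0$ gives $\Re\langle(\lambda-A)X,X\rangle\ge\Re(\lambda)\|X\|^2$, whence $\|(\lambda-A)X\|\ge\Re(\lambda)\|X\|$ by Cauchy--Schwarz. Thus $\lambda-A$ is injective with closed range and its inverse, where defined, has norm at most $|\Re(\lambda)|^{-1}$. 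Combining this with the definition of maximality, which supplies $\mathrm{Ran}(R-A)=H$ for every $R>0$ (the abstract counterpart of \eqref{estmaximal}), I obtain that $(R-A)^{-1}$ is a bounded, everywhere-defined operator with $\|(R-A)^{-1}\|\le 1/R$.

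Item (i) is then immediate: a bounded everywhere-defined operator is closed, hence so is its inverse $R-A$, hence $A$ is closed. For items (iii) and (iv) I would upgrade the resolvent statement from $\lambda=R\in(0,\infty)$ to the whole open right half-plane by the standard clopen argument. If $\mu-A$ is bijective with $\|(\mu-A)^{-1}\|\le 1/\Re(\mu)$, then for $|\lambda-\mu|<\Re(\mu)$ the factorisation $\lambda-A=(\mu-A)\bigl(\mathrm{Id}+(\lambda-\mu)(\mu-A)^{-1}\bigr)$ and a Neumann series show $\lambda-A$ is bijective, and the a priori bound pins its inverse norm at $\le 1/\Re(\lambda)$. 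Hence the set $U=\rho(A)\cap\{\Re\lambda>0\}$ is open, and since each $\mu\in U$ is the centre of a disc of radius $\Re(\mu)$ contained in $U$, it is also closed in $\{\Re\lambda>0\}$; as $U\supset(0,\infty)$ is nonempty and $\{\Re\lambda>0\}$ is connected, $U=\{\Re\lambda>0\}$. This is (iii), and the bound $\|(A-\lambda)^{-1}\|\le|\Re(\lambda)|^{-1}$ is (iv).

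For (ii) I would pass to adjoints. Since $A$ is closed and densely defined on a Hilbert space, $A^\ast$ is closed and densely defined, and for real $R$ one has $(R-A)^\ast=R-A^\ast$; taking adjoints of the bounded bijection $(R-A)^{-1}$ gives that $R-A^\ast$ is bijective with $\|(R-A^\ast)^{-1}\|\le 1/R$, so in particular $\mathrm{Ran}(R-A^\ast)=H$ for all $R>0$. To see that $A^\ast$ is dissipative, for $Y\in D(A^\ast)$ the bound $\|(R-A^\ast)Y\|\ge R\|Y\|$ expands to $R^2\|Y\|^2\le R^2\|Y\|^2-2R\Re\langle A^\ast Y,Y\rangle+\|A^\ast Y\|^2$; dividing by $R$ and letting $R\to+\infty$ forces $\Re\langle A^\ast Y,Y\rangle\le 0$. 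Dissipativity together with surjectivity of $R-A^\ast$ is precisely maximal dissipativity of $A^\ast$.

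The routine computations above are harmless; the two places needing a little care are the passage to the full right half-plane in (iii)--(iv) (a topological, not a computational, point) and, in (ii), the identification $(R-A)^\ast=R-A^\ast$ combined with the $R\to\infty$ extraction of dissipativity of $A^\ast$ from the resolvent bound. If I had to name a ``main obstacle'' it would be the latter, as it is the only step that genuinely exploits the Hilbert space (self-dual) structure beyond bare functional analysis.
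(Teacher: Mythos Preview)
Your proof is correct and is the standard Lumer--Phillips argument. Note that the paper does not actually prove this lemma: it is stated as a classical fact with a citation to \cite{bookinternet}, so there is no ``paper's own proof'' to compare against. Your write-up would serve perfectly well as a self-contained justification, and the two points you flag as needing care (the clopen argument on the right half-plane and the $R\to\infty$ extraction of dissipativity for $A^\ast$) are handled correctly.
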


We now recall from Hille-Yoshida's theorem that a maximally dissipative operator $A_0$ generates a strongly continuous semigroup $T_0$ on $H$, and so does $A_0+K$ for any bounded perturbation $K$. Let us now recall the following classical properties of strongly continuous semigroup $T(t)$. Let $\sigma(A)$ denote the spectrum of $A$, i.e., the complement of the resolvent set.

\begin{proposition}[Growth bound, \cite{engelnagel} Cor 2.11 p.258]
\label{lemmagrowthbound}
Let the growth bound of the semigroup be defined as $$w_0=\inf\{w\in \Bbb R, \exists M_w\ \ \mbox{such that}\ \ \forall t\geq 0, \ \ \|T(t)\|\leq M_we^{ wt}\}.$$ Let $w_{\rm ess}$ denote the essential growth bound of the semigroup: 
$$w_{\rm ess}=\inf\{w\in \Bbb R, \exists M_w\ \ \mbox{such that}\ \ \forall t\geq 0, \ \ \|T(t)\|_{\rm ess}\leq M_we^{ wt}\}$$
with 
$$
\|T(t)\|_{\rm ess}=\inf_{K\in\mathcal K(H)} \|T(t)-K\|_{H\to H}
$$
and $\mathcal K(H)$ is the ideal of compact operators on $H$; 
and let
$$s(A)=\sup\{\Re(\l), \ \ \l\in \sigma(A)\}.$$ Then $$w_0=\max\{w_{\rm ess},s(A)\}$$ and 
\be
\label{eignevlauefintie}
\forall w>w_{\rm ess}, \ \ \mbox{the set}\ \ \Lambda_w(A):=\sigma(A)\cap \{\Re(\l)>w\}\ \ \mbox{is finite}.
\ee
Moreover, each eigenvalue $\l\in \Lambda_w(A)$ has finite algebraic multiplicity $m^a_\l$: $\exists k_\l\in \Bbb Z$ such that
$$
ker (A-\l I)^{k_\l}\ne \emptyset,\qquad ker (A-\l I)^{k_\l+1}=\emptyset,\qquad m^a_\l:=dim ker (A-\l I)^{k_\l}
$$
\end{proposition}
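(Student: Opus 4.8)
The plan is to reduce the statement about the unbounded generator $A$ to a statement about the single bounded operator $S:=T(t_0)$, for a fixed but arbitrary $t_0>0$, then to use Riesz--Schauder theory for $S$ outside its essential spectral radius, and finally to transfer the information back to $A$ by means of a finite rank spectral projection that commutes with the whole semigroup. This is classical; see \cite{engelnagel}. The first step is to record the ``dictionary'' between $A$ and $S$. Since $t\mapsto\log\|T(t)\|$ is subadditive, $w_0=\lim_{t\to\infty}\frac1t\log\|T(t)\|$, and applying the spectral radius formula along $t=nt_0$ gives $w_0=\frac1{t_0}\log r(S)$, where $r(\cdot)$ is the spectral radius. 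Because $H$ is a Hilbert space, the Calkin algebra $\mathcal B(H)/\mathcal K(H)$ is a $C^\ast$-algebra whose norm is $\|\cdot\|_{\rm ess}$; the same computation then yields $w_{\rm ess}=\frac1{t_0}\log r_{\rm ess}(S)$ with $r_{\rm ess}$ the essential spectral radius. Finally the spectral inclusion $e^{t\sigma(A)}\subseteq\sigma(T(t))$, valid for every $C_0$-semigroup, gives $e^{t_0\Re\lambda}\le r(S)$ for $\lambda\in\sigma(A)$, hence $\sigma(A)\subseteq\{\Re\lambda\le w_0\}$, $s(A)\le w_0$, while $w_{\rm ess}\le w_0$ is immediate.

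Next I would use the Riesz--Schauder structure of $\sigma(S)$. Fix $w>w_{\rm ess}$ and set $\rho=e^{wt_0}>r_{\rm ess}(S)$. The portion $\sigma(S)\cap\{|\mu|\ge\rho\}$ is then a finite set $\{\mu_1,\dots,\mu_n\}$, each $\mu_j$ isolated in $\sigma(S)$, a pole of $R(\cdot,S)$, with finite rank Riesz projection. Let $P$ be the sum of these projections. The crucial point is that $P$ commutes with every $T(t)$: it is built from the resolvent of $S=T(t_0)$, which commutes with each $T(t)$; consequently $P$ commutes with the generator $A$, and the splitting $H=PH\oplus(1-P)H$ reduces both the semigroup and $A$. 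On the finite dimensional space $PH$ the generator $A|_{PH}$ is a matrix with $\sigma(A|_{PH})\subseteq\sigma(A)$ and $S|_{PH}=e^{t_0A|_{PH}}$. On $(1-P)H$ one has, by the Riesz decomposition theorem, $\sigma(S|_{(1-P)H})=\sigma(S)\setminus\{\mu_1,\dots,\mu_n\}\subseteq\{|\mu|<\rho\}$, so $r(S|_{(1-P)H})\le\rho$; applying the spectral inclusion to the restricted semigroup gives $\sigma(A|_{(1-P)H})\subseteq\{\Re\lambda\le w\}$.

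The conclusion then assembles directly. Since $\sigma(A)=\sigma(A|_{PH})\cup\sigma(A|_{(1-P)H})$ and the second piece lies in $\{\Re\lambda\le w\}$, every $\lambda\in\sigma(A)$ with $\Re\lambda>w$ lies in the finite set $\sigma(A|_{PH})$; moreover such a $\lambda$ is isolated in $\sigma(A)$ and its $A$-spectral projection is contained in $P$, so its algebraic multiplicity is at most $\dim PH$. This gives \eqref{eignevlauefintie} together with the finite multiplicity statement. For the identity $w_0=\max\{w_{\rm ess},s(A)\}$, finite dimensional spectral mapping gives $\mu_j=e^{t_0\lambda_j}$ with $\lambda_j\in\sigma(A)$, whence $|\mu_j|\le e^{t_0s(A)}$; combined with $r(S|_{(1-P)H})\le e^{wt_0}$ this yields $w_0=\frac1{t_0}\log r(S)\le\max\{s(A),w\}$ for every $w>w_{\rm ess}$, hence $w_0\le\max\{s(A),w_{\rm ess}\}$; together with $s(A)\le w_0$ and $w_{\rm ess}\le w_0$ from the first step this gives equality.

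I expect the main obstacle to be the input used in the second paragraph: that the spectrum of a bounded operator beyond its essential spectral radius consists of isolated eigenvalues, each a pole of the resolvent with a finite rank Riesz projection, and that only finitely many of them lie in any annulus bounded away from $r_{\rm ess}$. For self-adjoint-type or compact perturbations this is ordinary Riesz--Schauder theory, but in the general non-self-adjoint setting it needs the full Fredholm/essential spectrum machinery together with a careful choice of the definition of $w_{\rm ess}$ ensuring the identity $w_{\rm ess}=\frac1{t_0}\log r_{\rm ess}(T(t_0))$. Everything else is soft once this and the spectral inclusion are available; in particular the feature that keeps the generator side painless is that no spectral mapping theorem for $\sigma(A)$ (generally false for $C_0$-semigroups) is ever invoked --- the commuting Riesz projection of the bounded operator $T(t_0)$ does all the work.
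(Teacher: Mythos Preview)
The paper does not give its own proof of this proposition: it is stated with an explicit citation to \cite{engelnagel}, Cor.~2.11 p.~258, and is used as a black box in the subsequent lemmas. Your sketch is the standard argument behind that reference---reduce to the bounded operator $T(t_0)$, invoke the Riesz--Schauder/Fredholm description of $\sigma(T(t_0))$ outside the essential spectral radius, and transfer back via the commuting Riesz projection---and it is correct in outline. Your own caveat about the input in the second paragraph is well placed: the key analytic ingredient is precisely the Browder/Nussbaum theorem that outside $r_{\rm ess}(S)$ the spectrum consists of finitely many poles of finite rank, together with the identification $w_{\rm ess}=\frac{1}{t_0}\log r_{\rm ess}(T(t_0))$, both of which are developed in \cite{engelnagel} but require some care to set up in full generality.
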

We note that the subspaces $V_w(A)=\cup_{\l\in  \Lambda_w(A)} ker (A-\l I)^{k_\l}$ and $V_w^\perp(A^*)$ are invariant for $A$.
In particular, $A\left(D(A)\cap V_w^\perp(A^*)\right)\subset V_w^\perp(A^*)$. The invariance $V_w(A)$ is immediate.
To show that $A\left(D(A)\cap V_w^\perp(A^*)\right)\subset V_w^\perp(A^*)$ we let $X\in D(A)\cap V_w^\perp(A^*)$,
$Y\in V_w(A^*)$ and consider 
$
\la AX,Y\ra$. Since $Y\in D(A^*)$ and $V_w(A^*)$ is invariant for $A^*$, 
$$
\la AX,Y\ra=\la X,A^*Y\ra =0.
$$

We claim the following corollary.

\begin{lemma}[Perturbative exponential decay] 
\label{cneoneoneobis}
Let $T_0$ be the strongly continuous semigroup generated by a maximal dissipative operator $A_0$, and $T$ be the strongly continuous semi group generated by $A=A_0+K$ where $K$ is a compact operator on $H$. Then for any $\delta>0$, the following holds:\\
(i) the set $\Lambda_\delta(A)=\sigma(A)\cap \{\l\in \Bbb C, \ \ \Re(\l)> \delta\}$ is finite, 
each eigenvalue $\l\in \Lambda_\delta(A)$ has finite algebraic multiplicity $k_\l$. In particular, the subspace 
$V_\delta(A)$ is finite dimensional;\\
(ii) 
We have $\Lambda_\delta(A)=\overline{ \Lambda_\delta(A^*)}$
and $dim V_\delta(A^*)=dim V_\delta(A)$.
The direct sum decomposition 
\be\label{eq:direct}
H=V_\delta(A)\bigoplus V^\perp_\delta(A^*)
\ee
is preserved by $T(t)$ and there holds:
\be
\label{stabiliteexpo}
\forall X\in V^\perp_\delta(A^*), \ \ \|T(t)X\|\leq M_\delta e^{\delta t}\|X\|.
\ee
(iii) The restriction of $A$ to $V_\delta(A)$ is given by a direct sum of $(m_\l\times m_\l)_{\l\in \Lambda_\delta(A)}$
matrices each of which is the Jordan block associated to the eigenvalue $\l$ and the number of Jordan blocks corresponding 
to $\l$ is equal to the geometric multiplicity of $\l$ -- $m^g_\l=dim ker (A-\l I)$. In particular, $m^a_\l\le m^g_\l k_\l$.
Each block corresponds to an invariant subspace $J_\l$ and the semigroup $T$ restricted to $J_\l$ is given by the nilpotent 
matrix 
$$
T(t)|_{J_\l}=\begin{pmatrix} e^{\l t} & te^{\l t}&...&t^{m_\l-1} e^{\l t}\\
0& e^{\l t}&...&t^{m_\l-2} e^{\l t}\\
...\\
0&0&...& e^{\l t}
\end{pmatrix}
$$
\end{lemma}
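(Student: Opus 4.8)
The plan is to derive all three items from Proposition \ref{lemmagrowthbound} together with the invariance structure recalled just before the statement, applied to $A=A_0+K$ which is closed (being a bounded perturbation of a closed operator) and generates a strongly continuous semigroup $T(t)$ by Hille--Yoshida plus the bounded perturbation theorem. The starting observation is that $A_0$ maximal dissipative gives, by Lemma \ref{lemamcnneoe}(iii), $\sigma(A_0)\subset\{\Re\l\le 0\}$, hence $w_0(T_0)\le 0$ and \emph{a fortiori} $w_{\rm ess}(T_0)\le 0$. Since $K$ is compact, a standard fact is that the essential growth bound is stable under compact perturbation, so $w_{\rm ess}(T)=w_{\rm ess}(T_0)\le 0$. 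First I would record this and then invoke \eqref{eignevlauefintie}: for the given $\delta>0$ (which is $>0\ge w_{\rm ess}$), the set $\Lambda_\delta(A)=\sigma(A)\cap\{\Re\l>\delta\}$ is finite and consists of eigenvalues of finite algebraic multiplicity, with the Jordan-type description of Proposition \ref{lemmagrowthbound}. This gives item (i), with $V_\delta(A)=\bigoplus_{\l\in\Lambda_\delta(A)}\ker(A-\l I)^{k_\l}$ finite dimensional.

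For item (ii), I would first apply the same reasoning to $A^*$: by Lemma \ref{lemamcnneoe}(ii), $A_0^*$ is maximal dissipative, and $A^*=A_0^*+K^*$ with $K^*$ compact, so $\Lambda_\delta(A^*)$ is likewise finite. The identity $\Lambda_\delta(A^*)=\overline{\Lambda_\delta(A)}$ is the general relation $\sigma(A^*)=\overline{\sigma(A)}$ restricted to the half-plane $\{\Re\l>\delta\}$ (using that complex conjugation preserves this set); the equality of dimensions $\dim V_\delta(A^*)=\dim V_\delta(A)$ follows because, for an isolated eigenvalue of finite algebraic multiplicity, the Riesz spectral projection $P$ associated to $\Lambda_\delta(A)$ has rank equal to the sum of algebraic multiplicities, and the adjoint projection $P^*$ is exactly the Riesz projection of $A^*$ for $\overline{\Lambda_\delta(A)}$, whence $\dim\mathrm{Ran}\,P^*=\dim\mathrm{Ran}\,P$. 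The direct sum decomposition $H=V_\delta(A)\oplus V_\delta^\perp(A^*)$ is then $H=\mathrm{Ran}\,P\oplus\ker P^*$, i.e. the range/kernel splitting of the bounded projection $P$ (one checks $\ker P=V_\delta^\perp(A^*)$ since $\ker P=(\mathrm{Ran}\,P^*)^\perp=V_\delta(A^*)^\perp$). Both summands are $T(t)$-invariant: $V_\delta(A)$ trivially, and $V_\delta^\perp(A^*)$ by the invariance argument recalled before the lemma, namely if $X\in D(A)\cap V_\delta^\perp(A^*)$ and $Y\in V_\delta(A^*)$ then $\la AX,Y\ra=\la X,A^*Y\ra=0$; this passes to the semigroup by approximation. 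Finally \eqref{stabiliteexpo} is the growth bound \emph{on the invariant subspace} $V_\delta^\perp(A^*)$: the generator there is the restriction $A|_{V_\delta^\perp(A^*)}$, whose spectrum is $\sigma(A)\setminus\Lambda_\delta(A)\subset\{\Re\l\le\delta\}$, and whose essential growth bound is still $\le 0<\delta$; hence by $w_0=\max\{w_{\rm ess},s(A)\}$ applied to this restricted semigroup, $w_0\le\delta$, which is exactly $\|T(t)X\|\le M_\delta e^{\delta t}\|X\|$.

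Item (iii) is the purely finite-dimensional statement about $A$ restricted to $V_\delta(A)$: on this finite-dimensional invariant subspace $A$ acts as a matrix whose only eigenvalues are those in $\Lambda_\delta(A)$, so putting it in Jordan canonical form gives the direct sum of Jordan blocks; the number of blocks for a given $\l$ equals $\dim\ker(A-\l I)=m_\l^g$, the size $k_\l$ of the largest block is the one from (i), so $m_\l^a\le m_\l^g k_\l$, and exponentiating a single Jordan block of size $m_\l$ yields the displayed upper-triangular matrix with entries $t^j e^{\l t}/j!$ — I would just quote the standard formula (the factorials being absorbed into the asymptotics, or one may write the entries as $\frac{t^j}{j!}e^{\l t}$ if one wants to be literal). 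The only genuinely non-routine input is the stability of the essential growth bound under compact perturbations, $w_{\rm ess}(A_0+K)=w_{\rm ess}(A_0)$; I expect this to be the main point to get right, and I would cite it from \cite{engelnagel} (it is the perturbation counterpart of Proposition \ref{lemmagrowthbound}, following from the fact that $T(t)-T_0(t)$ is compact, which itself is proved via the Dyson--Duhamel series $T(t)=T_0(t)+\int_0^t T_0(t-s)KT(s)\,ds$ and compactness of $K$). Everything else is bookkeeping with Riesz projections and finite-dimensional linear algebra.
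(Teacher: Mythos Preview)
Your proposal is correct and follows essentially the same route as the paper: both use stability of the essential growth bound under compact perturbation (cited from \cite{engelnagel}), invoke Proposition \ref{lemmagrowthbound} to obtain finiteness of $\Lambda_\delta(A)$, pass to $A^*$ via Lemma \ref{lemamcnneoe}(ii), realize the direct sum decomposition through the Riesz projection and its adjoint, and deduce \eqref{stabiliteexpo} by applying the growth-bound formula $w_0=\max\{w_{\rm ess},s\}$ to the restricted semigroup on $V_\delta^\perp(A^*)$. The one place where the paper supplies slightly more detail is the verification that the \emph{restricted} semigroup on $U=V_\delta^\perp(A^*)$ still has $w_{\rm ess}\le 0$: given a compact $K(t)$ approximating $T(t)$ on $H$, one checks that $PK(t)|_U$ is compact on $U$ and that $\|\tilde T(t)-PK(t)\|_{U\to U}\le C_P\|T(t)-K(t)\|_{H\to H}$, where $P$ is the projection onto $U$; you state this conclusion but could make this short argument explicit.
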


\begin{proof} This is a simple consequence of Proposition \ref{lemmagrowthbound}.\\

\noindent{\bf step 1} Perturbative bound. First, since $A_0$ is maximally dissipative, $$\forall t\ge 0, \ \ \|T_0(t)\|\lesssim 1$$ implies $w_0(A_0)\leq 0$. By Proposition  \ref{lemmagrowthbound}, $s(A_0)\leq 0$ and $$w_{ess}(T_0)\leq 0.$$ On the other hand, from \cite{engelnagel} Prop 2.12 p.258, compactness of $K$ implies $$w_{\rm ess}(T)=w_{\rm ess}(T_0)\leq 0.$$ 
Let now $\l\in \sigma(A)$ with $\Re(\l)>0$, then the formula $$A-\l=A_0+K-\l=(A_0-\l)({\rm Id}+(A_0-\l)^{-1}K)$$ and
invertibility of  $(A_0-\l)$ imply that $\l$ belongs to the spectrum of the Fredholm operator ${\rm Id} +(A_0-\l)^{-1}K$. 
Therefore, $\l $ is an eigenvalue  of $A$. On the other hand, $\Re(\l)>\delta$ implies $\Re(\l)>\delta>0\geq w_{\rm ess}(T)$, and hence, by \eqref{eignevlauefintie}, there are finitely many eigenvalues with $\Re(\l)>\delta$. In fact, Proposition \ref{lemmagrowthbound} also directly shows that each some $\l$ is an eigenvalue and implies the rest of (i).

Since $A^*=A_0^*+K^*$ and $A_0^*$ is maximally dissipative from Lemma \ref{lemamcnneoe}, we can run the same argument as above for $A^*$.  Moreover, $\sigma(A)=\overline{\sigma(A^*)}$ (\cite{bookinternet}, prop. 2.7), (i) is proved.\\

The argument above, in fact, shows that $\{\l\in \Bbb C, \ \ \Re(\l)> \delta\}\cap \{\l\in \sigma(A)\}$ is finite, since for 
every $\Re(\l)>0$ and $\l\in\sigma(A)$, $\l$ is an eigenvalue of $A$.\\

\noindent{\bf step 2} The first statement of (ii) is standard. We already explained that the subspaces $V_\delta(A)$
and $D(A)\cap V_\delta^\perp(A^*)$ are invariant for $A$. To prove the direct decomposition we recall that the subspace 
$V_\delta(A)$ is the image of $H$ under the spectral projection $P_\delta(A)$ associated to the set $\Lambda_\delta(A)$: 
$$
P_\delta(A)=\frac 1{2\pi i} \int_\Gamma \frac {d\lambda}{\lambda I -A},
$$
where $\Gamma$ is an arbitrary contour containing the set $\Lambda_\delta(A)$. There is a direct decomposition 
$$
H=Im P_\delta(A)\bigoplus ker P_\delta(A).
$$
On the other hand, the adjoint 
$$
P^*_\delta(A)=\frac 1{2\pi i} \int_{\overline \Gamma} \frac {d\lambda}{\lambda I -A^*}=P_\delta(A^*)
$$
is the spectral projection of $A^*$ associated to the set ${\overline{\Lambda_\delta(A)}}$. The result is now immediate.\\

\noindent{\bf step 3} Semigroups generated by restriction and conclusion. Let $V=V_\delta(A)$, $U=V^\perp_\delta(A^*)$ and $P$ denote the projection on $V_\delta^\perp(A^*)$ in the 
direct decomposition \eqref{eq:direct}. 
 Let $\tilde{A}$ denote the restriction of $A$ to $U$ with the domain $D(\tilde{A})=U\cap D(A)$. By invariance
$$\forall X\in U\cap D(A), \ \ \tilde{A}X=AX.$$

Let $\tilde{T}$ be the semigroup on $U$ generated by $\tilde {A}=A$. Then for all $X\in D(A)\cap U$, $\tilde{T}(t)X\in \mathcal C^1([0,+\infty),D(\tilde{A}))$ is the unique strong solution to the ode
$$\frac{dX(t)}{dt}={A}X(t), \ \ X(0)=X.$$ This implies that $\tilde{T}(t)X=T(t)X$ 
for all $X\in D(A)\cap U$ and thus for all $X\in U$ by continuity of the semigroup. 
By Proposition \ref{lemmagrowthbound} the growth bound of $\tilde{T}$ satisfies $$w_0(\tilde{T})\leq \max\{w_{\rm ess}(\tilde T),s(\tilde{A})\}.$$ 
We first argue that 
$$
w_{\rm ess}(\tilde T)\le 0.
$$
To prove that we note that we already established that $w_{\rm ess}(T)\le 0$. We then fix $\ep>0$ and,
for any $t\ge 0$ 
choose a compact operator 
$K(t)\in \mathcal K(H)$ on $H$ such that,
$$
\log\|T(t)-K(t)\|_{H\to H}<\ep t + \log M
$$
for some constant $M$ which may depend on $\ep$.
The restriction $\tilde K(t)=PK(t)$ of $K(t)$ to $U$ is a compact operator on $U$. Then, for any $t\ge 0$
\bee
\log\|\tilde T(t)-\tilde K(t)\|_{U\to U}&=&\log\|P(T(t)-K(t))\|_{U\to U}\\ &\le& \log \{C _P\|T(t)-K(t)\|_{H\to H}\}<\log {C _P}+
\log M+ \ep t,
\eee
where $C_P$ denotes the norm of the projector $P$. The desired conclusion follows.

To show that $s(\tilde A)\le \de$
we assume that  $\l\in \sigma(\tilde{A})$ with $\Re(\l)>\delta$, then $\l$ is an eigenvalue of $\tilde{A}$ and, by invariance of $U$, ${\l}$ is an eigenvalue of $A$ with a non-trivial eigenvector $\psi\in U$. However, by construction, all such $\psi$
belong to the subspace $V=V_\delta(A)$, contradiction. Hence $s(\tilde{A})\leq \delta$ and Proposition \ref{lemmagrowthbound} yields \eqref{stabiliteexpo}.

Finally, part (iii) is completely standard.
\end{proof}

 We will use Lemma \ref{cneoneoneobis} in the following form.
 
 \begin{lemma}[Exponential decay modulo finitely many instabilities]
 \label{elmsnjennnw}
  Let $\delta>0$ and let $T_0$ be the strongly continuous semigroup generated by a maximal dissipative operator $A_0$, and  $T$ be the strongly continuous semigroup generated by $A=A_0-\delta+K$ where $K$ is a compact operator on $H$. Let the (possibly empty) finite set $$\Lambda=\{\l \in \Bbb C, \ \ \Re(\l)\ge 0\} \cap \{\l\ \ \mbox{is an eigenvalue of}\ \ A\}=(\l_i)_{1\le i\le N}$$ and let 
  $$
  H=U\bigoplus V,
  $$
where $U$ and $V$ are invariant subspaces for $A$ and  $V$ is the image of the spectral projection of $A$ associated to the set $\Lambda$. Then there exist $C,\delta_g>0$ such that 
 \be
\label{stabiliteexpobis}
\forall X\in U, \ \ \|T(t)X\|\leq C e^{-\frac{\delta_g}{2} t}\|X\|.
\ee
\end{lemma}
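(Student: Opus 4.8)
The statement follows almost immediately from Lemma \ref{cneoneoneobis} once one tracks the shift by $\delta$. The plan is to write $A = \tilde{A}_0 + K$ where $\tilde{A}_0 = A_0 - \delta$. The operator $A_0$ is maximal dissipative, hence so is $\tilde{A}_0 = A_0 - \delta$: indeed for $X\in D(A_0)$ we have $\Re\la \tilde{A}_0 X, X\ra = \Re\la A_0 X, X\ra - \delta\|X\|^2 \le -\delta\|X\|^2 \le 0$, and maximality $(-\tilde{A}_0 + R)X = (-A_0 + (R-\delta))X = F$ is solvable for $R > \delta$ since $A_0$ is maximal. So $\tilde{A}_0$ is maximal dissipative and $A = \tilde{A}_0 + K$ is a compact perturbation of it; Lemma \ref{cneoneoneobis} applies with $A_0$ there replaced by $\tilde{A}_0$.

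**Applying Lemma \ref{cneoneoneobis}.** First I would fix an auxiliary threshold. By part (i) of Lemma \ref{cneoneoneobis} applied to $A$, for every $\delta' > 0$ the set $\Lambda_{\delta'}(A) = \sigma(A) \cap \{\Re(\lambda) > \delta'\}$ is finite, consisting of eigenvalues of finite algebraic multiplicity. In particular $\Lambda_0(A) = \sigma(A)\cap\{\Re(\lambda) > 0\}$ is finite. Since the full set $\Lambda$ in the statement is $\{\Re(\lambda)\ge 0\}\cap\{\text{eigenvalues}\}$, which includes possibly the imaginary axis, I need a small separation: because $\Lambda$ is finite (it is contained in $\Lambda_{-1}(A)$, which is finite), there exists $\delta_g > 0$ such that no eigenvalue of $A$ has real part in the interval $(-\delta_g, 0)$, i.e. $\sigma(A)\cap\{-\delta_g < \Re(\lambda) \le 0\}$ consists only of eigenvalues on the imaginary axis, and moreover $\Lambda = \Lambda_{-\delta_g/2}(A) \cap \{\text{the prescribed set}\}$ — more carefully, choose $\delta_g>0$ small enough that $\Lambda_{-\delta_g/2}(A) = \Lambda$. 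This is possible precisely because $\Lambda$ is finite, so there is a spectral gap between $\Lambda$ and the rest of $\sigma(A)$ to the left.

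**Invoking the decay estimate and concluding.** Now apply part (ii) of Lemma \ref{cneoneoneobis} with the threshold $-\delta_g/2$ in place of $\delta$: the subspace $V_{-\delta_g/2}(A)$ is exactly $V$ (the image of the spectral projection associated to $\Lambda$), the complementary invariant subspace $V_{-\delta_g/2}^\perp(A^*)$ is exactly $U$, and the estimate \eqref{stabiliteexpo} reads
\be
\forall X \in U, \quad \|T(t)X\| \le M_{-\delta_g/2}\, e^{-\frac{\delta_g}{2} t}\|X\|,
\ee
which is precisely \eqref{stabiliteexpobis} with $C = M_{-\delta_g/2}$. One should double-check that Lemma \ref{cneoneoneobis} is stated for arbitrary $\delta > 0$ — it is, and its proof only uses $\delta > w_{\rm ess}(T)$; since $w_{\rm ess}(T) = w_{\rm ess}(T_0) \le 0$ for $T_0$ generated by the maximal dissipative $\tilde A_0$, and $-\delta_g/2 > w_{\rm ess}(T)$ needs $w_{\rm ess}(T) < -\delta_g/2$. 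This forces one more observation: $\tilde A_0 = A_0 - \delta$ generates $e^{-\delta t}T_0(t)$, so $w_{\rm ess}$ of its semigroup is $\le -\delta$; shrinking $\delta_g$ so that $\delta_g/2 < \delta$ guarantees $-\delta_g/2 > -\delta \ge w_{\rm ess}$.

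**Main obstacle.** There is no serious obstacle — the work has all been done in Lemma \ref{cneoneoneobis}. The only point requiring a little care is the bookkeeping of the shift: making sure the essential growth bound of the shifted semigroup is strictly below $-\delta_g/2$ (which needs $\delta_g < 2\delta$), and that the finite set $\Lambda$ prescribed in the hypothesis coincides with the spectral set $\Lambda_{-\delta_g/2}(A)$ used to define $V$ — this is where finiteness of $\Lambda$ is essential, providing the spectral gap that lets one push the threshold strictly to the left of the imaginary axis while still capturing exactly the same eigenvalues.
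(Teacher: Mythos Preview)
Your argument is correct and the approach is close in spirit to the paper's, but the route differs in one meaningful way. The paper applies Lemma \ref{cneoneoneobis} \emph{verbatim} to the shifted operator $\tilde A=A+\delta=A_0+K$ with the positive threshold $\delta/4$; this produces decay on a strictly smaller stable subspace $U_\delta\subset U$, and the paper then handles the leftover finite-dimensional piece $O_\delta$ (corresponding to eigenvalues of $A$ with $-\tfrac{3\delta}{4}\le\Re\lambda<0$) by hand using the explicit Jordan-block form from part (iii) of Lemma \ref{cneoneoneobis}, defining $\delta_g$ as the distance from those eigenvalues to the imaginary axis. You instead push the threshold in Lemma \ref{cneoneoneobis} directly to $-\delta_g/2<0$, after observing that its proof only requires the threshold to exceed $w_{\rm ess}(T)\le -\delta$; this hits the set $\Lambda$ exactly and bypasses the Jordan-block step entirely. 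Your route is slightly more economical; the paper's route stays within the literal statement of Lemma \ref{cneoneoneobis} and does not require re-examining its proof.

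Two small points. First, your claim that $\Lambda\subset\Lambda_{-1}(A)$ is finite implicitly assumes $\delta>1$; replace $-1$ by $-\delta/2$ (or any level strictly above $-\delta$) and the argument is clean. Second, if you prefer to invoke Lemma \ref{cneoneoneobis} exactly as stated rather than extend it, you can equivalently shift by $\delta_g$ instead of $\delta$: apply the lemma to $A+\delta_g=(A_0-\delta+\delta_g)+K$ (with $A_0-\delta+\delta_g$ maximal dissipative since $\delta_g<\delta$) at the positive threshold $\delta_g/2$; this is a verbatim application and yields your conclusion directly.
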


\begin{proof} We apply Lemma \ref{cneoneoneobis} to $\tilde{A}=A+\delta=A_0+K$ with generates the semi group $\tilde{T}$. Hence the set  $$\Lambda_{\frac{\delta}{4}}(\tilde{A})=\{\l\in \Bbb C, \ \ \Re(\l)> \frac{\delta}{4}\}\cap \{\l\ \ \mbox{is an eigenvalue of}\ \ \tilde{A}\}$$ is finite. Moreover $$AX=\l X\Leftrightarrow \tilde{A}X=(\l+\delta)X$$ and hence $$\Lambda \subset \Lambda_{\frac{\delta}{4}}$$ 
Let 
$$
H=U_\delta\bigoplus V_\delta,
$$
be the invariant decomposition of $\tilde A$ (and of $A$) associated to the set $\Lambda_{\frac{\delta}{4}}$. Clearly, 
$U_\delta\subset U$ and 
$$U=U_\delta\bigoplus O_\delta,$$ where $O_\delta$ is the image of the spectral projection of $A$ associated with the set
$\Lambda_{\frac{\delta}{4}}\setminus\Lambda$. By Lemma \ref{cneoneoneobis},
$$
\forall X\in U_\delta, \ \ \|\tilde{T}(t)X\|\leq M_\delta e^{\frac{\delta}{4} t}\|X\|
$$ 
which implies 
\be
\label{veineionevonee}
\forall X\in U_\delta, \ \ \|T(t)X\|=e^{-\delta t} \|\tilde{T}(t)X\|\leq M_\delta e^{-\frac{3\delta}{4} t}\|X\|.
\ee
Let now $X\in U$. Since $U_\delta$ is invariant by $T$ and \eqref{veineionevonee} yields exponential decay on $U_\delta$, we assume $X\in O_\delta$. $O_\delta$ is an invariant subspace of $A$ generated by the eigenvalues $\l$ with the property that 
$-\frac 34\delta\le \Re (\l) <0$. Let $\delta_g>0$ be defined as 
$$
-{\delta_g}:=\sup\{\Re(\l): -\frac 34\delta\le \Re (\l) <0\}
$$
From part (iii) of Lemma \ref{cneoneoneobis},
$$
\|T(t)X\|_{O_\delta}\le C \sup_{\Re (\l) < 0} e^{\l t} t^{m_\l -1}\|X\|\le C^{-\frac{\delta_g}2t}\|X\|
$$
This concludes the proof of Lemma \ref{elmsnjennnw}.
\end{proof}
Our final result in this section is to set up a Brouwer type argument for the evolution of unstable modes.
\begin{lemma}
\label{browerset}
Let $A,\delta_g$ as in Lemma \ref{elmsnjennnw} with the decomposition 
$$
H=U\bigoplus V
$$
into stable and unstable subspaces Fix sufficiently large $t_0>0$ (dependent on $A$). 
Let $F(t)$ such that, $\forall t\ge t_0$, $F(t)\in V$ and 
$$
\|F(t)\|\le e^{-\frac {2\delta_g}3 t}
$$
 be given. Let $X(t)$ denote  the solution to the ode
$$
\left|\begin{array}{l}
\frac {dX} {dt} = A X + F(t)\\
 X(t_0)=x\in V.
 \end{array}\right. 
$$
Then, for any $x$ in the ball
$$
\|x\|\le e^{-\frac {3\delta_g} 5t_0},
$$
we have
\be\label{eq:growth}
\|X(t)\|\le e^{-\frac{\delta_g} 2 t},\qquad t_0\le t\le t_0+\Gamma
\ee
for some large constant $\Gamma$ (which only depends on $A$ and $t_0$.)
Moreover, there exists $x^*\in V$ in the same ball as a above such that $\forall t\ge t_0$,
$$
\|X(t)\|\le e^{-\frac {3\delta_g}5 t}
$$

\end{lemma}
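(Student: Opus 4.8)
The plan is to prove the two assertions separately; the first is a short-time Duhamel estimate and the second a topological shooting (Brouwer) argument on the finite-dimensional unstable subspace $V$. For the first assertion, write $X(t)=e^{(t-t_0)A}x+\int_{t_0}^t e^{(t-s)A}F(s)\,ds$ and use that $V$ is finite dimensional with $A|_V$ having all its eigenvalues in $\{\Re\lambda\ge 0\}$ (by the definition of $V$ in Lemma~\ref{elmsnjennnw}), so that $\|e^{sA}|_V\|\lesssim_A (1+s)^{m-1}e^{\omega s}$ for some $\omega\ge 0$ and $m\ge 1$. Combined with $\|x\|\le e^{-\frac{3\delta_g}{5}t_0}$, $\|F(s)\|\le e^{-\frac{2\delta_g}{3}s}$ and $\frac23>\frac35$, this gives for $t_0\le t\le t_0+\Gamma$ a bound $\|X(t)\|\le C_\Gamma\,e^{-\frac{3\delta_g}{5}t_0}$ with $C_\Gamma$ depending only on $A$ and $\Gamma$. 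Since $e^{-\frac{\delta_g}{2}t}$ is decreasing and $t\le t_0+\Gamma$, the inequality $\|X(t)\|\le e^{-\frac{\delta_g}{2}t}$ follows as soon as $C_\Gamma e^{\frac{\delta_g}{2}\Gamma}\le e^{\frac{\delta_g}{10}t_0}$ (using $\frac35-\frac12=\frac1{10}$), which, since the left side is continuous and increasing in $\Gamma$, defines the large constant $\Gamma=\Gamma(A,t_0)$ once $t_0$ is taken large enough.

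For the second assertion we may assume $\dim V\ge 1$ (otherwise $F\equiv 0$, $x=0$ and all bounds are trivial with $x^*=0$). Set $Y(t)=e^{\frac{3\delta_g}{5}t}X(t)$, which solves $\dot Y=BY+G(t)$ on $V$ with $B=A|_V+\frac{3\delta_g}{5}\,{\rm Id}$ and $\|G(t)\|\le e^{-\frac{\delta_g}{15}t}\to 0$ (since $\frac35-\frac23=-\frac1{15}$); all eigenvalues of $B$ have real part $\ge\frac{3\delta_g}{5}>0$. By the classical Lyapunov construction, choose an inner product $\langle\cdot,\cdot\rangle_*$ on $V$ with $\Re\langle BY,Y\rangle_*\ge\frac{\delta_g}{2}\|Y\|_*^2$ (any constant below $\frac{3\delta_g}{5}$ is admissible, and $\frac12<\frac35$). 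Then on a sphere $\{\|Y\|_*=R_0\}$, and for $t\ge t_0$ with $t_0$ large enough that the forcing is dominated, $\tfrac12\frac{d}{dt}\|Y\|_*^2\ge\frac{\delta_g}{2}\|Y\|_*^2-\|G\|_*\|Y\|_*\ge\frac{\delta_g}{4}R_0^2>0$; the same computation with $\|Y\|_*\ge R_0$ shows that once $\|Y(t)\|_*$ reaches $R_0$ it remains $\ge R_0$ and increases, i.e. the rescaled flow is strictly and one-sidedly outgoing through this sphere. Fix $R_0=R_0(A)$ small enough that $\|Y\|_*\le R_0$ implies $\|Y\|\le 1$ (possible by norm equivalence on $V$).

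Now run the shooting argument on the closed ball $\mathcal B=\{Y_0\in V:\|Y_0\|_*\le R_0\}$. For $Y_0\in\mathcal B$ let $Y_{Y_0}$ denote the solution with $Y_{Y_0}(t_0)=Y_0$ and $T^*(Y_0)\in[t_0,+\infty]$ the first time $\|Y_{Y_0}(t)\|_*=R_0$, with $T^*(Y_0)=+\infty$ if this never occurs. If $T^*(Y_0)<\infty$ for every $Y_0\in\mathcal B$, then the transversality just established (strict positivity of $\frac{d}{dt}\|Y\|_*^2$ at $\|Y\|_*=R_0$), together with continuous dependence on initial data and the implicit function theorem, makes $T^*$ continuous on $\mathcal B$ and identically equal to $t_0$ on $\partial\mathcal B$; hence $\Phi(Y_0):=Y_{Y_0}(T^*(Y_0))$ is a continuous retraction of $\mathcal B$ onto $\partial\mathcal B$, contradicting Brouwer's theorem. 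So there exists $Y_0^*\in\mathcal B$ with $\|Y_{Y_0^*}(t)\|_*\le R_0$ for all $t\ge t_0$, and undoing the rescaling gives $x^*:=e^{-\frac{3\delta_g}{5}t_0}Y_0^*$ with $\|x^*\|\le e^{-\frac{3\delta_g}{5}t_0}$ and $\|X_{x^*}(t)\|=e^{-\frac{3\delta_g}{5}t}\|Y_{Y_0^*}(t)\|\le e^{-\frac{3\delta_g}{5}t}$ for all $t\ge t_0$, as claimed. The main obstacle is precisely this topological step: everything hinges on replacing the Euclidean norm on $V$ by the adapted Lyapunov norm $\|\cdot\|_*$, which turns the a priori oscillatory action of $A$ on the unstable subspace into a strictly outgoing flow, thereby making the exit-time map continuous so that Brouwer's no-retraction theorem applies.
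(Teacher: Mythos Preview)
Your proof is correct. The first assertion is handled exactly as in the paper: Duhamel on the finite-dimensional invariant subspace $V$, using the crude semigroup bound $\|e^{sA}|_V\|\lesssim (1+s)^{m-1}e^{\omega s}$ and the gap $\tfrac35-\tfrac12=\tfrac1{10}$ to trade smallness of the data against growth of $e^{(t-t_0)A}$ over a long but finite window $[t_0,t_0+\Gamma]$.

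For the second assertion your route genuinely differs from the paper's. The paper first reduces to a single Jordan block, writes $A=\lambda I+N$ with $N$ nilpotent, and performs the \emph{time-dependent} change of variables $Y(t)=e^{-tN}e^{\frac{19\delta_g}{30}t}X(t)$; since $N$ commutes with $A$ this yields the scalar-like ODE $\dot Y=(\lambda+\tfrac{19\delta_g}{30})Y+\tilde F$, on which the outgoing condition $\tfrac{d}{dt}\|Y\|^2>0$ at $\|Y\|=1$ is immediate in the \emph{original} norm. The price is paid at the end: undoing $e^{-tN}$ reintroduces a polynomial factor $t^{m_\lambda-1}$, which is absorbed by the deliberate overshoot $\tfrac{19}{30}>\tfrac{18}{30}=\tfrac35$. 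You instead shift by exactly $\tfrac{3\delta_g}{5}$ and absorb the nilpotent part once and for all into an adapted Lyapunov inner product $\langle\cdot,\cdot\rangle_*$ on $V$ (possible since $\tfrac{\delta_g}{2}<\tfrac{3\delta_g}{5}\le\min\Re\,\mathrm{spec}(B)$), so the outgoing condition and the Brouwer retraction argument run in $\|\cdot\|_*$ without any block decomposition or final polynomial correction. Your approach is cleaner and treats all of $V$ at once with an autonomous norm; the paper's is more hands-on and makes the role of the Jordan structure explicit, at the cost of the block-by-block reduction and the extra $\tfrac{1}{30}\delta_g$ bookkeeping.
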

\begin{proof}
According to Lemma \ref {cneoneoneobis} the subspace $V$ can be further decomposed into invariant subspaces 
on which $A$ is represented by Jordan blocks. We may therefore assume that $V$ is irreducible and corresponds to 
a Jordan block of $A$ of length $m_\l$ associated with an eigenvalue $\l$ with ${\Re(\l)\ge 0}$ and restrict $A$ to $V$. 
We decompose $A$ as 
$$
A=\l I + N,
$$ 
where $N$ has the property that $N^{m_\l-1}=0$, and
$$
e^{tN}=\begin{pmatrix} 1 & t&...&t^{m_\l-1} \\
0& 1&...&t^{m_\l-2} \\
...\\
0&0&...& 1
\end{pmatrix}
$$
The claim \eqref{eq:growth} follows from the growth on the Jordan block:
\bee
&&\|X(t)\|=\left\|e^{(t-t_0)A}x+\int_{t_0}^te^{(t-\tau)A}F(\tau)d\tau\right\|\\
&\leq &C\Gamma^{m_\l-1}e^{\Re(\l)\Gamma}e^{-\frac{3\delta_g t_0}{5}}+\int_{t_0}^{t}C|\tau-t_0| ^{m_\l-1}e^{\Re(\l)(t-\tau)}e^{-\frac{2}{3}\delta_g\tau}d\tau\le C\Gamma^{m_\l-1}e^{\Re(\l)\Gamma}e^{-\frac{3\delta_g t_0}{5}}
\eee
and hence the size of constant $\Gamma$ is determined
from the inequality
$$
C\Gamma^{m_\l-1}e^{\Re(\l)\Gamma}e^{-\frac{3\delta_g t_0}{5}}\le e^{-\frac \delta 2(t_0+\Gamma)},
$$
a sufficient condition being 
$$
\Gamma \le \frac {t_0}2\left[\frac {\delta_g}{10\Re(\l)+5\delta_g} \right]
$$
which can be made arbitrarily large by a choice of $t_0$.\\
We now define a new variable 
$$
Y(t)=e^{-tN}e^{\frac {19\delta_g}{30} t}X(t).
$$
Since $N$ and $A$ commute,
$$
\frac {dY} {dt} = \left(\l+\frac {19\delta_g}{30}\right) Y +  \tilde F(t), \qquad Y(t_0)=y
$$
where $\tilde F(t)=e^{-tN}e^{\frac {19\delta_g}{30} t}F(t)$ and 
$$
\|\tilde F(t)\|\lesssim e^{-\frac{\delta_g}{31} t}.
$$
Since $t_0$ was chosen to be sufficiently large, we can assume that $\forall t\ge t_0$
$$
\|\tilde F(t)\|\lesssim \epsilon e^{-\frac{\delta_g}{60} t}
$$
and $\epsilon < \Re (\l)+\frac {19\delta_g}{60}$.
We now run a standard Brouwer type argument for $Y$. For any $y$ such that $\|y\|\le 1$ we define the exit time 
$t^*$ to be the first time such that $\|Y(t^*)\|=1$. If for some $y$, $t^*=\infty$, we are done. Otherwise, assume 
that for all $\|y\|\le 1$, $t^*<\infty$ and define the map 
$\Phi: B\to S$ as $\Phi(y)=Y(t^*)$ mapping the unit ball to the unit sphere. Note that $\Phi$ is the identity map on the
boundary of $B$. To prove continuity of $\Phi$ we compute 
$$
\frac {d\|Y\|^2}{dt}(t^*)=2\Re(\l) +\frac{19\delta_g}{15}+ 2 \Re \la \tilde F(t^*),Y(t^*)\ra\ge \frac{19\delta_g}{30}>0.
$$
This is the outgoing condition which implies continuity. The Brouwer argument applies and shows that such $\Phi$ 
does not exist. We now reinterpret the result in terms of $X$. We have shown existence of $x$ such that the corresponding 
solution $X(t)$ has the property that $\forall t\ge t_0$,
$$
\|e^{-tN} X(t)\|\le  e^{-\frac {19\delta_g}{30} t}.
$$
Now $e^{-tN}$ is an invertible operator with the inverse given by $e^{tN}$ and its norm bounded by 
$C t^{m_\l-1}$. The result follows immediately. We note that the resulting solution $X(t)$ has initial data $X(t_0)$ in the ball
$
\|X(t_0)\|\le e^{-\frac {3\delta_g}5t_0}.
$
\end{proof}


\section{Set up and the bootstrap}
\label{sectionbootstrap}


In this section we describe a set of smooth well localized initial data which lead to the conclusions of 
Theorem \ref{thmmain}. The heart of the proof is a bootstrap argument coupled to the classical Brouwer topological argument of Lemma \ref{browerset} to avoid finitely many unstable directions of the corresponding linear flow. 
Since our analysis relies {\em essentially} on the phase-modulus decomposition of solutions of the Schr\"odinger equation,
our chosen data needs to give rise to {\em nowhere vanishing} solutions to \eqref{nls} (at least for a sufficiently small time.)


\subsection{Renormalized variables} 


Let $u(t,x)\in \mathcal C([0,T),\cap_{k\ge 0} H^k)$ be a solution to \eqref{nls} 
{such that} $u(t,x)$ does not vanish at any $(t,x)\in [0,T)\times{\Bbb R}^d$. This will be a consequence of our choice of initial data and suitable bootstrap assumptions. We introduce for such a solution the decomposition of Lemma \ref{newequationlemma} 
\be
\label{renorlianoineo}
u(t,x)=\frac{1}{(\l\sqrt{b})^{\frac 2{p-1}}}w(s,y)e^{i\gamma}, \ \ w(\tau,y)=\rho_T(\tau,Z)e^{i\frac{\Psi_T}{b}}
\ee
with the renormalized space and times
\be
\label{renormalization}
\left|\begin{array}{lll}
Z=y\sqrt{b}=Z^*x, \ \ Z^*=e^{\mu\tau}\\
\l(\tau)=e^{-\frac{\tau}{2}}, \ \ b(\tau)=e^{-{\mathcal e}\tau}, \ \ \gamma_\tau=-\frac 1b=-e^{{\mathcal e}\tau}\\
\tau=-\log(T-t), \ \ \tau_0=-\log T.
\end{array}\right.
\ee
Here, $0<{\mathcal e}<1$ is the {\it fixed} front speed such that $$r=\frac{2}{1-{\mathcal e}}>2.$$ Up to a constant the phase can more explicitly be written in the form
\be
\label{formulaphase}
\gamma(\tau)=-\frac{1}{{\mathcal e}b}.
\ee
Our claim is that given $$\tau_0=-\log T$$
large enough, we can construct a finite co-dimensional manifold of smooth well localized initial data $u_0$ such that the corresponding solution to the renormalized flow \eqref{fullflowrenormalized} is global in renormalized time $\tau\in[\tau_0,+\infty)$, bounded in a suitable topology and nowhere vanishing. Upon unfolding \eqref{renorlianoineo}, this produces a solution to \eqref{nls} blowing up at $T$ in the regime described by Theorem \ref{thmmain}.\\


\subsection{Stabilization and regularization of the profile outside the singularity}


The spherically symmetric profile solution $(\rho_P,\Psi_P)$ has an intrinsic slow decay as $Z\to +\infty$  $$\rho_P(Z)=\frac{c_P}{\la Z\ra^{\frac{2(r-1)}{p-1}}}\left(1+O\left(\frac{1}{\la Z\ra^r}\right)\right),$$
which need to be regularized in order to produce finite energy non vanishing initial data.\\

\noindent{\em 1. Stabilization of the profile}. Recall the asymptotics \eqref{decayprofile} and the choice of parameters \eqref{formulaphase}, \eqref{renormalization} which yield $$\l^{2(r-2)}=b^r, \ \ r=\frac{2}{1-{\mathcal e}}, \ \ {\mathcal e}=\frac{r-2}{r},\ \
\mu=\frac{1-{\mathcal e}}2.$$ For $Z=\frac{\sqrt{b}}{\l}x\gg 1$, i.e., outside the singularity:
\bea
\label{outerprofile}
\nonumber u_P(t,x)&=&\frac{e^{i\gamma(\tau)}}{(\l\sqrt{b})^{\frac{2}{p-1}}}\rho_P(Z)e^{i\frac{\Psi_P}{b}}=\frac{c_Pe^{-\frac{i}{{\mathcal e}b}}}{(\l\sqrt{b})^{\frac{2}{p-1}}\left(\frac{\sqrt{b}}{\l}x\right)^{\frac{2(r-1)}{p-1}}}e^{i\left[\frac{1}{eb}+\frac{c_{\Psi}}{b\left(\frac{\sqrt{b}}{\l}x\right)^{r-2}}\right]}\\
& = & \frac{c_P}{x^{\frac{2(r-1)}{p-1}}}e^{i\frac{c_\Psi}{x^{r-2}}\left[1+O\left(\frac{1}{Z^r}\right)\right]}\left[1+O\left(\frac{1}{Z^r}\right)\right].
\eea
We see that far away from the singularity the profile $u_P$ is {\it stationary}. It is precisely this property that will allow us 
to dampen the tail of the profile below and construct solutions arising from rapidly decaying (in particular, finite energy) initial
data.\\

\noindent{\em 2. Dampening of the tail}.  We dampen the tail outside the singularity $x\ge 1$, i.e., $Z\ge Z^*$ as follows. Let 
\be
\label{vnbeineoneoneov}
R_P(t,x)=\frac{1}{(\l\sqrt{b})^{\frac 2{p-1}}}\rho_P(Z), \ \ x=Ze^{-\mu\tau},
\ee then the asympotics \eqref{outerprofile} imply the existence of a limiting profile for $x\ge 1$:
$$R_P(t,x)=\frac{c_P}{x^{\frac{2(r-1)}{p-1}}}\left(1+O(e^{-\mu r\tau})\right)$$
We then pick once and for all a large integer $n_P\gg 1$ 
and define a smooth non decreasing connection $\matchal K(x)$ \be
\label{defconnexion}
\mathcal K(x)=\left|\begin{array}{ll} 0\ \  \mbox{for}\ \ |x|\leq 5\\
n_P-\frac{2(r-1)}{p-1}\ \ \mbox{for}\ \ |x|\geq 10
\end{array}\right., 
\ee
for some large enough universal constant $$n_P=n_P(d)\gg 1.$$
We then define the dampened tail profile in original variables
\be
\label{dampenedprofile}R_D(t,x)=R_P(t,x)e^{-\int_0^x\frac{\matchal K(x')}{x'}dx'}=\left|\begin{array}{ll} R_P(t,x)\ \ \mbox{for}\ \ |x|\leq 5\\  
\frac{c_{n,\delta}}{x^{n_P}}\left[1+O\left(e^{-c\delta\tau})\right)\right]\ \ \mbox{for}\ \ |x|\ge 10\end{array}\right., 
\ee
and hence in renormalized variables: 
\be
\label{definitionprofilewithtailchange}
\left|\begin{array}{l}
\rho_D(\tau,Z)=(\l\sqrt{b})^{\frac 2{p-1}} R_D(t,x)\\
x=\frac{Z}{Z^*}, \ \ Z^*=e^{\mu \tau}.
\end{array}\right.
\ee 
Let $$\zeta(x)=e^{-\int_0^x\frac{\matchal K(x')}{x'}dx'}, $$ we have the equivalent representation:
\be
\label{fromularhod}
\rho_D(Z)=(\l\sqrt{b})^{\frac 2{p-1}} R_D(\tau, x)=(\l\sqrt{b})^{\frac 2{p-1}}R_P(t,x)\zeta(x)=\zeta\left(\frac Z{Z^*}\right)\rho_P(Z)
\ee
Note that by construction for  $j\in \Bbb N^*$:
\be
\label{fundmaetnaldecay}
-\frac{Z^j\pa^j_Z\rho_D}{\rho_D}=\left|\begin{array}{ll}\ \ (-1)^{j-1}\Pi_{m=0}^{j-1}\left(\frac{2(r-1)}{p-1}+m\right)+O\left(\frac{1}{\la Z\ra^r}\right)\ \ \mbox{for}\ \ Z\leq 5Z^*\\
(-1)^{j-1} \Pi_{m=0}^{j-1}\left(n_P+m\right)+O\left(\frac{1}{\la Z\ra^r}\right)\mbox{for}\ \ Z\geq 10Z^*
\end{array}\right.
\ee
and $$ \left|\frac{Z^j\pa_j\rho_D}{\rho_D}\right|_{L^\infty}\lesssim 1.$$

The obtained dampened profile for $Z\geq Z^*$ will be denoted $$(\rho_D,\Psi_P), \ \ Q_D=\rho_D^{p-1}.$$


\subsection{Initial data}


We now describe explicitly an open set of initial data which will be considered as perturbations of the profile $(\rho_D,\Psi_P)$ in a suitable topology. The conclusions of Theorem \ref{thmmain} will hold for a finite co-dimension set of such data. \\

We pick universal constants $0<a\ll 1$, $Z_0\gg 1$ which will be adjusted along the proof and depend only on $(d,\ell)$.
 We define two levels of regularity $$\frac d2\ll k_0\ll k_m$$ where $k_m$ denotes the maximum level of regularity required for the solution and $k_0$ is the level of regularity required for the linear spectral theory on the compact set $[0,Z_a]$.\\

\noindent{\em 0}. Variables and notations for derivatives. We define the variables
 \be
 \label{definitionvariables}
 \left|\begin{array}{l}
 \rho_T=\rho_P+\rho=\rho_D+\rhot\\
  \Psi_T=\Psi_P+\Psi \\
  \Phi=\rho_P\Psi
  \end{array}\right.
 \ee
and specify the data in the $(\rhot,\Psi)$ variables. We will use the following notations for derivatives. Given $k\in \Bbb N$, we note $$\pa^k=(\pa_1^k,...,\pa_d^k), \ \ f^{(k)}:=\pa^kf$$ the vector of $k$-th derivatives in each direction. The notation $\pa_Z^k f$ is the $k$-th radial derivative. We  let $$\rhot_k=\Delta^k \rhot,\qquad \Psi_k=\Delta^k\Psi.$$
Given a multiindex $\alpha=(\alpha_1,\alpha_d)\in \Bbb N^d$, we note $$\pa^\alpha=\pa^{\alpha_1}_1\dots\pa^{\alpha_d}_d, \ \ |\alpha|=\alpha_1+\dots+\alpha_d.$$
 
\noindent{\em 1. Initializing the Brouwer argument}.  We define the variables adapted to the spectral analysis according to \eqref{defnewvariablephi},  \eqref{defintionT}: 
  \be
  \label{notatinotphi}
  \left|\begin{array}{ll}
  \Phi=\rho_P\Psi \\
   T=\pa_\tau\Phi+aH_2\Lambda \Phi
   \end{array}\right., \ \ X=\left|\begin{array}{ll}T\\ \Phi \end{array}\right.
  \ee
and recall the scalar product \eqref{defscalarproduct}. For $0<c_g,a\ll1$ small enough, we choose $k_0\gg 1$ such that Proposition \ref{propaccretif}  applies in the Hilbert space $\Bbb H_{2k_0}$
with the spectral gap 
\be
\label{accretivityalmostbis}
\forall X\in \matchal D(\matchal M), \ \ \Re\la (-\matchal M+\matchal A) X,X\ra \geq c_g\la X,X\ra.
\ee
Hence $$\mathcal M=(\mathcal M-\matchal A+c_g)-c_g+\mathcal A$$ and we may apply Lemma \ref{elmsnjennnw}: 
\be
\label{nvknneknengno}
\Lambda_0=\{\l \in \Bbb C, \ \ \Re(\l)\ge 0\} \cap \{\l\ \ \mbox{is an eigenvalue of}\ \ \mathcal M\}=(\l_i)_{1\le i\le N}
\ee 
is a finite set corresponding to unstable eigenvalues, $V$ is an associated (unstable) finite dimensional invariant set, $U$ is the complementary 
(stable) invariant set  
\be\label{eq:decomp}
  \Bbb H_{2k_0}=U\bigoplus V
  \ee
  and $P$ is the associated projection on $V$. We denote by ${\mathcal N}$ the nilpotent part of the matrix representing 
  ${\mathcal M}$ on V:
  \be\label{eq:nilp}
  {\mathcal M}|_V={\mathcal N} + {\text {diag}}
  \ee 
Then there exist $C, \delta_g>0$ such that \eqref{stabiliteexpobis} holds: 
$$
\forall X\in U, \ \ \|e^{\tau\mathcal M}X\|_{\Bbb H_{2k_0}}\leq C e^{-\frac{\delta_g}{2} (\tau-\tau_0)}\|X\|_{\Bbb H_{2k_0}},\qquad \forall \tau\ge\tau_0.
$$
We now choose the data at $\tau_0$ such that 
$$
\|(I-P) X(\tau_0)\|_{\Bbb H_{2k_0}}\le e^{-\frac{\delta_g}{2} \tau_0},\qquad \|PX(\tau_0)\|_{\Bbb H_{2k_0}}\le e^{-\frac{3\delta_g}5 \tau_0}.
$$

   \noindent{\em 2. Bounds on local low Sobolev norms}. Let $0\le m\leq 2k_0$ and 
   \be
   \label{venovnoenneneo}
   \nu_0=-\frac{2(r-1)}{p-1}+\frac{\delta_g}{2\mu},
   \ee 
   let the weight function 
  \be
 \label{defchimunu}
 \chi_{\nu_0,m}=\frac{1}{\la Z\ra^{d-2(r-1)+2(\nu_0-m)}}\zeta\left(\frac{Z}{Z^*}\right), \ \ \zeta(Z)=\left|\begin{array}{ll}1\ \ \mbox{for}\ \ Z\leq 2\\ 0\ \ \mbox{for}\ \ Z\ge 3.
 \end{array}\right.
 \ee
Then:
 \be
 \label{improvedsobolevlowinit}
 \sum_{m=0}^{2k_0}\int (p-1)Q(\pa^m\rho(\tau_0))^2\chi_{\nu_0,m}+|\nabla \pa^m\Phi(\tau_0)|^2\chi_{\nu_0,m}\leq e^{-\delta_g\tau_0}.
 \ee
    
\noindent{\em 4. Pointwise assumptions}. We assume the following interior pointwise bounds 
\be
\label{smallnessoutsideinitbis}
\forall 0\leq k\leq 2k_m, \ \ \|\frac{\la Z\ra^k\pa_Z^k\rhot(\tau_0)}{\rho_D}\|_{L^\infty(Z\le Z^*_0)}+\|\la Z\ra^{r-2}\la Z\ra^k\pa_Z^k\Psi(\tau_0)\|_{L^\infty(Z\le Z^*_0)}\le b_0^{c_0}
\ee
for some small enough universal constant $c_0$, and the exterior bounds:
\be
\label{smallnessoutsideinit}
\forall 0\leq k\leq 2k_m, \ \ \|\frac{Z^{k{+1}}\pa_Z^k\rhot(\tau_0)}{\rho_D}\|_{L^\infty(Z\ge Z^*_0)}+\frac{\|Z^{k{+1}}\pa_Z^k\Psi(\tau_0)\|_{L^\infty(Z\ge Z^*_0)}}{b_0}\le b_0^{C_0}
\ee
for some large enough universal $C_0(d,r,p)$. Note in particular that \eqref{smallnessoutsideinitbis}, \eqref{smallnessoutsideinit} ensure for $0<b_0<b_0^*\ll1$ small enough:
\be
\label{smallenenoendata}
\left\|\frac{\rhot(\tau_0)}{\rho_D}\right\|_{L^\infty}\leq \delta_0\ll 1
\ee and hence the data does not vanish.\\

\noindent {\em 5. Global rough bound for large Sobolev norms}.  We pick a large enough constant $k_m(d,r,\ell)$ and consider the global Sobolev norm 
\be
\label{globalsobolevnorm}
\|\rhot,\Psit\|_{k_m}^2:=\sum_{j=0}^{k_m}\sum_{|\alpha|=j}\int \frac{b^2|\nabla\pa^\alpha\rhot|^2+(p-1)\rho_D^{p-2}\rho_T(\pa^\alpha\rhot)^2+\rho_T^2|\nabla \pa^\alpha\Psi|^2}{\la Z\ra^{2(k_m-j)}},
\ee
then we require:
\be
\label{sobolevinit}
 \|\rhot(\tau_0),\Psit(\tau_0)\|_{k_m}\le 1
\ee
The bound above is actually implied by the pointwise assumptions.

\begin{remark}{Note that we may without loss of generality assume $u_0\in \cap_{k\ge 0}H^k.$}
\end{remark}


\subsection{Bootstrap bounds}


 We make the following bootstrap assumptions on the maximal interval $[\tau_0,\tau^*)$.\\
\vskip .5pc

\noindent{\em 0. Non vanishing and hydrodynamical variables.} From standard Cauchy theory and the smoothness of the nonlinearity since $p\in 2\Bbb N^*+1$, the smooth data $u_0 \in\cap_{k\ge 0}H^k$ generates a unique local solution $u\in \mathcal C([0,T),\cap_{k\ge 0}H^k)$ with the blow up criterion
\be
\label{blowupcrtiertrion}
T<+\infty\Rightarrow \lim_{t\to T}\|u(t,\cdot)\|_{H^{k_c}}=+\infty
\ee
for some large enough $k_c(d,p)$. To ensure non vanishing, we first note that since $\inf_{|x|\le 10}|u_0(x)|>0$, the continuity of $u$ in time ensures $\inf_{|x|\le 10}|u(t,x)|>0$ for $t\in [0,T]$, $T$ small enough. For $|x|\ge 10$, we estimate from the flow
 $$\left|{r^{n_P}}{|u(t,x)|}-{r^{n_P}}{|u_0|}\right|\leq \left|\int_0^{t}{r^{n_P}}(\Delta u-u|u|^{p-1})dt\right|$$ and hence from our choice of initial data, the non vanishing of $u(t,x)$ follows on a time interval where 
\be
\label{bootnonvanishing}
T\left\|{r^{n_P}}(|\Delta u|+|u|^{p})\right\|_{L^\infty([0,T),|x|\ge 10)}\le \delta
\ee
for some sufficiently small universal constant $0<\delta\ll1$. Using spherical symmetry we can replace the above by
$$
T\left(\|\la x\ra^{n_P+1-\frac d2+\epsilon}\Delta u\|_{L^\infty([0,T); H^1)}+
\|r^{2\epsilon} u\|^{p-1}_{L^\infty([0,T),|x|\ge 10)}\|\la x\ra^{n_P+1-\frac d2-\epsilon}u\|_{L^\infty([0,T); H^1)}\right)\le \delta
$$
for an arbitrarily small $\epsilon>0$. Our initial data $u_0$ belongs to the space 
$$
\cap_{k\ge 0}H^{k}\cap \|{\la x\ra^{n_P+1-\frac d2-\epsilon}}u\|_{L^2)}\cap \|{\la x\ra^{n_P+3-\frac d2-\epsilon}}\Delta u\|_{L^2)}.
$$
Existence of the desired time interval $[0,T)$ now follows from a local well-posedness for NLS in weighted Sobolev spaces which is (essentially)
in \cite{hnt}.

We may therefore introduce the hydrodynamical variables \eqref{renorlianoineo} on such a small enough time interval and will bootstrap the smallness bound which ensures non vanishing:
\be
\label{nonvavnishig}
\left\|\frac{\rhot}{\rho_T}\right\|_{L^\infty}\le \delta
\ee
for some sufficiently small $0<\delta=\delta(k_m)\ll1.$\\

\noindent {\em 1. Global weighted Sobolev norms}.  Pick a small enough universal constant $0<\nut<\nut^*(k_m)\ll 1$, we define 
\be
\label{defsigmnu}
\left|\begin{array}{l}
\sigma_\nu=\nu+\frac d2-(r-1)\\ 
m_0=\frac{4k_m}{9}+1\\
\nut=\nu+\frac{2(r-1)}{p-1}
\end{array}\right.
\ee
and let the continuous function:
\be
\label{defsimgma}
\sigma(m)=\left|\begin{array}{ll} \sigma_\nu-m\ \ \mbox{for}\ \ 0\leq m\leq m_0\\ -\alpha(k_m-m)\ \ \mbox{for}\ \  m_0\leq m\leq k_m\end{array}\right.
\ee
with the continuity requirement at $m_0$: 
\be
\label{defalphacomnot}
\alpha(k_m-m_0)=m_0-\sigma_\nu, \ \ \alpha=\frac{m_0-\sigma_\nu}{k_m-m_0}=\frac 45+O\left(\frac{1}{k_m}\right).
\ee
In particular, $\alpha<1$.
We note that for all $1\le m\le k_m$
\be\label{eq:sigma}
\sigma(m-1)\ge \sigma(m)-\alpha.
\ee
We also define the function
\bea
\label{def-sigma}
\nonumber \tilde\sigma(k)&=&\left|\begin{array}{l} n_P-\frac{2(r-1)}{p-1}-(r-2)+{2\nut}\ \  \mbox{for}\ \ 0\le k\le \frac {2k_m}3+1\\ \beta(k_m-k)\ \  \mbox{for}\ \ \frac {2k_m}3+1\leq m\leq k_m\end{array}\right.\\
&\le& n_P-\frac{2(r-1)}{p-1}-(r-2)+{2\nut}
\eea
where $\beta$ is computed through the continuity requirement at $\frac{2k_m}3$:
$$
\frac {k_m}3\beta= n_P-\frac{2(r-1)}{p-1}-(r-2)+2\nut\Leftrightarrow  \beta=3\frac {n_P-\frac{2(r-1)}{p-1}-(r-2)+2\nut}{k_m}.
$$
We will choose $n_P\ll k_m$, e.g. $n_P=\frac {k_m}{30}$, so that in particular, 
$$
\beta<\frac 1{10},\qquad \alpha+\beta\le 1.
$$
We also note that 
$$
{\sigmat(m-1)\le \sigmat(m)+\beta.}
$$
We then define the weighted Sobolev norm:
\be
\label{einivneineoneone}
\left|\begin{array}{ll}
\|\rhot,\Psit\|_{m,\sigma}^2=\sum_{k=0}^m\int \chi_{m,k,\sigma}\left[b^2|\nabla\rhot^{(k)}|^2+(p-1)\rho_D^{p-2}\rho_T(\rhot^{(k)})^2+\rho_T^2|\nabla \Psit^{(k)}|^2\right]\\
\chi_{m,k,\sigma}(Z)=\frac{1}{\la Z\ra^{2(m-k+\sigma)}} \, \xi_m\left(\frac Z{Z^*}\right),
\end{array}\right.
\ee
where the function 
$$\xi_m(x)=\left|\begin{array}{l}
1 \ \ \mbox{for}\ \ x\le 1\\
x^{2\tilde\sigma(m)}\ \  \mbox{for}\ \ x>1
\end{array}\right.
$$
We assume the bootstrap bound:
 \be
\label{weightedl2boot}
\|\rhot,\Psit\|^2_{m,\sigma(m)}\leq 1,  \ \ 0\le  m\leq k_m-1.
 \ee
 
 \begin{remark}[Equivalence of norms]
 \label{delta} The spherical symmetry assumption ensures the equivalence of norms
  \be\label{eq:psirho}
 \|\rhot,\Psit\|^2_{m,\sigma(m)}\approx \sum_{k=0}^m\sum_{|\alpha|=k} \int \chi_{m,k,\sigma(m)}\left[b^2|\nabla\nabla^\alpha\rhot|^2+\rho_D^{p-1}{|\nabla^\a\rhot|^2}+\rho_D^2{|\nabla \nabla^\alpha\Psit|^2}\right]
 \ee
 and for even $m$
 \be\label{eq:psirho'}
 \|\rhot,\Psit\|^2_{m,\sigma(m)}\approx \sum_{k=0}^{\frac m2}\int \chi_{m,k,\sigma(m)}\left[b^2|\nabla\rhot_k|^2+\rho_D^{p-1}{|\rhot_k|^2}+\rho_D^2{|\nabla \Psit_k|^2}\right].
 \ee
 Let us briefly sketch the proof. For $d=2$, we compute

$$
\pa_1\pa_2 f(r)= \frac {z_1z_2}{r^2} \pa_r^2 f(r) - \frac{z_1z_2}{r^3} \pa_r f\
$$
and $$
\pa_1^2f(r)+\pa_2^2 f(r)= \frac 1r\pa_r(r \pa_r f(r)),
$$
so that for $f$ regular at the origin,
$$
\frac 1r \pa_r f(r))= \frac 1{r^2}\int_0^r r'(\pa_1^2f(r')+\pa_2^2 f(r')) dr'.
$$
As a result we obtain
$$
|\nabla^2 f(Z)|\lesssim |\pa^2 f(Z)|+\frac {|\pa f(Z)|}{\la Z\ra} + \int_{|Z|\le 1}  |\pa^2 f|.
$$
The desired claim for the term
$$
\int\chi_{2,m,\sigma}\rho_D^{p-2}\rho_T|\nabla^2\rhot|^2\lesssim  \|\rhot,\Psi\|_{2,\sigma}^2
$$
then follows from the above inequality and the condition \eqref{boundarydata} $\rho_P(0)=\rho_D(0)=1$
together with the regularity of the profile $\rho_P$ and the non vanishing bound \eqref{nonvavnishig}.
The statement for other terms and higher derivatives follows by iteration.
 \end{remark}
 \noindent {\em 2. Global control of the highest Sobolev norm}:
\be
\label{sobolevinitboot}
  \|\rhot,\Psit\|^2_{k_m}= \|\rhot,\Psit\|^2_{k_m,\sigma(k_m)}\le 1.
\ee
\noindent {\em 3. Local decay of low Sobolev norms}: for any $0\le k\le 2k_0$, any large $\hat Z\le Z^*$ 
and universal constant $C=C(k_0)$:
\be\label{eq:bootdecay}
 \|(\rhot,\Psit)\|_{H^k(Z\le \hat{Z})}\le \hat Z^C  e^{-\frac{3\delta_g}8\tau}
\ee

\noindent {\em 4. Pointwise bounds}: 
\be
\label{smallglobalboot}
\left|\begin{array}{l}
\forall 0\le k\le \frac{2k_m}{3}, \ \ \|\frac{Z^{n(k)}\pa_Z^k\rhot}{\rho_D}\|_{L^\infty}\leq 1\\
\forall 1\le k\le \frac{2k_m}{3},\ \ \|Z^{n(k)}\la Z\ra^{r-2}\pa_Z^k\Psi\|_{L^\infty(Z \le Z^*)}+\frac{\|Z^{n(k)}\pa_Z^k\Psi\|_{L^\infty(Z\geq Z^*)}}{b}\le 1
\end{array}\right.
\ee
with
\be
\label{defnm}
n(k)=\left|\begin{array}{ll} k\ \ \mbox{for}\ \ k\leq \frac{4k_m}{9},\\  \frac{k_m}{4}\ \ \mbox{for}\ \ \frac{4k_m}{9}<k\leq\frac{2k_m}{3}.
\end{array}\right.
\ee
\begin{remark}
\label{rem:data}
Since $b=e^{-\mu(r-2)\tau}$, \eqref{smallnessoutsideinitbis} and \eqref{smallnessoutsideinit} imply that 
the initial data verify the bootstrap inequalities  \eqref{weightedl2boot}, \eqref{sobolevinitboot}, \eqref{smallglobalboot} with the bound $e^{-c\tau_0}$ for some small universal constant $c$.
\end{remark}

The heart of the proof of Theorem \ref{thmmain} is the following:

\begin{proposition}[Bootstrap]
\label{propboot}
Assume (see \eqref{eq:nilp}) that
\be\label{eq:unstboot}
\|e^{t{\mathcal N}} PX(\tau)\|_{\Bbb H_{2k_0}}\le e^{-\frac {19\delta_g}{30}\tau} 
\ee
for all  $\tau\in [\tau_0,\tau^*)$ and that the bounds \eqref{bootnonvanishing}, \eqref{weightedl2boot}, \eqref{sobolevinitboot}, \eqref{eq:bootdecay}, \eqref{smallglobalboot}, \eqref{nonvavnishig} 
hold on $[\tau_0,\tau^*]$ with ${\delta}^{-1},\tau_0$ large enough. Then the following holds:\\
\noindent{\em 1. Exit criterion}. The bounds  \eqref{bootnonvanishing}, \eqref{weightedl2boot}, \eqref{sobolevinitboot}, \eqref{eq:bootdecay}, \eqref{smallglobalboot}, \eqref{nonvavnishig}  can be strictly improved on $[\tau_0,\tau^*)$. Equivalently, $\tau^*<+\infty$ implies 
\be
\label{eibovbeiboebeo}
\|e^{t{\mathcal N}} PX(\tau^*)\|_{\Bbb H_{2k_0}} e^{\frac{19\delta_g}{30}\tau^*}=1.
 \ee
 \noindent{\em 2. Linear evolution}. The right hand side $G$ of the equation for $X(\tau)$ 
 $$
 \pa_\tau X = {\mathcal M} X + G 
 $$
 satisfies 
 \be\label{eq:Gest}
 \|G(\tau)\|_{\Bbb H_{2k_0}} \le e^{-\frac{2\delta_g}3\tau},\qquad \forall\tau\in [\tau_0,\tau^*]
 \ee
\end{proposition}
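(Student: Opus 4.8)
The plan is to establish part 2 first (the estimate \eqref{eq:Gest} for the source term $G$ driving the $X$-equation), since the linear evolution statement feeds directly into the Brouwer argument of Lemma \ref{browerset} which in turn closes the bootstrap on the unstable modes. Recall from Step 1 of the strategy and from \eqref{defgrho}, \eqref{defgt} that $G=(0,G_T)$ with $G_T$ built out of the nonlinear terms $G_\rho,G_\Phi$ and their $\Lambda$-derivatives, and that $G_\Phi$ itself contains the quadratic terms $-\rho_P(|\nabla\Psi|^2+\NL(\rho))$ together with the crucial linear-in-$b^2$ term $\frac{b^2\rho_P}{\rho_T}\Delta\rho_T$. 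The norm $\|\cdot\|_{\Bbb H_{2k_0}}$ is, by \eqref{defscalarproduct} and the subcoercivity bound \eqref{lowerboundhardy}, controlled by a weighted $H^{2k_0}$-type norm of $(\Phi,T)$ on the compact set $[0,Z_a]$, with a weight $g$ degenerating like $(Z_a-Z)^{c_g}$ at the light cone. So the first step is to expand $G_T$ in terms of $(\rho,\Psi)$-derivatives up to order $\sim 2k_0+1$ and to reduce \eqref{eq:Gest} to pointwise/weighted-$L^2$ control of $\rho,\Psi$ and their derivatives on $Z\le Z_a$, i.e. well inside the region $Z\le Z^*_0$ where the pointwise bootstrap \eqref{smallglobalboot} and the local-decay bootstrap \eqref{eq:bootdecay} apply.

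Next, for each structural piece of $G_T$ I would estimate as follows. For the genuinely quadratic terms $|\nabla\Psi|^2$ and $\NL(\rho)=O(\rho^2)$ (and their $\Lambda$-derivatives), on the compact set $Z\le Z_a$ one gains a full extra factor of the low-order $H^{2k_0}$ norm of $(\rho,\Psi)$, which by \eqref{eq:bootdecay} with $\hat Z\sim Z_a$ is $\lesssim e^{-\frac{3\delta_g}{8}\tau}$; interpolating against the (merely bounded) higher norm \eqref{sobolevinitboot} or \eqref{smallglobalboot} and using that $2k_0\ll k_m$, one obtains a bound of the form $e^{-\frac{3\delta_g}{8}\tau}\cdot e^{-\frac{3\delta_g}{8}\tau}=e^{-\frac{3\delta_g}{4}\tau}$, which comfortably beats $e^{-\frac{2\delta_g}{3}\tau}$. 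The delicate term is the Schrödinger term $b^2\frac{\rho_P}{\rho_T}\Delta\rho_T$: here there is no quadratic smallness, only the factor $b^2=e^{-2\mu(r-2)\tau}$. One writes $\Delta\rho_T=\Delta\rho_P+\Delta\rho$; the $\Delta\rho_P$ piece is a smooth bounded profile term and contributes a pure $b^2$, i.e. $e^{-2\mu(r-2)\tau}$, while the $\Delta\rho$ piece is controlled using \eqref{eq:bootdecay}. The point is the arithmetic inequality $2\mu(r-2)>\frac{2\delta_g}{3}$, which holds because $\delta_g$ is the (small) spectral gap produced in \eqref{nvknneknengno}–\eqref{stabiliteexpobis} and can be taken as small as we like by choosing $k_0$ and $a$; more precisely, recalling $\nu_0$ in \eqref{venovnoenneneo} and the choices there, the constants are arranged exactly so that $b^2$-type losses are subdominant to $e^{-\frac{2\delta_g}{3}\tau}$. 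Also, $\Delta^{k}$-commutators of $b^2\rho_P/\rho_T\cdot\Delta\rho_T$ for $k\le k_0$ bring in at most $2k_0+2$ derivatives of $\rho$ on a compact set, again handled by \eqref{eq:bootdecay}, possibly interpolated with \eqref{smallglobalboot}; the non-vanishing bound \eqref{nonvavnishig} guarantees $1/\rho_T$ is bounded so no division issue arises.

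Having established \eqref{eq:Gest}, part 1 (the exit criterion) follows by a standard continuity/open-closed argument. The bounds \eqref{bootnonvanishing}, \eqref{weightedl2boot}, \eqref{sobolevinitboot}, \eqref{eq:bootdecay}, \eqref{smallglobalboot}, \eqref{nonvavnishig} are each shown to be \emph{strictly} improved on $[\tau_0,\tau^*)$ by the later sections of the paper (the weighted Sobolev estimates of Sections \ref{sectionsobolev}, \ref{sec:point}, \ref{sec:high}, the pointwise bounds, and the low-Sobolev exponential decay of Section \ref{low}), using as input both \eqref{eq:Gest} and the unstable-mode hypothesis \eqref{eq:unstboot}; I would here simply invoke those estimates, noting that each improvement replaces the bound $1$ (resp. $e^{-\frac{3\delta_g}{8}\tau}$) by $\tfrac12$ (resp. $e^{-\frac{3\delta_g}{8}\tau}$ with a gained constant), so by continuity none of these can be the first to saturate at $\tau^*$. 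The only bound that \emph{can} saturate — because it is the one controlled by the finite-dimensional Brouwer/ODE mechanism of Lemma \ref{browerset} rather than by a self-improving energy estimate — is the unstable-mode bound \eqref{eq:unstboot}, which forces the equality \eqref{eibovbeiboebeo} at $\tau=\tau^*$ if $\tau^*<+\infty$. The main obstacle is the treatment of the $b^2\Delta\rho_T$ term in \eqref{eq:Gest}: it is the unique non-perturbative (non-quadratic) contribution to $G$, and one must verify that the $b^2$-gain genuinely dominates $e^{-\frac{2\delta_g}{3}\tau}$ after all $\Delta^k$-commutations and the change to the $(\Phi,T)$ variables — this is exactly the "loss of derivatives with $b^2$ smallness" issue flagged as Step 3 of the strategy, and it is the reason the regularity thresholds $k_0\ll k_m$ and the smallness of $\delta_g$ (hence of $a$) are imposed.
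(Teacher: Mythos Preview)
Your plan is essentially correct and matches the paper's approach: interpolate the local-decay bootstrap \eqref{eq:bootdecay} against the high Sobolev bound \eqref{sobolevinitboot} to obtain $H^{2k_0+O(1)}$ control of $(\rho,\Psi)$ on the compact set $Z\le Z_a$ with decay $e^{-(\frac{3}{8}-o(1))\delta_g\tau}$, then use that every piece of $G_T$ is either quadratic or carries a $b^2$ factor to get \eqref{eq:Gest}; part 1 then follows from the strict improvements in Sections \ref{sectionsobolev}--\ref{low} and continuity.

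One correction worth making: you overstate the difficulty of the $b^2\frac{\rho_P}{\rho_T}\Delta\rho_T$ term and misattribute its mechanism. In the paper (step 2 of the proof of Lemma \ref{lemmalightcone}) this is actually the \emph{easiest} piece: on $Z\le Z_a$ the quotient $\Delta\rho_T/\rho_T$ and its first $2k_0+O(1)$ derivatives are uniformly bounded by \eqref{smallglobalboot} and \eqref{nonvavnishig}, so the whole contribution is $\lesssim C_K b^2\le e^{-\delta_g\tau}$ once $\delta_g$ is taken small relative to $2\mu(r-2)$. The ``loss of derivatives with $b^2$ smallness'' that you invoke from Step 3 of the strategy is a \emph{global} issue arising in the unweighted $k_m$-level energy identity of Section \ref{sec:high}, not in the local $\Bbb H_{2k_0}$ estimate on $[0,Z_a]$: here there is no genuine loss, because interpolation with \eqref{sobolevinitboot} supplies as many extra derivatives as you need (the paper takes $H^{2k_0+10}$). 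The reason $k_0\ll k_m$ matters for \eqref{eq:Gest} is only to leave room for this interpolation, not to absorb a Schr\"odinger derivative loss.
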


We will show in section \ref{proofthmmamin} that Proposition \ref{propboot} immediately implies Theorem \ref{thmmain}.

\begin{remark}
\label{weak}
We note that the assumption \eqref{eq:unstboot}
implies that 
\be\label{eq:unstX}
\|PX(\tau)\|_{\Bbb H_{2k_0}}\le e^{-\frac {\delta_g}{2}\tau}, \qquad \forall\tau\in [\tau_0,\tau^*)
\ee
We will prove the bootstrap proposition \ref{propboot} under the weaker assumption \eqref{eq:unstX}.
Specifically, we will define $[\tau_0,\tau^*]$ to be the maximal time interval on which \eqref{eq:unstX} holds 
and will show that both the bounds  \eqref{bootnonvanishing}, \eqref{weightedl2boot}, \eqref{sobolevinitboot}, \eqref{smallglobalboot}, \eqref{nonvavnishig} can be improved and that $G$ satisfies \eqref{eq:Gest}.
\end{remark}

We now focus on the proof of Proposition \ref{propboot} and work on a time interval $[0,\tau^*]$, $\tau_0<\tau^*\le+\infty$ on which  \eqref{bootnonvanishing}, \eqref{weightedl2boot}, \eqref{sobolevinitboot}, \eqref{eq:bootdecay}, \eqref{smallglobalboot}, \eqref{nonvavnishig}  and \eqref{eq:unstX} hold.


\section{Control of high Sobolev norms}
\label{sectionsobolev}


We first turn to the {\rm global in space} control of high Sobolev norms. This is an essential step to control the $b$ dependence of the flow and the dissipative structure which can neither be treated by spectral analysis nor perturbatively.\\ 

We claim an improvement of the bound  \eqref{weightedl2boot}, controlling all but the highest 
weighted Sobolev norm.

\begin{proposition}
\label{prophighsobloev}
The exists a universal constant $c_{k_m}>0$ such that for all $0\le m\le k_m-1$
\be
\label{eq:weightedl2boot}
\|\rhot,\Psit\|_{m,\sigma(m)}\leq e^{-c_{k_m}\tau}.
 \ee
 \end{proposition}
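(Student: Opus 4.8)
The goal is to propagate, with an exponential gain $e^{-c_{k_m}\tau}$, all the weighted Sobolev norms $\|\rhot,\Psit\|_{m,\sigma(m)}$ below the top level $m=k_m$, using as inputs the bootstrap assumptions: the rough top-order bound \eqref{sobolevinitboot}, the pointwise bounds \eqref{smallglobalboot}, the interior decay \eqref{eq:bootdecay}, the nonvanishing \eqref{nonvavnishig}, the smallness of $b=e^{-\mu(r-2)\tau}$, and the initial smallness coming from Remark \ref{rem:data}. The strategy is a weighted energy identity performed by differentiating \eqref{fullflowrenormalized} $k$ times ($0\le k\le m$), testing the $\rhot^{(k)}$ equation against $(p-1)\rho_D^{p-2}\rho_T\,\chi_{m,k,\sigma(m)}\rhot^{(k)}$ and the $\Psit^{(k)}$ equation against $\rho_T^2\chi_{m,k,\sigma(m)}\nabla\Psit^{(k)}$ type quantities, plus the extra $b^2$-dissipation term coming from the Laplacian on the density. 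One then sums over $k\le m$ and over the dyadic-in-$\tau$ structure encoded in $\xi_m(Z/Z^*)$.

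\textbf{Main steps, in order.} First I would write the linearized system \eqref{exactliearizedflow} schematically as $\pa_\tau X = \mathcal M_{\mathrm{full}} X + \mathrm{NL}$, commute with $\nabla^\a$, $|\a|=k$, and collect the commutators with the transport operators $H_2\Lambda$, $\mu Z\pa_Z$ and with the potentials $H_1, Q=\rho_P^{p-1}$, using the fact (Remark \ref{delta}) that spherical symmetry reduces everything to radial derivatives modulo lower-order terms controlled by Hardy. Second, I would derive the energy identity: the time derivative of $\|\rhot,\Psit\|_{m,\sigma(m)}^2$ equals (i) a \emph{good negative} term coming from the drift $-\mu Z\pa_Z$ hitting the weight $\chi_{m,k,\sigma(m)}$ — here the anomalous scaling and the choice of $\sigma(m)$ in \eqref{defsimgma}, \eqref{defalphacomnot} are arranged precisely so that the weight-derivative term produces a coercive friction $\gtrsim \tau$-independent positive multiple of the norm, which is the source of the $e^{-c_{k_m}\tau}$ gain; (ii) the quadratic coupling term between $\rhot^{(k)}$ and $\nabla\Psit^{(k)}$ generated by the non-constant $Q(Z)$, which must be absorbed using the positivity \eqref{P} (outside the light cone) and \eqref{coercivityquadrcouplinginside} (inside) — for $k<k_m$ there is room because the weight is still non-trivial, but one must check the quadratic form is definite; (iii) error terms: commutator terms bounded by lower norms $\|\rhot,\Psit\|_{m-1,\sigma(m-1)}$ via \eqref{eq:sigma}, the $b^2$ Laplacian term which is either dissipative (sign-definite, after integration by parts, giving the $b^2|\nabla\rhot^{(k)}|^2$ piece in the norm) or $O(b^2)$-small, and the genuinely nonlinear terms $\NL(\rho)$, $|\nabla\Psi|^2$, $G_\rho$, $G_\Phi$, controlled by the pointwise bounds \eqref{smallglobalboot} and \eqref{nonvavnishig}. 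Third, I would set up a descending or ascending induction on $m$: the errors at level $m$ involve level $m-1$ (with exponential gain already available) plus the top level $k_m$ (with only the $O(1)$ bound \eqref{sobolevinitboot}), so the coupling to $k_m$ must come paired with a $b^2$ or a $\la Z\ra^{-1}$-type smallness — this is exactly why the weight exponents $\sigma(m)$ and $\tilde\sigma(m)$ are chosen with the break points $m_0=\tfrac{4k_m}{9}+1$ and $\tfrac{2k_m}{3}+1$. Finally, a Grönwall argument on the ODE $\tfrac{d}{d\tau}\|\cdot\|^2 \le -2c\|\cdot\|^2 + (\text{exponentially small forcing})$, together with the initial smallness $e^{-c\tau_0}$, closes \eqref{eq:weightedl2boot}.

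\textbf{The main obstacle.} The crux is item (ii) together with the $k_m$-coupling in step three: at each derivative level the non-constant sound speed $Q(Z)=\rho_P^{p-1}(Z)$ forces a genuine quadratic form in $(\rhot^{(k)},\nabla\Psit^{(k)})$ whose sign is not automatic, and one must show that the \emph{net} quadratic form — friction from the weight plus the $Q$-coupling plus the $b^2$-dissipation — is definite positive uniformly in $\tau$ and in $k\le k_m-1$. Across the light cone $Z=Z_2$ (and its shifted version $Z_a$) this is delicate: inside one uses \eqref{coercivityquadrcouplinginside}, outside \eqref{P}, and near $Z_2$ the weight degeneration and the vanishing of $D_a$ interact. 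The secondary difficulty is bookkeeping: one must verify that every error term is either controlled by a strictly lower weighted norm (so the induction closes) or carries a power of $b$ or of $\la Z\ra^{-1}$ strong enough to beat the $O(1)$ top-norm bound. I expect the bulk of the work to be this quadratic-form positivity together with the precise matching of weight exponents, with the transport/Hardy estimates and the nonlinear error estimates being comparatively routine given \eqref{smallglobalboot} and \eqref{nonvavnishig}.
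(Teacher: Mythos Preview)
Your overall scaffold --- differentiate, run a weighted energy identity, extract friction from the transport hitting the weight, and close by Gr\"onwall --- matches the paper. But you have misidentified the main mechanism and the main obstacle, and this misidentification would make the proof fail as written.

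\textbf{The key tool you are missing.} The paper does \emph{not} use the positivity conditions \eqref{coercivityquadrcouplinginside}, \eqref{P} at all in the proof of Proposition \ref{prophighsobloev}. Those conditions are reserved exclusively for the top level $k=k_m$ (Section \ref{sec:high}), where the weight is trivial ($\chi\equiv 1$) and one genuinely needs the sign of the quadratic form. For $k\le k_m-1$, every cross term between $\rhot^{(k)}$ and $\nabla\Psi^{(k)}$ is instead treated as an \emph{error} and bounded by a norm with a slightly stronger weight, namely $\|\rhot,\Psit\|^2_{k,\sigma(k)+\frac12}$ or similar. The crucial device is the interpolation bound
\[
\|\rhot,\Psit\|^2_{m,\sigma(m)+\delta}\ \le\ e^{-c_{\delta,k_m}\tau},
\]
obtained by splitting at $Z\sim (Z^*)^c$: on the inner region one uses the local decay bootstrap \eqref{eq:bootdecay} (interpolated with \eqref{sobolevinitboot}), and on the outer region the extra $\la Z\ra^{-2\delta}$ in the weight converts the $O(1)$ bootstrap bound \eqref{weightedl2boot} into exponential decay. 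This interpolation is the engine that turns all cross terms, all commutators, and the loss-of-derivative term (paired with $b$) into $e^{-c_{k_m}\tau}$ forcing. Without it you cannot close: the coupling terms are not sign-definite in a weighted space, and trying to absorb them into a quadratic form with a nontrivial $\chi$ would generate weight-derivative boundary terms that spoil the positivity analysis of Section \ref{sec:high}.

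\textbf{Two further confusions.} First, $D_a$ and the light cone $Z_2$ play no role here: those objects belong to the spectral analysis of Section \ref{sectionlinear}, not to the global weighted estimates. Second, there is no induction on $m$; the paper derives the differential inequality for $I_k=I_{k,\sigma(k)}$ directly for each $k$ (using only the bootstrap assumptions and the interpolation bound), integrates to get $I_k\lesssim e^{-2\mu\nut\tau}$, and then observes $\|\rhot,\Psit\|^2_{m,\sigma(m)}\le\sum_{k\le m} I_{k,\sigma(k)}$ via the monotonicity $\sigma(k)+k\le\sigma(m)+m$.
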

 
 The rest of this section is devoted to the proof of Proposition \ref{prophighsobloev}.
 

\subsection{Algebraic energy identity} 


We derive the energy identity for high Sobolev norms which in the hydrodynamical formulation has a quasilinear structure.\\

\noindent{\bf step 1} Equation for $\tilde{\rho}, {\Psi}$. Recall \eqref{fullflowrenormalized}:
$$\left|\begin{array}{ll}\pa_\tau \rho_T=-\rho_T\Delta \Psi_T-\frac{\mu\ell(r-1)}{2}\rho_T-\left(2\pa_Z\Psi_T+\mu Z\right)\pa_Z\rho_T\\
\rho_T\pa_\tau \Psi_T=b^2\Delta \rho_T-\left[|\nabla \Psi_T|^2+\mu(r-2)\Psi_T-1+\mu\Lambda \Psi_T+\rho_T^{p-1}\right]\rho_T
\end{array}\right.
$$
By construction
\be
\label{profileequationtilde}
\left|\begin{array}{ll}
|\nabla \Psit_P|^2+\rho_D^{p-1}+\mu(r-2)\Psit_P+\mu\Lambda \Psit_P-1=\tilde{\mathcal E}_{P,\Psi}\\
\pa_\tau \rho_D+\rho_D\left[\Delta \Psit_P+\frac{\mu\ell(r-1)}{2}+\left(2\pa_Z\Psit_P+\mu Z\right)\frac{\pa_Z\rho_D}{\rho_D}\right]=\tilde{\mathcal E}_{P,\rho}
\end{array}\right.
\ee
with $\Et$ supported in $Z\geq 3Z^*$. 
The linearized flow
\be
\label{exactliearizedflowtilde}
\left|\begin{array}{ll} \pa_\tau \rhot=-\rho_T\Delta \Psit-2\nabla\rho_T\cdot\nabla \Psit+H_1\rhot-H_2\Lambda \rhot-\tilde{\mathcal E}_{P,\rho}\\
\pa_\tau \Psit=b^2\frac{\Delta \rho_T}{\rho_T}-\left\{H_2\Lambda \Psit+\mu(r-2)\Psit+|\nabla \Psit|^2+(p-1)\rho_D^{p-2}\rhot +\NL(\rhot)\right\}-\Et_{P,\Psi}.
\end{array}\right.
\ee
with the nonlinear term $$\NL(\tilde{\rho})=(\rho_D+\rhot)^{p-1}-\rho_D^{p-1}-(p-1)\rho_D^{p-2}\rhot.$$
Note that the potentials  $$H_2=\mu+2\frac{\Psit'_P}{Z}, \ \ H_1=-\left(\Delta \Psit_P+\frac{\mu\ell(r-1)}{2}\right)$$ 
remain the same in these equations: they are not affected by the profile localization introduced by passing from $\rho_P$ 
to $\rho_D$.
We recall the Emden transform formulas \eqref{defhtwohunbis}:
\be
\label{eq:autroformluehonhtwo}
\left|\begin{array}{ll}
H_2=\mu(1-w)\\
H_1=\frac{\mu\ell}{2}(1-w)\left[1+\frac{\Lambda \sigma}{\sigma}\right]\\
\end{array}\right.
\ee
which, using \eqref{limitprofilesbsi}, \eqref{decayprofile}, yield the bounds:
 \be
\label{eq:esterrorpotentials}
\left|\begin{array}{llll}
H_2=\mu+O\left(\frac{1}{\la Z\ra^r}\right), \ \ H_1=-\frac{2\mu(r-1)}{p-1}+O\left(\frac{1}{\la Z\ra^r}\right)\\
 |\la Z\ra^j\pa_Z^j H_1|+||\la Z\ra^j\pa_Z^jH_2|\lesssim \frac 1{\la Z\ra^{r}}, \ \ j\ge 1
 \end{array}\right.
 \ee

Our main task is now to produce an energy identity for \eqref{exactliearizedflowtilde} which respects the quasilinear nature of \eqref{exactliearizedflowtilde} and does not loose derivatives.\\

\noindent{\bf step 2} Equation for derivatives. We recall the notation for the vector $\pa^k$:
$$\left|\begin{array}{l}
\pa^k:=(\pa_1^k,...,\pa_d^k)\\
 \rhot^{(k)}=\pa^k\rhot, \ \ \Psi^{(k)}=\pa^k\Psi.
 \end{array}\right.$$ 
 We use $$[\pa^k,\Lambda]=k\pa^k$$ to compute from \eqref{exactliearizedflowtilde}:
\bea
\label{estqthohrk}
\nonumber \pa_\tau \rhot^{(k)}&=&(H_1-kH_2)\rhot^{(k)}-H_2\Lambda \rhot^{(k)}-(\pa^k\rho_T)\Delta \Psit-k\pa\rho_T\pa^{k-1}\Delta \Psit-\rho_T\Delta\Psi^{(k)}\\
\nonumber &-& 2\nabla(\pa^k\rho_T)\cdot\nabla \Psit-2\nabla \rho_T\cdot\nabla \Psi^{(k)}\\
& + & F_1
\eea
with
\bea
\label{formluafone}
F_1&=&-\pa^k\tilde{\mathcal E}_{P,\rho}+[\pa^k,H_1]\rhot-[\pa^k,H_2]\Lambda \rhot\\
\nonumber &-& \sum_{\left|\begin{array}{ll} j_1+j_2=k\\ j_1\geq 2, j_2\geq 1\end{array}\right.}c_{j_1,j_2}\pa^{j_1}\rho_T\pa^{j_2}\Delta\Psit-\sum_{\left|\begin{array}{ll}j_1+j_2=k\\
j_1,j_2\geq 1\end{array}\right.}c_{j_1,j_2}\pa^{j_1}\nabla\rho_T\cdot\pa^{j_2}\nabla\Psit.
\eea
For the second equation:
\bea
\label{nkenononenon}
\nonumber \pa_\tau\Psi^{(k)}&=&b^2\left(\frac{\pa^k\Delta \rho_T}{\rho_T}-\frac{k\pa^{k-1}\Delta\rho_T\pa\rho_T}{\rho_T^2}\right)-kH_2\Psi^{(k)}-H_2\Lambda \Psi^{(k)}-\mu(r-2)\Psi^{(k)}-2\nabla \Psit\cdot\nabla \Psi^{(k)}\\
&-& \left[(p-1)\rho_{D}^{p-2}\rhot^{(k)}+k(p-1)(p-2)\rho_D^{p-3}\pa\rho_D\pa^{k-1}\rhot\right]+F_2
\eea
with
\bea
\label{estqthohrkbis}
\nonumber F_2&=& -\pa^k\tilde{\mathcal E}_{P,\Psi}+b^2\left[\pa^k\left(\frac{\Delta \rho_T}{\rho_T}\right)-\frac{\pa^k\Delta \rho_T}{\rho_T}+\frac{k\pa^{k-1}\Delta \rho_T\pa\rho_T}{\rho_T^2}\right]\\
\nonumber &-& [\pa^k,H_2]\Lambda \Psit-(p-1)\left([\pa^k,\rho_D^{p-2}]\rhot-k(p-2)\rho_D^{p-3}\pa\rho_D\pa^{k-1}\rhot\right)\\
&-& \sum_{j_1+j_2=k,j_1,j_2\geq 1}\pa^{j_1}\nabla\Psit\cdot\pa^{j_2}\nabla\Psit-\pa^k\NL(\rhot).
\eea

\noindent{\bf step 3} Algebraic energy identity. Let $\chi$ be a smooth function.  
We compute:
\bee
&&\frac 12\frac{d}{d\tau}\left\{\int b^2\chi|\nabla\rhot^{(k)}|^2+(p-1)\int \chi\rho_D^{p-2}\rho_T(\rhot^{(k)})^2+\int\chi\rho_T^2|\nabla \Psi^{(k)}|^2\right\}\\
\nonumber & = &\frac 12\int \pa_\tau \chi \left\{b^2|\nabla\rhot^{(k)}|^2+(p-1)\rho_D^{p-2}\rho_T(\rhot^{(k)})^2+\rho_T^2|\nabla \Psi^{(k)}|^2\right\}\\
& = & -eb^2\int\chi|\nabla\rhot^{(k)}|^2+\int \pa_\tau\rhot^{(k)}\left[-b^2\chi\Delta \rhot^{(k)}-b^2\nabla \chi\cdot\nabla \rhot^{(k)}+(p-1)\chi\rho_D^{p-2}\rho_T\rhot^{(k)}\right]\\
&+& \frac{p-1}{2}\int \chi(p-2)\pa_\tau\rho_D\rho_D^{p-3}\rho_T(\rhot^{(k)})^2+ \int\chi\pa_\tau\rho_T\left[\frac{p-1}{2}\rho_D^{p-2}(\rhot^{(k)})^2+\rho_T|\nabla \Psi^{(k)}|^2\right]\\
&-&  \int\pa_\tau\Psi^{(k)}\left[2\chi\rho_T\nabla\rho_T\cdot\nabla \Psi^{(k)}+\chi\rho_T^2\Delta \Psi^{(k)}+\rho_T^2\nabla \chi\cdot\nabla \Psi^{(k)}\right].
\eee
We compute:
\bee
&&\int\pa_\tau \rhot^{(k)}\left[-b^2\chi\Delta \rhot^{(k)}-b^2\nabla \chi\cdot\nabla \rhot^{(k)}+(p-1)\chi\rho_D^{p-2}\rho_T\rhot^{(k)}\right]\\
& = & \int F_1\left[-b^2\nabla \cdot(\chi\nabla \rhot^{(k)})+(p-1)\chi\rho_D^{p-2}\rho_T\rhot^{(k)}\right]\\
& + & \int \left[(H_1-kH_2)\rhot^{(k)}-H_2\Lambda \rhot^{(k)}-(\pa^k\rho_T)\Delta \Psit-2\nabla(\pa^k\rho_T)\cdot\nabla \Psit\right]\\
&\times&\left[-b^2\chi\Delta \rhot^{(k)}-b^2\nabla \chi\cdot\nabla \rhot^{(k)}+(p-1)\chi\rho_D^{p-2}\rho_T\rhot^{(k)}\right]\\
& - & \int k\pa\rho_T\pa^{k-1}\Delta \Psit\left[-b^2\chi\Delta \rhot^{(k)}-b^2\nabla \chi\cdot\nabla \rhot^{(k)}+(p-1)\chi\rho_D^{p-2}\rho_T\rhot^{(k)}\right]\\
&- & \int (\rho_T\Delta\Psi^{(k)}+2\nabla \rho_T\cdot\nabla \Psi^{(k)})[\left[-b^2\chi\Delta \rhot^{(k)}-b^2\nabla \chi\cdot\nabla \rhot^{(k)}+(p-1)\chi\rho_D^{p-2}\rho_T\rhot^{(k)}\right]\\
& = & b^2\int \chi \nabla F_1\cdot\nabla \rhot^{(k)}+(p-1)\int \chi F_1\rho_D^{p-2}\rho_T\rhot^{(k)}\\
& + & \int \left[(H_1-kH_2)\rhot^{(k)}-H_2\Lambda \rhot^{(k)}-(\pa^k\rho_T)\Delta \Psit-2\nabla(\pa^k\rho_T)\cdot\nabla \Psit\right]\\
&\times&  \left[-b^2\nabla \cdot(\chi \nabla \rhot^{(k)})+(p-1)\chi\rho_D^{p-2}\rho_T\rhot^{(k)}\right]\\
& - & \int k\pa\rho_T\pa^{k-1}\Delta \Psit\left[-b^2\nabla \cdot(\chi\nabla \rhot^{(k)})+(p-1)\chi\rho_D^{p-2}\rho_T\rhot^{(k)}\right]\\
& + & b^2\int\nabla \chi\cdot\nabla \rhot^{(k)}\left(\rho_T\Delta \Psi^{(k)}+2\nabla\rho_T\cdot\nabla \Psi^{(k)}\right)\\
& - & \int \chi(\rho_T\Delta\Psi^{(k)}+2\nabla \rho_T\cdot\nabla \Psi^{(k)})\left[-b^2\Delta \rhot^{(k)}+(p-1)\rho_D^{p-2}\rho_T\rhot^{(k)}\right]
\eee
Similarly:
\bee
&-&  \int\pa_\tau\Psi^{(k)}\left[2\chi\rho_T\nabla\rho_T\cdot\nabla \Psi^{(k)}+\chi\rho_T^2\Delta \Psi^{(k)}+\rho_T^2\nabla \chi\cdot\nabla \Psi^{(k)}\right]=  -\int F_2\nabla\cdot(\chi\rho_T^2\nabla \Psi^{(k)})\\
& - & \int\left\{b^2\left(\frac{\pa^k\Delta \rho_T}{\rho_T}-\frac{k\pa^{k-1}\Delta\rho_T\pa\rho_T}{\rho_T^2}\right)\right\}\left[2\chi\rho_T\nabla\rho_T\cdot\nabla \Psi^{(k)}+\chi\rho_T^2\Delta \Psi^{(k)}+\rho_T^2\nabla \chi\cdot\nabla \Psi^{(k)}\right]\\
&-& \int\Big\{-kH_2\Psi^{(k)}-H_2\Lambda \Psi^{(k)}-\mu(r-2)\Psi^{(k)}-2\nabla \Psit\cdot\nabla \Psi^{(k)}\\
&-& \left[(p-1)\rho_{D}^{p-2}\rhot^{(k)}+k(p-1)(p-2)\rho_D^{p-3}\pa\rho_D\pa^{k-1}\rhot\right]\Big\}\\
&\times& \left[2\chi\rho_T\nabla\rho_T\cdot\nabla \Psi^{(k)}+\chi\rho_T^2\Delta \Psi^{(k)}+\rho_T^2\nabla \chi\cdot\nabla \Psi^{(k)}\right]\\
& = & \int \chi\rho^2_T\nabla \Psi^{(k)}\cdot\nabla F_2\\
& - & b^2\int(\pa^k\Delta \rho_{D}+\Delta \rhot^{(k)})\left[2\chi\nabla\rho_T\cdot\nabla \Psi^{(k)}+\chi\rho_T\Delta \Psi^{(k)}+\rho_T\nabla \chi\cdot\nabla \Psi^{(k)}\right]\\
& + &b^2\int  \frac{k\pa^{k-1}\Delta\rho_T\pa\rho_T}{\rho_T}\left[2\chi\nabla\rho_T\cdot\nabla \Psi^{(k)}+\chi\rho_T\Delta \Psi^{(k)}+\rho_T\nabla \chi\cdot\nabla \Psi^{(k)}\right]\\
&-& \int\left[-kH_2\Psi^{(k)}-H_2\Lambda \Psi^{(k)}-\mu(r-2)\Psi^{(k)}-2\nabla \Psit\cdot\nabla \Psi^{(k)}\right]\nabla\cdot(\chi\rho_T^2\nabla \Psi^{(k)})\\
& +& \int (p-1)\rho_{D}^{p-2}\rhot^{(k)}\left[2\chi\rho_T\nabla\rho_T\cdot\nabla \Psi^{(k)}+\chi\rho_T^2\Delta \Psi^{(k)}+\rho_T^2\nabla \chi\cdot\nabla \Psi^{(k)}\right]\\
& + & \int k(p-1)(p-2)\rho_D^{p-3}\pa\rho_D\pa^{k-1}\rhot\nabla\cdot(\chi\rho_T^2\nabla \Psi^{(k)})\\
& = & \int \chi\rho^2_T\nabla \Psi^{(k)}\cdot\nabla F_2-  b^2\int(\pa^k\Delta \rho_{D})\nabla\cdot(\chi\rho_T^2\nabla \Psi^{(k)})\\
& +&\int(-b^2\Delta \rhot^{(k)}+(p-1)\rho_D^{p-2}{\rho_T}\rhot^{(k)})\left[2\chi\nabla\rho_T\cdot\nabla \Psi^{(k)}+\chi\rho_T\Delta \Psi^{(k)}+\rho_T\nabla \chi\cdot\nabla \Psi^{(k)}\right]\\
& + &b^2\int  \frac{k\pa^{k-1}\Delta\rho_T\pa\rho_T}{\rho^2_T}\nabla\cdot(\chi\rho_T^2\nabla \Psi^{(k)})\\
&-& \int\left[-kH_2\Psi^{(k)}-H_2\Lambda \Psi^{(k)}-\mu(r-2)\Psi^{(k)}-2\nabla \Psit\cdot\nabla \Psi^{(k)}\right]\nabla\cdot(\chi\rho_T^2\nabla \Psi^{(k)})\\
& + & \int k(p-1)(p-2)\rho_D^{p-3}\pa\rho_D\pa^{k-1}\rhot\nabla\cdot(\chi\rho_T^2\nabla \Psi^{(k)}).
\eee
This yields the algebraic energy identity:
\bea
\label{algebracienergyidnentiy}
\nonumber&&\frac 12\frac{d}{d\tau}\left\{\int b^2\chi|\nabla\rhot^{(k)}|^2+(p-1)\int \chi\rho_D^{p-2}\rho_T(\rhot^{(k)})^2+\int\chi\rho_T^2|\nabla \Psi^{(k)}|^2\right\}\\
\nonumber & = &{\frac 12\int \pa_\tau \chi \left\{b^2|\nabla\rhot^{(k)}|^2+(p-1)\rho_D^{p-2}\rho_T(\rhot^{(k)})^2+\rho_T^2|\nabla \Psi^{(k)}|^2\right\}}\\
 \nonumber &-&  b^2\int(\pa^k\Delta \rho_{D})\nabla\cdot(\chi\rho_T^2\nabla \Psi^{(k)})\\
\nonumber &-&eb^2\int\chi|\nabla\rhot^{(k)}|^2+\int\chi\frac{\pa_\tau\rho_T}{\rho_T}\left[\frac{p-1}{2}\rho_D^{p-2}\rho_T(\rhot^{(k)})^2+\rho^2_T|\nabla \Psi^{(k)}|^2\right]\\
\nonumber &+& \frac{p-1}{2}\int \chi(p-2)\frac{\pa_\tau\rho_D}{\rho_D}\rho_D^{p-2}\rho_T(\rhot^{(k)})^2\\
\nonumber & + & \int F_1\chi(p-1)\rho_D^{p-2}\rho_T\rhot^{(k)}+b^2\int \chi\nabla F_1\cdot\nabla \rhot^{(k)}+\int \chi\rho^2_T\nabla F_2\cdot\nabla \Psi^{(k)}\\
\nonumber  & + & \int \left[(H_1-kH_2)\rhot^{(k)}-H_2\Lambda \rhot^{(k)}-(\pa^k\rho_T)\Delta \Psit-2\nabla(\pa^k\rho_T)\cdot\nabla \Psit\right]\\
\nonumber &\times &  \left[-b^2\nabla \cdot(\chi \nabla\rhot^{(k)})+(p-1)\chi\rho_D^{p-2}\rho_T\rhot^{(k)}\right]\\
\nonumber&-& \int\left[-kH_2\Psi^{(k)}-H_2\Lambda \Psi^{(k)}-\mu(r-2)\Psi^{(k)}-2\nabla \Psit\cdot\nabla \Psi^{(k)}\right]\nabla\cdot(\chi\rho_T^2\nabla \Psi^{(k)})\\
\nonumber& - & \int k\pa\rho_T\pa^{k-1}\Delta \Psit\left[-b^2\nabla \cdot(\chi \nabla \rhot^{(k)})+(p-1)\chi\rho_D^{p-2}\rho_T\rhot^{(k)}\right]\\
\nonumber& + &b^2\int  \frac{k\pa^{k-1}\Delta\rho_T\pa\rho_T}{\rho^2_T}\nabla\cdot(\chi\rho_T^2\nabla \Psi^{(k)})\\
\nonumber& + & \int k(p-1)(p-2)\rho_D^{p-3}\pa\rho_D\pa^{k-1}\rhot\nabla\cdot(\chi\rho_T^2\nabla \Psi^{(k)})\\
\nonumber& + & b^2\int\nabla \chi\cdot\nabla \rhot^{(k)}\left(\rho_T\Delta \Psi^{(k)}+2\nabla\rho_T\cdot\nabla \Psi^{(k)}\right)\\
&+& \int(-b^2\Delta \rhot^{(k)}+(p-1)\rho_{D}^{p-2}{\rho_T}\rhot^{(k)})\left[\rho_T\nabla \chi\cdot\nabla \Psi^{(k)}\right].
\eea


\subsection{Weighted $L^2$ bound for $m\le k_m-1$}


 Given $\sigma\in \Bbb R$, we recall the notation
$$
\left|\begin{array}{ll}
\|\rhot,\Psit\|_{k,\sigma}^2=\sum_{m=0}^k\sum_{i=1}^d\int \chi_{k,m,\sigma}\left[b^2|\nabla\rhot_m|^2+(p-1)\rho_D^{p-2}\rho_T\rhot_m^2+\rho_T^2|\nabla \Psit_m|^2\right]\\
\chi_{k,m,\sigma}(Z)=\frac{1}{\la Z\ra^{2(k-m+\sigma)}} {\xi_{{k}}\left(\frac Z{Z^*}\right)}
\end{array}\right.
$$
We let \be
\label{defimsigma}
I_{k,\sigma}=\int \frac{\xi_k\left(\frac Z{Z^*}\right)}{\la Z\ra^{2\sigma}}\left[b^2|\nabla\rhot^{(k)}|^2+(p-1)\rho_D^{p-2}\rho_T(\rhot^{(k)})^2+\rho_T^2|\nabla \Psi^{(k)}|^2\right].
\ee

\begin{lemma}[Weighted $L^2$ bound]
\label{propinduction}
Recall the definition \eqref{defsigmnu}, \eqref{defsimgma} of $\sigma(m)$ and let
\be
\label{conditionsigma}
\left|\begin{array}{l}
\sigma= \sigma(k)\\
\nu+\frac{2(r-1)}{p-1}=\nut
\end{array}\right. 
\ee
then there exists $c_{k_m}>0$ such that for all $0<\nut<\nut(k_m)\ll1$ and $b_0<b_0(k_m)\ll 1$, for all $1\le k\le k_m-1$, $I_k:=I_{k,\sigma(k)}$ given by \eqref{defimsigma} satisfies the differential inequality
\be
\label{estnienonneoinduction}
 \frac{dI_k}{d\tau}+2\mu\nut I_k\le e^{-c_{k_m}\tau}.
\ee
\end{lemma}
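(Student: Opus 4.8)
The plan is to use the algebraic energy identity \eqref{algebracienergyidnentiy} with the choice of weight $\chi=\chi_{k,k,\sigma(k)}(Z)=\la Z\ra^{-2\sigma(k)}\xi_k(Z/Z^*)$, for which its left-hand side is exactly $\frac12\frac{dI_k}{d\tau}$; the lemma then amounts to bounding the right-hand side of \eqref{algebracienergyidnentiy} by $-\mu\nut I_k+\frac12 e^{-c_{k_m}\tau}$. Throughout I would use the bootstrap bounds of Section \ref{sectionbootstrap} and, if needed, a downward induction on $k$.

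\emph{Extracting the damping.} First I would collect the \emph{diagonal} terms: the weight-transport term $\frac12\int\pa_\tau\chi(\cdots)$, which is $\le0$ because $\pa_\tau\xi_k(Z/Z^*)=-\mu\frac{Z}{Z^*}\xi_k'(Z/Z^*)\le0$; the term $-{\mathcal e}b^2\int\chi|\nabla\rhot^{(k)}|^2$ coming from $\frac{d}{d\tau}b^2=-2{\mathcal e}b^2$; the terms $\int\chi\frac{\pa_\tau\rho_T}{\rho_T}(\cdots)$ and $\frac{(p-1)(p-2)}{2}\int\chi\frac{\pa_\tau\rho_D}{\rho_D}\rho_D^{p-2}\rho_T(\rhot^{(k)})^2$; and the parts of the potential terms $(H_1-kH_2)\rhot^{(k)}$, $-H_2\Lambda\rhot^{(k)}$, $-kH_2\Psi^{(k)}-H_2\Lambda\Psi^{(k)}-\mu(r-2)\Psi^{(k)}$ that carry the full $k$ derivatives, integrating the $\Lambda$-terms by parts. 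Using \eqref{fullflowrenormalized}, \eqref{profileequationtilde} to evaluate $\pa_\tau\rho_T/\rho_T$ and $\pa_\tau\rho_D/\rho_D$, the Emden identities \eqref{eq:autroformluehonhtwo}, the asymptotics \eqref{eq:esterrorpotentials}, \eqref{fundmaetnaldecay}, and the elementary fact $\frac{\Lambda\chi}{\chi}=-2\sigma(k)+O(\la Z\ra^{-2})$ on $\{Z\lesssim Z^*\}$, these terms assemble into a quadratic form in $(\rhot^{(k)},\nabla\rhot^{(k)},\nabla\Psi^{(k)})$ whose leading constant-coefficient part equals $-\mu\nut$ times the integrand of $I_k$; in the $b^2|\nabla\rhot^{(k)}|^2$ channel the $-{\mathcal e}b^2$ contribution cancels the weight-shift term exactly because ${\mathcal e}=\mu(r-2)$. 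This computation is what forces the definition of $\sigma(k)$ in \eqref{defsimgma} and of $\nut=\nu+\frac{2(r-1)}{p-1}$ in \eqref{conditionsigma}.

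\emph{Estimating the errors.} The remaining terms are: (a) the $O(\la Z\ra^{-r})$ corrections to $H_1,H_2$ and to $\Lambda\rho_D/\rho_D$ generated above; (b) the $b^2$-commutator terms carrying $\pa^k\Delta\rho_D$ and $\frac{k\pa^{k-1}\Delta\rho_T\pa\rho_T}{\rho_T^2}$, with prefactor $b^2=e^{-2{\mathcal e}\tau}$; (c) the $\rho$--$\Psi$ coupling terms — the $-(\pa^k\rho_T)\Delta\Psit$, $-2\nabla(\pa^k\rho_T)\cdot\nabla\Psit$ pieces and the terms built on $\rho_D^{p-2}\rho_T\rhot^{(k)}$ tested against $\nabla\cdot(\chi\rho_T^2\nabla\Psi^{(k)})$ and on $\nabla\chi$; and (d) the genuinely lower-order errors in $F_1,F_2$ of \eqref{formluafone}, \eqref{estqthohrkbis} (commutators $[\pa^k,H_i]$, $[\pa^k,\rho_D^{p-2}]$, the paraproduct remainders, $\pa^k\NL(\rhot)$, the $b^2$-remainder, and the profile error $\pa^k\tilde{\mathcal E}_{P,\cdot}$). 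For (a), split space: on $\{Z\ge Z_0\}$, $Z_0\gg1$ a fixed large constant, the corrections cost $\lesssim Z_0^{-r}I_k$, absorbed into the $\mu\nut I_k$ margin; on the compact set $\{Z\le Z_0\}$ use the local decay \eqref{eq:bootdecay} when $k\le2k_0$, and for $k>2k_0$ an elementary interpolation of \eqref{eq:bootdecay} against the bounded high norms \eqref{weightedl2boot}, \eqref{sobolevinitboot}, both yielding $e^{-c_{k_m}\tau}$. For (b), integration by parts, Cauchy--Schwarz against the energy density, and \eqref{smallglobalboot}, \eqref{fundmaetnaldecay}, \eqref{weightedl2boot} leave the $b^2$ prefactor, hence $\lesssim e^{-c\tau}$. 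For (c), the couplings in $\pa_\tau\rhot^{(k)}$ and $\pa_\tau\Psi^{(k)}$ nearly cancel after integration by parts; the uncancelled remainder is an off-diagonal quadratic form weighted by $\nabla(\chi\rho_D^{p-2}\rho_T)$ (carrying a $\la Z\ra^{-1}$ gain from $\nabla\chi$) and by $\rho_T-\rho_D$, absorbed for $k\le k_m-1$ using this gain together with the slow growth of $\chi_{k,k,\sigma(k)}$ ($\sigma$ has slope $\le1$, \eqref{eq:sigma}), whereas at the endpoint $k=k_m$ one invokes the exterior repulsivity \eqref{P} of Theorem \ref{propexistenceprofile}. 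For (d), Cauchy--Schwarz with the pointwise bounds \eqref{smallglobalboot}, \eqref{eq:esterrorpotentials}, \eqref{nonvavnishig} reduces each product to a norm $I_{k'}$ with $k'\le k$ (already controlled) times a small constant or an $e^{-c\tau}$ factor — this is exactly what the continuity conditions on $\sigma(\cdot),\sigmat(\cdot)$ (notably \eqref{defalphacomnot} and $\alpha+\beta\le1$) guarantee — while $\pa^k\tilde{\mathcal E}_{P,\cdot}$, supported in $Z\ge3Z^*=3e^{\mu\tau}$, contributes $\lesssim e^{-c\tau}$ by \eqref{weightedl2boot}. Collecting these bounds yields \eqref{estnienonneoinduction}.

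\emph{Main obstacle.} The principal difficulty is the bookkeeping: checking that the diagonal terms combine into exactly the coefficient $-\mu\nut$ — a computation rigidly tied to the Emden identities and to the definitions of $\sigma(k),\nut,Z^*$ — and verifying that each of the many error terms in \eqref{algebracienergyidnentiy} integrates against a weight no stronger than $\chi_{k,k,\sigma(k)}$, which is precisely why the weight exponents $\sigma(\cdot),\sigmat(\cdot)$ are rigged as in \eqref{defsimgma}, \eqref{def-sigma}; this is also where the off-diagonal coupling and the $b^2\Delta$ terms force the repulsivity \eqref{P} at the highest derivative and the weight margin available below $k_m$.
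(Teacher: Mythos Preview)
Your outline matches the paper's proof: run \eqref{algebracienergyidnentiy} with $\chi=\chi_{k,k,\sigma(k)}$, assemble the diagonal terms into $-\mu\nut I_k$ (the paper's key computation is that $\pa_\tau\chi+\mu\Lambda\chi$ has the $\xi_k$ contributions cancel exactly, leaving only the derivative of $\la Z\ra^{-2\sigma(k)}$), and bound every error term by $e^{-c_{k_m}\tau}$ via an interpolation bound \eqref{interpolatedbound} that packages your $Z_0$-splitting into a single estimate $\|\rhot,\Psit\|^2_{m,\sigma(m)+\delta}\le e^{-c_{\delta,k_m}\tau}$, invoked repeatedly for all the cross terms and $O(\la Z\ra^{-r})$ corrections.

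Two corrections to your write-up. First, no downward induction on $k$ is used; each $I_k$ is treated independently. Second, the repulsivity \eqref{P} is \emph{not} invoked in this lemma --- it enters only at $k=k_m$ in Proposition~\ref{proproopr}, which is outside the present range $1\le k\le k_m-1$. For $k\le k_m-1$ the cross terms in your (c) are handled by Cauchy--Schwarz and the interpolation bound (no cancellation needed), while the genuine loss-of-derivatives term $b^2\int\rho_T\nabla\chi\cdot\nabla\Psi^{(k)}\Delta\rhot^{(k)}$ from the last lines of \eqref{algebracienergyidnentiy} is controlled by integrating by parts, using \eqref{eq:chik} to convert $\chi_{k,k,\sigma(k)}\la Z\ra^{-2}$ into $\chi_{k+1,k+1,\sigma(k+1)}$, and then bounding by $b\,\|\rhot,\Psit\|^2_{k+1,\sigma(k+1)}\lesssim b$ via the bootstrap \eqref{weightedl2boot}, \eqref{sobolevinitboot}. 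This last step is precisely where the restriction $k\le k_m-1$ is used, and you should flag it as such rather than burying it in (c).
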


We claim that Lemma \ref{propinduction} implies Proposition \ref{prophighsobloev}.

\begin{proof}[Proof of Proposition \ref{prophighsobloev}]

Integrating \eqref{estnienonneoinduction} on the interval $[\tau_0,\tau],$ with initial data prescribed at $\tau_0$, we obtain 
$$
I_k(\tau) \le e^{-2\mu\nut(\tau-\tau_0)} I_k(\tau_0) + \left(e^{-2\mu\nut(\tau-\tau_0)-c_{k_m}\tau_0}-e^{-c_{k_m}\tau}\right) 
$$
We now recall, see Remark \ref{rem:data}, that $I_k(\tau_0)\le e^{-c\tau_0}$. Choosing  $2\mu\nut\le\min\{c,c_{k_m}\}$ 
we obtain that
\be\label{eq:Im}
I_k(\tau) \le 2 e^{-2\mu\nut\tau}.
\ee
We now recall from \eqref{eq:psirho} and \eqref{eq:psirho'} for even $m$ that $\|\rhot,\Psit\|_{m,\sigma}$ controls all the corresponding
 Sobolev norms: let a multi-index $\alpha=(\alpha_1,\dots,\alpha_d)$ with $$\alpha_1+\dots+\alpha_d=|\alpha|,\ \ \nabla^\alpha:=\pa_1^{\alpha_1}\dots\pa^{\alpha_d}_d,$$  then for all $|\alpha|=k, \ \ 0\leq k\leq m$,
\bea
\label{interpolationnorms}
\nonumber &&b^2\int \chi_{k,m,\sigma}|\nabla\nabla^\alpha \rhot|^2+(p-1)\int \chi_{k,m,\sigma}\rho_D^{p-2}\rho_T|\nabla^\alpha\rhot|^2+\int\chi_{k,m.\sigma}\rho_T^2|\nabla\nabla^\alpha \Psit|^2\\
&\lesssim& \|\rhot,\Psi\|_{k,\sigma}^2,
\eea
and similarly the norm $\|\rhot,\Psi\|_{k,\sigma}^2$ (with even $k$) is equivalent to the one 
where $\pa^m$ with $1\le m\le k$ derivatives are replaced by $\Delta^m$ with $1\le m \le \frac k2$.\\
We now claim 
\be\label{eq:km}
\|\rho,\Psi\|_{m,\sigma(m)}^2\le \sum_{k=0}^m I_{k,\sigma(k)}.
\ee
Combining this with \eqref{eq:Im} concludes the proof of \eqref{eq:weightedl2boot} (with $c_{k_m}=2\mu\nut$).\\
\noindent{\em Proof of \eqref{eq:km}}. {Indeed,
\bee
&&\|\rho,\Psi\|_{m,\sigma(m)}^2=\sum_{k=0}^m\int \chi_{m,k,\sigma(m)}\left[b^2|\nabla\rhot^{(k)}|^2+(p-1)\rho_D^{p-2}\rho_T(\rhot^{(k)})^2+\rho_T^2|\nabla \Psit^{(k)}|^2\right]\\
& = & \frac{1}{\la Z\ra^{2(m+\sigma(m))}}\sum_{k=0}^m\int \la Z\ra^{2k}\xi_m(x)\left[b^2|\nabla\rhot^{(k)}|^2+(p-1)\rho_D^{p-2}\rho_T(\rhot^{(k)})^2+\rho_T^2|\nabla \Psit^{(k)}|^2\right]
\eee
and
\bee
&&\sum_{k=0}^m I_{k,\sigma(k)}=\sum_{k=0}^m\int \frac{\xi_k(x)}{\la Z\ra^{2\sigma(k)}}\left[b^2|\nabla\rhot^{(k)}|^2+(p-1)\rho_D^{p-2}\rho_T(\rhot^{(k)})^2+\rho_T^2|\nabla \Psi^{(k)}|^2\right]\\
& = & \sum_{k=0}^m\int \frac{\la Z\ra^{2k}\xi_k(x)}{\la Z\ra^{2(\sigma(k)+k)}}\left[b^2|\nabla\rhot^{(k)}|^2+(p-1)\rho_D^{p-2}\rho_T(\rhot^{(k)})^2+\rho_T^2|\nabla \Psi^{(k)}|^2\right]
\eee
and hence \eqref{eq:km} follows from $\sigma(k)+k\leq \sigma(m)+m$ and $\xi_k(x)\ge \xi_m(x)$ for  $0\le k\le m$.
}
\end{proof}


\subsection{Proof of Lemma \ref{propinduction}} This follows from the energy identity \eqref{algebracienergyidnentiy} coupled with the pointwise bound \eqref{smallglobalboot} to control the nonlinear term.\\


 \noindent{\bf step 1} Interpolation bounds. In what follows we use the convention $\lesssim$ to denote any dependence on the universal constants, including $k_m$.
Constants $c, c_{k_m}$ will stand for generic, universal small constant.\\
 Our main technical tool below will be the following interpolation bound: for any $0\le m\le k_m-1$ and $\delta>0$, there exists $c_{\delta,k_m}>0$ such that
\be
\label{interpolatedbound}
\|\rhot,\Psit\|^2_{m,\sigma(m)+\delta}\le e^{-c_{\delta,k_m}\tau}.
\ee
Indeed, the claim follows by interpolating the local decay bootstrap bound \eqref{eq:bootdecay} and the 
bound \eqref{sobolevinitboot} for the highest Sobolev norm for $Z\le Z^*_c:=(Z^*)^c$ 
and using the global weighted Sobolev bound for \eqref{weightedl2boot} for $Z\ge Z^*_c$
\be
\label{inerpo;soationcrucial}
\|\rhot,\Psit\|^2_{m,\sigma(m)+\delta}\le (Z^*)^Ce^{-c_{k_m}\tau}+\frac{1}{(Z^*_c)^{2\delta}}\|\rhot,\Psit\|^2_{m,\sigma(m)}\le e^{-c_{\delta,k_m}\tau}
\ee
We will also use the bound for the damped profile from \eqref{dampenedprofile}, \eqref{definitionprofilewithtailchange} and \eqref{fromularhod}: 
\be
\label{estiamtionnprofile}
|Z^k\pa_Z^k\rho_D|\lesssim \frac{1}{\la Z\ra^{\frac{2(r-1)}{p-1}}}{\bf 1}_{Z\le Z^*}+\frac{1}{(Z^*)^{\frac{2(r-1)}{p-1}}}\frac{1}{\left(\frac{Z}{Z^*}\right)^{n_P}}{\bf 1}_{Z\ge Z^*}.
\ee
We will also use the bound
\be\label{eq:chik}
 \chi_{k-1,k-1,\sigma(k-1)}\le  \la Z\ra^{2(\alpha+\beta)}\chi_{k,k,\sigma(k)}\le  \la Z\ra^{2}\chi_{k,k,\sigma(k)},
\ee
which follows from
\be\label{sigmalpha}
\sigma(k-1)+\alpha \ge \sigma(k),\qquad \tilde\sigma(k-1)\le \tilde\sigma(k)+\beta
\ee       
and $\alpha+\beta\le 1$.\\

\noindent{\bf step 2} Energy identity. We run \eqref{algebracienergyidnentiy} with $$\chi=\frac{1}{\la Z\ra^{2\sigma}}
\xi_k\left(\frac{Z}{Z^*}\right), \ \  \sigma=\sigma(k), \ \ 1\leq k\leq k_m-1$$
with $\xi_k(x)=1$ for $x\le 1$ and $\xi_k(x)=x^{2\tilde\sigma(k)}$ for $x>1$,
 and estimate all terms. In our notations
 $$
 \chi=\chi_{k,k,\sigma(k)}.
 $$

From \eqref{defsigmnu}, \eqref{defsimgma} and recalling $m_0=\frac{4k_m}{9}+1$:
\bea
\label{defsigmnavrelations}
\nonumber \sigma(k)+k&=&\left|\begin{array}{l}\sigma_\nu\ \  \mbox{for}\ \ 0\le k\le m_0\\-\alpha(k_m-k)+k=(\alpha+1)(k-m_0)+\sigma_\nu\ \  \mbox{for}\ \ m_0\leq m\leq k_m\end{array}\right.\\
&\ge& \sigma_\nu
\eea
and
\bea
\label{defsigmnavrelations-sigma}
\nonumber \tilde\sigma(k)&=&\left|\begin{array}{l} n_P-\frac{2(r-1)}{p-1}-(r-2)+2\nut\ \  \mbox{for}\ \ 0\le k\le \frac {2k_m}3+1\\ \beta(k_m-k)\ \  \mbox{for}\ \  \frac {2k_m}3+1\leq m\leq k_m\end{array}\right.\\
&\le& n_P-\frac{2(r-1)}{p-1}-(r-2)+2\nut
\eea
which implies
\bea
\label{uppperboundchi}
\chi&=&\frac{1}{\la Z\ra^{2\sigma(k)}}\xi_k\left(\frac{Z}{Z^*}\right)\\
\nonumber &\lesssim& \frac{1}{\la Z\ra^{2\sigma(k)}}
\left[1+ \left (\frac Z{Z^*}\right)^{ 2n_P-\frac{4(r-1)}{p-1}-2(r-2)+4\nut}{\bf 1}_{Z\ge Z^*}\right]\\ &\lesssim &\frac{1}{\la Z\ra^{-2k+2\left(\frac{d}{2}+\tilde{\nu}-\frac{2(r-1)}{p-1}-(r-1)\right)}} +\frac{\left (\frac Z{Z^*}\right)^{ 2n_P-\frac{4(r-1)}{p-1}-2(r-2)+4\nut}}{\la Z\ra^{-2k+2\left(\frac{d}{2}+\tilde{\nu}-\frac{2(r-1)}{p-1}-(r-1)\right)}}{\bf 1}_{Z\ge Z^*}\nonumber.
\eea
which we will use below. The following additional inequality will be of particular significance ($b=(Z^*)^{2-r}$):
\bea
\label{eq:Dsigma}
\nonumber \rho_D^2 \chi &\lesssim& \frac{1}{\la Z\ra^{-2k+2\left(\frac{d}{2}+\tilde{\nu}-(r-1)\right)}} \left[{\bf 1}_{Z\le Z^*}+
{\left (\frac Z{Z^*}\right)^{4\nut-2(r-2)}} {\bf 1}_{Z\ge Z^*}\right]\\&=&\frac{1}{\la Z\ra^{-2k+2\left(\frac{d}{2}+\tilde{\nu}-(r-1)\right)}} {\bf 1}_{Z\le Z^*}+
\frac{1}{b^{2-\frac{4\nut}{r-2}} \la Z\ra^{-2k+2\left(\frac{d}{2}-\tilde{\nu}-1\right)}} {\bf 1}_{Z\ge Z^*}\label{eq:Dsigma'}
\eea
\\

\noindent{\bf step 3} Leading order terms. In what follows, we will systematically use the standard Pohozhaev identity:
\bea
\label{pohozaevbispouet}
\nonumber \int\Delta g F\cdot\nabla gdx&=&\sum_{i,j=1}^d \int \pa_i^2 g F_j\pa_jgdx=-\sum_{i,j=1}^d \int\pa_ig(\pa_iF_j\pa_jg+F_j\pa^2_{i,j}g)\\
&=&-\sum_{i,j=1}^d \int\pa_iF_j\pa_ig\pa_jg+\frac 12\int |\nabla g|^2\nabla \cdot F
\eea
which becomes in the case of spherically symmetric functions
\bee
\nonumber \int_{\Bbb R^d} f\Delta g \pa_rgdx=c_d\int_{\Bbb R^+}\frac{f}{r^{d-1}} \pa_r(r^{d-1}\pa_rg)r^{d-1}\pa_rgdr=-\frac 12\int_{\Bbb R^d}|\pa_rg|^2\left[f'-\frac{d-1}{r}f\right]dx
\eee

\noindent\underline{\em Cross terms}. We consider
\bee
A_1=b^2k\left[\int \pa\rho_T\pa^{k-1}\Delta \Psit \nabla \cdot(\chi \nabla \rhot^{(k)})+\frac{\pa^{k-1}\Delta\rho_T\pa\rho_T}{\rho^2_T}\nabla\cdot(\chi\rho_T^2\nabla \Psi^{(k)})\right].
\eee
We compute:
\bee
&&\pa\rho_T\pa^{k-1}\Delta \Psit \nabla \cdot(\chi \nabla \rhot^{(k)})+\frac{\pa^{k-1}\Delta\rho_T\pa\rho_T}{\rho^2_T}\nabla\cdot(\chi\rho_T^2\nabla \Psi^{(k)})\\
& = & \pa\rho_T\pa^{k-1}\Delta \Psit\left[\nabla\chi\cdot\nabla \rhot^{(k)}+\chi\Delta \rhot^{(k)}\right]\\
& + & \pa^{k-1}\Delta\rho_T\pa\rho_T\left[\nabla\chi \cdot\nabla \Psi^{(k)}+2\chi\frac{\nabla \rho_T}{\rho_T}\cdot\nabla \Psi^{(k)}+\chi\Delta \Psi^{(k)}\right]\\
& = & \pa\rho_T\pa^{k-1}\Delta \Psi\nabla\chi\cdot\nabla \rhot^{(k)}+\pa^{k-1}\Delta\rho_T\pa\rho_T\nabla\chi \cdot\nabla \Psi^{(k)}+2\pa^{k-1}\Delta\rho_T\pa\rho_T\chi\frac{\nabla \rho_T}{\rho_T}\cdot\nabla \Psi^{(k)}\\
& +  & \chi\pa\rho_T\pa^{k-1}\Delta \Psit\Delta \rhot^{(k)}+\chi\pa^{k-1}\Delta\rho_T\pa\rho_T\Delta \Psi^{(k)}.
\eee
The last 2 terms require an integration by parts:
\bee
\nonumber &&b^2k\left|\int \left[\chi\pa\rho_T\pa^{k-1}\Delta \Psit\Delta \rhot^{(k)}+\chi\pa^{k-1}\Delta\rho_T\pa\rho_T\Delta \Psi^{(k)}\right]\right|\\
\nonumber &=&b^2k\left| \int \left[-(\Delta \pa^{k-1}\rhot)\pa(\chi\pa\rho_T\pa^{k-1}\Delta \Psit)+\chi\pa^{k-1}\Delta\rho_T\pa\rho_T\Delta\Psi^{(k)}\right]\right|\\
\nonumber & = & b^2k\left|\int\left[-\pa^{k-1}\Delta \rhot\left[\chi\pa^2\rho_T\pa^{k-1}\Delta \Psit+\pa\chi\pa\rho_T\pa^{k-1}\Delta \Psit)\right]+\chi\pa^{k-1}\Delta\rho_D\pa\rho_T\Delta\Psi^{(k)}\right]\right|\\
\nonumber &\lesssim &  b^2k\int\chi|\pa^{k-1}\Delta \Psit|\left[|\pa^{k-1}\Delta \rhot\pa^2\rho_T|+\frac{|\pa^{k-1}\Delta \rhot\pa\rho_T|}{\la Z\ra}+|\pa(\pa^{k-1}\Delta\rho_D\pa\rho_T)|\right]\\
\nonumber & \lesssim &  C_kb^2\int\chi\rho_T|\pa^{k-1}\Delta \Psit|\left[\frac{\rho_D}{\la Z\ra^{k+2}}+\frac{|\pa^{k-1}\Delta \rhot|}{\la Z\ra}\right]\\
& \lesssim & \sum_{|\alpha|=k}\int\frac{\chi\rho_T^2}{\la Z\ra}|\nabla \pa^\alpha\Psit|^2+c_kb^4\int \frac{\chi}{\la Z\ra}|\nabla \pa^\alpha\rhot|^2+b^4\int\chi\frac{\rho_{D}^2}{\la Z\ra^{2k+3}},
\eee
where in penultimate inequality we used the pointwise bound \eqref{smallglobalboot}.
 
We now estimate the source term from\eqref{eq:Dsigma}:
\bea
\label{nivneinveoneinv}
\nonumber 
b^4\int\chi\frac{\rho_D^2}{\la Z\ra^{2k+3}}&\lesssim& b^4\int_{Z\leq Z^*}\frac{Z^{d-1}dZ}{\la Z \ra^{2k+3-2k+2\left(\frac{d}{2}+\tilde{\nu}-(r-1)\right)}}\\
\nonumber & + & b^2\int_{Z\ge Z^*}\left(\frac Z{Z^*}\right)^{4\nut}\frac{Z^{d-1}dZ}{\la Z \ra^{2k+3-2k+2\left(\frac{d}{2}+\tilde{\nu}-1\right)}}\\
\nonumber &\lesssim& b^4\int_{Z\le Z^*}\la Z\ra^{2(r-2)-2-2\nut}dZ+b^2\int_{Z\ge Z^*}\left(\frac Z{Z^*}\right)^{4\nut}{\la Z\ra^{-2-2\nut}} dZ\\
& \lesssim& b^4(Z^*)^{2(r-2)-1-2\nut}\lesssim e^{-c\tau}
\eea
and hence, using \eqref{inerpo;soationcrucial}, 
\bee
&&\left|b^2k\left|\int \left[\chi\pa\rho_T\pa^{k-1}\Delta \Psit\Delta \rhot^{(k)}+\chi\pa^{k-1}\Delta\rho_T\pa\rho_T\Delta \Psi^{(k)}\right]\right|\right|\lesssim e^{-c\tau}+\|\rhot,\Psit\|^2_{k,\sigma+\frac 12}\\
&\lesssim & e^{-c_{k_m}\tau}.
\eee
We estimate similarly,
\bee
&&kb^2\left| \pa\rho_T\pa^{k-1}\Delta \Psi\nabla\chi\cdot\nabla \rhot^{(k)}+\pa^{k-1}\Delta\rho_T\pa\rho_T\nabla\chi \cdot\nabla \Psi^{(k)}+2\pa^{k-1}\Delta\rho_T\pa\rho_T\chi\frac{\nabla \rho_T}{\rho_T}\cdot\nabla \Psi^{(k)}\right|\\
&\lesssim &  \sum_{|\alpha|=k}\left[b^4\int\frac{\chi}{\la Z\ra}|\nabla\pa^\alpha\rhot|^2+\int\frac{\chi}{\la Z\ra^{3}}\rho_T^2|\nabla \pa^\alpha\Psit|^2\right]+b^4\int\chi\frac{\rho_{D}^2}{\la Z\ra^{2k+3}}\\
&\lesssim &  e^{-c_{k_m}\tau}+\|\rhot,\Psit\|^2_{k,\sigma+\frac 12}\lesssim e^{-c_{k_m}\tau}.
\eee
The remaining cross terms are estimated as follows.
\bee
&&k(p-1)\left|\int\chi\pa\rho_T\pa^{k-1}\Delta\Psit\rho_D^{p-2}\rho_T\rhot^{(k)}\right|\lesssim c_k\int \chi\frac{\rho_T^{p-1}}{\la Z\ra}|\rho_T\pa^{k-1}\Delta \Psi|\rhot^{(k)}|\\
&\lesssim &\int \frac{\chi}{\la Z\ra}\rho_D^{p-1}(\rhot^{(k)})^2+\int\frac{\chi}{\la Z\ra}\rho_T^2|\nabla \pa^\alpha\Psit|^2\le\|\rhot,\Psit\|^2_{k,\sigma+\frac 12}\lesssim e^{-c_{k_m}\tau},
\eee
where we used that $p\ge 1$ and a trivial bound $|\rho_D|\lesssim 1$.
Similarly,
$$\int\left|(p-1)\rho_D^{p-2}\rhot^{(k)}\rhot^{2}_T\nabla \chi\cdot\nabla \Psi^{(k)}\right|
\int \frac{\chi}{\la Z\ra}\rho_D^{p-1}(\rhot^{(k)})^2+\int\frac{\chi}{\la Z\ra}\rho_T^2|\nabla \pa^\alpha\Psit|^2\le\|\rhot,\Psit\|^2_{k,\sigma+\frac 12}\lesssim e^{-c_{k_m}\tau}.
$$
The other remaining cross term is estimated using an integration by parts:
\bee
&& k(p-1)(p-2)\left|\int \nabla \cdot(\rho_T^2\nabla \Psi^{(k)})\chi\rho_D^{p-3}\pa\rho_D\pa^{k-1}\rhot\right|\\
&\lesssim &  \int \frac{\chi}{\la Z\ra}\rho_D^{p-1}|\nabla \rhot_{k-1}|^2+ \int \frac{\chi}{\la Z\ra^3}\rho_D^{p-1}\rhot_{k-1}^2+\int\frac{\chi}{\la Z\ra}\rho_T^2|\nabla \pa^\alpha\Psit|^2\le
\|\rho,\Psit\|_{k,\sigma+\frac 12}^2\\
&\lesssim& e^{-c_{k_m}\tau}.
\eee

\noindent\underline{\em $\rho_k$ terms}. We compute using \eqref{eq:esterrorpotentials}:
\bee
&&\int\chi(H_1-kH_2)\rhot^{(k)}(-b^2\Delta \rhot^{(k)}+(p-1)\rho_D^{p-2}\rho_T\rhot^{(k)})-b^2\int\left[H_1-kH_2\right]\rhot^{(k)}\nabla\chi\cdot\nabla \rhot^{(k)}\\
&=& \int\chi(H_1-kH_2)\left[b^2|\nabla \rhot^{(k)}|^2+(p-1)\rho_D^{p-2}\rho_T\rhot^2_k\right]- \frac{b^2}{2}\int(\rhot^{(k)})^2\nabla\cdot\left[ \chi \nabla(H_1-kH_2)\right]\\
\nonumber&=&  -\int \mu\chi \left(k+\frac{2(r-1)}{p-1}+O\left(\frac{1}{\la Z\ra^{r}}\right)\right)\left(b^2|\nabla \rhot^{(k)}|^2+(p-1)\rho_D^{p-2}\rho_T\rhot^2_k\right)\\
&-& \frac{b^2}{2}\int(\rhot^{(k)})^2\nabla\cdot\left[ \chi \nabla(H_1-kH_2)\right]\\
\nonumber& \lesssim & e^{-c_{k_m}\tau}-\int \mu \chi\left(k+\frac{2(r-1)}{p-1})\right)\left(b^2|\nabla \rhot^{(k)}|^2+(p-1)\rho_D^{p-2}\rho_T\rhot^2_k\right)\\
&-& \frac{b^2}{2}\int\chi(\rhot^{(k)})^2\left[\frac{\Lambda\chi\Lambda (H_1-kH_2)}{\chi Z^2}+\Delta(H_1-kH_2) \right],
\eee
where we used the interpolation bound \eqref{inerpo;soationcrucial}.
Similarly, using that $\chi_{k,k,\sigma}=\la Z\ra \chi_{k,k-1,\sigma}$ and $|\rho_k|\le |\nabla \rho_{k-1}|$ as well as
\eqref{eq:esterrorpotentials}, \eqref{inerpo;soationcrucial} gives 
$$\frac{b^2}{2}\int\chi(\rhot^{(k)})^2\left[\frac{\Lambda\chi\Lambda (H_1-kH_2)}{Z^2}+\Delta(H_1-kH_2) \right]
\lesssim  \|\rhot,\Psit\|^2_{k,\sigma(k)+\frac 12(1+r)}
\lesssim e^{-c_{k_m}\tau}.$$Next using $$|\pa^k\rho_D|\lesssim \frac{\rho_D}{\la Z\ra^{k}},$$
we estimate after an integration by parts:
\bee
&&b^2\left|\int (\chi\Delta \rhot^{(k)}+\nabla \chi\cdot\nabla \rhot^{(k)})\left[(\pa^k\rho_D)\Delta \Psit+2\nabla(\pa^k\rho_D)\cdot\nabla \Psit\right]\right|\\
&\lesssim & b^2\int\chi |\nabla \rhot^{(k)}|\left[|\nabla(\pa^k\rho_D\Delta \Psit)|+|\nabla(\nabla\pa^k\rho_D\cdot\nabla \Psit)|+\frac{|(\pa^k\rho_D)\Delta \Psit+2\nabla(\pa^k\rho_D)\cdot\nabla \Psit|}{\la Z\ra}\right]\\
& \leq & b^2\int \chi\frac{|\nabla \rhot^{(k)}|^2}{\la Z\ra}+b^2\sum_{j=1}^3\int \chi\frac{\la Z\ra \rho_D^2}{\la Z\ra^{2k}}\left(\frac{|\pa^j\Psit|}{\la Z\ra^{3-j}}\right)^2
\eee
We use the pointwise bootstrap bound \eqref{smallglobalboot}
\be
\label{venovneoienneonen}
|\la Z\ra^j\pa^j\Psi|\le C_K\left[\frac{{\bf 1}_{Z\le Z^*}}{\la Z\ra^{r-2}}+b\right]\lesssim\left[\frac{{\bf 1}_{Z\le Z^*}}{\la Z\ra^{r-2}}+\frac{{\bf 1}_{Z\ge Z^*}}{\la Z^*\ra^{r-2}}\right], \ \ 1\le j\le 3
\ee
to estimate from \eqref{eq:Dsigma}:
\bee
&&b^2\sum_{j=1}^3\int \chi\frac{\la Z\ra \rho_D^2}{\la Z\ra^{2k}}\left(\frac{|\pa^j\Psit|}{\la Z\ra^{3-j}}\right)^2\\
&\leq& b^2C_K\int\frac{Z^{d-1}dZ}{\la Z\ra^{2(\tilde{\nu}+\frac d2-(r-1))}}\frac{1}{\la Z\ra^{5}}\left(\left[\frac{{\bf 1}_{Z\le Z^*}}{\la Z\ra^{2(r-2)}}+\left(\frac Z{Z^*}\right)^{4\nut}\frac{{\bf 1}_{Z\ge Z^*}}{\la Z\ra^{2(r-2)}}\right]\right)\\
& \lesssim & e^{-c_{k_m}\tau}
\eee
and hence 
$$
b^2\left|\int  (\chi\Delta \rhot^{(k)}+\nabla \chi\cdot\nabla \rhot^{(k)})\left[(\pa^k\rho_D)\Delta \Psit+2\nabla(\pa^k\rho_D)\cdot\nabla \Psit\right]\right|\lesssim e^{-c_{k_m}\tau}.
$$
 For the nonlinear term, we use the Pohozhaev identity \eqref{pohozaevbispouet} and the pointwise bound \eqref{venovneoienneonen} 
 $$
 |Z^j \pa^j \Psi|\lesssim \la Z\ra^{-(r-2)}, \qquad j=1,...,3
 $$to estimate by the interpolation bound \eqref{inerpo;soationcrucial}
\bee
&&b^2\left|\int (\chi\Delta \rhot^{(k)}+\nabla \chi\cdot\nabla \rhot^{(k)})\left[\rhot^{(k)}\Delta \Psit+2\nabla\rhot^{(k)}\cdot\nabla \Psit\right]\right|\lesssim b^2\left[\int \chi\frac{|\nabla \rhot^{(k)}|^2}{\la Z\ra^{r}}+\int \chi\frac{(\rhot^{(k)})^2}{\la Z\ra^{r+2}}\right]\\
&\lesssim& e^{-c_{k_m}\tau}.
\eee
Note that the last term in the case $k=0$ should be treated with the help of the bound $\rhot\lesssim \rho_D$ and 
the estimate \eqref{nivneinveoneinv}. For $k\ne 0$, we simply use $|\rho_k|\le |\nabla\rho_{k-1}|$.
We recall that by definition of the norm:
\bee
\|\rhot,\Psit\|_{k,\sigma}^2\gtrsim \sum_{m=0}^k\int \chi_{k,k,\sigma}\frac {\rho_T^2|\pa^m\nabla \Psi|^2}{\la Z\ra^{2(k-m)}}\gtrsim\sum_{m=1}^{k+1}\int\chi \frac{\rho_T^2|\pa^m\Psi|^2}{\la Z\ra^{2(k+1-m)}}.
\eee
Hence, by the interpolation bound,
\bee
&&\left|\int \chi\left[(\pa^k\rho_D)\Delta \Psit+2\nabla(\pa^k\rho_D)\cdot\nabla \Psit\right](p-1)\rho_D^{p-2}\rho_T\rhot^{(k)}\right|\\
& \lesssim & \int \chi\frac{\rho_D^{p-2}\rho_T(\rhot^{(k)})^2}{\la Z\ra}+\int\chi\rho_T^{p-2}\rho_T^2\left[\frac{|\pa^2\Psit|^2}{\la Z\ra^{2k-1}}+\frac{|\pa\Psit|^2}{\la Z\ra^{2(k+1)-1}}\right]\\
& \leq & \|\rhot,\Psit\|_{k,\sigma+\frac 12}^2\lesssim e^{-c_{k_m}\tau}.
\eee
For the nonlinear term, we integrate by parts and use \eqref{venovneoienneonen}:
$$\left|\int \chi\left[\rhot^{(k)}\Delta \Psit+2\nabla\rhot^{(k)}\cdot\nabla \Psit\right](p-1)\rho_D^{p-2}\rho_T\rhot^{(k)}\right|\lesssim \int \chi\frac{\rho_D^{p-2}\rho_T(\rhot^{(k)})^2}{\la Z\ra}\lesssim e^{-c_{k_m}\tau}.
$$
From Pohozhaev \eqref{pohozaevbispouet} and \eqref{eq:esterrorpotentials}:
\bea
\label{vneineonoenonenv} &&-\int H_2\chi\Lambda \rhot^{(k)}(-b^2\Delta \rhot^{(k)})+b^2\int H_2\Lambda \rho_k\nabla\chi\cdot\nabla \rhot^{(k)}\\
\nonumber& = & b^2\left[\int \Delta \rhot^{(k)}(Z\chi H_2)\cdot\nabla \rhot^{(k)}+\int H_2\Lambda \rho_k\nabla\chi\cdot\nabla \rhot^{(k)}\right]\\
\nonumber& = & b^2\left[-\sum_{i,j=1}^d\int\pa_i(Z_j\chi H_2)\pa_i\rhot^{(k)}\pa_j\rhot^{(k)}+\frac12\int|\nabla \rhot^{(k)}|^2\nabla \cdot(Z\chi H_2)+\sum_{i,j=1}^d\int H_2Z_j\pa_j\rhot^{(k)}\pa_i\chi\pa_i \rhot^{(k)}\right]\\
\nonumber& = & b^2\Big\{-\sum_{i,j=1}^d\int \pa_i\rhot^{(k)}\pa_j\rhot^{(k)}\left[\delta_{ij}\chi H_2+Z_j\pa_i\chi H_2+Z_j\chi\pa_iH_2-H_2Z_j\pa_i\chi\right]\\
\nonumber&+&\frac 12\int|\nabla \rhot^{(k)}|^2\chi H_2\left[d+\frac{\Lambda\chi}{\chi}+\frac{\Lambda H_2}{H_2}\right]\Big\}=   \frac{\mu}{2}b^2\int \chi|\nabla \rhot^{(k)}|^2\left[d-2+\frac{\Lambda \chi}{\chi}+O\left(\frac{1}{\la Z\ra^{r-1}}\right)\right].
\eea
Integrating by parts and using \eqref{eq:esterrorpotentials}, \eqref{globalbeahvoiru}:
\bee
&&-\int \chi H_2\Lambda \rhot^{(k)}\left[(p-1)\rho_D^{p-2}\rho_T\rhot^{(k)}\right]+\frac{p-1}{2}\int \chi(p-2)\pa_\tau\rho_D\rho_D^{p-3}\rho_T(\rhot^{(k)})^2\\
&+& \frac{p-1}{2}\int \chi\pa_\tau\rho_T\rho_D^{p-2}(\rhot^{(k)})^2\\
&=& \frac {p-1}2\int (\rhot^{(k)})^2\left[\nabla \cdot(Z\chi H_2\rho_D^{p-2}\rho_T)+\chi\rho_T\pa_\tau(\rho_D^{p-2})+\chi\pa_\tau\rho_T\rho_D^{p-2}\right]\\
& = & \frac{p-1}{2}\int \chi\rho_D^{p-2}\rho_T(\rhot^{(k)})^2\left[\mu d+\mu \frac{\Lambda \chi}{\chi}+(p-2)\left(\frac{\pa_\tau\rho_D+\mu\Lambda \rho_D}{\rho_D}\right)+\frac{\pa_\tau\rho_T+\mu\Lambda \rho_T}{\rho_T}+O\left(\frac{1}{\la Z\ra^{r-1}}\right)\right].
\eee
We now claim the fundamental behavior
\be
\label{globalbeahvoiru}
\frac{\pa_\tau\rho_D+\mu\Lambda\rho_D}{\rho_D}=-\frac{2\mu(r-1)}{p-1}+O\left(\frac{1}{\la Z\ra^r}\right)
\ee
and
\be
\label{globalbeahvoirubis}
\frac{\pa_\tau\rho_T+\mu\Lambda \rho_T}{\rho_T}=-\frac{2\mu(r-1)}{p-1}+O\left(\frac{1}{\la Z\ra^r}\right).
\ee
Assume \eqref{globalbeahvoiru}, \eqref{globalbeahvoirubis}, we obtain
\bee
&&-\int \chi H_2\Lambda \rhot^{(k)}\left[(p-1)\rho_D^{p-2}\rho_T\rhot^{(k)}\right]+\frac{p-1}{2}\int \chi(p-2)\pa_\tau\rho_D\rho_D^{p-3}\rho_T(\rhot^{(k)})^2\\
&+& \frac{p-1}{2}\int \pa_\tau\rho_T\rhot^{p-2}(\rhot^{(k)})^2\\
& = &  \mu\frac{p-1}{2}\int \chi\rho_D^{p-2}\rho_T(\rhot^{(k)})^2\left[d+\frac{\Lambda \chi}{\chi}-2(r-1)+O\left(\frac{1}{\la Z\ra^r}\right)\right]\\
& = & \mu\frac{p-1}{2}\int \chi\rho_D^{p-2}\rho_T(\rhot^{(k)})^2\left[d+\frac{\Lambda \chi}{\chi}-2(r-1)\right]+O\left(e^{-c_{k_m}\tau}\right).
\eee
\noindent{\em Proof of \eqref{globalbeahvoiru}}. From \eqref{fromularhod}: 
\bee
\pa_\tau\rho_D+\mu\Lambda \rho_D=-\mu\Lambda \zeta\left(\frac Z{Z^*}\right)\rho_P(Z)+\mu\Lambda  \zeta\left(\frac Z{Z^*}\right)\rho_P(Z)+\mu \zeta\left(\frac Z{Z^*}\right)\Lambda \rho_P=\mu  \zeta\left(\frac Z{Z^*}\right)\Lambda \rho_P
\eee
$$\frac{\pa_\tau\rho_D+\mu\Lambda\rho_D}{\rho_D}=\mu\frac{\Lambda \rho_P}{\rho_P}=-\frac{2\mu(r-1)}{p-1}+O\left(\frac{1}{\la Z\ra^r}\right)
$$
and \eqref{globalbeahvoiru} is proved.\\
\noindent{\em Proof of \eqref{globalbeahvoirubis}}. Recall \eqref{fullflowrenormalized}
$$\pa_\tau \rho_T=-\rho_T\Delta \Psi_T-\frac{\mu\ell(r-1)}{2}\rho_T-\left(2\pa_Z\Psi_T+\mu Z\right)\pa_Z\rho_T$$ which yields:
\bee
\left|\frac{\pa_\tau \rho_T+\mu\Lambda \rho_T}{\rho_T}+\frac{\mu\ell(r-1)}{2}\right|=\left|-\Delta \Psi_T-2\frac{\pa_Z\Psi_T\pa_Z\rho_T}{\rho_T}\right|
\eee
and \eqref{globalbeahvoirubis} follows from \eqref{venovneoienneonen}.\\

\noindent\underline{\em $\Psi^{(k)}$ terms}. Integrating by parts:
\bee
&&\left|b^2\int(\pa^k\Delta \rho_{D)}\nabla\cdot(\chi\rho_T^2\nabla \Psi^{(k)})\right|\lesssim  b^2\int\chi \rho^2_T\frac{|\nabla\Psi^{(k)}|}{\la Z\ra^{k+3}}\\
&\lesssim &\int\chi \frac{\rho_T^2|\nabla \Psi^{(k)}|^2}{\la Z\ra}+b^4\int \chi\frac{\rho_T^2}{\la Z\ra^{2(k+3)-1}}\lesssim e^{-c_{k_m}\tau}
\eee
where we used \eqref{nivneinveoneinv}.

Next:
$$\mu(r-2)\int\Psi^{(k)}\nabla\cdot(\chi\rho_T^2\nabla \Psi^{(k)})=-\mu(r-2)\int \chi\rho_T^2|\nabla \Psi^{(k)}|^2.
$$
Similarly, using $\pa_ZH_2=O\left(\frac{1}{\la Z\ra^r}\right)$:
\bee
&&k\int H_2\Psi^{(k)}\nabla \cdot(\chi\rho_T^2\nabla \Psi^{(k)})=   -k\left[\int \chi \mu\left[1+O\left(\frac{1}{\la Z\ra^{\frac 12}}\right)\right]\rho_T^2|\nabla \Psi^{(k)}|^2+O\left(\int \chi\rho_T^2\frac{|\Psi^{(k)}|^2}{\la Z\ra^{2r-\frac 12}}\right)\right]\\
& = & -k\mu\int \chi \rho_T^2|\nabla \Psi^{(k)}|^2+O\left(e^{-c_{k_m}\tau}\right),
\eee
where we also used that $r>2$, $k\ne 0$ and 
$$
\int \chi_{k,k,\sigma(k)}\rho_T^2\frac{|\Psi^{(k)}|^2}{\la Z\ra^{2r-\frac 12}}\lesssim \int \chi_{k,k-1,\sigma(k)}\rho_T^2\frac{|\nabla \Psit_{k-1}|^2}{\la Z\ra^{2r-\frac 12-2}}\le \|\rhot,\Psit\|^2_{k,\sigma(k)+\frac 12}
$$
Then using \eqref{venovneoienneonen}:
$$
\left|\int 2\chi\rho_T\nabla \Psit\cdot\nabla \Psi^{(k)}\left(2\nabla \rho_T+\frac{\nabla\chi}\chi\right)\cdot\nabla \Psi^{(k)}\right|\lesssim \int \chi\frac{\rho_T^2|\nabla \Psi^{(k)}|^2}{\la Z\ra}\lesssim e^{-c_{k_m}\tau}$$
 and from \eqref{pohozaevbispouet}, \eqref{venovneoienneonen}:
\bee
\left|\int 2\chi\rho_T\nabla \Psit\cdot\nabla \Psi^{(k)}(\rho_T\Delta \Psi^{(k)})\right|&\lesssim& \int \chi|\nabla \Psi^{(k)}|^2|\left(|\pa(\rho_T^2\nabla \Psit)|+\frac{|\rho_T^2\nabla \Psit|}{\la Z\ra}\right)\lesssim \int \chi\frac{\rho_T^2|\nabla \Psi^{(k)}|^2}{\la Z\ra^2}\\
&\lesssim &e^{-c_{k_m}\tau}
\eee
We now carefully compute from \eqref{pohozaevbispouet} again:
\bea
\label{shaprpohoazev}
\nonumber &&\int \chi\rho_T H_2\Lambda \Psi^{(k)}\left(2\nabla \rho_T\cdot\nabla \Psi^{(k)}+\rho_T\Delta \Psi^{(k)}\right)+\int H_2\Lambda \Psi^{(k)}\rho_T^2\nabla \chi\cdot\nabla \Psi^{(k)}\\
\nonumber & = & 2\sum_{i,j}\int \chi\rho_T H_2Z_j\pa_j\Psi^{(k)}\pa_i\rho_T\pa_i\Psi^{(k)}-\sum_{i,j}\int \pa_i(\chi Z_j H_2\rho_T^2)\pa_i\Psi^{(k)}\pa_j\Psi^{(k)}\\
\nonumber &+& \frac 12\int\nabla \cdot(\chi Z H_2\rho_T^2)|\nabla \Psi^{(k)}|^2+  \sum_{i,j} H_2\rho_T^2Z_j\pa_j\Psi^{(k)}\pa_i\chi\pa_i\Psi^{(k)}\\
\nonumber &= & \sum_{i,j}H_2\pa_j\Psi^{(k)}\pa_i\Psi^{(k)}\left[2\chi\rho_T\pa_i\rho_TZ_j-\pa_i\chi Z_j\rho_T^2-\delta_{ij}\rho_T^2-2Z_j\rho_T\pa_i\rho_T+Z_j\rho_T^2\pa_i\chi\right]\\
\nonumber & + & \frac 12\int\chi H_2\rho_T^2|\nabla \Psi^{(k)}|^2\left[d+\frac{\Lambda \chi}{\chi}+\frac{\Lambda H_2}{H_2}+2\frac{\Lambda \rho_T}{\rho_T}\right]\\
& = & \frac 12\mu\int \chi\rho_T^2|\nabla \Psi^{(k)}|^2\left[d-2+\frac{\Lambda \chi}{\chi}+2\frac{\Lambda \rho_T}{\rho_T}+O\left(\frac{1}{\la Z\ra^r}\right)\right].
\eea
Hence the final formula recalling \eqref{globalbeahvoirubis}:
\bee
&&\int \chi\rho_T H_2\Lambda \Psi^{(k)}\left(2\nabla \rho_T\cdot\nabla \Psi^{(k)}+\rho_T\Delta \Psi^{(k)}\right)+\int H_2\Lamdba \Psi^{(k)}\rho_T^2\nabla \chi\cdot\nabla \Psi^{(k)}+\int\chi\pa_\tau\rho_T\rho_T|\nabla \Psi^{(k)}|^2\\
& = &  \int \mu \chi\rho_T^2|\nabla \Psi^{(k)}|^2\left[\frac{d-2}{2}+\frac 12\frac{\Lambda\chi}{\chi}+\frac{\Lambda \rho_T}{\rho_T}+\frac{1}{\mu}\frac{\pa_\tau\rho_T}{\rho_T}+O\left(\frac{1}{\la Z\ra^r}\right)\right]\\
& = &  \int \mu \chi\rho_T^2|\nabla \Psi^{(k)}|^2\left[\frac{d-2}{2}+\frac 12\frac{\Lambda\chi}{\chi}-\frac{2(r-1)}{p-1}+O\left(\frac{1}{\la Z\ra^r}\right)\right]\\
& = & \int \mu \chi\rho_T^2|\nabla \Psi^{(k)}|^2\left[\frac{d-2}{2}+\frac 12\frac{\Lambda\chi}{\chi}-\frac{2(r-1)}{p-1}\right]+O(e^{-c_{k_m}\tau}).
\eee
\noindent\underline{Loss of derivatives terms}. We integrate by parts the non linear term which must loose derivatives:
\bea
\label{;lossfoofneinoin}
\nonumber &&b^2\left|\int \rho_T\nabla \chi\cdot\nabla \Psi^{(k)}\Delta \rhot^{(k)}\right|\lesssim b^2\left|\int \rho_T\Delta\Psi^{(k)}\nabla \chi\cdot\nabla \rhot^{(k)}\right|+b^2\int \chi \frac{\rho_T}{\la Z\ra^2}|\nabla \Psi^{(k)}||\nabla \rhot^{(k)}|\\&&
\qquad\qquad   \lesssim  b^3\int\chi|\nabla \rhot^{(k)}|^2+b\left[\int \chi\frac{\rho_T^2|\Delta \Psi^{(k)}|^2}{\la Z\ra^2}+\int \chi\frac{\rho_T^2|\nabla \Psi^{(k)}|^2}{\la Z\ra^4}\right]\nonumber\\ &&\qquad\qquad\lesssim e^{-c_{k_m}\tau} + b\int \chi\frac{\rho_T^2|\Delta \Psi^{(k)}|^2}{\la Z\ra^2}.
\eea
We now use \eqref{eq:chik} for $0\le k\le k_m-1$ which
implies
$$
\int \chi_{k,k,\sigma(k)}\frac{\rho_T^2|\Delta \Psi^{(k)}|^2}{\la Z\ra^2}\le\int \chi_{k+1,k+1,\sigma(k+1)}\rho_T^2|\Delta \Psi^{(k)}|^2
\lesssim \|\rhot,\Psi\|^2_{k+1,\sigma(k+1)}\lesssim 1.
$$
Hence
$$b^2\left|\int \rho_T\nabla \chi\cdot\nabla \Psi^{(k)} \Delta \rhot^{(k)}\right|+b^2\left|\int \rho_T\Delta\Psi^{(k)}\nabla \chi\cdot\nabla \rhot^{(k)}\right|+b^2\left|\int\nabla \chi\cdot\nabla \rhot^{(k)}\nabla \rho_T\cdot\nabla \Psi^{(k)}\right|\lesssim e^{-c_{k_m}\tau}.$$

\noindent\underline{Conclusion for linear terms}. The collection of above bounds yields:
\bea
\label{firstestimate}
\nonumber &&\frac 12\frac{d}{d\tau}\left\{\int b^2\chi|\nabla\rhot^{(k)}|^2+(p-1)\int \chi\rho_D^{p-2}\rho_T(\rhot^{(k)})^2+\int\chi\rho_T^2|\nabla \Psi^{(k)}|^2\right\}\leq e^{-c_{k_m\tau}}\\
\nonumber & + & \mu\int\chi\left[-k+\frac d2-(r-1)-\frac{2(r-1)}{p-1}+\frac12\frac{\mu^{-1}\pa_\tau\chi+\Lambda \chi}{\chi}\right]\\
&\times& \left[ b^2|\nabla\rhot^{(k)}|^2+(p-1)\rho_D^{p-2}\rho_T(\rhot^{(k)})^2+\rho_T^2|\nabla \Psi^{(k)}|^2\right]\\
\nonumber & + & \int F_1\chi(p-1)\rho_D^{p-2}\rho_T\rhot^{(k)}+b^2\int \chi\nabla F_1\cdot\nabla \rhot^{(k)}+\int \chi\rho^2_T\nabla F_2\cdot\nabla \Psi^{(k)}.
\eea

\noindent{\bf step 4} $F_1$ terms. We recall \eqref{formluafone} and claim the bound:
\be
\label{boundfone}
(p-1)\int \chi F_1^2\rho_D^{p-2}\rho_T+b^2\int\chi|\nabla F_1|^2\lesssim e^{-c_{k_m}\tau}\left[1+\|\rhot,\Psit\|^2_{k,\sigma}\right]
\ee

\noindent\underline{\em Source term induced by localization}.
 Recall \eqref{profileequationtilde}
 \bee
\tilde{\mathcal E}_{P,\rho}&=&\pa_\tau \rho_D+\rho_D\left[\Delta \Psit_P+\frac{\ell(r-1)}{2}+\left(2\pa_Z\Psit_P+ Z\right)\frac{\pa_Z\rho_D}{\rho_D}\right]\\
& = & \pa_\tau \rho_D+\Lambda \rho_D+\frac{\ell(r-1)}{2}\rho_D+\rho_D\Delta\Psi_P+2\pa_Z\Psi_P\pa_Z\rho_D.
\eee
From the proof of  \eqref{globalbeahvoiru} 
$$
\rho_D=\zeta\left(\frac Z{Z^*}\right) \rho_P,\qquad \pa_\tau \rho_D+\Lambda \rho_D= \zeta\left(\frac Z{Z^*}\right) 
\Lambda\rho_P
$$
Therefore, using the profile equation for $\rho_P$, we obtain
$$
\tilde{\mathcal E}_{P,\rho}= 2\frac{\Psi'_P}{Z} \frac Z{Z^*}\zeta'\rho_P
$$
From \eqref{relationsprofileemden} and  \eqref{limitprofilesbsi} we then conclude that
\be
\label{neineinneonev}
|\pa^k\tilde{\mathcal E}_{P,\rho}|\lesssim \frac{\rho_D}{\la Z\ra^{k+r}}{\bf 1}_{Z\ge Z^*}
\ee
Hence, recalling \eqref{estiamtionnprofile} and \eqref{eq:Dsigma}:
\bee
&&\int \chi \rhot_{D}^{p-2}\rho_T|\pa^k\tilde{\mathcal E}_{P,\rho}|^2\lesssim \int_{Z\ge Z^*}\frac{Z^{d-1}dZ}{Z^{-2k+2\left(\frac d2-(r-1)+\nut-\frac{2(r-1)}{p-1}\right)}}\frac{1}{Z^{2k+2r}}\frac{(Z)^{-\frac{2(r-1)(p+1)}{p-1}}}{\left(\frac{Z}{Z^*}\right)^{(p-1)n_P-2(r-1)+2(r-2)}}\\
& \lesssim & \int_{Z\ge Z^*}\frac{dZ}{Z^{2r+2\nut+1}\left(\frac{Z}{Z^*}\right)^{(p-1)n_P-2}}\lesssim (Z^*)^{-2r-2\nut}\lesssim e^{-c\tau}
\eee
Similarly, from \eqref{eq:Dsigma}:
\bee
b^2\int\chi|\nabla\pa^k \tilde{\mathcal E}_{P,\rho}|^2\lesssim \int_{Z\ge Z^*}\frac{Z^{d-1}dZ}{Z^{2\left(\frac d2-1+\nut\right)+2+2r}}
 \lesssim   (Z^*)^{-2r-2\nut}\lesssim e^{-c\tau}.
\eee

\noindent\underline{\em $[\pa^k,H_1]$ term}. We use \eqref{eq:esterrorpotentials} to estimate:
\be
\label{neoneneonoev}
\left|\begin{array}{ll}
|[\pa^k,H_1]\rhot|\lesssim \sum_{j=0}^{k-1}|\pa^j\rhot\pa^{k-j}H_1|\lesssim \sum_{j=0}^{k-1}\frac{|\pa^j\rhot|}{\la Z\ra^{r+k-j}}\\
|\nabla[\pa^k,H_1]\rhot|\lesssim \sum_{j=0}^{k}\frac{|\pa^j\rhot|}{\la Z\ra^{1+r+k-j}}
\end{array}\right.
\ee
Hence:
\bee
(p-1)\int \chi\rho_D^{p-2}\rho_T([\pa^k,H_1]\rhot)^2&\lesssim &\sum_{j=0}^{k-1}\int \chi\rho_D^{p-1} \frac{|\pa^j\rhot|^2}{\la Z\ra^{2(r+k-j)}}\\
&\lesssim &\|\rhot,\Psit\|^2_{k,\sigma(k)+r}\lesssim  e^{-c_{k_m}\tau},
\eee
and 
\bee
&&b^2\int\chi|\nabla ([\pa^k,H_1]\rhot)|^2\lesssim b^2 \sum_{j=0}^{k}\int \chi\frac{|\pa^j\rhot|^2}{\la Z\ra^{2(1+r+k-j)}}\\
&\lesssim& b^2\int \chi \frac{\rhot^2}{\la Z\ra^{2(1+r+k)}}+b^2 \sum_{j=0}^{k}\int \chi \frac{|\pa^j\nabla \rhot|^2}{\la Z\ra^{2(r+k-j)}}\\
&\lesssim&  b^2\int \chi\frac{\rho_D^2}{\la Z\ra^{2(1+r+k)}}+ e^{-c_{k_m}\tau}\lesssim e^{-c_{k_m}\tau}
\eee
where we used the bootstrap bound \eqref{smallglobalboot}, the decay of $b^2$ and \eqref{eq:Dsigma}.

\noindent\underline{\em $[\pa^k,H_2]$ term}. Similarly, from  \eqref{eq:esterrorpotentials}:
\be 
\label{cneoneoneone}
\left|\begin{array}{ll}
|[\pa^k,H_2]\Lambda\rhot|\lesssim \sum_{j=0}^{k-1}| \pa^j(\Lambda \rhot)\pa^{k-j}H_2|\lesssim \sum_{j=1}^{k}\frac{|\pa^j\rhot|}{\la Z\ra^{r-1+k-j}}\\
|\nabla[\pa^k,H_2]\Lambda\rhot|\lesssim \sum_{j=1}^{k+1}\frac{|\pa^j\rhot|}{\la Z\ra^{r+k-j}}.
\end{array}\right.
\ee
Hence, using $r>1$:
$$(p-1)\int \chi\rho_D^{p-2}\rho_T([\pa^k,H_2]\Lambda \rhot)^2\lesssim \sum_{j=1}^{k}\int {\chi}\rho_D^{p-1} \frac{|\pa^j\rhot|^2}{\la Z\ra^{2(r-1+k-j)}} \lesssim   e^{-c_{k_m}\tau},
$$
and 
\bee
&&b^2\int\chi|\nabla ([\pa^k,H_2]\Lambda\rhot)|^2\lesssim b^2 \sum_{j=1}^{k+1}\int \chi\frac{|\pa^j\rhot|^2}{\la Z\ra^{2(r+k-j)}}\\
&=& b^2 \sum_{j=0}^{k}\int \chi \frac{|\pa^j\nabla \rhot|^2}{\la Z\ra^{2(r-1+k-j)}}\lesssim   \|\rhot,\Psit\|^2_{k,\sigma+r-1}\lesssim e^{-c_{k_m}\tau}\
\eee
\\

\noindent\underline{\em Nonlinear term}. Changing indices, we need to estimate terms
\be
\label{nenveonoenon}
N_{j_1,j_2}=\pa^{j_1}\rho_T\pa^{j_2}\nabla \Psit, \ \ j_1+j_2=k+1, \ \ 2\leq j_1,j_2\le k-1.
\ee
For the profile term:
$$|\pa^{j_1}\rho_D\pa^{j_2}\nabla \Psit|\lesssim \rho_D\frac{|\pa^{j_2}\nabla \Psit|}{\la Z\ra^{j_1}}= \rho_D\frac{|\pa^{j_2}\nabla \Psit|}{\la Z\ra^{k+1-j_2}}$$ and hence using from \eqref{estiamtionnprofile} the rough global bound:
\be
\label{boudnthod}
\rho_D\lesssim \frac{1}{\la Z\ra^{\frac{2(r-1)}{p-1}}}
\ee
yields:
\bee
&&\int(p-1) \chi N_{j_1,j_2}^2\rho_D^{p-2}\rho_T\lesssim \int \chi\frac{\rho_T^2|\pa^{j_2}\nabla \Psit|^2}{\la Z\ra^{2(k+1-j_2)+2(r-1)}}\lesssim  \int \chi\frac{\rho_T^2|\pa^{j_2}\nabla \Psit|^2}{\la Z\ra^{2(k-j_2)+2r}}\lesssim   e^{-c_{k_m}\tau}.
\eee
Similarly, after taking a derivative:
\bee
b^2\int\chi|\nabla N_{j_1,j_2}|^2\lesssim b^2\int \chi\frac{\rho_T^2|\pa^{j_2}\nabla \Psit|^2}{\la Z\ra^{2(k+2-j_2)}}\lesssim e^{-c_{k_m}\tau}.
\eee
We now turn to the control of the nonlinear term.  If $j_1\leq \frac{4k_m}{9}$, then from \eqref{smallglobalboot}:
\be
\int \chi\rho_D^{p-1}|\pa^{j_1}\rhot\pa^{j_2}\nabla \Psi|^2\lesssim \int \chi\rho_D^2\frac{|\pa^{j_2}\nabla \Psi|^2}{\la Z\ra^{2(k+1-j_2)+2(r-1)}}\lesssim e^{-c_{k_m}\tau}.
\ee
If $j_2\le \frac{4k_m}{9}$, then from \eqref{smallglobalboot} and $b=\frac{1}{(Z^*)^{r-2}}$:
\bee
&&\int \chi\rho_D^{p-1}|\pa^{j_1}\rhot\pa^{j_2}\nabla \Psi|^2\\
& \lesssim&  \int_{Z\leq Z^*} \chi\rho_D^{p-1}\frac{|\pa^{j_1}\rhot|^2}{\la Z\ra^{2(k+1+(r-2)-j_1)}}+b^2\int_{Z\ge Z^*}\chi\rho_D^{p-1}\frac{|\pa^{j_1}\rhot|^2}{\la Z\ra^{2(k+1-j_1)}}\lesssim  e^{-c_{k_m}\tau}.
\eee
We may therefore assume $j_1,j_2\ge m_0=\frac{4k_m}{9}+1$, which implies $k\ge m_0$ and $j_1,j_2\leq \frac{2k_m}{3}$. 
From \eqref{defsimgma}:
\bea
\label{lowerbounsogmak}
\nonumber \sigma(k)&=&-\alpha(k_m-k)\geq -\alpha\left(k_m-\frac{4k_m}{9}\right)+O_{k_m\to +\infty}(1)=-\frac{4}{5}\left(1-\frac{4}{9}\right)k_m+O_{k_m\to +\infty}(1)\\
&\geq& -\frac{4k_m}{9}+O_{k_m\to +\infty}(1)
\eea 
From \eqref{defnm}: 
\be
\label{neeneonenoe}
\sigma(k)+n(j_1)+n(j_2)\geq -\frac{4k_m}{9}+\frac{k_m}{4}+\frac{k_m}{4}+O_{k_m\to +\infty}(1)\ge \frac{k_m}{18}
\ee and hence from \eqref{smallglobalboot} and interpolating on $Z\le Z^*_c$:
\bee
&&\int \chi\rho_D^{p-1}|\pa^{j_1}\rhot\pa^{j_2}\nabla \Psi|^2\lesssim e^{-c_{k_m}\tau}\\ &+&\int_{Z\ge Z^*_c}\frac{Z^{d-1}dZ}{\la Z\ra^{\frac{k_m}{10}}}\left[{\bf 1}_{Z\le Z^*}+\left(\frac{Z}{Z^*}\right)^{-(p-3)n_P-\frac{4(r-1)}{(p-1)} -2(r-2)-2(r-1)+4\nut} {\bf 1}_{Z\ge Z^*}\right]\lesssim e^{-c_{k_m}\tau}
\eee
The $b^2$ derivative term and the other nonlinear term in \eqref{formluafone} are estimated similarly. 
We note that the relation 
$$
k_m\gg n_P\gg 1
$$
ensures that the terms containing $k_m$ are dominant and eliminates the need to track the dependence on $n_P$.\\

\noindent{\bf step 5} $F_2$ terms. We claim:
\be
\label{estfoneessentialftwo}
 \int \chi\rho_T^2|\nabla F_2|^2 \lesssim e^{-c_{k_m}\tau}\left[1+\|\rhot,\Psit\|^2_{k+1,\sigma(k+1)}\right].
\ee
\noindent\underline{\em Source term induced by localization}. Recall \eqref{profileequationtilde}:
$$\tilde{\mathcal E}_{P,\Psi}
=|\nabla \Psit_P|^2+\rho_D^{p-1}+e\Psit_P+\frac{1-\mathcal e}{2}\Lambda \Psit_P-1=\rho_D^{p-1}-\rho_P^{p-1}$$ which yields the rough bound
$$|\nabla \pa^k\tilde{\mathcal E}_{P,\Psi}|\lesssim \frac{1}{\la Z\ra^{k+1+2(r-1)}}{\bf 1}_{Z\ge Z^*}$$
 and hence, from \eqref{eq:Dsigma},
\bee
\int \chi\rho_T^2|\nabla \pa^k\tilde{\mathcal E}_{P,\Psi}|^2\lesssim \int_{Z\ge Z^*}\frac{Z^{d-1}}{b^2 \la Z\ra^{-2k+2\left(\frac d2+\nut-1\right)}}\frac{\left(\frac{Z}{Z^*}\right)^{4\nut}}{\la Z\ra^{2k+2+4(r-1)}}\lesssim {Z^*}^{2(r-2)-4(r-1)+4\nut}\lesssim e^{-c\tau}
\eee

\noindent\underline{\em $ [\pa^k,H_2]\Lambda \Psi$ term}. From \eqref{eq:esterrorpotentials}:
\bee
|\nabla([\pa^k,H_2]\Lambda\Psi)|\lesssim \sum_{j=1}^{k+1}\frac{|\pa^j\Psit|}{\la Z\ra^{r+k-j}}\lesssim  \sum_{j=0}^{k}\frac{|\nabla \pa^j\Psit|}{\la Z\ra^{r+k-j-1}}
\eee
and hence:
$$
\int\chi\rho_T^2|\nabla([\pa^k,H_2]\Lambda\Psi)|^2\lesssim \sum_{j=0}^{k}\int\chi\rho_T^2\frac{|\nabla \pa^j\Psit|^2}{\la Z\ra^{2(r-1+k-j)}}\lesssim e^{-c_{k_m}\tau}.
$$

\noindent\underline{$[\pa^k,\rho_D^{p-2}]\rhot-k(p-2)\rho_D^{p-3}\pa\rho_D\pa^{k-1}\rhot$ term}. By Leibnitz:
$$\left|\left[[\pa^k,\rho_D^{p-2}]\rhot-k(p-2)\rho_D^{p-3}\pa\rho_D\pa^{k-1}\rhot\right]\right|\lesssim \sum_{j=0}^{k-2}\frac{|\pa^j\rhot|}{\la Z\ra^{k-j}}\rho_D^{p-2}$$
and, hence, taking a derivative:
\bee
&&\int \chi\rho_T^2\left|\nabla \left[[\pa^k,\rho_D^{p-2}]\rhot-k(p-2)\rho_D^{p-3}\pa\rho_D\pa^{k-1}\rhot\right]\right|^2\lesssim\sum_{j=0}^{k-1}\int\chi \rho_D^{2(p-2)+2}\frac{|\pa^j\rhot|^2}{\la Z\ra^{2(k-j)+2}}\\
&\lesssim&\sum_{j=0}^{k-1}\int\chi \rho_D^{p-1}\frac{|\pa^j\rhot|^2}{\la Z\ra^{2(k-j)+2}} \lesssim e^{-c_{k_m}\tau}.
\eee

\noindent\underline{Nonlinear $\Psi$ term}. Let $$\pa N_{j_1,j_2}=\pa^{j_1}\nabla\Psi\pa^{j_2}\nabla\Psi, \ \ j_1+j_2=k+1, \ \ j_1,j_2\geq 1.$$ If $j_1\leq \frac{4k_m}{9},$ then from \eqref{smallglobalboot}:
$$
\int \chi\rho_T^2|\nabla N_{j_1,j_2}|^2\lesssim \int \rho_T^2\chi\frac{|\pa^{j_2}\nabla\Psi|^2}{\la Z\ra^{2(k+1-j_2)}}\lesssim \|\rhot,\Psit\|_{k,\sigma+\frac 12}^2\lesssim e^{-c_{k_m}\tau}.
$$
The expression being symmetric in $j_1,j_2$, we may assume $j_1,j_2\geq m_0=\frac{4k_m}{9}+1$, $j_1,j_2\leq \frac{2k_m}{3}$
and $k\ge m_0=\frac{4k_m}{9}+1$. Using \eqref{smallglobalboot}, \eqref{neeneonenoe} and arguing as above ($k_m\gg n_P$):
$$
\int  \chi\rho_T^2|\nabla N_{j_1,j_2}|^2\lesssim e^{-c\tau}+\int_{Z\ge Z^*_c}\frac{dZ}{\la Z\ra^{\frac{k_m}{10}}}
\left[{\bf 1}_{Z\le Z^*}+\left(\frac Z{Z^*}\right)^{2n_P+4\nut}{\bf 1}_{Z\ge Z^*}\right]\lesssim e^{-c_{k_m}\tau}.$$

\noindent\underline{Quantum pressure term}. We estimate from Leibniz:
\bee
b^2\left|\pa^k\left(\frac{\Delta \rho_T}{\rho_T}\right)-\frac{\pa^k\Delta \rho_T}{\rho_T}+\frac{k\pa^{k-1}\Delta \rho_T\pa\rho_T}{\rho_T^2}\right|\lesssim b^2\sum_{j_1+j_2=k,j_2\geq 2}\left|\pa^{j_1}\Delta \rho_T\pa^{j_2}\left(\frac{1}{\rho_T}\right)\right|.
\eee
and using the Faa-di Bruno formula:
$$\left|\pa^{j_2}\left(\frac{1}{\rho_T}\right)\right|\lesssim \frac{1}{\rho_T^{j_2+1}}\sum_{q_1+2q_1+\dots+j_2q_{j_2}=j_2}\Pi_{i=0}^{j_2}|(\pa^i\rho_T)^{q_i}|$$
 where $q_0$ is defined by $$q_0+q_1+\dots+q_{j_2}=j_2.$$
 We decompose $\rho_T=\rho_D+\rhot$ and control the $\rho_D$ term using the bound $$|\pa^i\rho_D|\lesssim \frac{\rho_D}{\la Z\ra^i}$$ which yields
 \be
 \label{onenvenoievnoe}
  \frac{1}{\rho_T^{j_2+1}}\sum_{m_1+2m_1+\dots+j_2m_{j_2}=j_2}\Pi_{i=0}^{j_2}|(\pa^i\rho_D)^{m_i}|
\lesssim \frac{1}{\rho_T\la Z\ra^{j_2}}
\ee
The corresponding contribution to \eqref{estfoneessentialftwo}:
\bee
&&b^4\int\chi\rho_T^2\left\{\sum_{j_1+j_2=k,j_2\geq 2}\frac{|\pa^{j_1+1}\Delta \rho_T|^2}{\rho_T^2\la Z\ra^{2j_2}}+\frac{|\pa^{j_1}\Delta \rho_T|^2}{\rho_T^2\la Z\ra^{2j_2+2}}\right\}\\
& \lesssim & b^4\sum_{j_1+j_2=k,j_2\geq 2}\left[\int \chi\frac{\rho_D^2dZ}{\la Z\ra^{2j_2+2(j_1+3)}}+\int \chi\frac{|\pa^{j_1+3}\rhot|^2}{\la Z\ra^{2j_2}}\right]\\
& \lesssim & b^4\int\chi\frac{\rho_D^2dZ}{\la Z\ra^{2k+6}}+ b^4\sum_{j_1=2}^k\int\chi\frac{|\nabla \pa^{j_1}\rhot|^2}{\la Z\ra^{2(k-j_1)+2}}\lesssim e^{-c_{k_m}\tau}
\eee
where we used \eqref{nivneinveoneinv} in the last step.

We now turn to the control of the nonlinear term and consider 
$$N_{j_1,j_2}=b^2\left(\pa^{j_1+1}\Delta \rho_T\right)\frac{1}{\rho_T^{j_2+1}}\sum_{q_1+2q_w+\dots+j_2q_{j_2}=j_2}\Pi_{i=0}^{j_2}(\pa^i\rhoh)^{m_i},$$
where $\rhoh$ is either $\rho_D$ or $\rhot$. In both cases we will use the weaker estimates \eqref{smallglobalboot}.\\
First assume that $q_i=0$ whenever $i\ge \frac{4k_m}{9}+1$, then from \eqref{smallglobalboot}:
$$|N_{j_1,j_2}|\lesssim b^2|\pa^{j_1+1}\Delta \rho_T|\frac{1}{\rho_T^{j_2+1}}\sum_{q_1+2q_1+\dots+j_2q_{j_2}=j_2}\Pi_{i=0}^{j_2}|(\pa^i\rhoh)^{q_i}|\lesssim b^2 \frac{|\pa^{j_1+1}\Delta \rho_T|}{\rho_T\la Z\ra^{j_2}}
$$
and the conclusion follows verbatim as above. Otherwise, there are at most two value $\frac{4k_m}{9}\le i_1\leq i_2\leq j_2$ with $q_{i_1},q_{i_2}\ne 0$ and $q_{i_1}+q_{i_2}\le 2$.
Hence from \eqref{smallglobalboot}:
\bee
\frac{1}{\rho_T^{j_2+1}}\Pi_{i=0}^{j_2}|(\pa^i\rhoh)^{q_i}|&\lesssim& \frac{1}{\rho_T^{j_2+1}}|\pa^{i_1}\rhoh|^{q_{i_1}}|\pa^{i_2}\rhoh|^{q_{i_2}}\Pi_{0\leq i\le j_2, i\notin \{i_1,i_2\}}\left(\frac{\rho_D}{\la Z\ra^{i}}\right)^{q_i}\\
&\lesssim&  \left(\frac{|\pa^{i_1}\rhoh|}{\rho_D}\right)^{q_{i_1}}\left(\frac{|\pa^{i_2}\rhoh|}{\rho_D}\right)^{q_{i_2}}\frac{1}{\rho_T\la Z\ra^{j_2-(q_{i_1}i_1+q_{i_2}i_2)}}.
\eee
Assume first $i_2\ge \frac{2k_m}3+1$, then $q_{i_1}=0$, $q_{i_2}=1$ and $j_1+3\le \frac{4k_m}{9}$ from which:
\bee
\int\chi\rho_T^2|N_{j_1,j_2}|^2&\lesssim& b^4\int\chi\rho_T^2 |\pa^{j_1+1}\Delta \rho_T|^2\frac{|\pa^{i_2}\rhoh|^2}{\rho^2_T}\frac{1}{\rho^2_T\la Z\ra^{2(j_2-i_2)}}\lesssim b^4\int \chi\frac{|\pa^{i_2}\rhoh|^2}{\la Z\ra^{2(j_2-i_2)+2(j_1+3)}}\\
& \lesssim & b^4\int \chi\frac{|\pa^{i_2}\rhoh|^2}{\la Z\ra^{2(k-i_2)+6}}\lesssim e^{-c_{k_m}\tau}.
\eee
There remains the case $\frac{4k_m}{9}+1\leq i_1\leq i_2\leq \frac{2k_m}3$ which imply $j_1+3\leq\frac{2k_m}{3}$, and we distinguish cases:\\
{\em -- case $(m_{i_1},m_{i_2})=(0,1)$}: if $j_1+3\leq \frac{4k_m}{9}$, we estimate
\bee
&&\int\chi\rho_T^2|N_{j_1,j_2}|^2\lesssim b^4\int\chi\rho_D^2 |\pa^{j_1+1}\Delta \rho_T|^2\frac{|\pa^{i_2}\rhoh|^2}{\rho^2_T}\frac{1}{\rho^2_T\la Z\ra^{2(j_2-i_2)}}\\
&\lesssim &b^4\int \chi\frac{|\pa^{i_2}\rhoh|^2}{\la Z\ra^{2(j_2-i_2)+2(j_1+3)}} \lesssim  b^4\int \chi\frac{|\pa^{i_2}\rhoh|^2}{\la Z\ra^{2(k-i_2)+6}}\lesssim e^{-c_{k_m}\tau}.
\eee
Otherwise,  $\frac{4k_m}{9}+1\leq j_1+3\leq \frac{2k_m}{3}$. since $j_2\ge \frac{4k_m}{9}+1$, then necessarily $j_2\le \frac{2k_m}{3}$. Hence $\frac{4k_m}{9}+1\leq j_1+3\leq \frac{2k_m}{3}$, $\frac{4k_m}{9}+1\leq j_2\leq \frac{2k_m}{3}$ and we estimate from \eqref{smallglobalboot}:
\bee
\int\chi\rho_T^2|N_{j_1,j_2}|^2\lesssim b^4\int \frac{Z^{d-1}dZ}{\la Z\ra^{2\left(\sigma(k)+\frac{k_m}{4}+\frac{k_m}{4}+j_2-i_2\right)}}\left[{\bf 1}_{Z\le Z^*}+\left(\frac Z{Z^*}\right)^{2n_P+4\nut}{\bf 1}_{Z\ge Z^*}\right]\lesssim b^4\lesssim e^{-c_{k_m}\tau},
\eee
where we once again used that in this range of $k$
$$
\sigma(k)+\frac{k_m}2\ge \frac{k_m}{10},\qquad k_m\gg n_P\gg 1
$$
{\em -- case $m_{i_1}+m_{i_2}=2$}: we use \eqref{smallglobalboot} and estimate crudely:
\bee
\int\chi\rho_T^2|N_{j_1,j_2}|^2&\lesssim& b^4\int\chi |\pa^{j_1+1}\Delta \rho_T|^2\left(\frac{1}{\la Z\ra^{\frac{k_m}4}}\right)^4\\ &\lesssim&b^4\int \frac{Z^{d-1}dZ}{\la Z\ra^{2\left(\sigma(k)+\frac{k_m}{2}\right)}}
\left[{\bf 1}_{Z\le Z^*}+\left(\frac Z{Z^*}\right)^{2n_P+4\nut}{\bf 1}_{Z\ge Z^*}\right]\lesssim b^4\lesssim e^{-c_{k_m}\tau}.
\eee

\noindent\underline{$\NL(\rhot)$ term}. We expand, using for the sake of simplicity that the power of the nonlinear term is an integer $p\ge 3$:
$$\NL(\rhot)=(\rho_D+\rhot)^{p-1}-\rho_D^{p-1}-(p-1)\rho_D^{p-2}\rhot=\sum_{q=2}^{p-1}c_{q}\rhot^{q}\rho_D^{p-1-q}$$ and hence by Leibniz:
\bee
\pa^{k}\NL(\rhot)&=&\sum_{q=2}^{p-1}\sum_{j_1+j_2=k}c_{q,j_1j,_2}\pa^{j_1}(\rhot^{q})\pa^{j_2}(\rho_D^{p-1-q})\\
& = & \sum_{q=2}^{p-1}\sum_{j_1+j_2=k}\sum_{\ell_1+\dots+\ell_q=j_1}\pa^{\ell_1}\rhot\dots\pa^{\ell_q}\rhot\pa^{j_2}(\rho_D^{p-1-q}).
\eee
Let $$N_{\ell_1,\dots.\ell_q,j_1,q}=\pa^{\ell_1}\rhot\dots\pa^{\ell_q}\rhot\pa^{j_2}(\rho_D^{p-1-q}),\ \ \ell_1\le\dots\le\ell_q,$$ then $$|\nabla N_{\ell_1,\dots.\ell_q,j_1,q}|\lesssim| N_{\ell_1,\dots.\ell_q,j_1,q}^{(1)}|+|N_{\ell_1,\dots.\ell_q,j_1,q}^{(2)}|$$ with $$| N^{(1)}_{\ell_1,\dots.\ell_q,j_1,q}|\lesssim |\pa^{m_1}\rhot\dots\pa^{m_q}\rhot|\frac{\rho_D^{p-1-q}}{\la Z\ra^{j_2}}, \ \ \left|\begin{array}{ll} 0\leq m_1\leq \dots\le m_q\leq  k+1\\ m_1+\dots +m_q=j_1+1.\end{array}\right.$$ We estimate $N^{(1)}_{\ell_1,\dots.\ell_q,j_1,q}$, the other term being estimated similarly. We distinguish cases.\\
 {\em -- case $m_q\leq \frac{4k_m}{9}$}, then from \eqref{smallglobalboot}:
\bee
| N^{(1)}_{m_1,\dots,m_q,j_1,q}|\lesssim \frac{\rhot^{q}}{\la Z\ra^{j_1+1}}\frac{\rho_D^{p-1-q}}{\la Z\ra^{j_2}}\lesssim \frac{\rho_D^{p-1}}{\la Z\ra^{k+1}}
\eee
and hence, from \eqref{estiamtionnprofile} and \eqref{uppperboundchi}, the contribution of this term 
\bee
\int&& \chi \rho_T^2|N^{(1)}_{m_1,\dots,m_q,j_1,q}|^2\lesssim e^{-c\tau}\\&+&\int_{Z\ge Z^*_c}\frac{Z^{d-1}dZ}{\la Z\ra^{2\sigma(k)+2(k+1)}}
\frac {{\bf 1}_{Z\le Z^*}+\left (\frac Z{Z^*}\right)^{ -2(p-1)n_P-{4(r-1)}-2(r-2)+4\nut}{\bf 1}_{Z\ge Z^*}}{\la Z\ra^{4(r-1)+\frac{4(r-1)}{p-1}}}\\
&\lesssim& e^{-c\tau}+\int_{Z\ge Z^*_c}\frac{dZ}{\la Z\ra^{2(r-1)+3}}\lesssim e^{-c_{k_m}\tau}.\eee
We now assume $m_q\geq \frac{4k_m}{9}+1$ and recall $m_q\le j_1+1\le k+1\le k_m$.\\
\noindent{\em -- case $m_{q-1}\le \frac{4k_m}9$}, then from \eqref{smallglobalboot}:
\bee
|N^{(1)}_{m_1,\dots,m_q,j_1,q}|\lesssim  \frac{\rhot^{q-1}_D}{\la Z\ra^{j_1-m_q+1}}|\pa^{m_q}\rhot|\frac{\rho_D^{p-1-q}}{\la Z\ra^{j_2}}\lesssim |\pa^{m_q}\rhot|\rho_D^{p-2}\frac{1}{\la Z\ra^{k+1-m_q}}.
\eee
If $m_q\le k$ then
\bee
&&\int\chi\rho_T^2|N^{(1)}_{m_1,\dots,m_q,j_1,q}|^2\lesssim \int\chi \rho_D^2\frac{|\pa^{m_q}\rhot|^2}{\la Z\ra^{2(k-m_q+1)+\frac{4(r-1)(p-2)}{p-1}}} \lesssim  e^{-c_{k_m}\tau}
\eee
On the other hand, if $m_q=k+1$, then, using \eqref{eq:chik}
\bee
&&\int \chi_{k,k,\sigma(k)} \rho_T^2|N^{(1)}_{m_1,\dots,m_q,j_1,q}|^2\lesssim \int\chi_{k,k,\sigma(k)}  \rho_D^2\rho_D^{2(p-3)}{\rhot^2\,|\pa^{k+1}\rhot|^2}\\ &&\lesssim
 \int_{Z<Z_c^*} \chi_{k+1,k+1,\sigma(k+1)} \rho_D^{2(p-2)}\la Z\ra^2{\rhot^2\,|\pa^{k+1}\rhot|^2}\\ &&+ \int_{Z>Z_c^*}\chi_{k+1,k+1,\sigma(k+1)} \rho_D^{p-1}\frac{\,|\pa^{k+1}\rhot|^2}{\la Z\ra^{-2+2(r-1)}}
 \lesssim  e^{-c_{k_m}\tau} \|\rhot,\Psit\|^2_{k+1,\sigma(k+1)}
\eee
where we used the interpolation bound 
$$\|\rhot\|_{L^\infty(Z\le Z^*_c)}\lesssim e^{-c\tau},$$
estimates
$$
\rhot\lesssim \rho_D\lesssim \la Z\ra^{-\frac{2(r-1)}{p-1}}
$$
and the condition
$$
-2+2(r-1)>0,
$$
which follows from $r>2$.\\
\noindent{\em -- case $m_{q-1}\ge \frac{4k_m}9+1$}, then necessarily $m_{q-2}\le \frac{4k_m}9<\frac{4k_m}9+1\le m_{q-1}\le m_{q}<\frac{2k_m}{3}$ and $k\ge \frac{4k_m}{9}+1$.
Hence:
$$
|N^{(1)}_{m_1,\dots,m_q,j_1,q}|\lesssim \frac{\rho^{q}_D}{\la Z\ra^{\frac{k_m}{4}+\frac{k_m}4}}\rho_D^{p-1-q}\lesssim \frac{\rho_D^{p-1}}{\la Z\ra^{\frac{k_m}{2}}}.$$ 
The integral for $Z<Z^*_c$ is estimated as above, and we further estimate from \eqref{uppperboundchi} and \eqref{neeneonenoe}, using that $k_m\gg n_P\gg 1$,
\bee
\int_{Z\ge Z^*_c}\chi \rho_D^2 |N^{(1)}_{m_1,\dots,m_q,j_1,q}|^2\lesssim \int_{Z\ge Z_c^*}\frac{dZ}{\la Z\ra^{\frac{k_m}{10}}}\lesssim e^{-c_{k_m}\tau}.
\eee

\noindent{\bf step 6} Conclusion. Injecting the collection of above bounds into \eqref{firstestimate} yields:
\bee
&&\frac 12\frac{d}{d\tau}\left\{\int b^2\chi|\nabla\rhot^{(k)}|^2+(p-1)\int \chi\rho_D^{p-2}\rho_T(\rhot^{(k)})^2+\int\chi\rho_T^2|\nabla \Psi^{(k)}|^2\right\}\\
& \leq & \mu\int\chi\left[-k+\frac d2-(r-1)-\frac{2(r-1)}{p-1}+\frac12\frac{\mu^{-1}\pa_\tau\chi+\Lambda \chi}{\chi}\right]\\
&\times& \left[ b^2|\nabla\rhot^{(k)}|^2+(p-1)\rho_D^{p-2}\rho_T(\rhot^{(k)})^2+\rho_T^2|\nabla \Psi^{(k)}|^2\right]\\
& + & e^{-c_{k_m\tau}}.
\eee
{We now compute, noting that $\pa_\tau Z^*=\mu Z^*$ and that $\xi_k$ only depends on $\tau$ through $Z^*$:
\bee
&&\pa_\tau\chi+\mu \Lambda \chi\\
&=& \frac 1{\la Z\ra^{2\sigma(k)}}\left[\pa_\tau \xi_k\left(\frac Z{Z^*}\right) +\mu \Lambda \xi_k\left(\frac Z{Z^*}\right)\right]+ \xi_k\left(\frac Z{Z^*}\right) \mu \Lambda \left( \frac 1{\la Z\ra^{2\sigma(k)}}\right) \\
\\ &=&\xi_k\left(\frac Z{Z^*}\right) \mu \Lambda \left( \frac 1{\la Z\ra^{2\sigma(k)}}\right).
\eee}
Hence recalling \eqref{defsigmnavrelations}:
\bee
&&k-\frac d2+r-1+\frac{2(r-1)}{p-1}-{\frac12\frac{\mu^{-1}\pa_\tau\chi+\Lambda \chi}{\chi}}=k+\sigma(k)-\frac d2+r-1+\frac{2(r-1)}{p-1}+O\left(\frac{1}{\la Z\ra}\right)\\
&\ge& \sigma_\nu -\frac d2+r-1+\frac{2(r-1)}{p-1}+O\left(\frac{1}{\la Z\ra}\right)\ge \nut+O\left(\frac{1}{\la Z\ra}\right).
\eee
Using that $k\le k_m-1$ and the interpolation bound  \eqref{interpolatedbound} we may absorb the $O\left(\frac{1}{\la Z\ra}\right)$
term 
and \eqref{estnienonneoinduction} is proved.


\section{Pointwise bounds}
\label{sec:point}


We are now in position to close the control of the pointwise bounds \eqref{smallglobalboot}. We start with inner bounds $|x|\lesssim 1$:

\begin{lemma}[Interior pointwise bounds]
For all $0\le k\le\frac{2k_m}{3}$:
\be
\label{smallnessoutsideinitbootfinal}
\left|\begin{array}{l}
\forall 0\le k\le\frac{2k_m}{3}, \ \ \|\frac{\la Z\ra ^{n(k)}\pa_Z^k\rho}{\rho_P}\|_{L^\infty(Z\leq Z^*)}\le {\mathcal d}_0\\
\forall 1\le k\le \frac{2k_m}{3}, \ \ \|\la Z\ra^{n(k)}\la Z\ra^{r-2}\pa_Z^k\Psi\|_{L^\infty(Z \le Z^*)}\leq {\mathcal d}_0
\end{array}\right.
\ee
where ${\mathcal d}_0$ is a smallness constant depending on data.
\end{lemma}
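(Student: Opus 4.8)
The statement is a weighted Sobolev embedding consequence of the $L^2$ estimates already at our disposal, so the plan is to convert Proposition~\ref{prophighsobloev} into pointwise bounds. First I would reduce to $\rho$ and $\Psi$ themselves. On $Z\le 5Z^*$ the connection $\mathcal K$ vanishes, cf.~\eqref{defconnexion}, so by \eqref{fromularhod} one has $\rho_D=\rho_P$ there; by Theorem~\ref{propexistenceprofile} the profile $\rho_P$ is smooth, strictly positive, with $|\la Z\ra^j\pa_Z^j\rho_P|\lesssim \rho_P$ and $\rho_P(Z)\sim c_P\la Z\ra^{-\frac{2(r-1)}{p-1}}$ by \eqref{decayprofile}. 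Hence on $Z\le Z^*$ we have $\rho=\rho_T-\rho_P=\rhot$ (see \eqref{definitionvariables}), and $\|\la Z\ra^{n(k)}\pa_Z^k\rho/\rho_P\|_{L^\infty(Z\le Z^*)}$ is comparable, modulo lower order derivatives, to $\|\la Z\ra^{n(k)+\frac{2(r-1)}{p-1}}\pa_Z^k\rhot\|_{L^\infty(Z\le Z^*)}$; similarly the $\Psi$--bound is equivalent to a weighted $L^\infty$ bound on $\pa_Z^k\Psi$, the extra power $\la Z\ra^{r-2}$ being produced by the $\rho_D^2\sim\rho_P^2\sim\la Z\ra^{-\frac{4(r-1)}{p-1}}$ weight carried by the $\Psi$--part of the norm \eqref{einivneineoneone}.

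Next I would use a weighted radial Sobolev inequality: for spherically symmetric $g$, any $k\ge0$, $k_0:=\lceil d/2\rceil+1$, and any $Z\ge1$,
\[
\la Z\ra^{d}|\pa_Z^k g(Z)|^2\;\lesssim\;\sum_{i=0}^{k_0}\int_{|Z'|\sim|Z|}\la Z'\ra^{2i}\,|\pa_{Z'}^{k+i}g(Z')|^2\,{Z'}^{d-1}\,dZ',
\]
together with its reweighted variants. Applying this with $g=\rhot$ and $g=\Psi$ on each dyadic piece $1\le Z\le Z^*$, and matching the weights against $\rho_D^{p-1}\chi_{m,\cdot,\sigma(m)}$, resp.\ $\rho_D^2\chi_{m,\cdot,\sigma(m)}$, for the auxiliary order $m=k+k_0$ --- which satisfies $m\le\frac{2k_m}{3}+k_0<k_m-1$ for $k_m$ large, so that Proposition~\ref{prophighsobloev} applies --- the right-hand side is bounded by $\|\rhot,\Psit\|_{m,\sigma(m)}^2\le e^{-c_{k_m}\tau}$; near $Z\sim1$ the weights are harmless and $\rho_D\sim1$. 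The bookkeeping of exponents is forced by the identities $\sigma(m)+m=\sigma_\nu$ for $m\le m_0$ (and, for $m_0<k\le\frac{2k_m}{3}$, by $\sigma(m)=-\alpha(k_m-m)$ together with $n(k)=\frac{k_m}{4}$ and the slack inequalities \eqref{lowerbounsogmak}, \eqref{neeneonenoe}), the relation $\nut=\nu+\frac{2(r-1)}{p-1}$, and $\rho_P^{p-1}\sim\la Z\ra^{-2(r-1)}$.

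The delicate point --- and the main obstacle --- is that these estimates are scale critical: the weights in \eqref{einivneineoneone} are tuned so that the above conversion is essentially scale invariant, and the weighted $L^2$ bound \emph{by itself} only reproduces the bound $\lesssim1$ already assumed in \eqref{smallglobalboot}, with a harmless loss of $\la Z\ra^{O(\nut)}$. To upgrade $\lesssim1$ to $\le{\mathcal d}_0$ I would split $Z\le Z^*$ at $Z=(Z^*)^{c_0}$ for a small $c_0$. On the inner zone $Z\le(Z^*)^{c_0}$, interpolating the local decay bootstrap \eqref{eq:bootdecay} against the highest Sobolev bound \eqref{sobolevinitboot} exactly as in the proof of \eqref{interpolatedbound} yields honest exponential decay $e^{-c\tau}$ of the Sobolev norms involved, hence a bound $\le e^{-c\tau_0}={\mathcal d}_0$. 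On the outer zone $(Z^*)^{c_0}\le Z\le Z^*$ the loss $\la Z\ra^{O(\nut)}\le(Z^*)^{O(\nut)}=e^{O(\mu\nut)\tau}$ is absorbed against the factor $e^{-c_{k_m}\tau}$ from Proposition~\ref{prophighsobloev}, provided $\nut$ was chosen small enough relative to $c_{k_m}$ and the spectral gap $\delta_g$, again leaving $\le e^{-c\tau}\le{\mathcal d}_0$ for $\tau\ge\tau_0$. Thus the hard part is precisely this: every natural reweighting of the $L^2$ estimate lands on the borderline constant, so the smallness must be produced by genuinely combining the non--scale--invariant decay available near the center of symmetry with the weighted $L^2$ control farther out, while keeping $\nut$ from overwhelming $\delta_g$; the arithmetic at the threshold $k=\frac{4k_m}{9}$, where $n(k)$ switches from $k$ to $\frac{k_m}{4}$ and $\sigma(k)$ enters its negative branch, is routine but has to be tracked carefully.
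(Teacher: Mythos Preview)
Your plan is close to the paper's, but the splitting at $(Z^*)^{c_0}$ is unnecessary and your outer-zone argument as written is circular. You invoke the decay $e^{-c_{k_m}\tau}$ from Proposition~\ref{prophighsobloev} and then add ``provided $\nut$ was chosen small enough relative to $c_{k_m}$''; but the final constant in that Proposition is precisely $c_{k_m}=2\mu\nut$ (see the end of its proof), so there is no slack left to absorb a $\la Z\ra^{\nut}$ Sobolev loss \emph{and} still produce ${\mathcal d}_0$. This is exactly the criticality you diagnosed --- you cannot beat it by citing Proposition~\ref{prophighsobloev} alone.

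The paper's resolution is to go back one step: rather than use Proposition~\ref{prophighsobloev}, it re-integrates the differential inequality \eqref{estnienonneoinduction} of Lemma~\ref{propinduction}, whose source rate is a fixed constant \emph{independent} of $\nut$. Choosing $\nut$ strictly smaller than that rate and using $I_m(\tau_0)\le e^{-c\tau_0}$ (Remark~\ref{rem:data}) gives the refined bound \eqref{esifnfewwonnw}, namely $I_m(\tau)\le{\mathcal d}_0\,e^{-2\mu\nut\tau}$ with ${\mathcal d}_0=O(e^{-c\tau_0})$. With the smallness already sitting in the $L^2$ estimate, a single application of the one-dimensional radial inequality \eqref{iobebeibbneobis} with $\gamma+1=\tfrac d2+\nut$, applied to $u=\la Z\ra^{\frac{2(r-1)}{p-1}+m-1}\pa_Z^{m-1}\rho$ (resp.\ $u=\la Z\ra^{r-2+m}\pa_Z^m\Psi$), produces a loss of exactly $\la Z\ra^{\nut}\le(Z^*)^{\nut}=e^{\mu\nut\tau}$, which is cancelled by the $e^{-\mu\nut\tau}$ from $\sqrt{I_m}$, leaving ${\mathcal d}_0$ directly; the boundary value $|u(1)|$ is controlled by \eqref{eq:compact}. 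No zone decomposition is needed. Note also that \eqref{iobebeibbneobis} costs only one extra derivative (radial, one-dimensional), not the $k_0\sim d/2$ derivatives of your dyadic embedding, so the paper controls the $m$-th pointwise norm from $I_m$ and $I_{m-1}$ alone. Your inner-zone route through \eqref{eq:bootdecay} is viable in principle, but would require $c_0\lesssim\delta_g/k_m$ to beat the weight growth $\la Z\ra^{n(k)}\le(Z^*)^{c_0 n(k)}$, which is a much more delicate balance than simply putting ${\mathcal d}_0$ into the $L^2$ bound at the outset.
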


\begin{proof} We integrate \eqref{estnienonneoinduction} in time and obtain, by choosing  $0<\nut\ll c+c_{k_m}$, 
$\forall 0\le m\le k_m-1$:
\be
\label{esifnfewwonnw}
I_m(\tau)\leq e^{-2\mu\nut(\tau-\tau_0)}I_m(0)+e^{-c_{k_m}\tau}\le e^{-2\mu\nut(\tau-\tau_0)} e^{-c\tau_0}+e^{-c_{k_m}\tau}\le {\mathcal d}_0 e^{-2\mu\nut\tau}
\ee 
for some small constant ${\mathcal d}_0$, which can be chosen to be arbitrarily small by increasing $\tau_0$. 
Below, we will adjust ${\mathcal d}_0$ to remain small while absorbing any other universal constant.

Recalling \eqref{eq:km}:
\be
\label{cenvnvnevneoen}\forall 0\le m\le k_m-1, \ \ \|\rhot,\Psi\|_{m,\sigma(m)}\le {\mathcal d}_0 e^{-\mu\nut\tau}.
\ee
This, in particular, already implies bounds on the Sobolev and pointwise norms of $(\rhot,\Psi)$ on compact sets:
for any $Z_K<\infty$ and any $k\le k_m-d$
\be\label{eq:compact}
\|(\rhot,\Psi)\|_{H^k(Z\le Z_K)}\le {\mathcal d}_0  e^{-\mu\nut\tau},\qquad \|(\pa^k\rhot,\pa^k\Psi)\|_{L^\infty(Z\le Z_K)}\le
{\mathcal d}_0  e^{-\mu\nut\tau}
\ee

\noindent\underline{case $m\le \frac{4k_m}{9}+1=m_0$}. Recall \eqref{defsimgma}, then \eqref{esifnfewwonnw} implies: $\forall 0\le m\le m_0,$
\bea
\label{nnvivneoneniven}
\nonumber&& \left\|\la Z\ra ^{m-\frac d2+\frac{2(r-1)}{p-1}-\nut}\pa^m_Z\rho\right\|^2_{L^2(Z\le Z^*)}+\left\|\la Z\ra ^{m-\frac d2+(r-1)-\nut}\pa^{m+1}_Z\Psi\right\|^2_{L^2(Z\le Z^*)}\\
&\le& {\mathcal d}_0e^{-2\mu\nut\tau}.
\eea
We now write for any spherically symmetric function $u$ and $\gamma>\frac d2-1$:
\bea
\label{iobebeibbneobis}
\nonumber |u(Z)|&\lesssim & |u(1)|+\int_1^Z|\pa_Zu|d\sigma\lesssim |u(1)|+\left(\int_{1\leq \sigma\leq Z} \frac{|\pa_Zu|^2}{\tau^{2\gamma}}\tau^{d-1}d\tau\right)^{\frac 12}\left(\int_{1\leq \sigma\leq Z}\frac{\tau^{2\gamma}}{\tau^{d-1}}d\tau\right)^{\frac 12}\\
&\lesssim & |u(1)|+\la Z\ra^{\gamma+1-\frac d2}\left\|\frac{\pa_Zu}{\la Z\ra^\gamma}\right\|_{L^2(1\leq \sigma \leq Z)}
\eea
We pick $1\le m\le m_0$ and apply this to $u=Z^{\frac{2(r-1)}{p-1}}Z^{m-1}\pa_Z^{m-1}\rho$, $\gamma+1=\frac d2+\tilde{\nu}$ and obtain for $Z\leq Z^*$ from  \eqref{eq:compact} and \eqref{nnvivneoneniven}:
\bee
&&|Z^{m-1+\frac{2(r-1)}{p-1}}\pa_Z^{m-1}\rho| \lesssim e^{-c\tau}+ \la Z\ra^{\tilde{\nu}}\left\|\frac{\pa_Z(Z^{\frac{2(r-1)}{p-1}+m-1}\pa_Z^{m-1}\rho)}{\la Z\ra^{\frac d2+\tilde{\nu}-1}}\right\|_{L^2(2Z_2\leq Z\leq Z^*)}\\
& \lesssim & e^{-c\tau}+\la Z\ra^{\tilde{\nu}}\left[\left\|\frac{\la Z\ra ^{m+\frac{2(r-1)}{p-1}}\pa_Z^m\rho}{\la Z\ra^{\frac d2+\nut}}\right\|_{L^2(Z\leq Z^*)}+\left\|\frac{\la Z\ra ^{m-1+\frac{2(r-1)}{p-1}}\pa_Z^{m-1}\rho}{\la Z\ra^{\frac d2+\nut}}\right\|_{L^2(Z\leq Z^*)}\right]\\
& \lesssim &  e^{-c\tau}+{\mathcal d}_0\la Z\ra^{\tilde{\nu}}e^{-\mu\tilde{\nu} \tau}\le  e^{-c\tau}+{\mathcal d}_0\left(\frac{\la Z\ra}{Z^*}\right)^{\tilde{\nu}}\lesssim {\mathcal d}_0
\eee
and hence $$\forall 0\le m\le \frac{4k_m}{9}, \ \ \left\|\frac{Z^m\pa_Z^m\rho}{\rho_P}\right\|_{L^\infty(Z\le Z^*)}\leq {\mathcal d}_0.$$
We similarly pick $1\le m\le m_0$, apply \eqref{iobebeibbneobis} to $u=\la Z\ra^{r-2+m}\pa_Z^m\Psi$, $\gamma+1=\frac d2+\tilde{\nu}$, and obtain for $Z\leq Z^*$ from \eqref{nnvivneoneniven}:
\bee
&&|\la Z\ra^{r-2+m}\pa_Z^m\Psi|\lesssim  e^{-c\tau}+\la Z\ra^{\tilde{\nu}}\left\|\frac{\pa_Z(\la Z\ra^{r-2+m}\pa_Z^m\Psi)}{\la Z\ra^{\frac d2+\tilde{\nu}-1}}\right\|_{L^2}\\
&+& \la Z\ra^{\tilde{\nu}} 
\left[\left\|\frac{\la Z\ra^{r-3+m}\pa_Z^m\Psi)}{\la Z\ra^{\frac d2+\tilde{\nu}-1}}\right\|_{L^2(2Z_2\leq Z\leq Z^*)}+\left\|\frac{\la Z\ra^{r-2+m}\pa_Z^{m+1}\Psi)}{\la Z\ra^{\frac d2+\nut-1}}\right\|_{L^2(2Z_2\leq Z\leq Z^*)}\right]\\
& \lesssim & e^{-c\tau}+{\mathcal d}_0\la Z\ra^{\tilde{\nu}}e^{-\mu\tilde{\nu} \tau}\le  e^{-c\tau}+{\mathcal d}_0\left(\frac{\la Z\ra}{Z^*}\right)^{\tilde{\nu}}\le {\mathcal d}_0
\eee

and hence $$\forall 1\le m\le m_0=\frac{4k_m}9+1, \ \ \|\la Z\ra^{r-2+m}\pa_Z^m\Psi\|_{L^\infty(Z\le Z^*)}\le {\mathcal d}_0.$$
\noindent\underline{case $m_0\leq m\leq \frac{2k_m}3+1$}. Recall \eqref{defsigmnavrelations}:
$$\sigma(m)+m=-\alpha(k_m-m)+m=(\alpha+1)(m-m_0)+\sigma_\nu$$ and rewrite the norm:
\bee
\|\rhot,\Psit\|_{m,\sigma}^2&\ge &\sum_{k=0}^m\int_{Z\le Z^*} \frac{1}{\la Z\ra^{2(m-k+\sigma(m))}}\left[b^2|\nabla\rhot^{(k)}|^2+(p-1)\rho_D^{p-2}\rho_T(\rhot^{(k)})^2+\rho_T^2|\nabla \Psi^{(k)}|^2\right]\\
& = & \sum_{k=0}^m\int_{Z\le Z^*} \frac{\la Z\ra^{2k}}{\la Z\ra^{2(\alpha+1)(m-m_0)+2\sigma_\nu}}\left[b^2|\nabla\rhot^{(k)}|^2+(p-1)\rho_D^{p-2}\rho_T(\rhot^{(k)})^2+\rho_T^2|\nabla \Psi^{(k)}|^2\right]
\eee
We use spherical symmetry to infer from \eqref{esifnfewwonnw}:
\bea
\label{evvnkebboebe}
\nonumber &&\int_{2Z_2\leq Z\leq Z^*} \left|\frac{Z^{m-(\alpha+1)(m-m_0)}\pa_Z^{m}\rho}{\la Z\ra^{\frac d2-\frac{2(r-1)}{p-1}+\nut}}\right|^2+\int\left|\frac{Z^{m-(\alpha+1)(m-m_0)}\la Z\ra^{r-1}\pa^{m+1}_Z\Psi}{\la Z\ra^{\frac d2+\nut}}\right|^2\\
\nonumber &\lesssim& \sum_{j=0}^m\int_{2Z_2\leq Z\leq Z^*}\frac{Q|Z^j\pa^j\rho|^2+\rho_T^2|Z^j\pa_Z^j\pa_Z\Psi|^2}{\la Z\ra^{2\sigma_\nu+(\alpha+1)(m-m_0)}}\lesssim \|\rho,\Psi\|_{m,\sigma(m)}^2\\
&\le& {\mathcal d}_0e^{-2\mu\nut \tau}.
\eea
Observe that for $m_0\le m\le\frac{2k_m}{3}+1$, from \eqref{defalphacomnot}:
\bea
\label{loweroundffd}
\nonumber 
&&m-(\alpha+1)(m-m_0)=m_0(1+\alpha)-\alpha m\geq m_0(1+\alpha)-\alpha\left(2\frac{k_m}{3}+1\right)\\
\nonumber &=& k_m\left[\frac{4}{9}\left(1+\frac{4}{5}\right)-\frac 23\frac 45\right]+O_{k_m\to +\infty}(1)=\frac{4k_m}{15}+O_{k_m\to +\infty}(1)\\
&>& \frac{k_m}{4}+10.
\eea
We now apply \eqref{iobebeibbneobis}, \eqref{evvnkebboebe} to $m_0+1\le m\le \frac{2k_m}{3}+1$, $u=\la Z\ra^{m+\frac{2(r-1)}{p-1}-(\alpha+1)(m-m_0)-1}\pa_Z^{m-1}\rho$, $\gamma+1=\frac d2+\nut$ and obtain for $Z\leq Z^*$:
\bee
&&|\la Z\ra^{m+\frac{2(r-1)}{p-1}-(\alpha+1)(m-m_0)-1}\pa_Z^{m-1}\rho|\\
& \lesssim& e^{-c\tau}+ \la Z\ra^{\nut}\left\|\frac{\pa_Z(\la Z\ra^{m+\frac{2(r-1)}{p-1}-(\alpha+1)(m-m_0)-1}\pa_Z^{m-1}\rho)}{\la Z\ra^{\frac d2+\nut-1}}\right\|_{L^2(Z\leq Z^*)}\\
& \lesssim & e^{-c\tau}+\la Z\ra^{\nut}\left\|\frac{\la Z\ra ^{m+\frac{2(r-1)}{p-1}-(\alpha+1)(m-m_0)}\pa_Z^m\rho}{\la Z\ra^{\frac d2+\nut}}\right\|_{L^2(\leq Z^*)}\\
&+& \la Z\ra^{\nut}\left\|\frac{\la Z\ra ^{m+\frac{2(r-1)}{p-1}-(\alpha+1)(m-m_0)-1}\pa_Z^{m-1}\rho}{\la Z\ra^{\frac d2+\nut}}\right\|_{L^2(Z\leq Z^*)}\lesssim {\mathcal d}_0\left[1+\la Z\ra^{\nut}e^{-\mu\nut \tau}\right]
\eee
and hence using \eqref{loweroundffd} for $Z\le Z^*$:
$$\left|\frac{Z^{\frac{k_m}{4}}\pa_Z^{m-1}\rho}{\rho_D}\right|\lesssim \left|Z^{m+\frac{2(r-1)}{p-1}-(\alpha+1)(m-m_0)-1}\pa_Z^{m-1}\rho\right|\lesssim {\mathcal d}_0\left[1+ \left(\frac{Z}{Z^*}\right)^{\nut}\right]\lesssim {\mathcal d}_0$$ and hence
$$\forall \frac{4k_m}{9}+1\le m\le \frac{2k_m}{3}, \ \ \left\|\frac{Z^{\frac{k_m}{4}}\pa_Z^{m}\rho}{\rho_D}\right\|_{L^\infty(Z\le Z^*)}\le {\mathcal d}_0.$$
For the phase, we apply  \eqref{iobebeibbneobis}, \eqref{evvnkebboebe} to $m_0+1\le m\le \frac{2k_m}{3}+1$, $\gamma+1=\frac d2+\nut$ , $u=\la Z\ra^{r-1+m-(\alpha+1)(m-m_0)}\pa_Z^m\Psi$ and obtain:
\bee
&&\la Z\ra^{r-1+m-(\alpha+1)(m-m_0)}|\pa_Z^m\Psi|\lesssim  e^{-c\tau}+ \la Z\ra^{\nut}\left\|\frac{\pa_Z(\la Z\ra^{r-1+m-(\alpha+1)(m-m_0)}\pa_Z^m\Psi)}{\la Z\ra^{\frac d2+\nut-1}}\right\|_{L^2(Z\leq Z^*)}\\
& \lesssim &e^{-c\tau}+\la Z\ra^{\nut}\left\|\frac{\la Z\ra^{r-1+m-1-(\alpha+1)(m-m_0)}\pa_Z^m\Psi}{\la Z\ra^{\frac d2+\nut-1}}\right\|_{L^2(Z\leq Z^*)}\\
&+& \la Z\ra^{\nut}\left\|\frac{\la Z\ra^{r-1+m-(\alpha+1)(m-m_0)}\pa_Z^{m+1}\Psi}{\la Z\ra^{\frac d2+\nut-1}}\right\|_{L^2(Z\leq Z^*)}\le {\mathcal d}_0\left[1+\la Z\ra^{\nut}e^{-\mu\nut \tau}\right]
 \eee
 and hence for $m_0+1\le m\le \frac{2k_m}{3}$ from \eqref{loweroundffd} for $Z\le Z^*$:
 \bee
 \la Z\ra^{r-2+\frac{k_m}{4}}|\pa_Z^{m}\Psi|\lesssim \la Z\ra^{r-1+m-(\alpha+1)(m-m_0)}|\pa_Z^m\Psi|\le {\mathcal d}_0
 \eee
 which concludes the proof of \eqref{smallnessoutsideinitbootfinal}.

\end{proof}
Similar to the above, we also have the following exterior bounds for $|x|\ge 1$: 

\begin{lemma}[Exterior pointwise bounds]
There holds:
\be
\label{smallnessoutsideinitbootfinal-ext}
\left|\begin{array}{l}
\forall 0\le k\le\frac{2k_m}{3}, \ \ \|\frac{\la Z\ra ^{n(k)}\pa_Z^k\rho}{\rho_D}\|_{L^\infty(Z\ge Z^*)}\le {\mathcal d}_0\\
\forall 1\le k\le \frac{2k_m}{3}, \ \ \|\frac{\la Z\ra^{n(k)}\pa_Z^k\Psi}b\|_{L^\infty(Z \ge Z^*)}\leq {\mathcal d}_0
\end{array}\right.
\ee
where ${\mathcal d}_0$ is a smallness constant depending on data.
\end{lemma}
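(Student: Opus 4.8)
The plan is to follow, almost line by line, the argument used for the interior bounds \eqref{smallnessoutsideinitbootfinal}, the only structural change being that the one–dimensional integration is now carried out \emph{outward} from $Z=Z^*$ and anchored to the boundary value at $Z=Z^*$ rather than at the origin. The input is the same: Proposition \ref{prophighsobloev} together with \eqref{cenvnvnevneoen} gives $\|\rhot,\Psi\|_{m,\sigma(m)}\le {\mathcal d}_0\,e^{-\mu\nut\tau}$ for all $0\le m\le k_m-1$, and on the exterior zone $Z\ge Z^*$ the cut-off $\xi_m(Z/Z^*)$ entering $\chi_{m,k,\sigma(m)}$ equals the growing factor $(Z/Z^*)^{2\tilde\sigma(m)}$, with $\tilde\sigma(m)=n_P-\frac{2(r-1)}{p-1}-(r-2)+2\nut$ throughout $0\le m\le\frac{2k_m}3+1$, while the dampened profile satisfies the fast tail bound \eqref{estiamtionnprofile}, $|Z^j\pa_Z^j\rho_D|\lesssim (Z^*)^{-\frac{2(r-1)}{p-1}}(Z/Z^*)^{-n_P}$ on $Z\ge Z^*$. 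I would also keep at hand the bootstrap bound \eqref{smallglobalboot} (which is what we must improve to the constant ${\mathcal d}_0$) and the non vanishing bound \eqref{nonvavnishig}, used, exactly as in the interior case, to absorb the factors $\rho_T=\rho_D+\rhot$ appearing in the nonlinear terms of \eqref{exactliearizedflowtilde}. Note that one only needs to reach a small \emph{constant} — no $\tau$–decay is expected in the exterior, consistently with the conservation of mass and energy — so that the comparatively weak decay of the $L^2$ densities at $Z=+\infty$ is enough, provided the matching at $Z=Z^*$ is handled correctly.

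The core step is the trace inequality \eqref{iobebeibbneobis} with lower endpoint $1$ (or $2Z_2$) replaced by $Z^*$: for a spherically symmetric $u$ and $\gamma>\frac d2-1$,
$$
|u(Z)|\;\lesssim\;|u(Z^*)|+\la Z\ra^{\gamma+1-\frac d2}\,\Big\|\frac{\pa_Z u}{\la Z\ra^{\gamma}}\Big\|_{L^2(Z^*\le\sigma\le Z)},\qquad Z\ge Z^*.
$$
I would apply it, iterating upward on $k$, to $u=\frac{Z^{n(k)}\pa_Z^k\rhot}{\rho_D}$ and to $u=\frac{Z^{n(k)}\pa_Z^k\Psi}{b}$, distinguishing the two regimes of \eqref{defnm} exactly as in the interior proof: $0\le k\le\frac{4k_m}9$ with $n(k)=k$, and $\frac{4k_m}9<k\le\frac{2k_m}3$ with $n(k)=\frac{k_m}4$. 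The exponent $\gamma=\gamma(k,m)$ is chosen (with $m$ taken as small as the number of available derivatives permits, as in the passage leading to \eqref{loweroundffd}) so that $\pa_Z u$ — computed by the Leibniz rule, using $|Z^j\pa_Z^j\rho_D/\rho_D|\lesssim 1$ from \eqref{fundmaetnaldecay} to control the $\rho_D^{-1}$–derivatives — is dominated by a weighted $L^2$ density controlled by $\|\rhot,\Psi\|_{m,\sigma(m)}$ together with the top norm \eqref{sobolevinitboot}; the division by $b$ in the phase estimate is turned into a power of $Z/Z^*$ via $b=(Z^*)^{2-r}$. The boundary term $|u(Z^*)|$ is supplied by the interior bounds \eqref{smallnessoutsideinitbootfinal} evaluated at $Z=Z^*$, using $\rho_D(Z^*)\sim\rho_P(Z^*)$ and $b=(Z^*)^{2-r}$; the finite transition layer $Z^*\le Z\le 10Z^*$, on which all $\la Z\ra$–weights are comparable to their values at $Z^*$, is absorbed into this matching. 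This closes \eqref{smallnessoutsideinitbootfinal-ext}.

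The main obstacle is exactly what it was inside the light cone: the bookkeeping of the exponents of $Z$, $Z^*$ and $b=(Z^*)^{2-r}=e^{-\mu(r-2)\tau}$, made subtler here by the interplay of the profile–tail power $n_P$ in $\rho_D\sim(Z^*)^{-\frac{2(r-1)}{p-1}}(Z/Z^*)^{-n_P}$, the weight power $2\tilde\sigma(m)\approx 2n_P$ coming from $\xi_m$, and the (frequently very negative) power $-2\sigma(m)$ from $\chi_{m,k,\sigma(m)}$. One has to check that, after integrating in $Z$ over $\{Z\ge Z^*\}$, the resulting bound is a convergent integral whose amplitude is a small constant times ${\mathcal d}_0$; this rests on the hierarchy $k_m\gg n_P\gg 1$ together with $r>2$ and $\nut\ll 1$ — precisely the relations already exploited in \eqref{loweroundffd} and in the high–regularity part of the proof of Lemma \ref{propinduction} — and, in the range $\frac{4k_m}9<k\le\frac{2k_m}3$, the value $n(k)=\frac{k_m}4$ together with $\sigma(k)\ge-\frac{4k_m}9+O_{k_m\to\infty}(1)$ to keep the relevant power of $\la Z\ra$ nonnegative. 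No genuinely new phenomenon appears; the one non-negotiable point is the order of the argument, namely that \eqref{smallnessoutsideinitbootfinal} must be established first, since the exterior estimate is entirely anchored to the boundary value at $Z=Z^*$.
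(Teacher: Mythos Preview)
Your overall strategy — a one–dimensional trace inequality from $Z^*$ outward, anchored at $Z=Z^*$ via the interior bounds \eqref{smallnessoutsideinitbootfinal} and fed by the weighted $L^2$ control from $I_{k,\sigma(k)}$ — is exactly the paper's. But the specific trace inequality you wrote down does not close. You copied \eqref{iobebeibbneobis} verbatim, including the hypothesis $\gamma>\frac d2-1$ and the prefactor $\la Z\ra^{\gamma+1-\frac d2}$. That prefactor \emph{grows} in $Z$, and on $Z\ge Z^*$ nothing compensates it: if you take $u=\frac{\la Z\ra^k\rhot^{(k)}}{\rho_D}$ and any $\gamma>\frac d2-1$, the weighted $L^2$ norm of $\pa_Zu$ is at best $O({\mathcal d}_0(Z^*)^{-\nut})$, so the trace yields $|u(Z)|\lesssim {\mathcal d}_0+{\mathcal d}_0\,(Z/Z^*)^{\gamma+1-\frac d2}$, which is unbounded as $Z\to\infty$. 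In the interior this growth was harmless because $Z\le Z^*$ capped it; here it is fatal.

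The fix — and this is what the paper does — is to flip the sign: take $\gamma<\frac d2-1$ (equivalently set $\l=\frac d2-1-\gamma>0$). Then the Cauchy--Schwarz factor $\int_{Z^*}^Z\tau^{2\gamma-d+1}d\tau$ is dominated by its \emph{lower} endpoint and the trace reads
\[
|u(Z)|\;\lesssim\;|u(Z^*)|+(Z^*)^{-\l}\Big(\int_{Z^*}^Z\tau^{1+2\l}|\pa_Zu|^2\,d\tau\Big)^{\frac12},
\]
with a prefactor depending only on $Z^*$. With $\l=\nut$ and $u=\frac{\la Z\ra^k\rhot^{(k)}}{(Z^*)^{2\nut}\rho_D}$, the integrand $\tau^{1+2\nut}|\pa_Zu|^2$ is, up to Leibniz lower–order terms controlled by $I_{k-1,\sigma(k-1)}$, precisely the exterior density of $I_{k,\sigma(k)}$ (this is the computation $Z^{d-1}\chi\,b^2|\nabla\rhot^{(k)}|^2\approx\la Z\ra^{1+2\nut}\big(\frac{\la Z\ra^k|\nabla\rhot^{(k)}|}{(Z^*)^{2\nut}\rho_D}\big)^2$ on $Z\ge Z^*$, using \eqref{defsigmnavrelations}, \eqref{defsigmnavrelations-sigma} and $b=(Z^*)^{2-r}$). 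The bound then closes uniformly in $Z$. Note also that $k=0$ needs a separate twist: the paper runs the $k=1$ trace with $\l=\frac{\nut}{2}$ to get $\frac{\la Z\ra^{1+\nut/2}|\pa_Z\rhot|}{\rho_D}\lesssim (Z^*)^{\nut/2}{\mathcal d}_0$, which is integrable in $Z$ and yields the $k=0$ bound; your ``iterating upward on $k$'' does not address this endpoint.
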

\begin{proof}
We recall \eqref{defsigmnavrelations} and \eqref{defsigmnavrelations-sigma}  and, in the 
\noindent\underline{case $0\leq k\leq \frac{2k_m}3$}, bound
\bee
I_{k,\sigma(k)}\ge\int_{Z\ge Z^*} \la Z\ra^{2k-2\sigma_\nu} \left(\frac Z{Z^*}\right)^{2n_P-\frac{4(r-1)}{p-1}-2(r-2)+4\nut}\left[b^2|\nabla\rhot^{(k)}|^2+(p-1)\rho_D^{p-1}(\rhot^{(k)})^2+\rho_D^2|\nabla \Psi^{(k)}|^2\right]
\eee
We observe from \eqref{fromularhod}, \eqref{defsigmnu}, \eqref{conditionsigma} and $b={Z^*}^{2-r}$ that for $Z\ge Z^*$
\bee
Z^{d-1}\la Z\ra^{2k-2\sigma_\nu} \left(\frac Z{Z^*}\right)^{2n_P-\frac{4(r-1)}{p-1}-2(r-2)+4\nut}b^2|\nabla\rhot^{(k)}|^2&\approx &
\la Z\ra^{d-1+2k-2\sigma_\nu-\frac{4(r-1)}{p-1}-2(r-2)+4\nut} \frac{|\nabla\rhot^{(k)}|^2}{(Z^*)^{4\nut}\rho_D^2}\\
&=&\la Z\ra^{1+2\nut} \left(\frac{\la Z\ra^{k} |\nabla\rhot^{(k)}|}{(Z^*)^{2\nut}\rho_D}\right)^2
\eee
Similarly,
\bee
Z^{d-1}\la Z\ra^{2k-2\sigma_\nu} \left(\frac Z{Z^*}\right)^{2n_P-\frac{4(r-1)}{p-1}-2(r-2)+4\nut}\rho_D^2|\nabla \Psi^{(k)}|^2
&\approx& \la Z\ra^{d-1+2k-2\sigma_\nu-\frac{4(r-1)}{p-1}-2(r-2)+4\nut} \frac{|\nabla\Psi^{(k)}|^2}{(Z^*)^{4\nut}b^2}\\ &=&
\la Z\ra^{1+2\nut} \left(\frac{\la Z\ra^{k} |\nabla\Psi^{(k)}|}{(Z^*)^{2\nut}b}\right)^2
\eee
Now, for a spherically symmetric function $u$, $Z\ge Z^*$ and an arbitrary $\l>0$
\bee
|u(Z)|&=&|u(Z^*)+\int_{Z^*}^Z \pa_Z u|\le |u(Z^*)|+ \left(\int_{Z^*}^Z  \tau^{1+2\l}|\pa_Z u|^2 d\tau\right)^{\frac 12}\left(\int_{Z^*}^Z \tau^{-1-2\l}d\tau\right)^{\frac 12} |\\ &\le& |u(Z^*)|+ (Z^*)^{-\l}\left(\int_{Z^*}^Z  \tau^{1+2\l}|\pa_Z u|^2d\tau\right)^{\frac 12}
\eee
We apply this to $u=\left(\frac{\la Z\ra^{k} \rhot^{(k)}}{(Z^*)^{2\nut}\rho_D}\right)$ for $k\ge 1$ and $\l=\nut$
\bee
&&\left(\frac{\la Z\ra^{k} \rhot^{(k)}}{(Z^*)^{2\nut}\rho_D}\right)(Z)\\
&\le &\left(\frac{\la Z\ra^{k} \rhot^{(k)}}{(Z^*)^{2\nut}\rho_D}\right)(Z^*)+(Z^*)^{-\nut}\left(\int_{Z^*}^Z  \tau^{1+2\nut}\left[\left(\frac{\la \tau\ra^{k} |\nabla\rhot^{(k)}|}{(Z^*)^{2\nut}\rho_D}\right)^2+\left(\frac{\la \tau\ra^{k-1}\rhot^{(k)}}{(Z^*)^{2\nut}\rho_D}\right)^2\right] d\tau\right)^{\frac 12}\\ &\lesssim &(Z^*)^{-2\nut} {\mathcal d}_0+(Z^*)^{-\nut} (I_{k,\sigma(k)}+I_{k,\sigma(k-1)})^{\frac 12},
\eee
where we used the already proved interior bounds \eqref{smallnessoutsideinitbootfinal}. This, together with \eqref{esifnfewwonnw}, immediately implies the exterior bound for $\pa_Z^k\rho$ and $1\le k\le \frac {4k_m}9+1$.
The corresponding bound for $\pa_Z^k\Psi$ is obtained similarly using $u=\left(\frac{\la Z\ra^{k} \Psi^{(k)}}{(Z^*)^{2\nut}b}\right)$ and $\l=\nut$. To prove the result for $\rho$ in the case of  $k=0$ we note that we could run the above argument for 
$u=\left(\frac{\la Z\ra^{k+\frac \nut2} \rhot^{(k)}}{(Z^*)^{2\nut}\rho_D}\right)$ and $\l=\frac \nut2$, which would imply
$$
\left(\frac{\la Z\ra^{k+\frac\nut2} \rhot^{(k)}}{\rho_D}\right)(Z)\lesssim (Z^*)^{\frac \nut 2} {\mathcal d}_0.
$$
It is a stronger estimate and, more crucially in the case of $k=1$, an estimate which can be integrated in $Z$ to produce 
the desired bound for $k=0$. Similar argument applies to $\Psi$. Finally, the regime $\frac {4k_m}9+1\le k\le \frac{2k_m}3$
can be treated by a combination of the argument above and the corresponding interior one. We note that the weight 
function $\xi_k$, which determines the $\frac Z{Z^*}$ dependence and thus relevant in the exterior, remains exactly the 
same in the whole range $0\le k\le \frac {2k_m}3+1$. We omit the details.
\end{proof}


\section{Highest Sobolev norm}
\label{sec:high}


In this section we improve the bootstrap bound \eqref{sobolevinitboot} on the highest unweighted Sobolev norm of $(\rhot,\Psi)$. Specifically,
for (see \eqref{globalsobolevnorm}) 
\be
\label{globalsobolevnorm'}
\|\rhot,\Psit\|_{{k_m}}^2=\sum_{j=0}^{k_m}\sum_{|\alpha|=j}\int \frac{b^2|\nabla\nabla^\alpha\rhot|^2+(p-1)\rho_D^{p-2}\rho_T(\nabla^\alpha\rhot)^2+\rho_T^2|\nabla \nabla^\alpha\Psi|^2}{\la Z\ra^{2(k_m-j)}}
\ee
we will establish the following 
\begin{proposition}[Control of the highest Sobolev norm]
\label{proproopr}
For some small constant ${\mathcal d}$ dependent on the data,
\be
\label{cneioneoneon}
\|\rhot,\Psit\|_{{k_m}}\leq {\mathcal d}.
\ee
\end{proposition}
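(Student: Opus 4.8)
\textbf{Proof of Proposition \ref{proproopr} (plan).}

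\emph{Step 1: reduction to the top order piece.} First I would split the norm level by level,
$$\|\rhot,\Psit\|_{k_m}^2=\sum_{j=0}^{k_m}\int\frac{b^2|\nabla\rhot^{(j)}|^2+(p-1)\rho_D^{p-2}\rho_T(\rhot^{(j)})^2+\rho_T^2|\nabla\Psi^{(j)}|^2}{\la Z\ra^{2(k_m-j)}},$$
and observe that for every $j\le k_m-1$ the pointwise comparison of weights $\la Z\ra^{-2(k_m-j)}\le \chi_{j,j,\sigma(j)}(Z)$ (which follows from $k_m-j\ge \sigma(j)$, true by \eqref{defsimgma} since $k_m\gg\sigma_\nu$, together with $\tilde\sigma(j)\ge 0$ from \eqref{def-sigma}) bounds the level $j$ piece by $\|\rhot,\Psit\|_{j,\sigma(j)}^2$, hence by $e^{-c_{k_m}\tau}\le {\mathcal d}^2$ thanks to Proposition \ref{prophighsobloev}. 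So only the top level $j=k_m$ remains. The key structural point is that at $j=k_m$ one has $\sigma(k_m)=0$ and $\tilde\sigma(k_m)=0$, so the associated weight is $\chi\equiv\chi_{k_m,k_m,0}\equiv 1$: there is no weight to differentiate. Using Remark \ref{delta} (spherical symmetry) we may moreover work with $\Delta^{k_m/2}$ in place of $\pa^{k_m}$ for even $k_m$; I will keep the $\pa^{k_m}$ notation since \eqref{algebracienergyidnentiy} is written that way.

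\emph{Step 2: the top order energy identity.} I would apply the algebraic energy identity \eqref{algebracienergyidnentiy} with $k=k_m$ and $\chi\equiv 1$. Since $\nabla\chi=0$ and $\pa_\tau\chi=0$, all the terms of \eqref{algebracienergyidnentiy} carrying a factor $\nabla\chi$ or $\pa_\tau\chi$ drop out — in particular the weight-driven loss-of-derivatives terms of \eqref{;lossfoofneinoin} vanish, and the genuinely second order pieces $b^2\int\rho_T\,\Delta\Psi^{(k_m)}\Delta\rhot^{(k_m)}$ coming respectively from the $\rho$- and $\Psi$-equations cancel by the antisymmetric Schr\"odinger structure, up to lower order commutator contributions carrying a $b^2$ factor. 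What remains is: (i) a diagonal dissipative contribution from the $H_1-k_mH_2$, $H_2\Lambda$ and $\mu(r-2)$ terms which, exactly as in \eqref{firstestimate} but now with $\Lambda\chi=\pa_\tau\chi=0$, produces the coefficient $\mu\big[-k_m+\tfrac d2-(r-1)-\tfrac{2(r-1)}{p-1}\big]$, i.e. a damping of size $\simeq -\mu k_m$ times the energy density; (ii) the coupling cross–terms between $\rhot^{(k_m)}$ and $\nabla\Psi^{(k_m)}$ generated by $\pa^{k_m}$ hitting the potentials $(p-1)\rho_D^{p-2}$ and, through the wave structure, $(p-1)Q=\mu^2Z^2\sigma^2$ (whose leading commutator coefficient is $\simeq k_m\,\pa_ZQ\simeq k_m\,\tfrac{F}{\sigma}\,\tfrac{Q}{Z}\cdot Z$, recall \eqref{enoenoenvonev}); (iii) the source and commutator errors $F_1,F_2$ of \eqref{formluafone}, \eqref{estqthohrkbis} and the nonlinear terms.

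\emph{Step 3: the coupling quadratic form — the main obstacle.} Collecting the $k_m$–proportional terms in (i)+(ii) I expect a bound of the schematic form
$$\frac12\frac{d}{d\tau}I_{k_m}\ \le\ -\,\mu k_m\int \mathcal Q\Big(b|\nabla\rhot^{(k_m)}|,\ \sqrt{\rho_D^{p-1}}\,|\rhot^{(k_m)}|,\ \rho_T|\nabla\Psi^{(k_m)}|\Big)\,Z^{d-1}dZ\ +\ (\text{errors}),$$
where $\mathcal Q$ is a quadratic form whose coefficients, once written in Emden variables via \eqref{enoenoenvonev}, reduce to the combinations $1-w-\Lambda w$, $F=\sigma+\Lambda\sigma$ and $\sigma$. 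Pointwise positivity $\mathcal Q\ge c>0$ is then exactly the repulsivity condition: \eqref{coercivityquadrcouplinginside} supplies it on $0\le Z\le Z_2$ and \eqref{P} supplies it on $Z\ge Z_2$ (this is the place where the $\mathcal C^\infty$ regularity of the profile and the analytic bounds of Theorem \ref{propexistenceprofile} are used). Hence for $k_m$ large the whole linear contribution is $\le -c\mu k_m I_{k_m}$. Carrying out the bookkeeping that isolates this quadratic form — tracking the $k_m$–proportional parts through the commutators with $Q$, $H_1$, $H_2$, $\rho_D^{p-2}$ and matching them with the diagonal terms — is the technical heart of the argument, and is what I expect to be the main difficulty; it is the ``non trivial quadratic form, definite positive for $k_m$ large'' of the strategy of Section \ref{stragegyoftheproof}.

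\emph{Step 4: errors and conclusion.} The $F_1,F_2$ and nonlinear terms I would treat exactly as in the proof of Lemma \ref{propinduction}, with three simplifications/changes: the localization sources $\tilde{\mathcal E}_{P,\rho}$, $\tilde{\mathcal E}_{P,\Psi}$ are supported in $Z\ge 3Z^*$ and carry powers of $b=(Z^*)^{2-r}$, so they are $O(e^{-c\tau})$; the commutator terms $[\pa^{k_m},H_i]$, $[\pa^{k_m},\rho_D^{p-2}]$ involve at most $k_m-1$ derivatives of $(\rhot,\Psi)$ and are therefore controlled — up to a harmless power of $\la Z\ra$ — by the already improved intermediate norms $\|\rhot,\Psit\|_{m,\sigma(m)}\le e^{-c_{k_m}\tau}$ of Proposition \ref{prophighsobloev}; the quadratic/nonlinear terms are estimated by the interior and exterior pointwise bounds \eqref{smallnessoutsideinitbootfinal}, \eqref{smallnessoutsideinitbootfinal-ext}. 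The one residual loss of a derivative produced by the quantum pressure $b^2\tfrac{\Delta\rho_T}{\rho_T}$, after the antisymmetric cancellation of Step 2 has removed the $\Delta^2$–order part, is paired against the profile $\rho_D$ (the $\rhot$–against–$\rhot$ part being exactly what cancels) and is absorbed using that $b^2\ll e^{-c_{k_m}\tau}$. Altogether, on the bootstrap interval $[\tau_0,\tau^*]$,
$$\frac{d}{d\tau}I_{k_m}\ \le\ -\,c\mu k_m\, I_{k_m}\ +\ C_{k_m}e^{-c\tau}\big(1+I_{k_m}\big)\ \le\ -\tfrac12 c\mu k_m\, I_{k_m}\ +\ C_{k_m}e^{-c\tau},$$
and Gr\"onwall together with $I_{k_m}(\tau_0)\le e^{-c\tau_0}$ (Remark \ref{rem:data}) yields $I_{k_m}(\tau)\le C e^{-c\tau_0}\le{\mathcal d}^2$ for $\tau_0$ large. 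Combining with Step 1 gives \eqref{cneioneoneon}.
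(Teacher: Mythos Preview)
Your overall architecture is right: the reduction to the top level via the weight comparison (Step 1), the observation that $\chi\equiv 1$ kills the $\nabla\chi$ loss-of-derivative terms, the antisymmetric cancellation of the principal $b^2\Delta\Psi^{(k_m)}\Delta\rhot^{(k_m)}$ pieces, and the identification of the $k_m$-proportional quadratic form whose coercivity is \eqref{coercivityquadrcouplinginside}--\eqref{P} (the paper's computation is exactly \eqref{enineinevnveo}, with discriminant $4\mu^2(p-1)\rho_P^2 Q[(1-w-\Lambda w)^2-F^2]$). These match the paper's proof.

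There is however a genuine gap in Step 4. You assert that the only residual derivative loss comes from the quantum pressure and carries a $b^2$, but the nonlinear term $\NL(\rhot)$ produces one as well, \emph{with no} $b^2$ factor. After Leibniz, $\Delta^{K_m}\NL(\rhot)$ contains the top piece $\rhot^{q-1}\rho_D^{p-1-q}\Delta^{K_m}\rhot$ (with $q\ge 2$), and its contribution to the energy is
\[
\int \rho_T^2\,\nabla\big(\rhot^{q-1}\rho_D^{p-1-q}\rhot_k\big)\cdot\nabla\Psi_k,
\]
whose worst part is $\int\rho_T^2\,\rhot^{q-1}\rho_D^{p-1-q}\,\nabla\rhot_k\cdot\nabla\Psi_k$. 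On the compact set $Z\lesssim 1$ this is $\sim\delta\,|\nabla\rhot_k|\cdot\rho_T|\nabla\Psi_k|$, but the energy only controls $b|\nabla\rhot_k|$, so Cauchy--Schwarz produces $\delta b^{-2}J_{k_m}$, which diverges. The pointwise bounds \eqref{smallnessoutsideinitbootfinal}--\eqref{smallnessoutsideinitbootfinal-ext} give you the $\delta$ but not the missing $b^2$; and the local exponential decay $e^{-c\tau}$ from \eqref{eq:bootdecay} has $c\sim\delta_g$ independent of $\mathcal e$, so you cannot beat $b^{-2}=e^{2\mathcal e\tau}$ that way either.

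The paper's fix (its Step 6) is to integrate by parts in space to get $\int\rhot^{q-1}\rho_D^{p-1-q}\rhot_k\,\nabla\cdot(\rho_T^2\nabla\Psi_k)$, then use the $\rhot_k$-equation \eqref{estqthohrkbisbis} to rewrite $\nabla\cdot(\rho_T^2\nabla\Psi_k)=-\rho_T\pa_\tau\rhot_k+\{\text{lower order, size }O(k)J_{k_m}\}$, and finally integrate the $\pa_\tau\rhot_k$ piece by parts \emph{in time}. This produces a boundary-in-time term $-\tfrac12\tfrac{d}{d\tau}\int\rhot^{q-1}\rho_D^{p-1-q}\rho_T\rhot_k^2$ which is absorbed into the energy as a modification $J_{k_m}\to J_{k_m}(1+O(\delta))$ --- this is precisely the origin of the $(1+O(\delta))$ in \eqref{estnienonneo}. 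The remaining pieces are then genuinely $O(\delta k_m)J_{k_m}+\mathcal d$ and close. Without this time-IBP the argument does not close; you should incorporate it into your Step 4.
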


\begin{proof}[Proof of Proposition \ref{proproopr}] This follows from the global {\it unweighted} quasilinear energy identity. We let $$k_m=2K_m, \ \ K_m\in \Bbb N$$ and denote in this section $$k=k_m, \ \ \rhot^{(k)}=\Delta^{K_m}\rhot, \ \ \Psi^{(k)}=\Delta^{K_m}\Psi.$$
We recall the notation \eqref{defimsigma}
\be
\label{vebioenobvebveb}
I_{k_m}=\int b^2|\nabla \pa^{k_m}\rhot|^2+(p-1)\int \rho_D^{p-2}\rho_T |\pa^{k_m}\rhot|^2+\int\rho_T^2|\nabla \pa^{k_m}\Psit|^2.
\ee

\noindent{\bf step 1} Control of lower order terms. We recall the notation:
$$\left|\begin{array}{ll}
\|\rhot,\Psi\|_{k_m,\sigma(k_m)}^2=\sum_{j=0}^{k_m}\int \chi_{j,k_m,\sigma(k_m)}b^2|\nabla\pa^j\rhot|^2+(p-1)\int \chi_{j,k_m}\rho_D^{p-2}\rho_T(\pa^j\rhot)^2+\int\chi_{j,k_m}\rho_T^2|\nabla \pa^j\Psi|^2\\
\chi_{j,k_m,\sigma(k_m)}(Z)=\frac{1}{\la Z\ra^{k_m-j}}.
\end{array}\right.
$$ 
Observe from \eqref{defsimgma} that for $m_0\le m\le k_m-1$: $$\sigma(m)+m=-\alpha(k_m-m)+m=(1+\alpha)m-\alpha k_m\le k_m$$ and the same holds for $0\le m\le m_0$ for $k_m$ large enough.
 Hence,
$$ \frac{\la Z\ra^{m}|\pa^m_Z\rho|}{\la Z\ra^{k_m}}\lesssim \frac{\la Z\ra^{m}|\pa^m_Z\rho|}{\la Z\ra^{\sigma(m)+m}}$$ and  a similar estimate for $\Psi$, imply, using \eqref{esifnfewwonnw}:
\be
\label{veniovnineonelweroidre}
\|\rhot,\Psi\|_{k_m,\sigma(k_m)}^2\leq I_{k_m}+\|\rhot,\Psi\|^2_{k_m-1,\sigma(k_m-1)}\leq I_{k_m}+{\mathcal d}_{0}
\ee 
By Remark \ref{delta} we can replace (up to the lower order terms controlled as above) $I_{k_m}$ with 
\be
\label{eq:vebioenobvebveb}
J_{k_m}:=\int b^2|\nabla \Delta^{K_m}\rhot|^2+(p-1)\int \rho_D^{p-2}\rho_T |\Delta^{K_m}\rhot|^2+\int\rho_T^2|\nabla \Delta^{K_m}\Psit|^2.
\ee
We claim: there exist $k_m^*(d,r,p),c_{d,r,p}>0$ such that for all $k_m>k_m^*(d,r,p)$, there holds:
\be
\label{estnienonneo}
\frac{d}{d\tau}\left\{J_{k_m}\left[1+O(\delta)\right]\right\}+c_{d,r}k_mJ_{k_m}\leq {\mathcal d}.
\ee
Integrating the above in time, using \eqref{sobolevinit}, \eqref{veniovnineonelweroidre}, yields \eqref{cneioneoneon}.\\

\noindent{\bf step 2} Energy identity. We revisit the computation of \eqref{estqthohrk}, \eqref{formluafone}, \eqref{nkenononenon}, \eqref{exactliearizedflowtilde} in order to extract all the coupling terms at the highest level of derivatives. Recall \eqref{exactliearizedflowtilde}:
$$
\left|\begin{array}{ll} \pa_\tau \rhot=-\rho_T\Delta \Psit-2\nabla\rho_T\cdot\nabla \Psit+H_1\rhot-H_2\Lambda \rhot-\tilde{\mathcal E}_{P,\rho}\\
\pa_\tau \Psit=b^2\frac{\Delta \rho_T}{\rho_T}-\left\{H_2\Lambda \Psit+\mu(r-2)\Psit+|\nabla \Psit|^2+(p-1)\rho_D^{p-2}\rhot +\NL(\rhot)\right\}-\Et_{P,\Psi}.
\end{array}\right.
$$
We use $$[\Delta^{K_m},\Lambda]=k_m\Delta^{K_m}$$ and recall \eqref{estimatecommutatorvlkeveln}:
$$[\Delta^k,V]\Phi-2k\nabla V\cdot\nabla \Delta^{k-1}\Phi=\sum_{|\alpha|+|\beta|=2k,|\beta|\le 2k-2}c_{k,\alpha,\beta}\pa^\alpha V\pa^\beta\Phi.
$$ 
which gives:
$$
\Delta^{K_m}(H_2\Lambda \rhot)=k_m(H_2+\Lambda H_2)\rho_k+H_2\Lambda \rho_k+\mathcal A_k(\rhot) $$
with 
\be
\label{estimatepenttniialvone}
\left|\begin{array}{l}
|\mathcal A_k(\rhot)|\lesssim c_k\sum_{j=1}^{k-1}\frac{|\nabla^j\rhot|}{\la Z\ra^{k_m+r-j}}\\
|\nabla \mathcal A_k(\rhot)|\lesssim c_k\sum_{j=1}^{k}\frac{|\nabla^j\rhot|}{\la Z\ra^{k_m+r+1-j}}
\end{array}\right.
\ee
where $\nabla^j=\pa^{\alpha_1}_1\dots\pa^{\alpha_d}_d$, $j=\alpha_1+\dots+\alpha_d$ denotes a generic derivative of order $j$. Using \eqref{estimatecommutatorvlkeveln} again:
\bea
\label{estqthohrkbisbis}
\nonumber \pa_\tau \rhot^{(k)}&=&(H_1-k(H_2+\Lambda H_2)\rhot_k-H_2\Lambda \rhot_k-(\Delta^{K_m}\rho_T)\Delta \Psit-k\nabla\rho_T\cdot\nabla \Psi^{(k)}-\rho_T\Delta\Psit_k\\
 &-& 2\nabla(\Delta^{K_m}\rho_T)\cdot\nabla \Psit-2\nabla \rho_T\cdot\nabla \Psit_k+  F_1
\eea
with
\bea
\label{formluafonebis}
F_1&=&-\Delta^{K_m}\tilde{\mathcal E}_{P,\rho}+[\Delta^{K_m},H_1]\rhot-A_k(\rhot)\\
\nonumber &-& \sum_{\left|\begin{array}{ll} j_1+j_2=k\\ j_1\geq 2, j_2\geq 1\end{array}\right.}c_{j_1,j_2}\nabla^{j_1}\rho_T\nabla^{j_2}\Delta\Psit-\sum_{\left|\begin{array}{ll}j_1+j_2=k\\
j_1,j_2\geq 1\end{array}\right.}c_{j_1,j_2}\nabla^{j_1}\nabla\rho_T\cdot\nabla^{j_2}\nabla\Psit.
\eea
For the second equation, we have similarly:
\bea
\label{nkenononenonbis}
 \pa_\tau\Psit_k&=&b^2\left(\frac{\Delta^{K_m+1} \rho_T}{\rho_T}-\frac{k\nabla\Delta^{K_m}\rho_T\cdot\nabla\rho_T}{\rho_T^2}\right)\\
\nonumber &-& k(H_2+\Lambda H_2)\Psit_k-H_2\Lambda \Psit_k-\mu(r-2)\Psit_k-2\nabla \Psit\cdot\nabla \Psit_k\\
\nonumber&-& \left[(p-1)\rho_P^{p-2}\rhot_k+k(p-1)(p-2)\rho_D^{p-3}\nabla\rho_D\cdot\nabla\Delta^{K_m-1}\rhot\right]+F_2
\eea
with
\bea
\label{estqthohrkbisbisbisbisbibfebjbifebfji}
\nonumber F_2&=& -\pa^k\tilde{\mathcal E}_{P,\Psi}+b^2\left[\Delta^{K_m}\left(\frac{\Delta \rho_T}{\rho_T}\right)-\frac{\Delta^{K_m+1} \rho_T}{\rho_T}+\frac{k\nabla\Delta^{K_m}\rho_T\cdot\nabla\rho_T}{\rho_T^2}\right]\\
\nonumber &-&(p-1)\left([\Delta^{K_m},\rho_D^{p-2}]\rhot-k(p-2)\rho_D^{p-3}\nabla\rho_D\cdot\nabla\Delta^{K_m-1}\rhot\right)\\&-&\mathcal A_k(\Psit)
- \sum_{j_1+j_2=k,j_1,j_2\geq 1}\nabla^{j_1}\nabla\Psit\cdot\nabla^{j_2}\nabla\Psit-\Delta^{K_m}\NL(\rhot).
\eea
and
\be
\label{estimatepenttniialvone'}
\left|\begin{array}{l}
|{\mathcal A}_k(\Psit)|\lesssim \sum_{j=1}^{k-1}\frac{|\nabla^j\Psit|}{\la Z\ra^{k_m+r-j}}\\
|\nabla \mathcal A_k(\Psit)|\lesssim \sum_{j=1}^{k}\frac{|\nabla^j\Psit|}{\la Z\ra^{k_m+r+1-j}}.
\end{array}\right.
\ee
We then run the global quasilinear energy identity similar to \eqref{algebracienergyidnentiy} with $\chi=1$ and obtain:
\bea
\label{algebracienergyidnentiybis}
&&\frac 12\frac{d}{d\tau}\left\{\int b^2|\nabla\rhot_k|^2+(p-1)\int \rho_D^{p-2}\rho_T\rhot_k^2+\int\rho_T^2|\nabla \Psit_k|^2\right\}\\
\nonumber &=& -\mu(r-2)b^2\int|\nabla\rhot_k|^2+\int\pa_\tau\rho_T\left[\frac{p-1}{2}\rho_D^{p-2}\rhot_k^2+\rho_T|\nabla \Psit_k|^2\right]+ \frac{p-1}{2}\int (p-2)\pa_\tau\rho_D\rho_D^{p-3}\rho_T\rhot_k^2\\
\nonumber & + & \int F_1(p-1)\rho_D^{p-2}\rho_T\rhot_k+b^2\int \nabla F_1\cdot\nabla \rhot_k+\int \rho^2_T\nabla F_2\cdot\nabla \Psit_k\\
\nonumber &-& \int k\nabla\rho_T\cdot\nabla\Psi_k(-b^2\Delta \rhot_k+(p-1)\rho_D^{p-2}\rho_T\rhot_k)+\int b^2\frac{k\nabla \Delta^{K_m}\rho_T\cdot\nabla\rho_T}{\rho_T}(\rho_T\Delta \Psit_k+2\nabla\rho_T\cdot\nabla\Psit_k)\\
\nonumber & + & \int \left[(H_1-k(H_2+\Lambda H_2))\rhot_k-H_2\Lambda \rhot_k-(\Delta^{K_m}\rho_T)\Delta \Psit-2\nabla(\Delta^{K_m}\rho_T)\cdot\nabla \Psit\right]\\
\nonumber&\times& \left[-b^2\Delta \rhot_k+(p-1)\rho_D^{p-2}\rho_T\rhot_k\right]\\
\nonumber & - & \int\left[b^2\Delta^{K_m+1} \rho_D-k\rho_T(H_2+\Lambda H_2)\Psit_k-\rho_T H_2\Lambda \Psit_k-\mu(r-2)\rho_T\Psit_k-2\rho_T\nabla \Psit\cdot\nabla \Psit_k\right]\\
\nonumber&\times & \left[2\nabla\rho_T\cdot\nabla \Psit_k+\rho_T\Delta \Psit_k\right]\\
\nonumber & + & \int k(p-1)(p-2)\rho_T\rho_D^{p-3}\nabla\rho_D\cdot\nabla \Delta^{K_m-1}\rhot\left[2\nabla\rho_T\cdot\nabla \Psit_k+\rho_T\Delta \Psit_k\right].
\eea
We now estimate all terms in \eqref{algebracienergyidnentiybis}. The proof is similar to that one of Proposition \eqref{propinduction} with two main differences:  the absence of a cut-off function $\chi$, and a priori control of lower order derivatives  from \eqref{veniovnineonelweroidre}.  The challenge here is to avoid any loss of derivatives and to compute {\em exactly} the quadratic form at the highest level of derivatives.
The latter will be shown to be positive on a compact set in $Z$ provided $k_m>k_m^*(d,r,p)\gg 1$ has been chosen large enough.\\

In what follows, below, we will use $\delta>0$ as a small universal constant and will assume that the pointwise bounds \eqref{smallnessoutsideinitbootfinal} obtained on the lower order derivatives of $\rhot$ and $\Psit$ are dominated by $\delta$.
On the set $Z\le Z^*$, this will often be a source of smallness, while for $Z\ge Z^*$, we may use the bootstrap bounds \eqref{smallglobalboot} and the $\delta$-smallness will be generated by
extra powers of $Z$. We also note that from \eqref{estnienonneo} the quadratic form is expected to be proportionate to 
$k_m I_{k_m}$. Choosing $k_m$ large will allow us to dominate other quadratic terms without smallness but with the uniform 
dependence on $k_m$. The notation $\lesssim$ will allow dependence on $k_m$, while $O$ will indicate a bound independent of $k_m$. As before, ${\mathcal d}_0$  (as well as ${\mathcal d}$) 
will denote small constants, dependent on the data (or, more precisely, on $\tau_0$), that can be made arbitrarily small. In particular, we will use 
\be\label{eq:smaf}
\|\rhot,\Psi\|_{k_m-1,\sigma(k_m-1)}\le {\mathcal d}_0.
\ee
The constants $\delta\gg {\mathcal d}_0$ will be assumed to be smaller than any power of $k_m$, so that our calculations will be unaffected 
by combinatorics generated by taking $k_m$ derivatives of the equations.
\\

\noindent{\bf step 3} Leading order terms.\\

\noindent\underline{\em Cross term}. Recall \eqref{pohozaevbispouet}:
\bee
\nonumber \int\Delta g F\cdot\nabla gdx&=&\sum_{i,j=1}^d \int \pa_i^2 g F_j\pa_jgdx=-\sum_{i,j=1}^d \int\pa_ig(\pa_iF_j\pa_jg+F_j\pa^2_{i,j}g)\\
&=&-\sum_{i,j=1}^d \int\pa_iF_j\pa_ig\pa_jg+\frac 12\int |\nabla g|^2\nabla \cdot F.
\eee
Letting $g=g_1+g_2$ yields a bilinear off-diagonal Pohozhaev identity:
\bee
\int\left[\Delta g_1 F\cdot\nabla g_2+\Delta g_2 F\cdot\nabla g_1\right]dx=-\sum_{i,j=1}^d \int\pa_iF_j(\pa_ig_1\pa_jg_2+\pa_ig_2\pa_jg_1)+\int \nabla g_1\cdot\nabla g_2(\nabla \cdot F).
\eee
We may therefore integrate by parts the one term in \eqref{algebracienergyidnentiy} which has too many derivatives:
\bee
\nonumber &&b^2k\left|\int \left[\nabla\rho_T\cdot\nabla\Psi_k\Delta \rhot_k+\nabla\Delta^{K_m}\rho_T\cdot\nabla\rho_T\Delta \Psit_k\right]\right|\\
& =& b^2k\left|\int \nabla \rho_T\cdot\nabla \Psi_k\Delta\rhot_k+\nabla \rho_T\cdot\nabla \rhot_k\Delta \Psi_k+\nabla \rho_T\cdot\nabla \Delta^{K_m}\rho_D\Delta \Psi_k\right|\\
& = & b^2k\left|-\int\sum_{i,j=1}^d\pa^2_{i,j}\rho_T(\pa_i\rhot_k\pa_j\Psi_k+\pa_i\Psi_k\pa_j\rhot_k)+\int\nabla \rhot_k\cdot\nabla \Psi_k\Delta \rho_T+\int\nabla\Psi_k\cdot\nabla(\nabla \rho_T\cdot\nabla \Delta^{K_m}\rho_D)\right|\\
& \lesssim & b^2k\int\rho_T|\nabla \Psi_k|\left[\frac{1}{\la Z\ra^{k+2}}+\frac{|\nabla \rhot_k|}{\la Z\ra^2}\right] \leq  \delta \int\rho_T^2|\nabla \Psit_k|^2+C_\delta b^4+C_\delta b^4\int |\nabla \rhot_k|^2\\
&\lesssim& \delta J_{k_m}+C_\delta b^4.
\eee
We estimate similarly:
\bee
\left|kb^2\int\frac{\nabla \Delta^{K_m}\rho_T\cdot\nabla\rho_T}{\rho_T}\nabla \rho_T\cdot\nabla \Psit_k\right|\leq \delta\left[b^2\int|\nabla\rhot_k|^2+\int\rho_T^2|\nabla\Psit_k|^2\right]+c_\delta b^4\lesssim \delta J_{k_m}+C_\delta b^4.
\eee
We use 
\be
\label{vnenvenveneneonv}
\frac{|\rhot|}{\rho_T}+\frac{|\Lambda\rhot|}{\la Z\ra^c\rho_T}\lesssim \delta, \  \ 0<c\ll1
\ee
 to  compute the first coupling term:
\bee
&&-k(p-1)\int\nabla\rho_T\cdot\nabla\Psit_k\rho_D^{p-2}\rho_T\rhot_k= -k\int\rho_D\nabla \rho^{p-1}_D\cdot\nabla \Psit_k\rhot_k+ O\left(\delta\int \frac{|\nabla \Psit_k|\rho_D^{p-1}|\rho_T\rhot_k|}{\la Z\ra}\right)\\
& = & -k\int\rho_D\nabla \rho_D^{p-1}\cdot\nabla \Psit_k\rhot_k+O\left(\delta J_{k_m}\right) \eee
The second coupling term is computed after an integration by parts using \eqref{vnenvenveneneonv}, the control of lower order terms \eqref{veniovnineonelweroidre} and the spherically symmetric assumption:
\bee
&&\int (\rho_T\Delta \Psit_k+2\nabla \rho_T\cdot\nabla \Psit_k)k(p-1)(p-2)\rho_T\rho_D^{p-3}\nabla\rho_D\cdot\nabla\Delta^{K_m-1}\rhot\\
& = & k(p-1)(p-2)\int \nabla \cdot(\rho_T^2\nabla \Psi_k)\rho_D^{p-3}\nabla\rho_D\cdot\nabla\Delta^{K_m-1}\rhot\\
& = & -k(p-1)(p-2)\int\rho_T^2\nabla \Psi_k\cdot\nabla\left(\rho_D^{p-3}\nabla\rho_D\cdot\nabla\Delta^{K_m-1}\rhot\right)\\
& = &  -k(p-1)(p-2)\int\rho_T^2\pa_Z\Psi_k\pa_Z\left(\rho_D^{p-3}\pa_Z\rho_D\pa_Z\Delta^{K_m-1}\rhot\right)\\
& = &   -k(p-1)(p-2)^d\int\rho_D^{p-3}\pa_Z\rho_D\rho_T^2\pa_Z\Psi_k\pa^2_Z\Delta^{K_m-1}\rhot+O\left(\int \rho_T|\nabla \Psi_k|\rho_T^{p-1}\frac{|\nabla^{k_m-1}\rhot|}{\la Z\ra^2}\right)\\
& = &-\int k(p-2)\rho_D\pa_Z(\rho_D^{p-1})\pa_Z\Psit_k \rhot_k+\int k(p-2)(d-1)\rho_D\pa_Z(\rho_D^{p-1})\pa_Z\Psit_k\frac{\pa_Z\Delta^{K_m-1}\rhot}{|Z|}\\ &+&O\left(\int \rho_T|\nabla \Psi_k|\rho_T^{p-1}\frac{|\nabla^{k_m-1}\rhot|}{\la Z\ra^2}\right)\\
& = & -k(p-2)\int\rho_D\nabla \rho^{p-1}_D\cdot\nabla \Psit_k\rhot_k+O\left({\mathcal d}_0+\delta J_{k_m}\right),
\eee
where in the last step we used that
$
\frac {|\pa_Z \rho_D|}{|Z|\rho_D} \lesssim \frac 1{\la Z\ra^2}.
$

\noindent\underline{\em $\rho_k$ terms}. We compute:
\bee
&&\int(H_1-k(H_2+\Lambda H_2)\rhot_k)(-b^2\Delta \rhot_k+(p-1)\rho_D^{p-2}\rho_T\rhot_k)\\
&=& \int(H_1-k(H_2+\Lambda H_2))\left[b^2|\nabla \rhot_k|^2+(p-1)\rho_D^{p-2}\rho_T\rhot^2_k\right]-\frac{b^2}{2}\int \rhot_k^2\Delta(H_1-k(H_2+\Lambda H_2)).
\eee
We now use the global lower bound, see properties \eqref{coercivityquadrcouplinginside} and \eqref{P} of the the profile 
$(w,\sigma)$, 
$$H_2+\Lambda H_2=\mu(1-w-\Lambda w)\ge c_{p,d,r}, \ \ c_{p,d,r}>0$$ 
to estimate using \eqref{esterrorpotentials}, \eqref{veniovnineonelweroidre}:
\bee
&&\int(H_1-k(H_2+\Lambda H_2))\rhot_k(-b^2\Delta \rhot_k+(p-1)\rho_D^{p-2}\rho_T\rhot_k)\\
& \leq& -k\int\left[1+O_{k_m\to +\infty}\left(\frac{1}{k_m}\right)\right](H_2+\Lambda H_2)\left[b^2|\nabla \rhot_k|^2+(p-1)\rho_D^{p-2}\rho_T\rhot^2_k\right]+Cb^2\int\frac{\rho_k^2}{\la Z\ra^{2+r}}\\
& \leq & -k\int (H_2+\Lambda H_2)\left[b^2|\nabla \rhot_k|^2+(p-1)\rho_D^{p-2}\rho_T\rhot^2_k\right]+\delta J_{k_m}.
\eee
Next, using $$|\pa^k\rho_D|\lesssim \frac{\rho_D}{\la Z\ra^{k}},$$
we estimate from \eqref{smallglobalboot}:
\bee
&&b^2\left|\int \Delta \rhot_k\left[(\Delta^{K_m}\rho_D)\Delta \Psit+2\nabla(\Delta^{K_m}\rho_D)\cdot\nabla \Psit\right]\right|\\
&\lesssim&  b^2\int |\nabla \rhot_k|\left[|\nabla(\Delta^{K_m}\rho_D\Delta \Psit)|+|\nabla(\nabla\Delta^{K_m}\rho_D\cdot\nabla \Psit)|\right]\\
& \leq & b^2\delta\int |\nabla \rhot_k|^2+\frac{b^2}{\delta}\sum_{j=1}^3\int \frac{\rho_D^2|\pa^j\Psit|^2}{\la Z\ra^{2\left(k+3-j\right)}}\leq  \delta J_{k_m}+{\mathcal d}_0 b^2.
\eee
For the nonlinear term, we use \eqref{smallglobalboot}, \eqref{pohozaevbispouet}, \eqref{veniovnineonelweroidre} to estimate 
\bee
&&b^2\left|\int \Delta \rhot_k\left[\rhot_k\Delta \Psit+2\nabla\rhot_k\cdot\nabla \Psit\right]\right|\lesssim b^2\left[\int |\pa^2\Psit||\nabla \rhot_k|^2+\int \frac{\rhot_k^2}{\la Z\ra^{2}}\right]\leq   \delta J_{k_m}+{\mathcal d}_0 b^2.
\eee
Next:
\bee
&&\left|\int \left[(\Delta^{K_m}\rho_D)\Delta \Psit-2\nabla(\Delta^{K_m}\rho_D)\cdot\nabla \Psit\right](p-1)\rho_D^{p-2}\rho_T\rhot_k\right|\\
& \leq & \delta\int \rho_D^{p-2}\rho_T\rhot_k^2+\frac{C}{\delta}\int\rho_D^{p-2}\rho_T^2\left[\frac{|\pa^2\Psit|^2}{\la Z\ra^{{2 k}}}+\frac{|\pa\Psit|^2}{\la Z\ra^{2(k+1)}}\right]\\
& \leq & \delta J_{k_m}+{\mathcal d}_0,
\eee
since we are assuming that ${\mathcal d}_0\ll \delta$,
and for the nonlinear term after an integration by parts:
$$\left|\int \left[\rhot_k\Delta \Psit-2\nabla\rhot_k\cdot\nabla \Psit\right](p-1)\rho_D^{p-2}\rho_T\rhot_k\right|\lesssim \delta\int \rho_D^{p-2}\rho_T\rhot_k^2.
$$
From Pohozhaev \eqref{pohozaevbispouet}:
$$
-\int H_2\Lambda \rhot_k(-b^2\Delta \rhot_k)=b^2\int \Delta \rhot_k(ZH_2)\cdot\nabla \rhot_k=  O\left(b^2\int |\nabla \rhot_k|^2.\right)
$$
Integrating by parts and using \eqref{esterrorpotentials}, \eqref{globalbeahvoiru}:
\bee
&&-\int H_2\Lambda \rhot_k\left[(p-1)\rho_D^{p-2}\rho_T\rhot_k\right]+\frac{p-1}{2}\int (p-2)\pa_\tau\rho_D\rho_D^{p-3}\rho_T\rhot_k^2+\frac{p-1}{2}\int \pa_\tau\rho_T\rhot^{p-2}\rhot_k^2\\
&=& \frac {p-1}2\int \rhot_k^2\left[\nabla \cdot(ZH_2\rho_D^{p-2}\rho_T)+\pa_\tau(\rho_D^{p-2})+ \pa_\tau\rho_D\rhot^{p-2}\right]=  O\left(\int \rho_D^{p-2}\rho_T\rhot_k^2\right)
\eee
Note that the above two bounds, even though dependent on the highest order derivatives, contain no $k$ dependence.

\noindent\underline{\em $\Psi_k$ terms}. After an integration by parts:
\bee
&&\left|\int b^2\Delta^{K_m+1}\rho_D\left[2\nabla\rho_T\cdot\nabla \Psit_k+\rho_T\Delta \Psit_k\right]\right|\\
&\lesssim&  b^2\int \rho_T\frac{|\nabla\Psit_k|}{\la Z\ra^{k+3}}+b^2\leq\delta\int \rho_T^2|\nabla \Psi_k|^2+{\mathcal d}_0.
\eee
Then
\bee
&&\mu(r-2)\int\rho_T\Psit_k\left[2\nabla\rho_T\cdot\nabla \Psit_k+\rho_T\Delta \Psit_k\right] \\
&=&  -\mu(r-2)\int \Psit_k^2\nabla\cdot(\rho_T\nabla \rho_T)-\mu(r-2)\int \nabla \Psit_k\cdot\nabla(\rho_T^2\Psit_k)\\
& = & -\mu(r-2)\int \rho_T^2|\nabla \Psit_k|^2
\eee
and similarly, using \eqref{esterrorpotentials}, \eqref{veniovnineonelweroidre}:
\bee
&&k\int\rho_T(H_2+\Lambda H_2)\Psit_k\left[2\nabla\rho_T\cdot\nabla \Psit_k+\rho_T\Delta \Psit_k\right]=k\int (H_2+\Lambda H_2)\Psi_k\nabla\cdot(\rho_T^2\nabla \Psi_k)\\
& = & -k\left[\int (H_2+\Lambda H_2)\rho_T^2|\nabla \Psit_k|^2+\int \rho_T^2\Psit_k^2\left(\frac{\nabla \cdot(\rho_T^2\nabla(H_2+\Lambda H_2))}{2\rho^2_T}\right)\right]\\
&=& -k\int (H_2+\Lambda H_2)\rho_T^2|\nabla \Psit_k|^2+{\mathcal d}_0,
\eee
where the $\Psi_k^2$ term is controlled, with the help of the bound
$$
\left|\frac{\nabla \cdot(\rho_T^2\nabla(H_2+\Lambda H_2))}{2\rho^2_T}\right|\lesssim 
\la Z\ra^{-2-r},
$$
 by using the already bounded $\|(\rhot,\Psi)\|_{k_m-1,\sigma(k_m-1)}$-norm.

Then, from  \eqref{smallnessoutsideinitbootfinal} and \eqref{smallnessoutsideinitbootfinal-ext}:
$$
\left|\int 2\rho_T\nabla \Psit\cdot\nabla \Psit_k(2\nabla \rho_T\cdot\nabla \Psit_k)\right|\lesssim {\mathcal d}_0\int \rho_T^2|\nabla \Psit_k|^2$$
 and using \eqref{pohozaevbispouet}:
\bee
\left|\int 2\rho_T\nabla \Psit\cdot\nabla \Psit_k(\rho_T\Delta \Psit_k)\right|\lesssim \int |\nabla \Psit_k|^2||\pa(\rho_T^2\nabla \Psit)|\lesssim {\mathcal d}_0\int \rho_T^2|\nabla\Psit_k|^2.
\eee
Arguing verbatim as in the proof of \eqref{shaprpohoazev} produces the bound
$$\left|\int \rho_TH_2\Lambda \Psit_k\left(2\nabla \rho_T\cdot\nabla \Psit_k+\rho_T\Delta \Psit_k\right)\right|=O\left(\int\rho_T^2|\nabla \Psi_k|^2\right).
$$

\noindent{\bf step 4} $F_1$ terms. We claim the bound:
\be
\label{estfoneessentialbis}
 b^2\int |\nabla F_1|^2+(p-1)\int {\rho}_{D}^{p-1}F_1^2\lesssim J_{k_m}+  {\mathcal d}_0.
\ee
\noindent\underline{\em Source term induced by localization}. From \eqref{neineinneonev}, for $k_m$ large enough:
$$
\int \rho_D^{p-2}\rho_T|\Delta^{K_m}\tilde{\mathcal E}_{P,\rho}|^2+b^2\int|\nabla \Delta ^{K_m}\tilde{\mathcal E}_{P,\rho}|^2\lesssim {\mathcal d}_0.
$$

\noindent\underline{\em $[\Delta^{K_m},H_1]$ term}. We estimate from \eqref{neoneneonoev}, \eqref{eq:smaf}
 $$(p-1)\int \rho_D ^{p-1}([\Delta^{K_m},H_1]\rhot)^2\lesssim \sum_{j=0}^{k-1}\int  \rho_D^{p-1}\frac{|\pa^j\rhot|^2}{\la Z\ra^{2(r+k-j)}}\le {\mathcal d}_0
$$
and 
\bee
&&b^2\int|\nabla ([\Delta^{K_m},H_1]\rhot)|^2\lesssim b^2 \sum_{j=0}^{k}\int\frac{|\pa_j\rhot|^2}{\la Z\ra^{2(1+r+k-j)}}\\
&=& b^2\int \frac{\rhot^2dZ}{\la Z\ra^{2(1+r+k)}}+b^2 \sum_{j=0}^{k-1}\int \frac{|\pa^j\nabla \rhot|^2}{\la Z\ra^{2(r+k+1-j)+2}}\lesssim b^2+\|\rhot,\Psi\|_{k_m-1,\sigma(k_m-1)}\le {\mathcal d}_0.
\eee

\noindent\underline{\em $\mathcal A_k(\rhot)$ term}. From \eqref{estimatepenttniialvone}, \eqref{eq:smaf}:
$$(p-1)\int \rho_D^{p-1}(\mathcal A_k(\rhot))^2\lesssim \sum_{j=1}^{k-1}\int \rho_D^{p-1} \frac{|\nabla^j\rhot|^2}{\la Z\ra^{2(r+k-j)}}\le {\mathcal d}_0
$$
and 
$$b^2\int|\nabla (\mathcal A_k(\rhot))|^2\lesssim b^2 \sum_{j=0}^{k-1}\int \frac{|\nabla\nabla^j\rhot|^2}{\la Z\ra^{2(r+k-j)}}\le {\mathcal d}_0
$$
and \eqref{estfoneessentialbis} is proved for this term.\\

\noindent\underline{\em Nonlinear term}. After changing indices, we need to estimate
$$
N_{j_1,j_2}=\nabla^{j_1}\rho_T\nabla^{j_2}\nabla \Psit, \ \ j_1+j_2=k+1, \ \ 2\leq j_1,j_2\le k-1.
$$
For the profile term:
$$|\pa^{j_1}\rho_D\nabla^{j_2}\nabla \Psit|\lesssim \rho_{D}\frac{|\nabla^{j_2}\nabla \Psit|}{\la Z\ra^{j_1}}= \rho_D\frac{|\nabla^{j_2}\nabla \Psit|}{\la Z\ra^{k+1-j_2}}$$ and therefore, recalling \eqref{boudnthod}, \eqref{eq:smaf}:
$$\int(p-1) N_{j_1,j_2}^2\rho_D^{p-1}\lesssim \int \frac{\rho_T^2|\nabla^{j_2}\nabla \Psit|^2}{\la Z\ra^{2(k+1-j_2)+2(r-1)}}\le {\mathcal d}_0.
$$
Similarly, after taking a derivative:
$$
b^2\int|\nabla N_{j_1,j_2}|^2\lesssim b^2\int \frac{\rho_T^2|\nabla^{j_2}\nabla \Psit|^2}{\la Z\ra^{2(k+2-j_2)}}
 +b^2\int \frac{\rho_T^2|\nabla^{j_2+1}\nabla \Psit|^2}{\la Z\ra^{2(k+1-j_2)}}\le  {\mathcal d}_0+\delta J_{k_m}.
$$
The $\delta J_{k_m}$ term above controls the case $j_2=k-1$.

We now turn to the control of the nonlinear term.  If $j_1\leq \frac{4k_m}{9}$, then from \eqref{smallglobalboot}, \eqref{veniovnineonelweroidre}:
\bee
&&\int \rho_D^{p-1}|\nabla^{j_1}\rhot\nabla^{j_2}\nabla \Psi|^2\lesssim \int\rho_D
^2\frac{|\nabla^{j_2}\nabla \Psi|^2}{\la Z\ra^{2(k+1-j_2)+(p-1)\frac{2(r-1)}{p-1}}}\le {\mathcal d}_0.
\eee
If $j_2\le \frac{4k_m}{9}$, then from \eqref{smallglobalboot} with $b=\frac{1}{(Z^*)^{r-2}}$:
\bee
&&\int\rho_D^{p-1}|\nabla^{j_1}\rhot\nabla^{j_2}\nabla \Psi|^2 \lesssim \int_{Z\leq Z^*} \rho_D^{p-1}\frac{|\nabla^{j_1}\rhot|^2}{\la Z\ra^{2(k+1+(r-2)-j_1)}}+b^2\int_{Z\ge Z^*}\rho_D^{p-1}\frac{|\nabla^{j_1}\rhot|^2}{\la Z\ra^{2(k+1-j_1)}}\\
&\le&c_{k}
\eee
We may therefore assume $j_1,j_2\ge m_0=\frac{4k_m}{9}+1$, which implies $k\ge m_0$ and $j_1,j_2\leq \frac{2k_m}{3}$ and hence from \eqref{smallglobalboot} and \eqref{smallnessoutsideinitbootfinal}:
\bee
\int \rho_D^{p-1}|\pa^{j_1}\rhot\nabla^{j_2}\nabla \Psi|^2\lesssim {\mathcal d}_0+\int_{Z\ge Z^*}\frac{dZ}{\la Z\ra^{\frac{k_m}{10}}}\le {\mathcal d}_0.
\eee
The $b^2$ derivative contribution of the nonlinear term is estimated similarly.\\

\noindent{\bf step 5} $F_2$ terms. We claim: 
\be
\label{estfoneessentialftwobisbis}
\int\rho_T^2|\nabla (F_2+\Delta^{K_m}\NL(\rhot))|^2 \leq \delta  J_{k_m}+ {\mathcal d}_0.
\ee
The nonlinear term $\Delta^{K_m}\NL(\rhot)$ will be treated in the next step.

\noindent\underline{\em $ \matchal A_k(\Psi)$ term}. From \eqref{estimatepenttniialvone'}
$$
|\nabla\mathcal A_k(\Psi)|\lesssim \sum_{j=1}^{k}\frac{|\nabla^j\Psit|}{\la Z\ra^{r+k-j+1}}$$
and hence:
$$
\int\rho_T^2|\nabla\mathcal A_k(\Psi)|^2\lesssim \sum_{j=0}^{k-1}\int\rho_T^2\frac{|\nabla \nabla^j\Psit|^2}{\la Z\ra^{2(r+k-j)}}\le {\mathcal d}_0.
$$
\noindent\underline{$[\Delta^{K_m},\rho_D^{p-2}]$ term}. From \eqref{estimatecommutatorvlkeveln}:
$$\left|[\Delta^{K_m},\rho_D^{p-2}]\rhot-k(p-2)\rho_D^{p-3}\nabla\rho_D\cdot\nabla\Delta^{K_m-1}\rhot\right|\lesssim \sum_{j=0}^{k-2}\frac{|\nabla^j\rhot|}{\la Z\ra^{k-j}}\rho_D^{p-2}$$
After taking a derivative:
\bee
&&\int \rho_T^2\left|\nabla \left[[\Delta^{K_m},\rho_D^{p-2}]\rhot-k(p-2)\rho_D^{p-3}\nabla\rho_D\cdot\nabla\Delta^{K_m-1}\rhot\right]\right|^2\\
&\lesssim& \sum_{j=0}^{k-1}\int \rho_D^{2(p-2)+2}\frac{|\nabla^j\rhot|^2}{\la Z\ra^{2(k-j)+2}}\le {\mathcal d}_0.
\eee

\noindent\underline{Nonlinear $\Psi$ term}. Let $$\pa N_{j_1,j_2}=\nabla^{j_1}\nabla\Psi\nabla^{j_2}\nabla\Psi, \ \ j_1+j_2={k+1}, \ \ j_1,j_2\geq 1.$$ We first treat the highest derivative term using the $L^\infty$ smallness of small derivatives:
Using \eqref{smallglobalboot} and \eqref{smallnessoutsideinitbootfinal}
$$\int \rho_T^2|\nabla\nabla \Psi|^2|\nabla^{k_m}\nabla \Psi|^2\leq ({\mathcal d}_0+b^2) I_{k_m}.
$$ We now assume $j_1,j_2\le k_m-1$. If $j_1\leq \frac{4k_m}{9},$ then from \eqref{smallglobalboot}, \eqref{veniovnineonelweroidre}:
$$
\int \rho_T^2|\nabla N_{j_1,j_2}|^2\lesssim ({\mathcal d}_0+b^2)\int \rho_T^2\frac{|\nabla^{j_2}\nabla\Psi|^2}{\la Z\ra^{2(k+1-j_2)}}\leq {\mathcal d}_0.
$$
The expression being symmetric in $j_1,j_2$, we may assume $j_1,j_2\geq m_0=\frac{4k_m}{9}+1$, $j_1,j_2\leq \frac{2k_m}{3}$, hence $k\ge m_0=\frac{4k_m}{9}+1$ and using \eqref{smallglobalboot}, \eqref{veniovnineonelweroidre}:
$$
\int  \rho_T^2|\nabla N_{j_1,j_2}|^2\lesssim {\mathcal d}_0 \int_{Z\le Z^*}\frac{dZ}{\la Z\ra^{\frac{k_m}{10}}}+b^4\int_{Z>Z^*}\frac{dZ}{\la Z\ra^{\frac{k_m}{10}}}\leq {\mathcal d}_0.$$

\noindent\underline{Quantum pressure term}. We estimate from Leibniz and \eqref{estimatecommutatorvlkeveln}:
\bee
b^2\left|\Delta^{K_m}\left(\frac{\Delta \rho_T}{\rho_T}\right)-\frac{\Delta^{K_m+1} \rho_T}{\rho_T}+\frac{k\nabla \Delta^{K_m} \rho_T\cdot\nabla \rho_T}{\rho_T^2}\right|\lesssim_k b^2\sum_{j_1+j_2=k,j_2\geq 2}\left|\nabla^{j_1}\Delta \rho_T\pa^{j_2}\left(\frac{1}{\rho_T}\right)\right|.
\eee

We use the Faa di Bruno formula: 
$$N_{j_1,j_2}=b^2\nabla^{j_1+1}\Delta \rho_T\frac{1}{\rho_T^{j_2+1}}\sum_{m_1+2m_1+\dots+j_2m_{j_2}=j_2}\Pi_{i=0}^{j_2}(\nabla^i\rho_T)^{m_i}$$ and  $m_0+m_1+2m_2+\dots+j_2m_{j_2}=j_2$.
We decompose ${\rho_T}=\rho_D+\rhot$ in the $\Sigma$
and estimate the $\rho_D$ contribution:
\bee
&&b^4\int\rho_T^2\left\{\sum_{j_1+j_2=k,j_2\geq 2}\frac{|\nabla^{j_1+1}\Delta \rho_T|^2}{\rho_T^2\la Z\ra^{2j_2}}+\frac{|\nabla^{j_1}\Delta \rho_T|^2}{\rho_T^2\la Z\ra^{2j_2+2}}\right\}\\
& \lesssim & b^4\sum_{j_1+j_2=k,j_2\geq 2}\left[\int \frac{\rho_T^2dZ}{\la Z\ra^{2j_2+2(j_1+3)}}+\int\frac{|\nabla^{j_1+3}\rhot|^2}{\la Z\ra^{2j_2}}\right]\\
& \lesssim & b^4\left(1+\sum_{j_1=2}^k\int\frac{|\nabla \nabla^j\rhot|^2}{\la Z\ra^{2(k-j_1)+2}}\right)\le {\mathcal d}_0+\delta J_{k_m}
\eee
In the general case, we replace $(\nabla^i\rho_T)^{m_i}$ by $(\nabla^i\rhoh)^{m_i}$ where $\rhoh$ is either $\rho_D$
or $\rhot$. In both cases we will use the weaker estimates \eqref{smallglobalboot}.

 First, assume that $m_i=0$ for $i\ge \frac{4k_m}{9}+1$, then from \eqref{smallglobalboot}:
$$|N_{j_1,j_2}|\lesssim b^2|\nabla^{j_1+1}\Delta \rho_T|\frac{1}{\rho_T^{j_2+1}}\sum_{m_1+2m_1+\dots+j_2m_{j_2}=j_2}\Pi_{i=0}^{j_2}|(\nabla^i\rhoh)^{m_i}|\lesssim b^2 \frac{|\nabla^{j_1+1}\Delta \rho_T|}{\rho_T\la Z\ra^{j_2}}
$$
and the conclusion follows as above. Otherwise, there are at most two value $\frac{4k_m}{9}\le i_1\leq i_2\leq j_2$ with $m_{i_1},m_{i_2}\ne 0$ and $m_{i_1}+m_{i_2}\le 2$.
Hence from \eqref{smallglobalboot}:
\bee
\frac{1}{\rho_T^{j_2+1}}\Pi_{i=0}^{j_2}|(\nabla^i\rhot)^{m_i}|&\lesssim& \frac{1}{\rho_D^{j_2+1}}|\nabla^{i_1}\rhot|^{m_{i_1}}|\nabla^{i_2}\rhoh|^{m_{i_2}}\Pi_{0\leq i\le j_2, i\notin \{i_1,i_2\}}\left(\frac{\rho_D}{\la Z\ra^{i}}\right)^{m_i}\\
&\lesssim&  \left(\frac{|\nabla^{i_1}\rhoh|}{\rho_D}\right)^{m_{i_1}}\left(\frac{|\nabla^{i_2}\rhoh|}{\rho_D}\right)^{m_{i_2}}\frac{1}{\rho_D\la Z\ra^{j_2-(m_{i_1}i_1+m_{i_2}i_2)}}.
\eee
Assume first $i_2\ge \frac{2k_m}3+1$, then $m_{i_1}=0$, $m_{i_2}=1$ and $j_1+3\le \frac{4k_m}{9}$ from which:
\bee
\int\rho_T^2|N_{j_1,j_2}|^2&\lesssim& b^4\int\rho_T^2 |\nabla^{j_1+1}\Delta \rho_T|^2\frac{|\nabla^{i_2}\rhoh|^2}{\rho^2_D}\frac{1}{\rho^2_D\la Z\ra^{2(j_2-i_2)}}\lesssim b^4\int \frac{|\nabla^{i_2}\rhoh|^2}{\la Z\ra^{2(j_2-i_2)+2(j_1+3)}}\\
& \lesssim & b^4\int \frac{|\nabla^{i_2}\rhoh|^2}{\la Z\ra^{2(k-i_2)+6}}\le \mathcal d_0
\eee
There remains the case $\frac{4k_m}{9}+1\leq i_1\leq i_2\leq \frac{2k_m}3$ which imply $j_1+3\leq\frac{2k_m}{3}$, and we distinguish cases:\\
{\em -- case $(m_{i_1},m_{i_2})=(0,1)$}: if $j_1+3\leq \frac{4k_m}{9}$, we estimate
\bee
\int\rho_T^2|N_{j_1,j_2}|^2&\lesssim& b^4\int\rho_D^2 |\nabla^{j_1+1}\Delta \rho_T|^2\frac{|\nabla^{i_2}\rhoh|^2}{\rho^2_D}\frac{1}{\rho^2_D\la Z\ra^{2(j_2-i_2)}}\lesssim b^4\int \frac{|\nabla^{i_2}\rhoh|^2}{\la Z\ra^{2(j_2-i_2)+2(j_1+3)}}\\
& \lesssim & b^4\int \frac{|\nabla^{i_2}\rhoh|^2}{\la Z\ra^{2(k-i_2)+6}}\le {\mathcal d}_0
\eee
Otherwise,  $\frac{4k_m}{9}+1\leq j_1+3\leq \frac{2k_m}{3}$. Hence $\frac{4k_m}{9}+1\leq j_1+3\leq \frac{2k_m}{3}$, $\frac{4k_m}{9}+1\leq i_2\leq \frac{2k_m}{3}$ and we estimate from \eqref{smallglobalboot}, using $k_m$ large:
\bee
\int\rho_T^2|N_{j_1,j_2}|^2\lesssim b^4\int \frac{Z^{d-1} dZ}{\la Z\ra^{2\left(\frac{k_m}{4}+\frac{k_m}{4}\right)}}\lesssim b^4\le {\mathcal d}_0.
\eee
{\em -- case $m_{i_1}+m_{i_2}=2$}: we obtain from \eqref{smallglobalboot}, \eqref{neeneonenoe}
and $j_1+3\le \frac{2k_m}{3}$ 
\bee
\int\rho_T^2|N_{j_1,j_2}|^2&\lesssim& b^4\int\rho_D^2 |\pa^{j_1+1}\Delta \rho_T|^2\left(\frac{1}{\la Z\ra^{\frac{k_m}4}}\right)^4\lesssim b^4\int \frac{dZ}{\la Z\ra^{k_m}}\le {\mathcal d}_0.
\eee

\noindent{\bf step 6} $\NL(\rhot)$ term. We need to estimate $$\int\rho_T^2\nabla \Delta^{K_m}\NL(\rhot)\cdot\nabla \Psit_k$$ which requires an integration by part in time for the highest order term.
We expand using for the sake of simplicity that the nonlinearity is an integer:
$$\NL(\rhot)=(\rho_D+\rhot)^{p-1}-\rho_D^{p-1}-(p-1)\rho_D^{p-2}\rhot=\sum_{q=2}^{p-1}c_{q}\rhot^{q}\rho_D^{p-1-q}$$ and hence by Leibniz:
\bee
\Delta^{K_m}\NL(\rhot)&=&\sum_{q=2}^{p-1}c_{q}\rhot^{q-1}\left(\Delta^{K_m}\rhot\right) \rho_D
^{p-1-q}\\
& + & \sum_{q=2}^{p-1}\sum_{j_1+j_2=k}\sum_{\ell_1+\dots+\ell_q=j_1, \ell_1\leq \dots\ell_q\le k-1}\nabla^{\ell_1}\rhot\dots\nabla^{\ell_q}\rhot\nabla^{j_2}(\rho_D^{p-1-q})\\
\eee
Let $$N_{\ell_1,\dots.\ell_q,j_1,q}=\nabla^{\ell_1}\rhot\dots\nabla^{\ell_q}\rhot\nabla^{j_2}(\rho_D^{p-1-q}),\ \ \ell_1\le\dots\le\ell_q.$$\\

\noindent\underline{\em case $\ell_q\le k_m-2$}: we estimate 
$$|\nabla N_{\ell_1,\dots.\ell_q,j_1,q}|\lesssim |\nabla^{m_1}\rhot\dots\nabla^{m_q}\rhot|\frac{\rho_D^{p-1-q}}{\la Z\ra^{j_2}}, \ \ \left|\begin{array}{ll} 0\leq m_i\leq  k_m-1\\ m_1+\dots m_q=j_1+1.\end{array}\right.$$ We may reorder $m_1\leq \dots \leq m_q$. If $m_q\leq \frac{4k_m}{9}$, then:
\bee
|\nabla N_{\ell_1,\dots.\ell_q,j_1,q}|\lesssim \frac{\rhot^{q}}{\la Z\ra^{j_1+1}}\frac{\rho_D^{p-1-q}}{\la Z\ra^{j_2}}\lesssim \frac{{\mathcal d}_0}{\la Z\ra^{\frac{k_m}{2}}}
\eee
and hence the contribution of this term $$\int \rho_T^2|\nabla N_{\ell_1,\dots.\ell_q,j_1,q}|^2\leq {\mathcal d}_0.$$ If $\frac{4k_m}{9}\leq m_q \leq \frac{2k_m}{3}$, then similarly, combining \eqref{smallnessoutsideinitbootfinal-ext}, \eqref{smallnessoutsideinitbootfinal}:
$$|\nabla N_{\ell_1,\dots.\ell_q,j_1,q}|\lesssim \frac{1}{\la Z\ra^{j_2}}\frac{{\mathcal d}_0}{\la Z\ra^{\frac{k_m}{4}}}$$ and the conclusion follows. If $m_q\geq \frac{2k_m}3$, then $m_{q-1}\leq \frac{4k_m}{9}$ from which:
\bee
|\nabla N_{\ell_1,\dots.\ell_q,j_1,q}|\lesssim\frac{\rho_D^{p-1-q}}{\la Z\ra^{j_2}}\frac{\rho_D^{q-1}}{\la Z\ra^{j_1+1-\ell_q}}|\nabla^{\ell_q}\rhot|\lesssim \frac{\rho_D^{p-2}|\nabla^{\ell_q}\rhot|}{\la Z\ra^{k_m+1-m_q}}
\eee
and hence the bound
$$\int \rho_T^2|\nabla N_{\ell_1,\dots.\ell_q,j_1,q}|^2\lesssim \int \rho_T^{2(p-2)+2}\frac{|\nabla^{m_q}\rhot|^2}{\la Z\ra^{2(k_m-m_q)+2}}\lesssim \|\rhot,\Psit\|_{k_m-1,\sigma(k_m-1)}^2\le {\mathcal d}_0.$$

\noindent\underline{\em case $\ell_q= k_m-1$}: we compute $\nabla N_{\ell_1,\dots \ell_q,j_1,q}$. If the derivative falls on $\ell_j$, $j\leq q-1$, we are back to the previous case, and we are therefore left with estimating
$$|\pa^{\ell_1}\rhot\pa^{\ell_{q-1}}\rhot \pa^{k_m}\rhot|\frac{\rho_D^{p-1-q}}{\la Z\ra^{j_2}}, \ \ \left|\begin{array}{ll} \ell_1+\dots+\ell_{q-1}+k_m=j_1+1\\ j_1+j_2=k_m.\end{array}\right.$$
If $j_1=k_m-1$, then $j_2=1$, $\ell_1=\dots=\ell_{q-1}=0$ and we estimate relying onto the
smallness of $\frac{\rhot}{\rho_T}$ from \eqref{veniovnineonelweroidre} (for $Z\le Z^*$) and using \eqref{smallglobalboot}
together with the smallness of $\la Z\ra^{-1}$ (for $Z\ge Z^*$):
\bee
|\nabla^{\ell_1}\rhot...\nabla^{\ell_{q-1}}\rhot \nabla^{k_m}\rhot|\frac{\rho_D^{p-1-q}}{\la Z\ra^{j_2}}\lesssim |\rhot^{q-1} \nabla^{k_m}\rhot|\frac{\rho_D^{p-1-q}}{\la Z\ra}\lesssim {\mathcal d}_0 \rho_D^{p-{2}}|\nabla^{k_m}\rhot|
\eee
and hence the corresponding contribution ($p\ge 3$)
$${\mathcal d}_0 \int \rho_D^{2(p-2)}|\pa^{k_m}\rhot|^2\le \delta J_{k_m}.$$
Similarly, if $j_1=k_m$ then $\ell_1=...=\ell_{q-2}=j_2=0$ and $\ell_{q-1}=1$. 
\bee
|\nabla^{\ell_1}\rhot...\nabla^{\ell_{q-1}}\rhot \nabla^{k_m}\rhot|\frac{\rho_D^{p-1-q}}{\la Z\ra^{j_2}}\lesssim \rhot^{q-2}
|\nabla \rhot| |\nabla^{k_m}\rhot| \rho_D^{p-1-q}\lesssim {\mathcal d}_0 \rho_D^{p-2}|\nabla^{k_m}\rhot|
\eee

\noindent\underline{\em Highest order term} We are left with estimating the highest order term:
$$N_{\ell_1,\dots.\ell_q,j_1,q}=\rhot^{q-1}\rho_D^{p-1-q}\Delta^{K_m}\rhot.$$
We treat this term by integration by parts in time using \eqref{estqthohrkbisbis}:
\bea
\label{vneneknenenpe}
\nonumber &&-\int\rho_T^2\nabla \left[\rhot^{q-1}\rho_D^{p-1-q}\Delta^{K_m}\rhot\right]\cdot\nabla \Psit_k=\int \rhot^{q-1}\rho_D^{p-1-q}\rhot_k\nabla \cdot(\rho_T^2\nabla \Psit_k)\\
\nonumber & = & -\int \rhot^{q-1}\rho_D^{p-1-q}\rhot_k\rho_T\left[\pa_\tau \rhot_k-(H_1-k(H_2+\Lambda H_2))\rhot_k+H_2\Lambda \rhot_k+(\Delta^{K_m}\rho_T)\Delta \Psit\right.\\
&+& \left. k\nabla\rho_T\cdot\nabla\Psit_k+2\nabla(\Delta^{K_m}\rho_T)\cdot\nabla \Psit-  F_1\right]
\eea
and we treat all terms in \eqref{vneneknenenpe}. We will systematically use the smallness \eqref{nonvavnishig}.
The $\pa_\tau\rhot_k$ term is integrated by parts in time:
\bee
&&-\int \rhot^{q-1}\rho_D^{p-1-q}\rho_T\rhot_k\pa_\tau \rhot_k=-\frac12\frac{d}{d\tau}\left\{\int \rhot^{q-1}\rho_D^{p-1-q}\rho_T\rhot^2_k\right\}+ \frac12\int  \rhot^2_k\pa_\tau\left(\rhot^{q-1}\rho_D^{p-1-q}\rho_T\right)\\
& = & -\frac12\frac{d}{d\tau}\left\{\int \rhot^{q-1}\rho_D^{p-1-q}\rho_T\rhot^2_k\right\}+O(\delta\int \rho_D^{p-1}\rhot_k^2)
\eee
and the boundary term in time is small  $$\int \rhot^{q-1}\rho_D^{p-1-q}\rho_T\rhot^2_k\lesssim \delta\int \rho_D^{p-1}\rho_k^2.$$
We then estimate:
$$
\left|\int \rhot^{q-1}\rho_D^{p-1-q}\rhot_k\rho_T(H_1-k(H_2+\Lambda H_2))\rhot_k\right|\lesssim k\delta \int \rho_T^{p-1}\rhot_k^2\lesssim {\delta}J_{k_m}.$$ Using the extra decay in $Z$ and $\|\Delta\Psi\|_{L^\infty}\le \delta\ll 1$:
$$
\left|\int \rhot^{q-1}\rho_D^{p-1-q}\rhot_k\rho_T(\Delta^{K_m}\rho_T)\Delta \Psit\right|\lesssim \int \frac{dZ}{\la Z\ra^{\frac{k_m}{2}}}+\int\rho_T^{p-1}\rhot^2_k|\Delta \Psit|\le {\mathcal d}_0+\delta J_{k_m}.$$
Similarly, after an integration by parts:
\bee
&&\left|-\int \rhot^{q-1}\rho_D^{p-1-q}\rhot_k\rho_T\nabla(\Delta^{K_m}\rho_T)\cdot\nabla \Psit\right|\lesssim {\mathcal d}_0+\left|\int  \rhot^{q-1}\rho_D^{p-1-q}\rho_T\nabla (\rhot_k^2)\cdot\nabla \Psit\right|\\
& \le & {\mathcal d}_0+\delta J_{k_m}.
\eee
Similarly, after an integration by parts using \eqref{smallglobalboot}:
$$\left|-\int \rhot^{q-1}\rho_D^{p-1-q}\rhot_k\rho_TH_2\Lambda \rhot_k\right|\lesssim {\mathcal d}_0+\left(\left\|\frac{\rhot}{\rho_T}\right\|_{L^\infty}+\left\|\frac{ |\nabla\rhot|}{\rho_T}\right\|_{L^\infty}\right)J_{k_m}\le {\mathcal d}_0+\delta J_{k_m}
$$
and similarly:
$$\left|\int \rhot^{q-1}\rho_D^{p-1-q}\rhot_kk\nabla\rho_T\cdot\nabla \Psi_k\right|\lesssim {\mathcal d}_0+\delta J_{k_m}.$$

\noindent{\bf step 7} Conclusion for $k=k_m(d,r)$ large enough. We now sum the collection of above bounds and obtain the differential inequality with $k=k_m$.
\bee
&&\frac 12\frac{d}{d\tau}\left\{J_{k_m}(1+O(\delta))\right\}\\
&\leq& -k\left[1+O\left(\frac{1}{k}\right)\right]\int( H_2+\Lambda H_2)\left[ b^2|\nabla\rhot_k|^2+(p-1) \rho_D^{p-2}\rho_T\rhot_{k}^2+\rho_T^2|\nabla \Psit_k|^2\right]\\
& - & k\int (p-1)\rho_D\pa_Z (\rho_D^{p-1})\rhot_k\pa_Z \Psit_k+{\mathcal d}.
\eee
We recall from \eqref{coercivityquadrcouplinginside}, \eqref{P}:
\be
\label{vnioevineneovklvknlve}
H_2+\Lambda H_2=\mu(1-w-\Lambda w)\ge c_{d,p}>0
\ee 
and  we now claim the pointwise coercivity of the coupled quadratic form: $\exists c_{d,p}>0$ such that $\forall Z\ge 0$,
\bea
\label{enineinevnveo}
\nonumber 
&&(H_2+\Lambda H_2)\left[ (p-1)\rho_D^{p-2}\rho_T\rhot_{k}^2+\rho_T^2|\nabla \Psit_k|^2\right]+ (p-1)\rho_D\pa_Z (\rho_D^{p-1})\rhot_k\pa_Z \Psit_k\\
& \ge & c_{d,p}\left[ (p-1) \rho_D^{p-2}\rho_T\rhot_{k}^2+\rho_T^2|\nabla \Psit_k|^2\right]
\eea
which, after taking $k>k^*(d,p)$ large enough, concludes the proof of \eqref{estnienonneo}.\\
\noindent{\em Proof of \eqref{enineinevnveo}}. The coupling term is lower order for $Z$ large: $$|(p-1)\rho_D\pa_Z (\rho_D^{p-1})\rhot_k\pa_Z \Psit_k|\lesssim \frac{\rho_T^{p-1}}{\la Z\ra}\rhot_k\rho_T\pa_Z\Psi_k\le \delta\left[ (p-1)\rho_D^{p-2}\rho_T\rhot_{k}^2+\rho_T^2|\nabla \Psit_k|^2\right]
$$ for $Z>Z(\delta)$ large enough. On a compact set using the smallness \eqref{nonvavnishig}, \eqref{enineinevnveo}
 is implied by: 
\bea
\label{enineinevnveobis}
\nonumber 
&&(H_2+\Lambda H_2)\left[ (p-1)Q\rhot_{k}^2+\rho_P^2|\nabla \Psit_k|^2\right]+ (p-1)\rho_P\pa_Z Q\rhot_k\pa_Z \Psit_k\\
& \ge & c_{d,p}\left[ (p-1)Q\rhot_{k}^2+\rho_P^2|\nabla \Psit_k|^2\right]
\eea
We compute the discriminant:
\bee
&&{\rm Discr}=(p-1)^2\rho_P^2(\pa_ZQ)^2-4\mu^2(p-1)\rho_P^2Q(H_2+\Lambda H_2)^2\\
& = & (p-1)\rho_P^2 Q\left[(p-1)\frac{(\pa_ZQ)^2}{Q}-4\mu^2(1-w-\Lambda w)^2\right]
\eee
We compute from \eqref{relationsprofileemden} recalling \eqref{definitionF}:
\bee
(p-1)\frac{(\pa_ZQ)^2}{Q} &=&   (p-1)\left(2\pa_Z\sqrt{Q}\right)^2=(p-1)\left(\frac{1-\mathcal e}{2}\sqrt{\ell}\pa_Z(\sigma_P Z)\right)^2=(1-\mathcal e)^2(\pa_Z(Z\sigma_P))^2\\
& =& \frac{4}{r^2}(\pa_Z(Z\sigma_P))^2=4\mu^2F^2
\eee
and hence from \eqref{coercivityquadrcouplinginside}, \eqref{P} the lower bound:
$$-D=4\mu^2(p-1)\rho_P^2 Q\left[(1-w-\Lambda w)^2-F^2\right]\ge c_{d,r}(p-1)\rho_P^2Q, \ \ c_{d,r}>0$$
which together with \eqref{vnioevineneovklvknlve} concludes the proof of \eqref{enineinevnveo}.
\end{proof}


\section{Control of low Sobolev norms and proof of Theorem \ref{thmmain}}
\label{low}

Our aim in this section is to control weighted low Sobolev norms in the interior $r\le 1$ ($Z\le Z^*$). On our way we will conclude the proof of the bootstrap Proposition \ref{propboot}. Theorem \ref{thmmain} will then follow from a classical topological argument.


\subsection{Exponential decay slightly beyond the light cone}


We use the exponential decay estimate \eqref{stabiliteexpobis} for a linear problem to prove exponential decay 
for the nonlinear evolution in the region slightly past the light cone.
We recall the notations of Section 3, in particular $Z_a$ of Lemma \ref{shiftoight}.
\begin{lemma}[Exponential decay slightly past the light cone]
\label{lemmalightcone}
Let $$\tilde{Z_a}=\frac{Z_2+Z_a}2.$$
\be
\label{firstbound}
 \|\nabla \Phi\|_{H^{2k_0}(Z\leq\tilde{Z_a})}+\|\rho\|_{H^{2k_0}(Z\leq \tilde{Z_a})}\lesssim e^{-\frac{\delta_g}{2}\tau}.
  \ee
\end{lemma}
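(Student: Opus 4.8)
The goal is to upgrade the abstract semigroup decay \eqref{stabiliteexpobis} for the linear operator $\mathcal M$ on $\Bbb H_{2k_0}$ into the stated exponential decay for the genuine nonlinear solution in the region $Z\le \tilde Z_a$ slightly beyond the light cone. The plan is to run a Duhamel argument in the Hilbert space $\Bbb H_{2k_0}$ for the variable $X=(\Phi,T)$ solving $\pa_\tau X=\mathcal M X+G$, where $G=(0,G_T)$ as in \eqref{newlinearflow}, \eqref{defgt}. Since $\mathcal M-\mathcal A$ is maximal dissipative with a spectral gap (Proposition \ref{propaccretif}) and $\mathcal A$ is a finite rank (hence compact) perturbation, Lemma \ref{elmsnjennnw} applies: writing $\Bbb H_{2k_0}=U\oplus V$ with $V$ the finite-dimensional unstable subspace, the stable part decays as $e^{-\frac{\delta_g}{2}(\tau-\tau_0)}$, while the unstable part $PX$ is controlled directly by the bootstrap assumption \eqref{eq:unstX} (or \eqref{eq:unstboot}), which gives $\|PX(\tau)\|_{\Bbb H_{2k_0}}\le e^{-\frac{\delta_g}{2}\tau}$. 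Thus it suffices to estimate the stable-part Duhamel integral:
$$
(I-P)X(\tau)=e^{(\tau-\tau_0)\mathcal M}(I-P)X(\tau_0)+\int_{\tau_0}^\tau e^{(\tau-s)\mathcal M}(I-P)G(s)\,ds,
$$
and bound $\|(I-P)X(\tau)\|_{\Bbb H_{2k_0}}\lesssim e^{-\frac{\delta_g}{2}(\tau-\tau_0)}\|(I-P)X(\tau_0)\|_{\Bbb H_{2k_0}}+\int_{\tau_0}^\tau e^{-\frac{\delta_g}{2}(\tau-s)}\|G(s)\|_{\Bbb H_{2k_0}}\,ds$.

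\textbf{Key steps.} First I would invoke the initial-data smallness \eqref{improvedsobolevlowinit} and the decomposition \eqref{eq:decomp} to get $\|(I-P)X(\tau_0)\|_{\Bbb H_{2k_0}}\le e^{-\frac{\delta_g}{2}\tau_0}$ as prescribed in the choice of data. Second, and this is the analytic heart, I would establish that $G$ satisfies part 2 of Proposition \ref{propboot}, namely $\|G(\tau)\|_{\Bbb H_{2k_0}}\le e^{-\frac{2\delta_g}{3}\tau}$ on $[\tau_0,\tau^*]$. This requires controlling $G_T$ from \eqref{defgt}, which is built from $G_\Phi=-\rho_P(|\nabla\Psi|^2+\NL(\rho))+b^2\frac{\rho_P}{\rho_T}\Delta\rho_T$ and $G_\rho=-\rho\Delta\Psi-2\nabla\rho\cdot\nabla\Psi$ in \eqref{defgrho}, together with their $\Lambda$-derivatives and the $\pa_\tau G_\Phi$ term; in the $\Bbb H_{2k_0}$ norm this amounts to controlling up to $\sim 2k_0$ spatial derivatives of these quadratic/higher and $b^2\Delta$ terms, localized in $Z\le Z_a$. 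The nonlinear terms are quadratically small and come with the local decay from the weighted bounds of Proposition \ref{prophighsobloev}, the pointwise bounds \eqref{smallnessoutsideinitbootfinal}, and the compact-set estimate \eqref{eq:compact}; the $b^2\Delta$ term carries the exponentially small factor $b^2=e^{-2\mu(r-2)\tau}$ and is handled using the highest-norm bound of Proposition \ref{proproopr} to absorb the loss of one derivative. Since all these furnish decay faster than $e^{-\frac{2\delta_g}{3}\tau}$ on the compact set $Z\le Z_a$ (after choosing $\nut,\delta_g$ appropriately small relative to $c_{k_m}$ and $\mu(r-2)$), \eqref{eq:Gest} follows. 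Third, plugging \eqref{eq:Gest} into the Duhamel estimate and computing $\int_{\tau_0}^\tau e^{-\frac{\delta_g}{2}(\tau-s)}e^{-\frac{2\delta_g}{3}s}\,ds\lesssim e^{-\frac{\delta_g}{2}\tau}$ yields $\|(I-P)X(\tau)\|_{\Bbb H_{2k_0}}\lesssim e^{-\frac{\delta_g}{2}\tau}$. Combining with the unstable bound gives $\|X(\tau)\|_{\Bbb H_{2k_0}}\lesssim e^{-\frac{\delta_g}{2}\tau}$.

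\textbf{From the $\Bbb H_{2k_0}$ bound to \eqref{firstbound}.} Finally I would translate control of $\|X\|_{\Bbb H_{2k_0}}=\|(\Phi,T)\|_{\Bbb H_{2k_0}}$ into control of $\nabla\Phi$ and $\rho$ in $H^{2k_0}(Z\le\tilde Z_a)$. The subcoercivity estimate \eqref{lowerboundhardy} shows the $\Bbb H_{2k_0}$ norm dominates $\sum_{m=0}^{2k_0}\int|\pa_Z^m\Phi|^2\frac{g}{(Z_a-Z)^{1-\nu}}Z^{d-1}dZ$; since $g\ge c>0$ on the compact region $Z\le\tilde Z_a<Z_a$ away from the degeneracy, this controls $\|\Phi\|_{H^{2k_0}(Z\le\tilde Z_a)}$ and hence $\|\nabla\Phi\|_{H^{2k_0-1}}$; a small bookkeeping adjustment of the index (working with $2k_0+1$ derivatives from the start, or using the elliptic gain in $\mathcal L_g$) gives the full $H^{2k_0}$ control of $\nabla\Phi$. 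For $\rho$ one recovers it from $T=\pa_\tau\Phi+aH_2\Lambda\Phi$ and the first equation of \eqref{nekoneneon}, which expresses $\rho$ in terms of $\pa_\tau\Phi$, $\Lambda\Phi$, $\Delta\Phi$ and $G_\rho$ modulo the invertible factor $(p-1)Q$ (nonvanishing on $Z\le\tilde Z_a$ by part 4 of Theorem \ref{propexistenceprofile}); alternatively one reads $\rho$ directly via $\Phi=\rho_P\Psi$ and the relation $\rho=\rho_T-\rho_P$ combined with the modulus equation. Either route, using the already-established local decay of $G_\rho$, gives $\|\rho\|_{H^{2k_0}(Z\le\tilde Z_a)}\lesssim e^{-\frac{\delta_g}{2}\tau}$.

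\textbf{Main obstacle.} The principal difficulty is step two: verifying $\|G(\tau)\|_{\Bbb H_{2k_0}}\le e^{-\frac{2\delta_g}{3}\tau}$, i.e. that every nonlinear and $b^2$-term in $G_T$, after applying up to $2k_0$ derivatives and the $\Lambda$ and $\pa_\tau$ operators in \eqref{defgt}, decays strictly faster than the linear rate. This forces a careful matching of the constants — $\delta_g$ must be chosen small compared to $2\mu(r-2)$ (the $b^2$ gain), to $2\mu\nut$ (the local weighted decay of $(\rho,\Psi)$), and to $c_{k_m}$ — and a careful treatment of the $b^2\Delta\rho_T$ term, which loses a derivative and must be absorbed using the highest-regularity bound $\|\rhot,\Psit\|_{k_m}\lesssim 1$ of Proposition \ref{proproopr} rather than the $2k_0$-level norms. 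The time derivative $\pa_\tau G_\Phi$ additionally requires substituting the equations \eqref{exactliearizedflowtilde} for $\pa_\tau\rho$ and $\pa_\tau\Psi$, which reintroduces the $b^2\Delta\rho_T$ term and the potentials $H_1,H_2$; this is where the bookkeeping is heaviest but presents no conceptual obstruction given the bootstrap hypotheses.
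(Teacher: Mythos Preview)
Your overall strategy is correct and matches the paper's: run Duhamel for $X=(\Phi,T)$ in $\Bbb H_{2k_0}$, control $PX$ by the bootstrap assumption \eqref{eq:unstX}, control $(I-P)X$ by the semigroup decay \eqref{stabiliteexpobis} plus the forcing estimate $\|G\|_{\Bbb H_{2k_0}}\le e^{-\frac{2\delta_g}{3}\tau}$, and then translate back to $(\rho,\nabla\Phi)$ via \eqref{formulat} and the nondegeneracy of $(p-1)Q$ on $Z\le\tilde Z_a$. The paper does exactly this (steps 1--3 of its proof).

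The one substantive difference is in how you propose to obtain the $G$ estimate. You route it through Proposition~\ref{prophighsobloev} and the compact-set bound \eqref{eq:compact}, which deliver decay at the rate $e^{-\mu\nut\tau}$; squaring for the quadratic terms gives $e^{-2\mu\nut\tau}$, and you then need $\mu\nut>\frac{\delta_g}{3}$. But $\nut$ and $\delta_g$ are independent small parameters in the paper's setup, and no such ordering is assumed. The paper instead interpolates the bootstrap bound \eqref{eq:bootdecay} (which carries the rate $\frac{3\delta_g}{8}$ at the $H^{2k_0}$ level) directly against the high Sobolev bound \eqref{sobolevinitboot} to obtain $H^{2k_0+10}$ control at rate $\left(\frac{3}{8}-\frac{1}{100}\right)\delta_g$; squaring then gives $\left(\frac{3}{4}-\frac{1}{50}\right)\delta_g>\frac{2\delta_g}{3}$ with no extra constraint. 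This keeps everything tied to the single parameter $\delta_g$ and avoids the detour through the $\nut$-machinery of Section~\ref{sectionsobolev}. Your approach can be made to work by additionally imposing $\delta_g<3\mu\nut$, but this is an avoidable complication; the paper's route is cleaner precisely because the bootstrap rate $\frac{3\delta_g}{8}$ in \eqref{eq:bootdecay} was chosen to make this step close automatically.
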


\begin{proof} The proof relies on the spectral theory beyond the light cone and an elementary finite speed propagation like 
argument in renormalized variables, related to \cite{MZwave}.\\

\noindent{\bf step 1} Semigroup decay in $X$ variables. Recall the definition \eqref{notatinotphi} of $X=(\Phi,T)$ 
 \be\label{formulat}
  \left|\begin{array}{ll}
  \Phi=\rho_P\Psi \\
   T=\pa_\tau\Phi+aH_2\Lambda \Phi=-(p-1)\qx\rhox-H_2\Lambda \Phix+(H_1-\mathcal e)\Phix+G_\Phi+aH_2\Lambda \Phi
   \end{array}\right.
  \ee
with $G_\Phi$ given by \eqref{defgrho}, the scalar product \eqref{defscalarproduct} and the definitions
\eqref{nvknneknengno}, \eqref{eq:decomp}: $$\left|\begin{array}{l}
\Lambda_0=\{\l \in \Bbb C, \ \ \Re(\l)\ge 0\} \cap \{\l\ \ \mbox{is an eigenvalue of}\ \ \mathcal M\}=(\l_i)_{1\le i\le N}\\
V=\cup_{1\leq i\leq N}\mbox{\rm ker} (\mathcal M-\l_i I)^{k_{\l_i}}
\end{array}\right.
$$
the projection $P$ associated with $V$, the decay estimate \eqref{stabiliteexpobis} on the range of $(I-P)$ and 
the results of Lemma \ref{browerset}. Relative to the $X$ variables our equations take the form 
$$
\pa_\tau X=\mathcal M X + G,
$$ 
which are considered on the time interval $\tau\ge \tau_0\gg 1$ and the space interval $Z\in [0,Z_a]$ (no boundary conditions
at $Z_a$.) We consider evolution in the Hilbert space ${\Bbb H_{2k_0}}$ with initial data such that 
\be\label{data}
\|(I-P) X(\tau_0)\|_{\Bbb H_{2k_0}}\le e^{-\frac{\delta_g}2\tau_0},\qquad \|P X(\tau_0)\|_{\Bbb H_{2k_0}}\le e^{-\frac {3\delta_g}5\tau_0}.
\ee
According to the bootstrap assumption \eqref{eq:unstX}
\be\label{eq:unstX'}
\|PX(\tau)\|_{\Bbb H_{2k_0}}\le e^{-\frac {\delta_g}{2}\tau}, \qquad \forall\tau\in [\tau_0,\tau^*]
\ee
Lemma \ref{browerset} shows that as long as 
\be\label{eq:G}
\|G\|_{\Bbb H_{2k_0}}\le e^{-\frac{2\delta_g}3\tau}, \qquad \tau\ge \tau_0
\ee
there exists $\Gamma$, which can
 be made as large as we want with a choice of $\tau_0$, such that 
  \be\label{eq:Xfgrow}
 \|PX(\tau)\|_{\Bbb H_{2k_0}}\lesssim e^{-\frac{\delta_g}{2}\tau},\qquad \tau_0\le \tau\le \tau_0+\Gamma.
 \ee
 This will allow us to show eventually that if we can verify \eqref{eq:G}, the bootstrap time $\tau^*\ge \tau_0+\Gamma$.
 
Moreover,  as long as \eqref{eq:G} holds, the decay estimate \eqref{stabiliteexpobis} implies that
\bea
 \label{kmdommepeo}
 \nonumber &&\|(I-P)X(\tau)\|_{\Bbb H_{2k_0}}\lesssim e^{-\frac{\delta_g}{2}(\tau -\tau_0)}\|X(\tau_0)\|_{\Bbb H_{2k_0}}+\int_{\tau_0}^{\tau}e^{-\frac{\delta_g}{2}(\tau-\sigma)}\|G(\sigma)\|_{\Bbb H_{2k_0}}d\sigma\\
 & \lesssim & e^{\frac{-\delta_g}{2}\tau}\left[e^{\frac{\delta_g}{2}\tau_0}\|X(\tau_0)\|_{\Bbb H_{2k_0}}+\int_{\tau_0}^{+\infty}e^{-\frac{\delta_g}{6}\tau}d\tau\right]\le e^{-\frac{\delta_g}2\tau}.
 \eea
 As a result,
 \be\label{eq:Xf}
 \|X(\tau)\|_{\Bbb H_{2k_0}}\lesssim e^{-\frac{\delta_g}{2}\tau},\qquad \tau_0\le \tau\le \tau^*
 \ee
 Below we will verify \eqref{eq:G} $\forall \tau\in[\tau_0,\tau^*]$ under the assumption \eqref{kmdommepeo}, closing
 both. Once again,  this will allow us to show eventually that the length of the bootstrap interval $\tau^*-\tau_0\ge \Gamma$ is sufficiently large.\\ 
 
 Recall from \eqref{defgt}, \eqref{newlinearflow}, \eqref{defscalarproduct}:
 \be
 \label{vnebonennnevno}
 \|G\|^2_{\Bbb H_{2k_0}}\lesssim \int_{Z\le Z_a} |\nabla\Delta^{k_0}G_T|^2gZ^{d-1}dZ+\int_{Z\le Z_a} G_T^2Z^{d-1}dZ
 \ee
 with
 $$\left|\begin{array}{lll}
G_T=\pa_\tau G_\Phi-\left(H_1+H_2\frac{\Lambda Q}{Q}\right)G_\Phi+H_2\Lambda G_\Phi-(p-1)QG_\rho\\
G_\rho=-\rho\Delta \Psi-2\nabla\rho\cdot\nabla \Psix\\
G_\Phi=-\rho_P(|\nabla \Psi|^2+\NL(\rho))+\frac{b^2\rho_P}{\rho_T}\Delta \rho_T.
\end{array}\right.
$$
\noindent{\bf step 2} Semigroup decay for $(\rho,\Psi)$.  
 We now translate the $X$ bound to the bounds for $\rho$ and $\Psi$ and then verify \eqref{eq:G}.
 We recall \eqref{formulat} and obtain for any $\hat Z>Z_2$
  \bee
  \|T\|_{H^{2k_0}(Z\le \hat{Z})}+\|\Phi\|_{H^{2k_0+1}(Z\le \hat{Z})}&\lesssim&\|\rho\|_{H^{2k_0}(Z\le \hat{Z})}+\|\Psi\|_{H^{2k_0+1}(Z\le \hat{Z})} +\|G_\Phi\|_{H^{2k_0}(Z\le \hat{Z})}\\&\lesssim& \|T\|_{H^{2k_0}(Z\le \hat{Z})}+\|\Phi\|_{H^{2k_0+1}(Z\le \hat{Z})}+\|G_\Phi\|_{H^{2k_0}(Z\le \hat{Z})}
 \eee
  and claim:
\be
\label{boundgphi}
\|G_{\Phi}\|_{H^{2k_0}(Z\le \hat{Z})}\lesssim \|\nabla \Psi\|_{H^{k_0}(Z\le \hat{Z})}^2+\|\rho\|_{H^{k_0}(Z\le \hat{Z})}^2+e^{-{\delta_g}\tau}.
\ee
Indeed, since $H^{2k_0}(Z\le \hat{Z})$ is an algebra for $k_0$ large enough:
  $$\|\rho_P(|\nabla \Psi|^2+\NL(\rho))\|_{H^{2k_0}(Z\le \hat{Z})}\lesssim \|\nabla \Psi\|_{H^{k_0}(Z\le \hat{Z})}^2+\|\rho\|_{H^{k_0}(Z\le \hat{Z})}^2.$$
    The remaining quantum pressure term is treated using the pointwise bound \eqref{smallglobalboot} for small Sobolev norms and the smallness of $b$ which imply: $$\|\frac{b^2\rho_P\Delta\rho_T}{\rho_T}\|_{H^{2k_0}(Z\leq \hat{Z})}\lesssim C_Kb^2\leq  e^{-\delta_g\tau}$$
  provided $\delta_g>0$ has been chosen small enough, and \eqref{boundgphi} is proved. 
  Choosing $\hat Z>Z_2$, this implies from \eqref{formulat} and the initial bound \eqref{improvedsobolevlowinit}:
  \bea
  \label{boundonthedata}
  \nonumber \|X(\tau_0)\|_{\Bbb H^{2k_0}}&\lesssim& \lesssim \|\Psi(\tau_0)\|_{H^{2k_0+1}(Z\le \hat{Z})}+\|\rho(\tau_0)\|_{H^{2k_0}(Z\le \hat{Z})}+e^{-\delta_g\tau_0}\\
  &\lesssim & e^{-\frac{\delta_g\tau_0}2}.
  \eea
  This verifies \eqref{data}. On the other hand, choosing $\hat Z=\tilde Z_a$ with 
$$\tilde{Z_a}=\frac{Z_2+Z_a}2,$$ we also obtain from \eqref{eq:Xf}
  \be\label{eq:rhopsi}
  \|\Psi(\tau)\|_{H^{2k_0+1}(Z\le \tilde{Z}_a)}+\|\rho(\tau)\|_{H^{2k_0}(Z\le \tilde{Z}_a)}
  \lesssim \|X(\tau)\|_{\Bbb H^{2k_0}}+ e^{-\delta_g\tau}\lesssim  e^{-\frac{\delta_g\tau}2}.
  \ee
 The estimate \eqref{firstbound} follows.\\
  
 \noindent{\bf step 3} Estimate for $G$. Proof of \eqref{eq:G}. We recall \eqref{vnebonennnevno}.
 On a fixed compact domain $Z\le Z_0$ with $Z_0>Z_2$,
 we can interpolate the bootstrap bound \eqref{eq:bootdecay} with the global large Sobolev bound \eqref{sobolevinitboot} and obtain for $k_m$ large enough and $b_0<b_0(k_m)$ small enough:
 \be
 \label{cneoineonenvoen}
 \|\rho\|_{H^{2k_0+10}(Z\le Z_0)}+\|\Psi\|_{H^{2k_0+10}(Z\le Z_0)}\leq C_Ke^{-\left[\frac{3}{8}-\frac{1}{100}\right]\delta_g\tau}\le e^{-\left[\frac{3}{8}-\frac{1}{50}\right]\delta_g\tau}
 \ee
 and since $H^{2k_0}$ is an algebra and all terms are either quadratic or with a $b$ term, \eqref{cneoineonenvoen} implies
\bea
 \label{estimatheG}
 \nonumber 
 &&\|G_T\|_{H^{2k_0+5}(Z\leq Z_0)}+\|G_\rho\|_{H^{2k_0+5}(Z\leq Z_0)}+\|G_\Phi\|_{H^{2k_0+5}(Z\leq Z_0)}\\
 &\leq &e^{-\left(\frac 34-\frac1{20}\right)\delta_g\tau}\le e^{-\frac{2\delta_g}{3}\tau}
 \eea
 which in particular using \eqref{vnebonennnevno} implies \eqref{eq:G}.
\end{proof}


\subsection{Weighted decay for $m\le 2k_0$ derivatives}


We recall the notation \eqref{defnewvariablephi}. We now transform the exponential decay \eqref{firstbound} from just past the light cone into weighted decay estimate. It is {\em essential} for this argument that the decay \eqref{firstbound} has been 
shown in the region strictly including the light cone $Z=Z_2$. The estimates in the lemma below close the remaining bootstrap 
bound \eqref{eq:bootdecay}.

\begin{lemma}[Weighted Sobolev bound for $m\leq 2k_0$]
\label{lemmabootlocalnorms}
 Let $m\leq 2k_0$ and $\nu_0=\frac{\delta_g}{2\mu}-\frac {2(r-1)}{p-1}$, recall 
 $$
 \chi_{\nu_0,m}=\frac{1}{\la Z\ra^{d-2(r-1)+2(\nu_0-m)}}\zeta\left(\frac{Z}{Z^*}\right), \ \ \zeta(Z)=\left|\begin{array}{ll}1\ \ \mbox{for}\ \ Z\leq 2\\ 0\ \ \mbox{for}\ \ Z\ge 3,
 \end{array}\right.
 $$
 then:
 \be
 \label{improvedsobolevlowbetter}
 \sum_{m=0}^{2k_0}\sum_{i=1}^d\int (p-1)Q(\pa^m_i\rho)^2\chi_{\nu_0,m}+|\nabla \pa^m_i\Phi|^2\chi_{\nu_0,m}\leq C e^{-\frac{4\delta_g}{5}\tau}.
 \ee
 \end{lemma}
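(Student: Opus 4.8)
\textbf{Proof strategy for Lemma \ref{lemmabootlocalnorms}.}
The plan is to promote the unweighted exponential decay \eqref{firstbound} — valid on the slightly larger region $Z\le \tilde Z_a$ which \emph{strictly contains} the sonic line $Z=Z_2$ — into a weighted Sobolev bound on the full interior region $Z\le Z^*$, by combining a finite speed of propagation argument for the renormalized wave equation with a weighted energy estimate. First I would recall that the principal part of the equation for $\Phi=\rho_P\Psi$ is the operator $\Box_Q=L\underline L$ with null speeds $\mu[(1-w)\mp\sigma]$; in the renormalized self-similar variables the backward acoustic cone from the singularity is exactly $\{Z=Z_2\}$, and for $Z>Z_2$ the characteristic speeds of the flow are directed \emph{outward} (toward larger $Z$). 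Concretely, since the decay \eqref{firstbound}, \eqref{eq:rhopsi} holds on a set $Z\le \tilde Z_a$ with $\tilde Z_a>Z_2$, the region $Z\le Z_2$ lies strictly in the past-domain-of-dependence-complement: any signal reaching $\{Z\le Z_2\}$ at time $\tau$ originates from $\{Z\le \tilde Z_a\}$ at earlier times. This is the content of the $\tilde Z_a=\tfrac{Z_2+Z_a}2$ construction in Lemma \ref{lemmalightcone} and it is the step that makes the whole argument work — the decay must be known \emph{past} the cone, not just up to it.

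\textbf{Key steps.} Step 1: record that from \eqref{firstbound} together with the interpolated bootstrap/Sobolev bounds \eqref{cneoineonenvoen}, \eqref{estimatheG} one has, on any fixed compact set $Z\le Z_0$ (with $Z_2<Z_0$ chosen below, e.g.\ $Z_0=\tilde Z_a$), the bound $\|\rho\|_{H^{2k_0}(Z\le Z_0)}+\|\Phi\|_{H^{2k_0+1}(Z\le Z_0)}\lesssim e^{-\frac{\delta_g}2\tau}$, hence in particular $\sum_{m\le 2k_0}\int_{Z\le Z_0}\big[(p-1)Q(\pa^m\rho)^2+|\nabla\pa^m\Phi|^2\big]\lesssim e^{-\delta_g\tau}$, which already absorbs the weight $\chi_{\nu_0,m}$ on that compact set (there $\chi_{\nu_0,m}\lesssim 1$). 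Step 2: for the region $Z_0\le Z\le Z^*$, run a weighted energy estimate for the system \eqref{exactliearizedflow} (equivalently \eqref{nekoneneon}) commuted with $\pa^m$ for $m\le 2k_0$, using the multiplier generating the weight $\chi_{\nu_0,m}$; the key algebraic point is that, in this outer region, $-D_a>0$ so the wave operator is strictly hyperbolic in the $\tau$-direction and the energy flux through $\{Z=Z^*\}$ — which is a spacelike-type moving boundary since $Z^*=e^{\mu\tau}$ grows — has a favorable sign (nothing enters from $Z>Z^*$), so it can be dropped. Step 3: the weighted energy identity produces a coercive damping term of size $2\mu(\nu_0+\tfrac{2(r-1)}{p-1})=\delta_g$ (by the choice $\nu_0=\tfrac{\delta_g}{2\mu}-\tfrac{2(r-1)}{p-1}$), i.e.\ a Gronwall inequality of the form $\tfrac{d}{d\tau}E_m+\delta_g E_m\lesssim (\text{flux at }Z_0)+(\text{lower-order})$, where $E_m=\sum\int_{Z\ge Z_0}\chi_{\nu_0,m}[\cdots]$; the incoming flux at the inner boundary $Z=Z_0$ is controlled by the compact-set estimate of Step 1, hence $\lesssim e^{-\delta_g\tau}$, and the lower-order and nonlinear terms are absorbed using the pointwise bounds \eqref{smallglobalboot}, \eqref{smallnessoutsideinitbootfinal}, \eqref{smallnessoutsideinitbootfinal-ext} exactly as in the proofs of Proposition \ref{prophighsobloev} and Lemma \ref{propinduction}. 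Step 4: integrate the differential inequality from $\tau_0$ with initial data controlled by \eqref{improvedsobolevlowinit} ($E_m(\tau_0)\le e^{-\delta_g\tau_0}$), obtaining $E_m(\tau)\lesssim e^{-\delta_g\tau}\le e^{-\frac{4\delta_g}5\tau}$; combining with Step 1 on $Z\le Z_0$ gives \eqref{improvedsobolevlowbetter}.

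\textbf{Main obstacle.} The delicate point is Step 2–3: one must choose the inner radius $Z_0$ strictly between $Z_2$ and $\tilde Z_a$ and verify that the weighted multiplier produces a genuinely \emph{positive} commutator for $Z\ge Z_0$, i.e.\ that crossing the sonic line is harmless \emph{once the solution is already known to decay past it}. In the outer region the coefficient $-D_a$ of $\pa_Z^2$ is bounded below by a positive constant (away from $Z_a$, and here we only need $Z\le Z^*$ which for $\tau\ge\tau_0$ large is far from any zero of $D_a$), so the hyperbolic energy method applies cleanly; the subtlety is purely the bookkeeping of the boundary flux at $Z=Z_0$ and at the moving boundary $Z=Z^*$, and the fact that the weight $\chi_{\nu_0,m}=\la Z\ra^{-d+2(r-1)-2(\nu_0-m)}\zeta(Z/Z^*)$ is designed so that the $\Lambda$-derivative of the weight contributes precisely the damping constant $\delta_g$ after using $\tfrac{\pa_\tau\rho_D+\mu\Lambda\rho_D}{\rho_D}=-\tfrac{2\mu(r-1)}{p-1}+O(\la Z\ra^{-r})$ from \eqref{globalbeahvoiru}. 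The $O(\la Z\ra^{-r})$ errors and the nonlinear terms, being either quadratic or carrying a power of $b$, are handled by the already-established pointwise bounds and the interpolation bound \eqref{inerpo;soationcrucial}, so no genuinely new estimate is needed beyond the structural hyperbolicity in the outer region. Finally, the slightly-better exponent $\frac{4\delta_g}5$ (rather than the $\frac{3\delta_g}8$ appearing in the bootstrap \eqref{eq:bootdecay}) is what strictly improves the bootstrap assumption and closes it.
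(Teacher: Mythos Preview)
Your overall strategy is right in spirit---use the decay \eqref{firstbound} just past the cone as seed, then propagate it outward by a weighted energy estimate---but there is a genuine gap in the way you merge the two steps, and one sign confusion.

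\textbf{The missing intermediate step.} You choose the matching radius $Z_0\in(Z_2,\tilde Z_a)$, i.e.\ $Z_0=O(1)$, and propose to run the weighted energy identity with $\chi_{\nu_0,m}$ directly on $Z_0\le Z\le Z^*$. The trouble is that the energy identity for the commuted first-order system (the paper's \eqref{energyidentittyk}) contains \emph{linear} error terms of the form $O\bigl(\int \chi_{\nu_0,m}\la Z\ra^{-\omega}[(p-1)Q\rho_m^2+|\nabla\Phi_m|^2]\bigr)$ coming from $[\pa^m,H_1]$, $[\pa^m,H_2]\Lambda$, $[\pa^m,Q]$. These are not quadratic and cannot be absorbed by the pointwise bounds \eqref{smallglobalboot}; on the slab $Z_0\le Z\le 2Z_0$ with $Z_0=O(1)$ the extra factor $\la Z\ra^{-\omega}$ is $O(1)$, so the error is $O(1)\cdot E_m$ and competes with the small damping $\delta_g E_m$---Gronwall does not close. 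The paper inserts a separate step before the weighted estimate: a finite-speed-of-propagation argument with a \emph{time-dependent moving cutoff} $\chi(\tau,Z)=\zeta(Z/\nu(\tau))$, $\nu(\tau)=\frac{Z_0}{2\hat Z_a}e^{-\omega_0(\tau_f-\tau)}$, run backward from an arbitrary final time $\tau_f$. This propagates the decay \eqref{firstbound} from $Z\le\tilde Z_a$ out to an \emph{arbitrarily large} fixed compact set $Z\le Z_0$, yielding $\|\rho\|^2_{H^{2k_0}(Z\le Z_0/2)}+\|\nabla\Psi\|^2_{H^{2k_0}(Z\le Z_0/2)}\lesssim_{Z_0} e^{-\delta_g\tau}$. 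Only then is the weighted estimate with $\chi_{\nu_0,m}$ run globally: the linear errors split into $Z\le Z_0$ (now $\lesssim e^{-\delta_g\tau}$ by the extended compact decay) and $Z\ge Z_0$ (carrying $Z_0^{-\omega}$ smallness, absorbed into the damping for $Z_0$ large). This two-stage structure is what your proposal collapses into one.

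\textbf{The sign confusion.} You assert $-D_a>0$ on $Z_0\le Z\le Z^*$; this is false. By Lemma~\ref{shiftoight}, $-D_a>0$ only on $[0,Z_a)$ and $-D_a<0$ for $Z>Z_a$, with $Z_a$ a fixed constant near $Z_2\ll Z^*$. In fact the paper's moving-cutoff step \emph{uses} that $-D(Z)<0$ on the support of $\Lambda\chi$ (which lies in $Z\ge\hat Z_a>Z_2$) to show the boundary quadratic form \eqref{signfnonon} is negative definite---this is exactly the statement that the moving boundary $Z=\nu(\tau)\hat Z_a$ is spacelike for the acoustic metric once it is beyond $Z_2$. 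Your physical intuition (characteristics outgoing for $Z>Z_2$) is correct; the sign encoding it is the opposite of what you wrote. Also note that the final weighted estimate (the paper's step~4) is carried out on the first-order $(\rho_m,\Phi_m)$ system and does not invoke $D_a$ at all---the hyperbolic structure enters only in the moving-cutoff step.
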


\begin{proof}[Proof of Lemma \ref{lemmabootlocalnorms}]  The proof relies on a sharp energy estimate with time dependent localization of $(\rho,\Phi)$. This is  a renormalized version of the finite speed of propagation.\\

\noindent{\bf step 1} $\dot{H}^{m}$ localized energy identity. Pick a smooth well localized radially symmetric function $\chi(\tau,Z)$ and a coordinate $1\leq i\leq d$ and note for $m$ integer
$$\rho_m=\pa^m_i\rho, \ \ \Phi_m=\pa^m_i\Phi,$$ where we omit the $i$ dependence to simplify notations.
We recall the Emden transform formulas \eqref{defhtwohunbis}:
\be
\label{autroformluehonhtwo}
\left|\begin{array}{ll}
H_2=\mu(1-w)\\
H_1=\frac{\mu\ell}{2}(1-w)\left[1+\frac{\Lambda \sigma}{\sigma}\right]\\
H_3=\frac{\Delta \rho_P}{\rho_P}
\end{array}\right.
\ee
which yield the bounds using \eqref{limitprofilesbsi}, \eqref{decayprofile}:
 \be
\label{esterrorpotentials}
\left|\begin{array}{llll}
H_2=\mu+O\left(\frac{1}{\la Z\ra^r}\right), \ \ H_1=-\frac{2\mu(r-1)}{p-1}+O\left(\frac{1}{\la Z\ra^r}\right)\\
 |\la Z\ra^j\pa_Z^j H_1|+||\la Z\ra^j\pa_Z^jH_2|\lesssim \frac 1{\la Z\ra^{r}}, \ \ j\ge 1\\
 |\la Z\ra^j\pa_Z^j H_3|\lesssim \frac{1}{\la Z\ra^2}\\
  \frac{1}{\la Z\ra^{2(r-1)}}\left[1+O\left(\frac{1}{\la Z\ra^{r}}\right)\right]\lesssim_j |\la Z\ra^j\pa_Z^jQ|\lesssim_j \frac{1}{\la Z\ra^{2(r-1)}}
 \end{array}\right.
 \ee
and the commutator bounds:
\be
\label{esterrorpotentialsbis}
\left|\begin{array}{lllll}
|[\pa_i^m,H_1]\rho|\lesssim  \sum_{j=0}^{m-1}\frac{|\pa_Z^j\rho|}{\la Z\ra^{r+m-j}}\\
|\nabla\left([\pa_i^m,H_1]\rho\right)|\lesssim \sum_{j=0}^{m}\frac{|\pa_Z^j\rho|}{\la Z\ra^{m-j+r+1}}\\
|[\pa_i^m,Q]\rho|\lesssim Q\sum_{j=0}^{m-1}\frac{|\pa_Z^j\rho|}{\la Z\ra^{m-j}}\\
|[\pa_i^m,H_2]\Lambda\rho|\lesssim \sum_{j=1}^{m}\frac{|\pa_Z^j\rho|}{\la Z\ra^{r+m-j}}\\
|\nabla\left([\pa_i^m,H_2]\Lambda\Phi\right)|\lesssim \sum_{j=1}^{m+1}\frac{|\pa^j_Z\Phi|}{\la Z\ra^{r+1+m-j}}.
\end{array}\right.
\ee
Commuting \eqref{nekoneneon} with $\pa_i^m$:
$$
\left|\begin{array}{ll}
\pa_\tau\rho_m=H_1\rho_m-H_2(m+\Lambda)\rho_m-\Delta \Phi_m+\pa_i^mG_\rho+E_{m,\rho}\\
\pa_\tau\Phi_m=-(p-1)Q\rho_m-H_2(m+\Lambda) \Phi_m+(H_1-\mathcal e)\Phi_m+\pa^m_iG_\Phi+E_{m,\Phi}
\end{array}\right.
$$
with the bounds $$\left|\begin{array}{ll} |E_{m,\rho}|\lesssim \sum_{j=0}^{m}\frac{|\pa_Z^j\rho|}{\la Z\ra^{r-1+m-j}}+\sum_{j=0}^{m}\frac{|\pa_Z^j\Phi|}{\la Z\ra^{m-j+2}}\\  |\nabla E_{m,\Phi}|\lesssim Q\sum_{j=0}^{m}\frac{|\pa_Z^j\rho|}{\la Z\ra^{m+1-j}}+ \sum_{j=0}^{m+1}\frac{|\pa^j_Z\Phi|}{\la Z\ra^{r+m-j}}.
\end{array}\right.$$

Let $\chi$ be an arbitrary smooth function. We derive  the corresponding energy identity:
\bee
&&\frac 12\frac{d}{d\tau}\left\{\int (p-1)Q\rho_m^2\chi+|\nabla \Phi_m|^2\chi\right\}=\frac 12\int \pa_\tau\chi\left[(p-1)Q\rho_m^2+|\nabla \Phi_m|^2\right]\\
& +& \int(p-1)Q\rho_m\chi\left[H_1\rho_m-H_2(m+\Lambda)\rho_m-\Delta \Phi_m+\pa_i^mG_\rho+E_{m,\rho}\right]\\
& + & \int \chi\nabla \Phi_m\cdot\nabla\left[-(p-1)Q\rho_m-H_2(m+\Lambda \Phi_m)+(H_1-\mu(r-2))\Phi_m+\pa^i_mG_\Phi+E_{m,\Phi}\right]\\
& = & \frac 12\int \pa_\tau\chi\left[(p-1)Q\rho_m^2+|\nabla \Phi_m|^2\right]\\
& + & \int(p-1)Q\rho_m\chi\left[H_1\rho_m-H_2(m+\Lambda)\rho_m+\pa_i^mG_\rho+E_{m,\rho}\right]+\int(p-1)Q\rho_m\nabla\chi\cdot\nabla \Phi_m\\
&+& \int \chi\nabla \Phi_m\cdot\nabla\left[-H_2(m+\Lambda) \Phi_m+(H_1-\mu(r-2))\Phi_m+\pa^m_iG_\Phi+E_{m,\Phi}\right].\\
\eee
In what follows we will use $\omega>0$ as a small universal constant to denote the power of tails of the error terms. 
In most cases, the power is in fact $r>2$ which we do not need.\\
\noindent\underline{$\rho_m$ terms}. From the asymptotic behavior of $Q$ \eqref{decayprofile} and \eqref{esterrorpotentials}:
\bee
&&-\int(p-1)Q\rho_m\chi H_2\Lambda \rho_m=\frac{p-1}2\int \rho_m^2 \chi QH_2\left[d+\frac{\Lambda Q}{Q}+\frac{\Lambda H_2}{H_2}+\frac{\Lambda \chi}{\chi}\right]\\
&= & \int \rho_m^2 (p-1)\chi Q\mu\left[\frac d2-(r-1)+O\left(\frac{1}{\la Z\ra^\omega}\right)\right]+\frac 12\int (p-1)QH_2\Lambda \chi \rho_m^2
\eee
\noindent\underline{$\Phi_m$ terms}. We first estimate recalling \eqref{esterrorpotentials}:
\bee
&&\int \chi\nabla \Phi_m\cdot\nabla\left[(-mH_2+H_1-\mu(r-2))\Phi_m\right]\\
&=&\int(-mH_2+H_1-\mu(r-2))\chi|\nabla\Phi_m|^2+O\left(\int \frac{\chi}{\la Z\ra^{r}}|\nabla\Phi_m||\Phi_m|\right)\\
& = & -\left[\mu(m+r-2)+\frac{2\mu(r-1)}{p-1}\right]\int \chi|\nabla\Phi_m|^2+O\left(\int \frac{\chi}{\la Z\ra^\omega}\left[|\nabla \Phi_m|^2+\frac{\Phi_m^2}{\la Z\ra^2}\right]\right)
\eee
From Pohozhaev identity \eqref{pohozaevbispouet}
with $F=\chi H_2(Z_1,\dots,Z_d)$:
\bee
&&-\int \chi\nabla \Phi_m\cdot\nabla (H_2\Lambda \Phi_m)=\int H_2\Lambda \Phi_m[ \chi \Delta \Phi_m+\nabla \chi\cdot\nabla \Phi_m]\\
& = & -\sum_{i,j=1}^d \int\pa_iF_j\pa_i\Phi_m\pa_j\Phi_m+\frac 12\int |\nabla \Phi_m|^2\nabla \cdot F+\int H_2\Lambda \Phi_m\nabla \chi\cdot\nabla \Phi_m\\
& = & \sum_{i,j=1}^d\pa_i\Phi_m\pa_j\Phi_m\left[-\pa_i(\chi H_2 Z_j)+H_2Z_j\pa_i\chi\right]+\frac 12\int |\nabla \Phi_m|^2\chi H_2\left[d+\frac{\Lambda\chi}{\chi}+\frac{\Lambda H_2}{H_2}\right]\\
& = &\frac{\mu(d-2)}{2}\int\chi|\nabla \Phi_m|^2+\frac 12 \int H_2\Lambda \chi|\nabla \Phi_m|^2+O\left(\int \frac{\chi}{\la Z\ra^\omega}|\nabla \Phi_m|^2\right)
\eee

The collection of above bounds yields for some universal constant $\omega>0$ the weighted energy identity:
\bea
\label{energyidentittyk}
&&\frac 12\frac{d}{d\tau}\left\{\int (p-1)Q\rho_m^2\chi+|\nabla \Phi_m|^2\chi\right\}\\
\nonumber & = &-\int\chi\left[(p-1)Q\rho_m^2+|\nabla\Phi_m|^2\right]\left[\mu(m-\frac d2+r-1)+\frac{2\mu(r-1)}{p-1}+O\left(\frac{1}{\la Z\ra^\omega}\right)\right] \\
\nonumber& + & \frac 12\int (p-1)Q\rho_m^2\left[\pa_\tau \chi+H_2\Lambda \chi\right]+\frac 12\int |\nabla \Phi_m|^2\left[\pa_\tau \chi+H_2\Lambda \chi\right]+\int(p-1)Q\rho_m\nabla\chi\cdot\nabla \Phi_m\\
\nonumber& + & O\left(\int \chi\left[\sum_{j=0}^{m+1}\frac{|\pa_Z^j\Phi|^2}{\la Z\ra^{2(m+1-j)+\omega}}+\sum_{j=0}^{m}\frac{Q|\pa_Z^j\rho|^2}{\la Z\ra^{2(m-j)+\omega}}\right]\right)\\
\nonumber&+ & O\left(\int \chi|\nabla \Phi_m||\nabla \pa^mG_\Phi|+\int\chi Q|\rho_m||\pa^mG_\rho|\right)
\eea

\noindent{\bf step 2} Nonlinear and source terms. We claim the bound for $\chi=\chi_{\nu_0,m}$:
\bea
\label{cenoeneonoenevne}
\nonumber &&\sum_{m=0}^{2k_0} \sum_{i=1}^d\int \chi_{\nu_0,m}|\nabla \pa^mG_\Phi|^2+\int (p-1)Q\chi_{\nu_0,m}|\pa^mG_\rho|^2\\
& \lesssim & \left(\sum_{m=0}^{2k_0}\sum_{i=1}^d\int Q\rho_m^2\chi_{\nu_0+1,m}+|\nabla \Phi_m|^2\chi_{\nu_0+1,m}\right)+b^2.
\eea

\noindent\underline{$G_\rho$ term}. Recall \eqref {defgrho}
$$G_\rho=-\rho\Delta \Psi-2\nabla\rho\cdot\nabla \Psix, $$ then by Leibniz:
$$
|\pa^mG_\rho|^2\lesssim \sum_{j_1+j_2=m+2, j_2\geq 1}|\pa^{j_1}\rho|^2|\pa^{j_2}\Psi|^2.$$ 
We recall the pointwise bounds \eqref{smallglobalboot} for $Z\leq 3Z^*$,
\bee
|\pa^{j_1}\rho|\le \frac{C_K}{\la Z\ra^{j_1+\frac{2(r-1)}{p-1}}}, \ \ |\pa^{j_2}\Psi|&\leq& \frac{C_K}{\la Z\ra^{j_2+r-2}}.
\eee
This yields, recalling \eqref{beubeibebiev}, for $j_1\le 2k_0$:
 \bee
&&\int \chi_{\nu_0,m}Q|\pa^{j_1}\rho|^2|\pa^{j_2}\Psi|^2\lesssim \int Q\zeta\left(\frac{Z}{Z^*}\right) \frac{|\pa^{j_1}\rho|^2}{Z^{2(j_2-m)+d-2(r-1)+2(r-2)+2\nu_0}}\\
& \lesssim & \int\zeta\left(\frac{Z}{Z^*}\right)Q\frac{|\pa^{j_1}\rho|^2}{\la Z\ra^{d-2(r-1)+2(\nu_0-j_1)+2}}\lesssim  \sum_{j=0}^{j_1}\int \chi_{\nu_0+1,j_1}Q|\pa_Z^{j}\rho|^2\\
& \lesssim & \sum_{m=0}^{2k_0}\sum_{i=1}^d\int Q\rho_m^2\chi_{\nu_0+1,m}+|\nabla \Phi_m|^2\chi_{\nu_0+1,m}.
\eee
For $j_1=m+1$, $j_2=1$, we use the other variable:
\bee
&&\int \chi_{\nu_0,m}Q|\pa^{j_1}\rho|^2|\pa^{j_2}\Psi|^2\lesssim \int Q\zeta\left(\frac{Z}{Z^*}\right)\frac{|\pa^{j_2}\Psi|^2}{Z^{2(j_1-m)+d-2(r-1)+\frac{4(r-1)}{p-1}+2\nu_0}}\\
& \lesssim & \int\zeta\left(\frac{Z}{Z^*}\right)\frac{\rho_P^2|\pa^{j_2}\Psi|^2}{\la Z\ra^{d-2(r-1)+2(\nu_0-j_2)+2}}\lesssim \sum_{j=0}^{j_2}\int\zeta\left(\frac{Z}{Z^*}\right)\frac{|\pa_Z^{j}\Phi|^2}{\la Z\ra^{d-2(r-1)+2(\nu_0-j)+2}}\\
&\lesssim &  \sum_{j=0}^{j_2}\int \chi_{\nu_0+1,j}|\pa_Z^{j}\Phi|^2\lesssim  \sum_{m=0}^{2k_0}\sum_{i=1}^d\int Q\rho_m^2\chi_{\nu_0+1,m}+|\nabla \Phi_m|^2\chi_{\nu_0+1,m}
\eee
and \eqref{cenoeneonoenevne} follows for $G_\rho$ by summation on $0\leq m\leq 2k_0$ .

\noindent\underline{$G_\Phi$ term}. Recall \eqref{defgrho}
$$G_\Phi=-\rho_P(|\nabla \Psi|^2+\NL(\rho))+\frac{b^2\rho_P}{\rho_T}\Delta \rho_T.$$
We estimate using the pointwise bounds \eqref{smallglobalboot} for $j_3\le 2k_0$:
\bee
&&|\nabla \pa^m(\rho_P|\nabla \Psi|^2)|\lesssim \sum_{j_1+j_2+j_3=m+1,j_2\le j_3}\frac{\rho_P}{\la Z\ra^{j_1}}|\pa^{j_2+1}\Psi\pa^{j_3+1}\Psi|\\
&\lesssim&  \sum_{j_1+j_2+j_3=m+1,j_2\le j_3}\frac{1}{\la Z\ra^{\frac{2(r-1)}{p-1}+j_1+r-2+j_2+1}}|\pa^{j_3+1}\Psi|\lesssim \sum_{j_3=0}^{2k_0}\frac{|\pa^{j_3+1}\Phi|}{\la Z\ra^{r+m-j_3}}
\eee
and  since $r>1$:
\bee
\sum_{j_3=0}^{2k_0}\int\chi_{\nu_0,m}\frac{|\pa^{j_3+1}\Phi|^2}{\la Z\ra^{2(r+m-j_3)}}\lesssim \sum_{j_3=0}^{2k_0}\int\chi_{\nu_0+1,j_3}|\nabla\Phi_{j_3}|^2.
\eee
For $j_3=2k_0+1$, we use the other variable and the conclusion follows similarly.

The quantum pressure term is estimated using the pointwise bounds \eqref{smallglobalboot}:
\bee
&&\int \chi_{\nu_0,m}\left|\nabla \pa^m\left(\frac{b^2\rho_P}{\rho_T}\Delta \rho_T\right)\right|^2\lesssim C_Kb^4\int_{Z\le 3Z^*}\frac{\chi_{\nu_0,m}}{\la Z\ra^{\frac{4(r-1)}{p-1}+2(m+3)}}\\
& \lesssim & C_K b^4 \int_{Z\le 3Z^*}\frac{Z^{d-1}dZ}{\la Z\ra^{d-2(r-1)+2(\nu_0+\frac{2(r-1)}{p-1}-m)+2(m+3)}}\leq b^2
\eee
\\

\noindent{\bf step 2} Initialization and lower bound on the bootstrap time $\tau^*$.\\
 Fix a large enough $Z_0$ and pick a small enough universal constant $\omega_0$ such that 
\be
 \label{positiiviiefecorrection}
 \forall Z\ge 0, \ \ -\omega_0+H_2\ge \frac{\omega_0}{2}>0
 \ee
 and let $\Gamma=\Gamma(Z_0)$ such that  
\be
\label{definitionna}
\frac{Z_0}{2\hat{Z_a}}e^{-\omega_0\Gamma}=1.
\ee
We claim that provided ${\tau_0}$ has been chosen sufficiently large, the bootstrap time $\tau^*$ of Proposition \ref{propboot} satisfies the lower bound
\be
\label{vneiovnevneonvein}
\tau^*\ge \tau_0+\Gamma.
\ee
Indeed, in view of sections \ref{sectionsobolev}, \ref{sec:point}, \ref{sec:high} there remains to control the bound \eqref{eq:bootdecay} on $[\tau_0,\tau_0+\Gamma]$. By \eqref{eq:Xfgrow}, the desired bounds already hold for $Z\le \tilde Z_a$ on $[\tau_0,\tau_0+\Gamma]$.\\

We now run the energy estimate \eqref{energyidentittyk} with $\chi=\chi_{\nu_0,m}$ and obtain from \eqref{energyidentittyk}, \eqref{cenoeneonoenevne} the rough bound on $[\tau_0,\tau^*]$:
$$\frac{d}{d\tau}\left\{\int (p-1)Q\rho_m^2\chi_{\nu_0,m}+|\nabla \Phi_m|^2\chi_{\nu_0,m}\right\}\leq C\int (p-1)Q\rho_m^2\chi_{\nu_0,m}+|\nabla \Phi_m|^2\chi_{\nu_0,m}+b^2.
$$
which yields using \eqref{improvedsobolevlowinit}:
\bee
&&\int (p-1)Q\rho_m^2\chi_{\nu_0,m}+|\nabla \Phi_m|^2\chi_{\nu_0,m}\le e^{C(\tau-\tau_0)}\int (p-1)Q(\rho_m(0))^2\chi_{\nu_0,m}+|\nabla \Phi_m(0)|^2\chi_{\nu_0,m}\\
& + & e^{C\tau}\int_{\tau_0}^{\tau}e^{-(C+2\delta_g)\sigma}d\sigma\leq e^{C\Gamma}\left[C_0e^{-\delta_g\tau_0}+e^{-2\delta_g\tau_0}\right]\leq 2e^{C\Gamma}C_0e^{-\delta_g\tau_0}
\eee
and hence
\bee
&&e^{\frac{4\delta_g}{5}\tau}\left[\int (p-1)Q\rho_m^2\chi_{\nu_0,m}+|\nabla \Phi_m|^2\chi_{\nu_0,m}\right]\\
&\le&  e^{\frac{4\delta_g}{5}\tau_0}e^{\frac{4\delta_g}{5}\Gamma}\left[\int (p-1)Q\rho_m^2\chi_{\nu_0,m}+|\nabla \Phi_m|^2\chi_{\nu_0,m}\int (p-1)Q\rho_m^2\chi_{\nu_0,m}+|\nabla \Phi_m|^2\chi_{\nu_0,m}\right]\\
& = & 2e^{C\Gamma}C_0e^{-\delta_g\tau_0}e^{\frac{4\delta_g}{5}\tau_0}\le e^{2C\Gamma}e^{-\frac{\delta_g}{10}\tau_0}\le 1
\eee
which concludes the proof of \eqref{vneiovnevneonvein} and \eqref{improvedsobolevlowbetter} for $\tau\in[\tau_0,\tau_0+\Gamma]$.\\

\noindent{\bf step 3} Finite speed of propagation.
We now pick a time $\tau_f\in[\tau_0+\Gamma,\tau^*]$ and propagate the bound \eqref{firstbound} to the compact set $Z\leq {Z_0}$ using a finite speed of propagation argument. We claim:
\be
\label{lowsobolev}
\|\rho\|^2_{H^{2k_0}(Z\leq \frac{Z_0}{2})}+\|\nabla \Psi\|^2_{H^{2k_0}(Z\le \frac{Z_0}{2})}\le Ce^{-{\delta_g}\tau}.
\ee
 Here the key is that \eqref{firstbound} controls a norm on the set {\em strictly including} the light cone $Z\le Z_2$. 
 Let $$\hat{Z_a}=\frac{\tilde{Z_a}+Z_2}{2}$$ and note that we may, without loss of generality by taking $a>0$ small enough, assume:
\be
\label{estnisneione}
\frac{\tilde{Z}_a}{\hat{Z_a}}\leq 2.
\ee
Recall that $\Gamma=\Gamma(Z_0)$ is parametrized by \eqref{definitionna}. We define $$\chi(\tau,Z)=\zeta\left(\frac{Z}{\nu(\tau)}\right), \ \ \nu(\tau)=\frac{Z_0}{2\hat{Z}_a}e^{-\omega_0(\tau_f-\tau)}$$ with $\omega_0>0$ defined 
in \eqref{positiiviiefecorrection}, \eqref{definitionna} and a fixed spherically symmetric non-increasing cut off function 
\be
\label{defzetavneneov}
\zeta(Z)=\left|\begin{array}{ll} 1\ \ \mbox{for}\ \ 0\leq Z\leq 2\hat{Z_a}\\ 0\ \ \mbox{for}\ \ Z\geq 2\hat{Z_a}.
\end{array}\right., \ \ \zeta'\leq 0
\ee 
We define $$\tau_\Gamma=\tau_f-\Gamma$$ so that from \eqref{definitionna}:
\be
\label{defstopingtime}
\left|\begin{array}{l}
 \tau_0\le \tau_\Gamma\le \tau^*\\
\nu(\tau_\Gamma)=\frac{Z_0}{2\hat{Z}_a}e^{-\omega_0(\tau_f-\tau_\Gamma)}=\frac{Z_0}{2\hat{Z_a}}e^{-\omega_0\Gamma}=1.
\end{array}\right.
\ee
We pick $$0\leq m\leq 2k_0$$ 
then  \eqref{defzetavneneov}, \eqref{defstopingtime} ensure ${\rm Supp}(\chi(\tau_\Gamma,\cdot))\subset \{Z\le \hat{Z}_a\}$ with $\hat{Z_a}<\tilde{Z}_a$ and hence from \eqref{firstbound}:
\be
\label{initiazliationtauagtronwall}
\left(\int (p-1)Q\rho_m^2\chi+|\nabla \Phi_m|^2\chi\right)(\tau_\Gamma)\lesssim e^{-\delta_g \tau_\Gamma}.
\ee
This estimate implies that we can integrate energy identity \eqref{energyidentittyk} {\it only} on the interval 
$[\tau_\Gamma,\tau_f]$. 
We now estimate all terms in  \eqref{energyidentittyk}.\\

\noindent\underline{Boundary terms}. We compute the quadratic terms involving $\Lambda \chi$ which should be thought of as boundary terms. First $$\pa_\tau\chi(\tau,Z)=-\frac{\pa_\tau \nu}{\nu} \frac{Z}{\nu}\pa_Z\zeta\left(\frac{Z}{\nu}\right)=-\omega_0\Lambda \chi.$$ We now assume, recalling \eqref{autroformluehonhtwo}, that $\omega_0$ has been chosen small enough so that \eqref{positiiviiefecorrection} holds, and hence the lower bound on the full boundary quadratic form using $\Lambda \chi\le 0$:

\bee
 &&\frac 12\int (p-1)Q\rho_m^2\left[\pa_\tau \chi+H_2\Lambda \chi\right]+\frac 12\int |\nabla\Phi_m|^2\left[\pa_\tau \chi+H_2\Lambda \chi\right]+\int(p-1)Q\rho_m\nabla\chi\cdot\nabla \Phi_m\\
 & = & \int\left\{\frac 12  (p-1)Q\rho_m^2\left[-\omega_0+H_2\right]+\frac 12|\nabla\Phi_m|^2\left[-\delta_0+H_2\right]\Lambda\chi+\int(p-1)\frac{Q}{Z}\pa_Z \Phi_m\rho_m\right\}\Lambda \chi.\\
 & \leq & \int\left\{\frac 12  (p-1)Q\rho_m^2\left[-\omega_0+H_2\right]+\frac 12|\pa_Z\Phi_m|^2\left[-\omega_0+H_2\right]\Lambda\chi+\int(p-1)\frac{Q}{Z}\pa_Z \Phi_m\rho_m\right\}\Lambda \chi.
 \eee

 From \eqref{enoenoenvonev}, the discriminant of the above quadratic form is given by
 \bee
&& \left[(p-1)\frac{Q}Z\right]^2-(-\omega_0+H_2)^2(p-1)Q=(p-1)Q\left[\frac{(p-1)Q}{Z^2}-(-\omega_0+H_2)^2\right]\\
 &= & (p-1)\mu^2Q\left[\sigma^2-\left(-\frac{\omega_0}{\mu}+1-w\right)^2\right]= (p-1)\mu^2Q\left[-D(Z)+O(\omega_0)\right].
 \eee
 We then observe by definition of $\chi$ that for $\tau\geq \tau_A$: $$Z\in {\rm Supp}\Lambda \chi\Leftrightarrow \hat{Z}_a\leq \frac{Z}{\nu(\tau)}\leq \tilde{Z}_a \Rightarrow  Z\geq \nu(\tau) \hat{Z}_a\geq \nu(\tau_\Gamma) \hat{Z}_a=\hat{Z}_a$$ from which since $\tilde{Z}_a>Z_2$: $$Z\in {\rm Supp}\Lambda \chi \Rightarrow -D(Z)+O(\omega_0)<0$$ provided $0<\omega_0\ll 1$ has been chosen small enough. 

 Together with \eqref{positiiviiefecorrection} and $\Lambda\chi<0$, this ensures: $\forall \tau\in [\tau_\Gamma,\tau^*]$, 
\bea
\label{signfnonon}
\nonumber &&\frac 12\int (p-1)Q\rho_m^2\left[\pa_\tau \chi+H_2\Lambda \chi\right]+\frac 12\int |\nabla\Phi_m|^2\left[\pa_\tau \chi+H_2\Lambda \chi\right]+\int(p-1)Q\rho_m\nabla\chi\cdot\nabla \Phi_m\\
&<&0
\eea

\noindent\underline{Nonlinear terms}. From \eqref{defzetavneneov}, \eqref{estnisneione} for $\tau\leq \tau_f$:
$$\mbox{Supp}\chi\subset\{Z\leq \nu(\tau)\tilde{Z}_a\}\subset \{Z\leq \nu(\tau_f)\tilde{Z}_a\}=\subset \{Z\leq \frac{Z_0}{2}\frac{\tilde{Z}_a}{\hat{Z}_a}\}\subset\{Z\leq Z_0\},$$ and hence from \eqref{estimatheG}:
$$\int \chi|\nabla \pa^mG_\Phi|^2+\int (p-1)Q\chi|\pa^mG_\rho|^2\lesssim  \|\nabla G_\Phi\|^2_{H^{2k_0}(Z\leq Z_0)}+\|\pa^mG_\rho\|^2_{H^{2k_0}(Z\leq Z_0)}\leq  e^{-\frac{4\delta_g}{3}\tau}.
$$
\noindent\underline{Conclusion}. Injecting the collection of above bounds into \eqref{energyidentittyk} and summing over $m\in[0,2k_0]$ yields the crude bound: $\forall \tau\in [\tau_\Gamma,\tau_f]$,
$$
\frac{d}{d\tau}\left\{\sum_{m=0}^{2k_0}\int (p-1)Q\rho_m^2\chi+|\nabla \Phi_m|^2\chi\right\}\leq C \sum_{m=0}^{2k_0}\int (p-1)Q\rho_m^2\chi+|\nabla \Phi_m|^2\chi+e^{-\frac{4\delta_g}{3}\tau}.
$$
We integrate the above  on $[\tau_\Gamma,\tau_f]$

and conclude using $$\chi(\tau_f,Z)=\zeta\left(\frac{Z}{\nu(\tau_f)}\right)=\zeta\left(\frac{Z}{\frac{Z_0}{2\hat{Z}_a}}\right)=1\ \ \mbox{for}\ \ Z\leq {Z_0}$$ and the initialization \eqref{initiazliationtauagtronwall}:
\bee
&&\left[\|\rho\|^2_{H^{2k_0}(Z\leq {Z_0})}+\|\nabla \Psi\|^2_{H^{2k_0}(Z\leq {Z_0})}\right](\tau_f)\lesssim \sum_{m=0}^{2k_0} \left[\int (p-1)Q\rho_m^2\chi+|\nabla \Phi_m|^2\chi\right](\tau_\Gamma)\\
& \lesssim & e^{C(\tau_f-\tau_\Gamma)}e^{-\delta_g\tau_\Gamma}+\int_{\tau_\Gamma}^{\tau_f}e^{C(\tau_f-\sigma)}e^{-\frac{4\delta_g}{3}\sigma}d\sigma\lesssim C(\Gamma)e^{-\delta_g\tau_f}=C(Z_0)e^{-\delta_g\tau_f}.
\eee
Since the time $\tau_f$ is arbitrary in $[\tau_0+\Gamma,\tau^*]$, the bound \eqref{lowsobolev} follows.\\

\noindent{\bf step 4} Proof of \eqref{improvedsobolevlowbetter}. We run the energy identity \eqref{energyidentittyk} with $\chi_{\nu_0,m}$  and estimate each term.\\

 \noindent\underline{\em terms $\frac{Z_0}{3}\leq Z\leq \frac{Z_0}{2}$}. In this zone, we have by construction $$\rho=\rhot$$ and hence the bootstrap bounds \eqref{sobolevinitboot} imply $$\|\rho\|_{H^{k_m}(Z\leq \frac{Z_0}{2})}+\|\nabla \Psi\|_{H^{k_m}(Z\leq \frac{Z_0}{2})}\lesssim 1$$ and hence interpolating with \eqref{lowsobolev} for $k_m $ large enough:
 \bea
 \label{inteprolsaiotnnormone}
 \nonumber \|\rho\|_{H^{m}(\frac{Z_0}{3}\leq Z\leq \frac{Z_0}{2})}&\lesssim& \|\rho\|^{\frac{m}{k_m}}_{H^{k_m}(\frac{Z_0}{3}\leq Z\leq \frac{Z_0}{2})}\|\rho\|^{1-\frac{m}{k_m}}_{L^2(\frac{Z_0}{3}\leq Z\leq \frac{Z_0}{2})}\lesssim e^{-\frac{\delta_g}{2}\left(1-\frac{m}{k_m}\right)}\\
 &\leq& e^{-\frac{4\delta_g}{10}}
 \eea
  and similarly for the phase
  \be
 \label{inteprolsaiotnnormbisone}
 \|\nabla\Psi\|_{H^{m}(\frac{Z_0}{3}\leq Z\leq \frac{Z_0}{2})}\lesssim e^{-\frac{\delta_g}{2}\left(1-\frac{m}{k_m}\right)}\leq e^{-\frac{4\delta_g}{10}}.
 \ee
 
 \noindent\underline{\em Linear term}. We observe the cancellation using \eqref{esterrorpotentials}, \eqref{renormalization}:
  \bea
 \label{vnnenoenoene}
 \nonumber &&\pa_\tau\chi_{\nu_0,m}+H_2\Lambda \chi_{\nu_0,m}=\frac{1}{\la Z\ra^{d-2(r-1)+2(\nu_0-m)}}\left[-\mu\Lambda \zeta\left(\frac{Z}{Z^*}\right)\right]\\
 \nonumber &+& \mu(1-w)\left[\frac{1}{\la Z\ra^{d-2(r-1)+2(\nu_0-m)}}\Lambda \zeta\left(\frac{Z}{Z^*}\right)+\Lambda \left(\frac{1}{\la Z\ra^{d-2(r-1)+2(\nu_0-m)}}\right)\zeta\left(\frac{Z}{Z^*}\right)\right]\\
 & = &-\mu\left[d-2(r-1)+2(\nu_0-m)\right]\chi_{\nu_0,m}+O\left(\frac{1}{\la Z\ra^{d-2(r-1)+2(\nu_0-m)+\omega}}\right)
\eea
for some universal constant $\omega>0$. We now estimate the norm for $2Z^*\le Z\le 3Z^*$.
Using spherical symmetry for $Z\ge 1$ and $m\ge 1$:
\be
\label{beubeibebiev}
|Z^{m}\pa^m\rho|\lesssim \sum_{j=1}^mZ^{m}\frac{|\pa_Z^j\rho|}{Z^{m-j}}\lesssim  \sum_{j=1}^m Z^{j}|\pa_Z^j\rho|
\ee
and hence using the outer $L^\infty$ bound \eqref{smallglobalboot}:
\bea
\label{beibeibeibiebebv}
\nonumber &&\int_{2Z^*\leq Z\leq 3Z^*}\frac{(p-1)Q|\pa^m\rho|^2+|\pa^m\nabla \Phi|^2}{\la Z\ra^{d-2(r-1)+2(\nu_0-m)+\omega}}\\
\nonumber&\lesssim&\int_{2Z^*\leq Z\leq 3Z^*}\left[\sum_{j=0}^m \left|\frac{Z^{j}\pa_Z^j\rho}{\la Z\ra^{\frac d2+\nu_0+\frac{\omega}2}}\right|^2+\sum_{j=1}^{m+1}\left|\frac{Z^{j}\pa_Z^j\Phi}{\la Z\ra^{\nu_0+\frac d2-(r-1)+1+\frac{\omega}{2}}}\right|^2\right]\\
\nonumber& \lesssim & \int_{2Z^*\leq Z\leq 3Z^*}\left[\sum_{j=0}^m \left|\frac{Z^{j}\pa_Z^j\rho}{\rho_P\la Z\ra^{\frac d2+\nu_0+\frac{2(r-1)}{p-1}+\frac{\omega}2}}\right|^2+\sum_{j=1}^{m+1}\left|\la Z\ra^{r-2}\frac{Z^{j}\pa_Z^j\Psi}{\la Z\ra^{\nu_0+\frac{2(r-1)}{p-1}+\frac d2+\frac{\omega}{2}}}\right|^2\right]\\
& \lesssim & \frac{1}{(Z^*)^{\omega+2\left[
\nu_0+\frac{2(r-1)}{p-1}\right]}}\left(1+b^2(Z^*)^{2(r-2)}\right)\leq e^{-\delta_g\tau}
\eea
using 
$$b(Z^*)^{r-2}=e^{\tau\left[-\mathcal e+\mu(r-2)\right]}=e^{\tau\left[-\mathcal e+1-2\mu\right]}=1$$
and the explicit choice from \eqref{venovnoenneneo}: $$2\mu\left(\nu_0+\frac{2(r-1)}{p-1}\right)=\delta_g$$
\noindent\underline{Conclusion} Injecting the above bounds into \eqref{energyidentittyk} yields:
\bee
&&\frac 12\frac{d}{d\tau}\left\{\int (p-1)Q\rho_m^2\chi_{\nu_0,m}+|\nabla \Phi_m|^2\chi_{\nu_0,m}\right\}\\
\nonumber & = &-\int\chi_{\nu_0,m}\left[(p-1)Q\rho_m^2+|\nabla\Phi_m|^2\right]\left[\mu\nu_0+\frac{2\mu(r-1)}{p-1}\right] \\
\nonumber& + & O\left(\int_{Z_0\leq Z\leq 2Z^*} \chi_{\nu_0,m}\left[\sum_{m=0}^{m+1}\frac{|\pa_Z^j\Phi|^2}{\la Z\ra^{2(m+1-j)+2\omega}}+\sum_{j=0}^{m}\frac{Q|\pa_Z^j\rho|^2}{\la Z\ra^{2(m-j)+2\omega}}\right]+e^{-\frac{4\delta_g}{5}\tau}\right)\\
\nonumber&+ &  O\left(\int \chi_{\nu_0,m}|\nabla \Phi_m||\nabla \pa^mG_\Phi|+\int\chi_{\nu_0,m}Q|\rho_m||\pa^mG_\rho|\right)\eee
and hence after summing over $m$:
\bee
&&\frac 12\frac{d}{d\tau}\left\{\sum_{m=0}^{2k_0}\int (p-1)Q\rho_m^2\chi_{\nu_0,m}+|\nabla \Phi_m|^2\chi_{\nu_0,m}\right\}\\
\nonumber& = & -\mu\left[\nu_0+\frac{2(r-1)}{p-1}\right]\sum_{m=0}^{2k_0} \int\chi_{\nu_0,m}\left[(p-1)Q\rho_m^2+|\nabla\Phi_m|^2\right]\\
& + & O\left(e^{-\frac{4\delta_g}{5}\tau}+\sum_{m=0}^{2k_0}\int (p-1)Q\rho_m^2\chi_{\nu_0+\omega,m}+|\nabla \Phi_m|^2\chi_{\nu_0+\omega,m}\right)\\
\nonumber&+ &  \sum_{m=0}^{2k_0}O\left(\int \chi_{\nu_0,m}|\nabla \Phi_m||\nabla \pa^mG_\Phi|+\int\chi_{\nu_0,m}Q|\rho_m||\pa^mG_\rho|\right)
\eee
We now inject \eqref{cenoeneonoenevne} and obtain:
\bee
&&\frac 12\frac{d}{d\tau}\left\{\sum_{m=0}^{2k_0}\int (p-1)Q\rho_m^2\chi_{\nu_0,m}+|\nabla \Phi_m|^2\chi_{\nu,m}\right\}\\
\nonumber & \leq &-\sum_{m=0}^{2k_0}\int\chi_{\nu,m}\left[(p-1)Q\rho_m^2+|\nabla\Phi_m|^2\right]\mu\left[\nu_0+\frac{2(r-1)}{p-1}\right]+e^{-\frac{4\delta_g}{5}\tau} \\
\nonumber& + &C\sum_{m=0}^{2k_0}\left(\int_{Z_0\leq Z\leq 2Z^*} \chi_{\nu,m}\left[\sum_{j=0}^{m+1}\frac{|\pa_Z^j\Phi|^2}{\la Z\ra^{2(m+1-j)+2\omega}}+\sum_{j=0}^{m}\frac{Q|\pa_Z^j\rho|^2}{\la Z\ra^{2(m-j)+2\omega}}\right]\right)
\eee
Using \eqref{lowsobolev} we conclude 
\bea
\label{estigneononeo}
&&\frac 12\frac{d}{d\tau}\left\{\sum_{m=0}^{2k_0}\int (p-1)Q\rho_m^2\chi_{\nu_0,m}+|\nabla \Phi_m|^2\chi_{\nu_0,m}\right\}\\
\nonumber& = & -\mu\left[\nu_0+\frac{2(r-1)}{p-1}+O\left(\frac{1}{Z_0^C}\right)\right]\sum_{m=0}^{2k_0} \int\chi_{\nu_0,m}\left[(p-1)Q\rho_m^2+|\nabla\Phi_m|^2\right]\\
\nonumber&+ & O\left(e^{-\frac{4\delta_g}{5}}+\sum_{m=0}^{2k_0} \int \chi_{\nu_0,m}|\nabla \pa^mG_\Phi|^2+\int (p-1)Q\chi_{\nu_0,m}|\pa^mG_\rho|^2\right).
\eea
Therefore, using also \eqref{cenoeneonoenevne}, for $Z_0$ large enough and universal and $$2\mu\left(\nu_0+\frac{2(r-1)}{p-1}\right)=\delta_g,$$ there holds
 \bee
 &&\frac{d}{d\tau}\left\{\sum_{m=0}^{2k_0}\int (p-1)Q\rho_m^2\chi_{\nu_0,m}+|\nabla \Phi_m|^2\chi_{\nu_0,m}\right\}\\
\nonumber& \le & -\delta_g\sum_{m=0}^{2k_0} \int\chi_{\nu_0,m}\left[(p-1)Q\rho_m^2+|\nabla\Phi_m|^2\right]+C e^{-\frac{4\delta_g\tau}{5}}.
\eee
Integrating in time and using \eqref{improvedsobolevlowinit} yields \eqref{improvedsobolevlowbetter}.
 \end{proof}
 

\subsection{Closing the bootstrap and proof of Theorem \ref{thmmain}}
\label{proofthmmamin}


We are now in position to prove the bootstrap Proposition \ref{propboot} which immediately implies Theorem \ref{thmmain}.

 \begin{proof}[Proof of Proposition \ref{propboot} and Theorem \ref{thmmain}] {Recall that the non vanishing of the solution is ensured by \eqref{nonvavnishig}. It remains to close the bound \eqref{bootnonvanishing}. Indeed, from \eqref{renorlianoineo}, \eqref{renormalization}, \eqref{definitionprofilewithtailchange} for $Z\ge Z^*$:
 $$\frac{|\Delta u|}{\rho_D}\lesssim \frac{(Z^*)^2}{\rho_D}\left[|\Delta \rho_T|+\frac{|\pa_Z\rho_T||\pa_Z\Psi_T|}{b}+\frac{|\rho_T\Delta \Psi_T|}{b}\right]\lesssim 1$$ where we used \eqref{smallglobalboot} in the last step. The $|u|^p$ term is handled similarily, and \eqref{bootnonvanishing} is improved for $b_0$ small enough\footnote{The smallness of $b_0$ is responsible for the size of the time length between initial data and formation of a singularity.}. Note also that the bounds \eqref{smallglobalboot} imply $$\|u(t)\|_{H^{k_c}}\le C(t)$$ for $\frac d2\ll k_c\ll k_m$ for times in the bootstrap interval and hence the bootstrap time is strictly smaller than the life time provided by standard Cauchy theory.}\\
We now conclude from a classical topological argument \`a la Brouwer. The bounds of sections 5,6,7,8 have been  shown to hold for all initial data on the time 
 interval $[\tau_0, \tau_0+\Gamma]$ with $\Gamma$ large. Moreover, as explained in the proof of Lemma \ref{lemmalightcone},
 they can be immediately propagated to any time $\tau^*$ after a choice of projection of initial data on the subspace of unstable 
 modes $PX(\tau_0)$. This choice is dictated by Lemma \ref{browerset}. A continuity argument implies $\tau^*=\infty$ for this data, and the conclusions of Theorem \ref{thmmain} follow.

 \end{proof}


\begin{appendix}


\section{Comparison with compressible Euler dynamics}
We consider the compressible Euler equations with a polytropic equation of state: 
\be
\label{NScomp}
\left|\begin{array}{lll}\pa_t\rho+\nabla\cdot(\rho u)=0\\
\rho\pa_tu+\rho u\cdot\nabla u+\nabla P=0\\
P=\frac{\gamma-1}{\gamma}\rho^\gamma\end{array}\right., \ \ x\in \Bbb R^d.
\ee 
for $\gamma>1.$\\

\noindent{\bf step 1} Scaling and renormalization. The scaling symmetry\footnote{We choose a 1-parameter of scaling transformation, which is compatible with the Navier-Stokes equations, out of a larger 2-parameter family of possible transformations.} is
$$\l^{\frac{2}{\gamma+1}} \rho(\l^{\frac{2\gamma}{\gamma+1}}t,\l x), \ \ \l^{\frac{\gamma-1}{\gamma+1}} u(\l^{\frac{2\gamma}{\gamma+1}}t,\l x).$$ We renormalize self-similarly $$\frac{d\tau}{dt}=\frac{1}{\l^{\frac{2\gamma}{\gamma+1}}}, \ \ -\frac{\l_\tau}{\l}=\frac 12$$ and obtain:
\be
\label{selfsimilarflow}
\left|\begin{array}{ll}\pa_\tau \rho+\frac 12\left(\frac{2}{\gamma+1}\rho+y\cdot\nabla\rho\right)+\nabla\cdot(\rho u)=0\\
\rho\pa_\tau u+\frac 12\rho\left(\frac{\gamma-1}{\gamma+1}u+y\cdot\nabla u\right)+\rho u\cdot\nabla u+\nabla P=0.
\end{array}\right.
\ee
As above, we proceed with a front renormalization 
$$\rho\mapsto \frac{1}{b^{\frac 1{\gamma-1}}}\rho(y\sqrt{b}), \ \ u\mapsto \frac{1}{b^{\frac 12}}u(y\sqrt{b})$$ 
with 
$$\frac{b_\tau}b=-{\mathcal e}$$ and consider a potential
 spherically symmetric flow with 
 $u=\nabla \Psi=\Psi'$. A direct computation in which we also integrate the second equation leads to 
$$\left|\begin{array}{ll} -\pa_\tau\rho=\Delta \Psi+\left(\frac{{\mathcal e}}{\gamma-1}+\frac{1}{\gamma+1}\right)+\frac{\nabla \rho}{\rho}\cdot\left[\left(\frac{1-\mathcal e}{2}\right)Z+\nabla \Psi\right]\\
-\pa_\tau\Psi=\frac12|\nabla \Psi|^2+\left({\mathcal e}-\frac{1}{\gamma+1}\right)\Psi-1+\left(\frac{1-{\mathcal e}}{2}\right)Z\cdot\nabla \Psi+\rho^{\gamma-1}
\end{array}\right.
$$
A stationary solution of the above equation satisfies 
\be
\label{generalcequation'}
\left|\begin{array}{ll} \Delta \Psi+\left(\frac{{\mathcal e}}{\gamma-1}+\frac{1}{\gamma+1}\right)+\frac{\nabla \rho}{\rho}\cdot\left[\left(\frac{1-{\mathcal e}}{2}\right)Z+\nabla \Psi\right]=0\\
\frac12|\nabla \Psi|^2+\left({\mathcal e}-\frac{1}{\gamma+1}\right)\Psi+\left(\frac{1-{\mathcal e}}{2}\right)Z\cdot\nabla \Psi+\rho^{\gamma-1}=1
\end{array}\right.
\ee

\noindent{\bf step 2} Emden transform.
We introduce the variables $$V=\Psi', \ \ S= \sqrt{2}\rho^{\frac{\gamma-1}{2}}$$ where $S$ is the space dependent sound speed, so that equivalently taking the derivative of the second equation:

$$
\left|\begin{array}{ll} V'+\frac{d-1}{Z}V+\left(\frac{{\mathcal e}}{\gamma-1}+\frac{1}{\gamma+1}\right)+\frac{2}{\gamma-1}\frac{S'}{S}\left[\left(\frac{1-{\mathcal e}}{2}\right)Z+V\right]=0\\
VV'+\left({\mathcal e}-\frac{1}{\gamma+1}\right)V+\left(\frac{1-{\mathcal e}}{2}\right)(ZV'+V)+SS'=0.
\end{array}\right.
$$

Let $$x=\log Z,\ \ V(Z)=v(x), \ \ S(Z)=s(x), \ \ Z\frac{d}{dZ}=\frac{d}{dX}.$$

\noindent\underline{First equation}. 
$$\frac{v'}{Z}+\frac{(d-1)v}{Z}+\left(\frac{e}{\gamma-1}+\frac{1}{\gamma+1}\right)+\frac{2}{\gamma-1}\frac{s'}{s}\frac{1}{Z}\left[\left(\frac{1-{\mathcal e}}{2}\right)Z+V\right]$$ and hence letting $$v(x)=e^x w, \ \ s(x)=e^x\sigma$$ yields
$$(w'+w)+(d-1)w+\left(\frac{{\mathcal e}}{\gamma-1}+\frac{1}{\gamma+1}\right)+\frac{2}{\gamma-1}\left(\frac{\sigma'}{\sigma}+1\right)\left(\frac{1-{\mathcal e}}{2}+w\right)=0$$ 
i.e., 
$$\sigma w'+\frac{2}{\gamma-1}\left(\frac{1-{\mathcal e}}{2}+w\right)\sigma'+\sigma\left[\left(d+\frac{2}{\gamma-1}\right)w+\frac{2\gamma}{\gamma^2-1}\right]=0$$
\noindent\underline{Second equation}. We get
$$\frac{vv'}{Z}+\left({\mathcal e}-\frac{1}{\gamma+1}\right)v+\left(\frac{1-{\mathcal e}}{2}\right)(v'+v)+\frac{ss'}{Z}=0$$
and hence
$$w(w'+w)+\left({\mathcal e}-\frac{1}{\gamma+1}\right)w+\left(\frac{1-{\mathcal e}}{2}\right)(w'+2w)+\sigma(\sigma'+\sigma)=0$$
or equivalently
$$\left(w+\frac{1-\mathcal e}{2}\right)w'+\sigma\sigma'+\left(w^2+\frac{\gamma}{\gamma+1}w+\sigma^2\right)=0$$
We have obtained:

\begin{lemma}[Emden transform] Let $$x=\log Z, \ \ \Psi'(Z)=e^xw(x), \ \ S(Z)=e^x\sigma(x), \ \ S=\sqrt{2}\rho^{\frac{\gamma-1}{2}},$$ then 
\be
\label{newsystem}
\left|\begin{array}{ll}
\left(w+\frac{1-\mathcal e}{2}\right)w'+\sigma\sigma'+\left(w^2+\frac{\gamma}{\gamma+1}w+\sigma^2\right)=0\\
\sigma w'+\frac{2}{\gamma-1}\left(\frac{1-{\mathcal e}}{2}+w\right)\sigma'+\sigma\left[\left(d+\frac{2}{\gamma-1}\right)w+\frac{2\gamma}{\gamma^2-1}\right]=0\\
\end{array}\right.
\ee
\end{lemma}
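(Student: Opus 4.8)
The statement is the Emden transform lemma for the compressible Euler system \eqref{NScomp}: it asserts that the substitution $x=\log Z$, $\Psi'(Z)=e^xw(x)$, $S(Z)=e^x\sigma(x)$ with $S=\sqrt2\,\rho^{(\gamma-1)/2}$ turns the stationary self-similar profile equations into the autonomous $2\times2$ first-order system \eqref{newsystem}. The proof is entirely a direct computation, and I would organize it in the three steps already laid out in the body of the excerpt, simply filling in the intermediate algebra. First I would record the starting point: the stationary equations \eqref{generalcequation'} obtained from the front-renormalized self-similar flow \eqref{selfsimilarflow}, specialized to a spherically symmetric potential flow $u=\nabla\Psi=\Psi'$, with the pressure law $P=\frac{\gamma-1}{\gamma}\rho^\gamma$. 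Here I would note that the second equation of \eqref{generalcequation'} is already the \emph{integrated} form of the momentum equation (the constant $1$ on the right being the integration constant fixed by the boundary normalization), so before differentiating one must pass to $V=\Psi'$ by applying $\partial_Z$ to that equation.

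Second, I would introduce $V=\Psi'$ and $S=\sqrt2\,\rho^{(\gamma-1)/2}$ (so that $\rho^{\gamma-1}=\tfrac12 S^2$ and $\frac{\nabla\rho}{\rho}=\frac{2}{\gamma-1}\frac{S'}{S}$), rewriting \eqref{generalcequation'} as the $(V,S)$ system displayed in the excerpt:
$$
\left|\begin{array}{ll}
V'+\tfrac{d-1}{Z}V+\left(\tfrac{{\mathcal e}}{\gamma-1}+\tfrac{1}{\gamma+1}\right)+\tfrac{2}{\gamma-1}\tfrac{S'}{S}\left[\left(\tfrac{1-{\mathcal e}}{2}\right)Z+V\right]=0\\
VV'+\left({\mathcal e}-\tfrac{1}{\gamma+1}\right)V+\left(\tfrac{1-{\mathcal e}}{2}\right)(ZV'+V)+SS'=0.
\end{array}\right.
$$
The first line uses $\Delta\Psi=V'+\tfrac{d-1}{Z}V$ for radial functions; the second is $\partial_Z$ of the integrated momentum equation, using $\partial_Z(\rho^{\gamma-1})=\partial_Z(\tfrac12 S^2)=SS'$. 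Then I would pass to the logarithmic variable $x=\log Z$ and the self-similar unknowns $v(x)=Z^{-1}V$, $s(x)=Z^{-1}S$, i.e. $V(Z)=e^xw(x)$, $S(Z)=e^x\sigma(x)$, using $Z\tfrac{d}{dZ}=\tfrac{d}{dx}$, so that $ZV'=w'+w$ and $\tfrac{S'}{S}=\tfrac{\sigma'+\sigma}{\sigma}$ (and similarly $ZS'=s'+s=e^x(\sigma'+\sigma)$). Substituting and clearing the common factor $e^x$ (respectively $\sigma e^x$ in the first equation, after multiplying through) gives, line by line, exactly the two equations of \eqref{newsystem}; I would present the two resulting identities as separate displays to keep the algebra transparent.

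The computation has no conceptual obstacle — the only thing to be careful about is \textbf{bookkeeping of the numerical coefficients}: matching $\tfrac{{\mathcal e}}{\gamma-1}+\tfrac{1}{\gamma+1}$ and $\tfrac{2}{\gamma-1}$ against the coefficients $\left(d+\tfrac{2}{\gamma-1}\right)$ and $\tfrac{2\gamma}{\gamma^2-1}$ in the second line of \eqref{newsystem}, and checking that the $\left({\mathcal e}-\tfrac{1}{\gamma+1}\right)$, $\tfrac{1-{\mathcal e}}{2}$ terms combine into the single coefficient $\tfrac{\gamma}{\gamma+1}$ of $w$ in the first line (this is where the identity $\left({\mathcal e}-\tfrac{1}{\gamma+1}\right)+\tfrac{1-{\mathcal e}}{2}+1 = \tfrac{\gamma}{\gamma+1}$ must be verified, which forces the particular algebraic shape). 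I would finish by remarking that specializing $\gamma$ to the value dictated by the NLS reduction — $\gamma-1 = \tfrac{4}{\ell} = p-1$, together with the identification ${\mathcal e}=\tfrac{r-2}{r}$ from \eqref{vneionveoneonv} — recovers the system \eqref{systemedefoc} of Section~\ref{bloeuoje} after rescaling $\sigma$ and $w$, thereby confirming that the blow-up profile equation \eqref{profileequation} is indeed the Emden form of spherically symmetric self-similar compressible Euler, which is the point of the appendix.
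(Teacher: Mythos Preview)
Your proposal is correct and follows exactly the paper's own argument: the paper's Appendix derives the $(V,S)$ system from \eqref{generalcequation'}, passes to $x=\log Z$ with $V=e^xw$, $S=e^x\sigma$, and collects terms to obtain \eqref{newsystem}. Two small bookkeeping slips to fix when you write it out: first, $V'=w'+w$ (not $ZV'=w'+w$; rather $ZV'=Z(w'+w)$), and second, the coefficient identity for the linear $w$ term in the first line is $\bigl({\mathcal e}-\tfrac{1}{\gamma+1}\bigr)+(1-{\mathcal e})=\tfrac{\gamma}{\gamma+1}$, since $\tfrac{1-{\mathcal e}}{2}(ZV'+V)$ contributes $\tfrac{1-{\mathcal e}}{2}\cdot 2w=(1-{\mathcal e})w$ after the substitution, not $\tfrac{1-{\mathcal e}}{2}+1$.
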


\noindent{\bf step 3} Renormalized form. We define 
$$
\ell=\frac{2}{\gamma-1}, \ \  r=\frac{2\gamma}{(1-{\mathcal e})(\gamma+1)}, \ \  \phi^2\left(\frac{2}{{\mathcal e}-1}\right)^2=\ell
$$
and the renormalized unknowns 
\be
\label{newunkonwns}
U=\frac{2}{c-1}w, \ \ \Sigma=\frac{\sigma}{\phi}.
\ee
The second equation becomes:
$$\Sigma \frac{e-1}{2}U'+\ell\frac{e-1}{2}(U-1)\Sigma'+\Sigma\ell\left[\left(1+\frac{d}{\ell}\right)\frac{{\mathcal e}-1}{2}U+\frac{2\gamma}{(\gamma^2-1)\ell}\right]=0$$ i.e., 
$$\frac{\sigma}{\ell}U'+(U-1)\sigma'+\sigma\left[\left(1+\frac{d}{\ell}\right)U-\frac{2\gamma}{(1-{\mathcal e})(\gamma+1)}\right]=0.$$
The first equation becomes
$$\left(\frac{e-1}{2}\right)^2UU'+\phi^2\Sigma\Sigma'+\left[\left(\frac{e-1}{2}\right)^2U^2+\frac{\gamma}{\gamma+1}\frac{{\mathcal e}-1}{2}U+\phi^2\Sigma^2\right]=0$$
and hence \eqref{newunkonwns} yields:
$$(U-1)U'+\ell \Sigma\Sigma'+\left[U^2-\frac{2\gamma}{(1-{\mathcal e})(\gamma+1)}U+\ell \Sigma^2\right]=0.$$
We arrive at the renormalized system
$$\left|\begin{array}{ll}
(U-1)U'+\ell \Sigma\Sigma'+(U^2-rU+\ell \Sigma^2)=0\\
\frac{\Sigma}{\ell}U'+(U-1)\Sigma'+\Sigma\left[U\left(\frac{d}{\ell}+1\right)-r\right]=0
\end{array}
\right.
$$
which is identical to the system \eqref{systemedefoc} for the defocusing NLS but with the parameters 
$$
\ell=\frac{2}{\gamma-1}, \ \  r=\frac{2\gamma}{(1-{\mathcal e})(\gamma+1)}
$$
in place of 
$$
\ell=\frac 4{p-1}, \ \  r=\frac{2}{(1-{\mathcal e})}
$$


\section{Hardy inequality}


\begin{lemma}
Assume $2\gamma\notin \Bbb Z$. Then, for all $u\in \matchal C_{{\rm rad}}^\infty(r\ge 1)$ and $j\ge 1$:
\be
\label{radialhardy}
\int_{r\ge 1}r^{2\gamma}u^2dr\lesssim_{j,\gamma} \|u\|^2_{H^{j}(1\le r\le 2)}+\int_{r\ge 1}r^{2(\gamma+j)}|\pa^{j}_ru|^2dr
\ee
\end{lemma}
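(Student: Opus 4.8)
The plan is to prove the weighted Hardy inequality \eqref{radialhardy} by induction on $j$, reducing each step to the case $j=1$ via an integration by parts that trades one power of $r$-weight for one derivative. First I would establish the base case $j=1$. For $u\in\mathcal C^\infty_{\rm rad}(r\ge 1)$, write
\[
\int_{r\ge 1}r^{2\gamma}u^2\,dr=\frac{1}{2\gamma+1}\int_{r\ge1}\bigl(\pa_r r^{2\gamma+1}\bigr)u^2\,dr,
\]
which is legitimate precisely because $2\gamma\notin\mathbb Z$ guarantees $2\gamma+1\ne 0$ (and, more to the point, that no logarithmic resonance occurs). Integrating by parts produces a boundary term at $r=1$ controlled by $\|u\|^2_{H^1(1\le r\le 2)}$ (after inserting a cutoff, or simply bounding $u(1)^2$ by the trace/Sobolev inequality on $[1,2]$), a vanishing contribution at $r=\infty$ for compactly supported $u$ — and for general Schwartz-type $u$ one first proves the bound for truncations and passes to the limit using the finiteness of the right-hand side — and the cross term
\[
-\frac{2}{2\gamma+1}\int_{r\ge1}r^{2\gamma+1}u\,\pa_r u\,dr.
\]
By Cauchy–Schwarz this is bounded by
\[
\frac{2}{|2\gamma+1|}\Bigl(\int_{r\ge1}r^{2\gamma}u^2\Bigr)^{1/2}\Bigl(\int_{r\ge1}r^{2\gamma+2}|\pa_r u|^2\Bigr)^{1/2},
\]
and absorbing the first factor into the left-hand side yields \eqref{radialhardy} for $j=1$.

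For the inductive step, suppose \eqref{radialhardy} holds with exponent $j-1$ for all weights, in particular for $\pa_r u$ in place of $u$ with the shifted weight $r^{2(\gamma+1)}$: one needs $2(\gamma+1)\notin\mathbb Z$, which again follows from $2\gamma\notin\mathbb Z$. Applying the $j=1$ case first gives
\[
\int_{r\ge1}r^{2\gamma}u^2\,dr\lesssim \|u\|^2_{H^1(1\le r\le 2)}+\int_{r\ge1}r^{2(\gamma+1)}|\pa_r u|^2\,dr,
\]
and then applying the inductive hypothesis at level $j-1$ to $v=\pa_r u$ with weight parameter $\gamma+1$ gives
\[
\int_{r\ge1}r^{2(\gamma+1)}|\pa_r u|^2\,dr\lesssim \|\pa_r u\|^2_{H^{j-1}(1\le r\le 2)}+\int_{r\ge1}r^{2(\gamma+1+j-1)}|\pa_r^{j}u|^2\,dr.
\]
Since $\|\pa_r u\|_{H^{j-1}(1\le r\le2)}\le\|u\|_{H^{j}(1\le r\le 2)}$ and $\gamma+1+j-1=\gamma+j$, chaining the two displays produces exactly \eqref{radialhardy}. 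The implied constants depend only on $j$ and $\gamma$ through the geometric series of factors $|2\gamma+2m+1|^{-1}$, $0\le m\le j-1$, all finite under the hypothesis.

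The main obstacle is bookkeeping the non-integrality hypothesis correctly: the integration by parts in the base case requires $2\gamma+1\ne0$, and each level of the induction shifts the weight by $2$, so one must check $2(\gamma+m)+1\ne 0$ for $0\le m\le j-1$; this is automatic once $2\gamma\notin\mathbb Z$. A secondary technical point is justifying the integration by parts for functions that are merely smooth on $r\ge1$ rather than compactly supported — this is handled by first truncating with a cutoff $\chi(r/R)$, noting that all error terms generated by $\chi'$ are supported on $r\sim R$ and are dominated by $\int r^{2(\gamma+j)}|\pa_r^j u|^2$, then letting $R\to\infty$ using that the right-hand side of \eqref{radialhardy} is assumed finite (otherwise the inequality is vacuous). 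Everything else is routine Cauchy–Schwarz and absorption.
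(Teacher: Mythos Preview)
Your proof is correct and follows essentially the same approach as the paper: integration by parts to trade a weight for a derivative, Cauchy--Schwarz and absorption for the $j=1$ case, then straightforward induction with shifted weight for general $j$. You are in fact more careful than the paper about the role of the hypothesis $2\gamma\notin\mathbb Z$ across all levels of the induction and about the truncation/boundary-at-infinity justification; the paper simply writes out the $j=1$ step and says ``for higher values of $j$, \eqref{radialhardy} now follows by induction.''
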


\begin{proof} Assume $2\gamma\ne -1$, We integrate by parts
\bee
&&\int r^{2\gamma} u^2dr=\frac{1}{2\gamma+1}[r^{2\gamma+1}u^2]_1^{+\infty}-\frac{2}{2\gamma+1}\int_{r\ge 1}r^{2\gamma+1}u\pa_rudr\\
& \leq & \|u\|^2_{H^{1}(1\le r\le 2)}+C\left(\int_{r\ge 1}r^{2\gamma}u^2dr\right)^{\frac 12}\left(\int_{r\ge 1}r^{2\gamma+2}(\pa_ru)^2dr\right)^{\frac 12}
\eee
where we used the one dimensional Sobolev embedding, and \eqref{radialhardy} for $j=1$ follows by H\"older. 
For higher values of $j$, \eqref{radialhardy} now follows by induction.
\end{proof}

\section{Commutator for $\Delta^k$}

\begin{lemma}[Commutator for $\Delta^k$]
Let $k\ge 1$, then for any two smooth function $V,\Phi$, there holds:
\be
\label{estimatecommutatorvlkeveln}
[\Delta^k,V]\Phi-2k\nabla V\cdot\nabla \Delta^{k-1}\Phi=\sum_{|\alpha|+|\beta|=2k,|\beta|\le 2k-2}c_{k,\alpha,\beta}\pa^\alpha V\pa^\beta\Phi.
\ee
where $\pa^\alpha=\pa_1^{\alpha_1}\dots\pa^{\alpha_d}_d$, $|\alpha|=\alpha_1+\dots+\alpha_d.$
\end{lemma}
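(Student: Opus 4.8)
The plan is to argue by induction on $k$. For $k=1$ the Leibniz rule gives $\Delta(V\Phi)=(\Delta V)\Phi+2\nabla V\cdot\nabla\Phi+V\Delta\Phi$, hence $[\Delta,V]\Phi-2\nabla V\cdot\nabla\Phi=(\Delta V)\Phi$, which is of the asserted form with $|\alpha|=2$, $|\beta|=0$, so that $|\alpha|+|\beta|=2$ and $|\beta|\le 0=2k-2$.

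For the inductive step I would write $\Delta^k=\Delta\circ\Delta^{k-1}$ and insert the identity at level $k-1$: $\Delta^{k-1}(V\Phi)=V\Delta^{k-1}\Phi+2(k-1)\nabla V\cdot\nabla\Delta^{k-2}\Phi+R_{k-1}$, where $R_{k-1}=\sum_{|\alpha|+|\beta|=2(k-1),\,|\beta|\le 2k-4}c_{k-1,\alpha,\beta}\pa^\alpha V\pa^\beta\Phi$. Applying $\Delta$ to this and using the $k=1$ Leibniz rule on each product, the only contribution producing the maximal order $2k$ in $\Phi$, namely $V\Delta^k\Phi$ coming from $\Delta(V\Delta^{k-1}\Phi)$, cancels against the subtracted $V\Delta^k\Phi$. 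This leaves $2\nabla V\cdot\nabla\Delta^{k-1}\Phi$ from $\Delta(V\Delta^{k-1}\Phi)$, together with a further $2(k-1)\nabla V\cdot\nabla\Delta^{k-1}\Phi$ extracted from $2(k-1)\Delta(\nabla V\cdot\nabla\Delta^{k-2}\Phi)$ — precisely the term $\sum_i\pa_i V\,\Delta\pa_i\Delta^{k-2}\Phi=\nabla V\cdot\nabla\Delta^{k-1}\Phi$ in which both Laplacian derivatives land on $\Delta^{k-2}\Phi$. Summing gives the claimed principal term $2k\,\nabla V\cdot\nabla\Delta^{k-1}\Phi$.

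It then remains to check that each leftover term has total order $2k$ in $(V,\Phi)$ and $\Phi$-order at most $2k-2$. The leftovers are: $(\Delta V)\Delta^{k-1}\Phi$, of orders $(2,2k-2)$; the cross terms $(\Delta\pa_i V)\,\pa_i\Delta^{k-2}\Phi$ and $\pa_i\pa_j V\,\pa_i\pa_j\Delta^{k-2}\Phi$ arising from the remaining pieces of $\Delta(\nabla V\cdot\nabla\Delta^{k-2}\Phi)$, of orders $(3,2k-3)$ and $(2,2k-2)$ respectively; and $\Delta R_{k-1}$, which by the Leibniz rule is a sum of terms $\pa^{\alpha'}V\,\pa^{\beta'}\Phi$ with $|\alpha'|+|\beta'|=2(k-1)+2=2k$ and $|\beta'|\le (2k-4)+2=2k-2$. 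All of these fall in the admissible range $|\alpha|+|\beta|=2k$, $|\beta|\le 2k-2$, so collecting them into a single sum $\sum c_{k,\alpha,\beta}\pa^\alpha V\pa^\beta\Phi$ closes the induction.

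The argument is essentially bookkeeping of multi-index degrees; the only point demanding care — and the place I would double-check the constants — is verifying that no term of $\Phi$-order $2k-1$ or $2k$ survives in the remainder after the two contributions to $2k\,\nabla V\cdot\nabla\Delta^{k-1}\Phi$ have been separated off. This is exactly what pins down the coefficient $2k$ and the constraint $|\beta|\le 2k-2$, and it follows from the cancellation of the top-order piece of $\Delta(V\Delta^{k-1}\Phi)$ against $V\Delta^k\Phi$ together with the inductive hypothesis that $R_{k-1}$ already has $\Phi$-order at most $2k-4$. (In particular, the specific values of the combinatorial constants $c_{k,\alpha,\beta}$ play no role for the applications in the paper, only the degree constraints do.)
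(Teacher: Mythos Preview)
Your proof is correct and follows essentially the same inductive scheme as the paper: write $\Delta^k=\Delta\circ\Delta^{k-1}$, apply the Leibniz rule, extract the two contributions that combine into the principal term $2k\,\nabla V\cdot\nabla\Delta^{k-1}\Phi$, and verify that every leftover has $\Phi$-order at most $2k-2$. Your bookkeeping of the degree constraints is in fact more explicit than the paper's own write-up.
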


\begin{proof} We argue by induction on $k$. For $k=1$:
$$\Delta(V\Phi)-V\Delta \Phi=2\nabla V\cdot\nabla \Phi.$$
We assume \eqref{estimatecommutatorvlkeveln} for $k$ and prove $k+1$. Indeed,
\bee
&&\Delta^{k+1}(V\Phi)=\Delta([\Delta^k,V]\Phi+V\Delta^k\Phi)\\
&=&\Delta\left(2k\nabla V\cdot\nabla \Phi+\sum_{|\alpha|+|\beta|=2k, |\beta|\le 2k-2}c_{k,\alpha,\beta}\pa^\alpha V\pa^\beta\Phi+V\Delta^k\Phi\right)=  2k\nabla V\cdot\nabla \Delta^{k}\Phi\\
&+&\sum_{|\alpha|+|\beta|=2k+2, |\alpha|\ge 1}\tilde{c}_{k,\alpha,\beta}\pa^\alpha V\pa^\beta\Phi+2k\nabla V\cdot\nabla \Delta^k\Phi+V\Delta^{k+1}\Phi+2\nabla V\cdot\nabla \Delta^k\Phi\\
& = & 2(k+1)\nabla V\cdot\nabla \Delta^{k}\Phi+\sum_{|\alpha|+|\beta|=2k+2, |\alpha|\ge 1}c_{k+1,\alpha,\beta}\pa^\alpha V\pa^\beta\Phi
\eee
and \eqref{estimatecommutatorvlkeveln} is proved.
\end{proof}


\section{Behaviour of Sobolev norms}
\label{formal}


We compute Sobolev norms assuming that the leading part of the solution is given by \eqref{nvivnineoneinov}. Computations below are formal but could be justified as a consequence of the bootstrap estimates.\\
\noindent\underline{Dirichlet energy of the profile}. We recall \eqref{vneoenvneovnie}, \eqref{nvivnineoneinov} and compute:
$$
\|\nabla u\|_{L^2}^2\sim1+\|\nabla u\|_{L^2(|x|\le 1)}^2=1+ \int_{|x|\leq 1}|\nabla\rho|^2dx+\int_{|x|\le 1}\rho^2|\nabla\phi|^2.$$
We compute for the first term:
\bee
&&\int_{|x|\le 1}|\nabla\rho|^2dx\sim\frac{1}{(T-t)^{\frac{4(r-1)}{r(p-1)}}} \int_{|Z|\le \frac{1}{(T-t)^{\frac 1r}}}\frac{(T-t)^{\frac dr}}{(T-t)^{\frac{2}{r}}}\frac{Z^{d-1}dZ}{\la Z\ra^{\frac{4(r-1)}{p-1}+2}}\\
& = & \frac{1}{(T-t)^{\frac2r\left(\frac{2(r-2)}{p-1}+\frac{2}{p-1}-\frac d2+1\right)}}\int_{|Z|\le \frac1{(T-t)^{\frac 1r}}}\frac{dZ}{Z^{1+2\left(1+\frac{2(r-2)}{p-1}+\frac{2}{p-1}-\frac d2\right)}}\\
& = & \frac{1}{(T-t)^{\frac 2r(1-\sigma)}}\int_{|Z|\le \frac{1}{(T-t)^{\frac 1r}}}\frac{dZ}{\la Z\ra^{1+2(1-\sigma)}}
 \eee
 with 
 \bee
&& \sigma=s_c-\frac{2(r-2)}{p-1}>1\Leftrightarrow \frac d2-\frac{\ell}{2}-\frac{\ell}{2}(r-2)>1\Leftrightarrow d-\ell(r-1)>2\\
& \Leftrightarrow& d-2>\ell\left(\frac{\ell+d}{\ell+\sqrt{d}}-1\right)=\frac{\ell(d-\sqrt{d})}{\ell+\sqrt{d}}\Leftrightarrow (d-2)\sqrt{d}+\ell(\sqrt{d}-2)>0
\eee
which holds and hence $$\int_{|x|\le 1}|\nabla\rho|^2dx\lesssim 1.$$ Similarily:
  \bee
 &&\int_{|x|\le 1}\rho^2|\nabla\phi|^2=\frac{1}{(T-t)^{\frac{4(r-1)}{r(p-1)}+\frac{2(r-2)}{r}}} \int_{|Z|\le \frac{1}{(T-t)^{\frac 1r}}}\frac{(T-t)^{\frac dr}}{(T-t)^{\frac{2}{r}}}\frac{Z^{d-1}dZ}{\la Z\ra^{2(r-2+1)+\frac{4(r-1)}{p-1}}}\\
 & = & \frac{1}{(T-t)^{\frac 2r\left(r-2+\frac{2(r-2)}{p-1}+1+\frac{2}{p-1}-\frac d2\right)}}\int_{|Z|\le \frac{1}{(T-t)^{\frac 1r}}}\frac{dZ}{\la Z\ra^{1+2\left(1+r-2+\frac{2(r-2)}{p-1}+\frac{2}{p-1}-\frac d2\right)}}
 \eee
 and at $r^*(\ell)$:
 \bee
 &&r-2+\frac{2(r-2)}{p-1}+1+\frac{2}{p-1}-\frac d2<0\Leftrightarrow (r-1)\left(1+\frac{\ell}{2}\right)<\frac d2\\
 & \Leftrightarrow& (2+\ell)(d-\sqrt{d})<d(\ell+\sqrt{d})\Leftrightarrow d(\sqrt{d}-2)+(\ell+2)\sqrt{d}>0
 \eee
 which holds and hence 
$$\int_{|x|\le 1}\rho^2|\nabla\phi|^2\lesssim 1.$$
\noindent\underline{Blow up of large enough Sobolev norms below the scaling}. We now unfold the change of variables 
$$\left|\begin{array}{l}
u(t,x)=\frac{1}{\l(t)^{\frac 2{p-1}}}v(s,y)e^{i\gamma}, \ \ y=\frac{x}{\l}\\
v(s,y)=\frac{1}{(\sqrt{b})^{\frac 2{p-1}}}\left(\rho_Te^{\frac{i}{b}\Psi_T}\right)(\tau,Z), \ \ Z=y\sqrt{b}
\end{array}\right.
$$
which yields
\bee
&&\|\nabla^su\|_{L^2}=\frac{1}{\l^{s-s_c}}\|\nabla^sv\|_{L^2}= \frac{1}{\l^{s-s_c}(\sqrt{b})^{\frac{2}{p-1}}}(\sqrt{b})^{s-\frac d2}\|\nabla^s(\rho_Te^{\frac{\Psi_T}{b}})\|_{L^2}\\
&\gtrsim & e^{\frac{s-s_c}{\tau}}(\sqrt{b})^{s-\frac d2-\frac{2}{p-1}}\|\nabla^s(\rho_Te^{\frac{\Psi_T}{b}})\|_{L^2(|Z|\le 1)}\gtrsim  e^{\frac{s-s_c}{\tau}}(\sqrt{b})^{s-\frac d2-\frac{2}{p-1}}\frac{1}{b^s}\\
& \gtrsim& e^{\tau\left[\frac{s-s_c}{2}+\frac e2\left(s+\frac d2+\frac 2{p-1}\right)\right]}
\eee
which blows up as soon as $$s>\sigma=\frac{1}{1+e}\left[s_c-\mathcal e\left(\frac d2+\frac{2}{p-1}\right)\right].$$ We can check that at $r^*(\ell)$:
\bee
&&\sigma>1\Leftrightarrow \frac d2-\frac{2}{p-1}>1+e+e\left(\frac d2+\frac{2}{p-1}\right)\Leftrightarrow \frac d2(1-\mathcal e)>(1+e)(1+\frac{\ell}{2})\\
&\Leftrightarrow& \frac d2\frac 2r>(r-1)(\ell+2)\Leftrightarrow d>(\ell+2)\left(\frac{d+\ell}{\ell+\sqrt{d}}-1\right)\Leftrightarrow d(\sqrt{d}-2)+(\ell+2)\sqrt{d}>0
\eee
The last inequality holds for our assumptions on $d\ge 5$ and $\ell>0$.
\end{appendix}

\end{document}